\definecolor{newcolor}{rgb}{.8,.349,.1}
\newcommand{\Rmnum}[1]{\uppercase\expandafter{\romannumeral #1}}
\newtheorem{theorem}{Theorem}[section]
\newtheorem{lemma}[theorem]{Lemma}
\newtheorem{corollary}{Corollary}[section]
\theoremstyle{definition}
\newtheorem{example}{Example}[section]
\theoremstyle{remark}
\newtheorem{remark}{Remark}[]
\journal{Journal of Computational Physics}
\begin{document}

\verso{Huihui Cao, Manting Peng, Kailiang Wu}
\begin{frontmatter}

\title{{\bf Robust Discontinuous Galerkin Methods Maintaining Physical Constraints for General Relativistic Hydrodynamics}\tnoteref{tnote1}}
	

\tnotetext[tnote1]{This work is partially supported by Shenzhen Science and Technology Program (Grant No.~RCJC20221008092757098) and 
	National Natural Science Foundation of China (Grant No.~12171227).}

\author[1]{Huihui {Cao}}
\ead{caohh@sustech.edu.cn}
\author[1]{Manting {Peng}}
\ead{12232855@mail.sustech.edu.cn}
\author[1,2]{Kailiang {Wu}\corref{cor1}}
\cortext[cor1]{Corresponding author.}
\ead{wukl@sustech.edu.cn}

\address[1]{Department of Mathematics, Southern University of Science and Technology, Shenzhen 518055, China}
\address[2]{Shenzhen International Center for Mathematics, Southern University of Science and Technology, Shenzhen 518055, China}


\begin{abstract}
Numerically simulating general relativistic hydrodynamics (GRHD) presents significant challenges, including handling curved spacetime, achieving non-oscillatory shock-capturing and high-order accuracy, and maintaining essential physical constraints (such as positive density and pressure, and subluminal fluid velocity) under strong nonlinear coupling. This paper develops high-order accurate, physical-constraint-preserving, oscillation-eliminating discontinuous Galerkin (PCP-OEDG) schemes with the Harten--Lax--van Leer flux for GRHD. To suppress spurious oscillations near discontinuities, we incorporate an oscillation-eliminating (OE) procedure after each Runge--Kutta stage. This OE procedure, based on the exact solution operator of a novel linear damping equation, is computationally efficient and avoids the need for complicated characteristic decomposition. It ensures effective oscillation suppression while preserving the high-order accuracy and conservation properties of the DG method. To further enhance the stability and robustness of the DG method, we develop fully physical-constraint-preserving (PCP) schemes. First, we utilize the W-form of GRHD equations, which reformulates the $(3+1)$ Arnowitt--Deser--Misner formalism via the Cholesky decomposition of the spatial metric. This addresses the challenge of the non-equivalence of admissible state sets at different points in curved spacetime, enabling the construction of provably PCP schemes via convexity techniques. Second, we rigorously prove the PCP property of cell averages using highly technical estimates and the Geometric Quasi-Linearization (GQL) approach [K. Wu and C.-W. Shu, {\em SIAM Review}, 65:1031--1073, 2023], which equivalently casts complex nonlinear constraints into linear ones by introducing auxiliary variables. Our proof shows that, with the enforcement of a simple PCP limiter, the updated cell averages of the OEDG solutions remain physically admissible  throughout the simulation. 
Finally, we present provably convergent PCP iterative algorithms for the robust recovery of primitive variables, ensuring that these variables, approximately recovered from the evolved variables, satisfy the physical constraints throughout the iterative process. The resulting PCP-OEDG method is validated through extensive numerical experiments, including classical test problems in flat spacetime, axisymmetric ultra-relativistic jet flows, and accretion onto rotating black holes in the Kerr metric. These results demonstrate our method's robustness, accuracy, and ability to handle extreme GRHD scenarios involving strong shocks, high Lorentz factors, and strong gravitational fields.
\end{abstract}


\begin{keyword}	
	{\bf Keywords:} 
	General relativistic hydrodynamics; 
	Physical-constraint-preserving;
	Discontinuous Galerkin method;
    Oscillation-eliminating procedure;
    Geometric quasilinearization; 
    W-form 
\end{keyword}

\vspace{-1mm}

\end{frontmatter}

\section{Introduction}
In astrophysics and high-energy physics, scenarios often arise where the velocity of fluid approaches the speed of light and strong gravitational fields significantly influence hydrodynamics. These extreme conditions necessitate the consideration of special and general relativistic effects in the modeling of fluid dynamics. Relativistic hydrodynamics (RHD) extends beyond the capabilities of Newtonian mechanics, offering explanations for phenomena that are inaccessible to non-relativistic theories. Additionally, RHD  describes a variety of critical astrophysical processes, such as accretion onto black holes, the coalescence of neutron stars, and core-collapse supernovae. As such, RHD is indispensable in the study of a wide range of astrophysical phenomena, from stellar to galactic scales, as well as in laboratory plasma experiments.

The equations governing RHD are highly nonlinear, incorporating complex spacetime curvature and relativistic effects. Due to these complexities, obtaining analytical solutions is often extraordinarily challenging, making numerical simulations the primary and most powerful tool for exploring the physical mechanisms in RHD. Early work in this field can be traced back to the pioneering efforts of May, White, and Wilson \cite{MW1996, MW1967, W1972}, who applied the finite difference method combined with artificial viscosity techniques to solve the general relativistic hydrodynamics (GRHD) equations in either Lagrangian or Eulerian coordinates. 
Since the 1990s, there has been significant development in high-resolution shock-capturing schemes for RHD. These include, but are not limited to, finite difference methods \cite{DW1995, ZM2006, RR2012, HT2012, RRG2014, WT2015}, finite volume methods \cite{MB2005, TMN2007, BK2016, CW2022, LDT2019}, adaptive moving mesh methods \cite{HT2012, HT20122}, and discontinuous Galerkin (DG) methods \cite{ZT2013,zanotti2015solving,T2016, QSY2016, fambri2018ader}. For further information and additional methods, interested readers are referred to papers such as \cite{W2021, YHT2011, MM2003, MM2015, F2008} and the references therein.

The DG method is a class of ﬁnite element methods whose trial and test spaces consist of discontinuous piecewise polynomials. It was first introduced by Reed and Hill \cite{RH1973} to solve a steady linear hyperbolic equation in 1973. A major development of DG method was coupled with the Runge--Kutta (RK) time discretization for solving nonlinear hyperbolic conservation laws by Cockburn, Shu, and their collaborators \cite{CHS1990,CLS1989,CS1989,CS1998}. The foundational framework of the explicit RKDG method in one dimension was established in \cite{CS1989, CLS1989}, with subsequent extensions to multidimensional equations and systems in \cite{CHS1990, CS1998}.  
Despite the widespread popularity of DG methods for solving a variety of hyperbolic equations due to their notable features---such as compactness, uniformly high-order accuracy, and high parallel efficiency---several key technical challenges remain in developing a robust high-order DG method for GRHD equations:

(i) \textbf{Robust Primitive Variables Recovery:} The flux and the eigenvalues/eigenvectors of its Jacobian matrix cannot be explicitly expressed in terms of the conserved variables, due to the nonlinear coupling introduced by relativistic effects. 
Therefore, it is essential to recover the primitive variables (such as rest-mass density, velocity, and pressure) from the conserved variables.
This recovery process typically requires solving complex nonlinear algebraic equations, either analytically or numerically. Several analytical algorithms for recovering the primitive variables were developed in \cite{AS1972,ED2005,RCC2006}. However, these methods may pose risks of low accuracy and high computational cost. As for numerical method, iterative algorithms are usually used to compute an intermediate variable, which is then used to determine the remaining variables. For instance, a root-finding algorithm using density as the intermediate variable was introduced in \cite{DW1995}. The Newton--Raphson algorithms, based on other intermediate variables like pressure, velocity, and Lorentz factors, were proposed in \cite{RD2008}. 
Unfortunately, the convergence of these iterative methods was not yet rigorously proven or guaranteed in theory. 
In the special relativistic case, recent studies  have explored robust linearly convergent iterative recovery algorithms in \cite{CW2022} and proposed three efficient Newton--Raphson methods with provable convergence in \cite{CQW2024}. In the context of GRHD, the added complexity of spacetime curvature and stronger nonlinear couplings further complicates the recovery process. Extending the provably convergent recovery algorithms \cite{CQW2024,CW2022} to the GRHD case are highly desirable but have not been explored yet.

(ii) \textbf{Controlling Spurious Oscillations:} The Gibbs phenomenon associated with high-order polynomials in the DG method can lead to spurious oscillations when solving nonlinear hyperbolic systems involving discontinuities such as shock waves. These oscillations can cause nonphysical wave structures, numerical instability, or even blowup solutions. Controlling such spurious oscillations is critical for designing robust DG methods for RHD equations. Over the years, various strategies have been developed to mitigate these oscillations, including monotonic schemes \cite{G1959}, different types of limiters \cite{S2009, QS2005, ZS2013}, and artificial viscosity terms \cite{ZGMP2013, HM2014, YH2020, HC2020}. However, these methods have limitations: monotonic schemes are only first-order accurate, limiter selection can be problem-dependent, and artificial diffusion terms often require parameter tuning. 
In recent work, Lu, Liu, and Shu introduced numerical damping terms into the semi-discrete DG formulation, resulting in the oscillation-free DG (OFDG) methods \cite{LLS2021, LLS2022} to control spurious oscillations. This approach retains many desirable properties of the classical DG method, such as conservation, $L^2$-boundedness, and high accuracy. However, the OFDG method may suffer from excessive smearing or persistent spurious oscillations in problems involving large or small scales (wave speeds). In cases with strong discontinuities and highly stiff damping terms, the (modified) exponential RK methods \cite{HS2018} are necessary due to the very restrictive time step sizes required by explicit RK discretization \cite{LLS2022}. Additionally, characteristic decomposition is often required for the OFDG method to solve hyperbolic systems. More recently, motivated by the damping technique of OFDG methods, the oscillation-eliminating DG (OEDG) method, equipped with a scale-invariant and evolution-invariant damping operator, was proposed in \cite{PSW2024}. This method incorporates an oscillation-eliminating (OE) procedure after each RK stage, effectively suppressing nonphysical oscillations for problems with various scales and wave speeds. The OE procedure is independent of the RK stage update, enabling seamless integration into existing DG codes as a standalone module. Compared to the OFDG method, the OEDG method offers reduced computational cost and greatly simplifies implementation, as it does not require characteristic decomposition and allows for normal time step sizes. Extensions and applications of the OEDG method to ideal magnetohydrodynamics (MHD) and the Kapila five-equation two-phase flow model have been explored in \cite{LW2024, YAW2024}. In this work, we further develop high-order robust OEDG schemes preserving physical constraints for GRHD in curved spacetime. 

(iii) \textbf{Physical-Constraint-Preserving:} 
In RHD, several critical physical constraints must be satisfied, such as the positivity of density and pressure and the requirement that fluid velocity remains subluminal. These constraints are crucial for ensuring the physical plausibility and relativistic causality of the solutions, as well as for maintaining the stability of numerical methods. Violating these constraints during numerical computations can result in the eigenvalues of the Jacobian matrix becoming imaginary, leading to an ill-posed discrete problem and simulation failures. Recent progress has been made in developing high-order methods that preserve such bound constraints in hyperbolic conservation laws through two types of limiters. 
The first type is flux-correction limiters \cite{HAS2013, X2013, WT2015}, which modify the high-order flux by incorporating a first-order bound-preserving flux, resulting in fluxes that maintain both physical constraints and high-order accuracy. The second type is the simple scaling limiter, applicable to high-order finite volume and DG schemes. This approach was first proposed by Zhang and Shu in \cite{ZS2010, ZS20102}, where a scaling limiter is applied to the numerical solution to ensure pointwise constraint preservation,  if it can be proven that the updated cell averages remain within the admissible state set.

For RHD equations, most existing physical-constraint-preserving (PCP) research has been primarily focused on the special relativistic case. Wu and Tang \cite{WT2015} were the first to provide an explicit characterization of the admissible state set and proposed high-order PCP finite difference weighted essentially non-oscillatory (WENO) schemes with flux-corrected PCP limiters for special RHD. A PCP finite volume WENO method for special RHD on unstructured meshes was later developed in \cite{CW2022}, which also introduced three robust algorithms for recovering the primitive variables. Bound-preserving DG methods for special RHD were designed in \cite{QSY2016}. Additionally, \cite{W2021} established a minimum principle for specific entropy and developed high-order accurate invariant region-preserving schemes for special RHD with an ideal equation of state. PCP central DG schemes were constructed in \cite{WT2017} for special RHD with a general equation of state.
In addition, a flux-limiting approach that preserves the positivity of the rest-mass density was introduced in \cite{RRG2014}. A reconstruction technique enforcing the subluminal constraint on fluid velocity was proposed in \cite{BalsaraKim2016}. A flux-limiting entropy-viscosity approach for RHD, based on a measure of the entropy generated by the solution, was developed in \cite{guercilena2017entropy}. 
Furthermore, PCP schemes have also been investigated for special relativistic magnetohydrodynamics (MHD) in \cite{WT20172, WS2021}, and for non-relativistic MHD in \cite{W2018, WS2018, WS2019}. A robust adaptive-order positivity-preserving finite difference scheme for general relativistic MHD was recently proposed in \cite{DKT2023}. 
More recently, inspired by the works in \cite{WT20172, W2018, WS2018, WS2019, WS2021}, a general framework known as Geometric Quasi-Linearization (GQL) was established in \cite{WS2023}. This framework transforms complex nonlinear constraints into equivalent linear forms, offering a novel and powerful method for studying the preservation of nonlinear constraints.

Developing a genuinely PCP numerical method for GRHD equations presents several technical challenges, particularly in three key aspects:
i) Establishing a provable PCP property for cell averages throughout the evolution process; ii) Ensuring that the values of the polynomial numerical solution at critical points satisfy physical constraints; 
iii) Maintaining the physical constraints of the primitive variables during their recovery from the conserved variables. 
The main challenges arise from the nonlinear implicit relation between the conserved variables (the evolved variables of the numerical schemes) and the primitive variables (which directly define the physical constraints). This feature, absent in the non-relativistic case, significantly complicates the analysis and design of PCP schemes for RHD. Moreover, the curved spacetime in the GRHD case further complicates the PCP study.  
High-order PCP methods for GRHD were only investigated in \cite{W2017}. However, the schemes studied in \cite{W2017} were limited to the simple Lax--Friedrichs numerical flux, and PCP convergent algorithms for recovering primitive variables have not yet been explored in the general relativistic case.

The aim of this paper is to design and analyze high-order PCP-OEDG methods with the Harten--Lax--van Leer (HLL) numerical flux for the GRHD equations. The distinctive advantages of the proposed methods include their essentially non-oscillatory nature and its provable PCP properties, including preservation of physical constraints in cell-average updates, pointwise solution values, and primitive variable recovery. The contributions, novelty, and significance of this work are outlined as follows:

\begin{itemize}
    \item \textbf{Robust DG Method for GRHD Equations in W-form:} We develop a high-order robust DG method for the W-form of GRHD equations in arbitrary spacetimes. 
    The W-form was initially proposed in \cite{W2017}, leveraging the Cholesky decomposition of the induced metric on each spacelike hypersurface to reformulate the $(3+1)$ Arnowitt--Deser--Misner (ADM) formalism of the GRHD equations.  
    The admissible state set for the evolved variables in the $(3+1)$ ADM formulation is spacetime-dependent, varying at different points in curved spacetime. This variation invalidates the key convexity of the set required for the analysis and design of PCP DG schemes. 
    The W-form effectively addresses this challenge by ensuring that the admissible state set is invariant with respect to spacetime. 
    This invariance is essential for constructing high-order provably PCP-OEDG schemes, guaranteeing robustness and stability. 

    \item \textbf{Suppression of Spurious Oscillations:} To mitigate spurious oscillations in DG solutions near discontinuities, we incorporate an OE procedure after each RK stage. 
    The OE procedure is based on the solution operator of a novel linear damping equation, 
    which can be solved exactly without the need for numerical discretization. Consequently, the OE procedure is simple to implement and computationally efficient, avoiding the need for characteristic decomposition, which is particularly complex and computationally expensive for GRHD equations. 
    Furthermore, the OE procedure is non-intrusive to the DG discretization and can be integrated into existing GRHD DG solvers as a separate, modular component. 
    Since the OE procedure only involves the solution data from the target and adjacent cells, 
    the resulting OEDG method maintains the compactness and parallel efficiency of the original DG method, which is crucial for GRHD simulations in highly complex spacetimes, such as those involving black holes or complicated geometries.
    
    \item \textbf{PCP Theory and Techniques:} 
    We develop fully PCP, high-order DG schemes through the following three approaches:
    \begin{itemize}
        \item {\bf Prove the PCP Property of Cell Averages via GQL}: We employ the GQL approach to rigorously prove that high-order OEDG schemes with appropriate HLL wave speeds satisfy the weak PCP property, meaning they preserve the PCP property of cell averages if the DG solution from the previous time step meets the required physical constraints. This ensures that the cell averages of the evolved variables remain within the physically admissible state set. The proof is quite nontrivial due to the strong nonlinearity of the GRHD equations. 
        
        \item {\bf  PCP Limiter for Point Values}: Building upon the weak PCP property, we present a PCP limiter to enforce the PCP property for the values of the OEDG solution polynomials at certain quadrature points. This limiter guarantees the satisfaction of conditions necessary for the cell averages to preserve the PCP property at the next time step. 
        
        \item {\bf PCP Recovery of Primitive Variables}: Although the evolved variables are ensured to remain within the admissible state set, it is also necessary to ensure that the primitive variables, approximately recovered from the evolved variables, satisfy the physical constraints. 
        To address this, we develop provably convergent, PCP iterative algorithms for robust recovery of primitive variables. These algorithms guarantee that the approximate primitive variables always respect the physical constraints throughout the recovery process.
    \end{itemize}
    As a result, the numerical solutions evolved by the OEDG method remain in the admissible state set and satisfy the physical constraints throughout the entire simulation. 

    \item \textbf{Validation and Simulation:} To validate the robustness, accuracy, and effectiveness of our PCP-OEDG method, we perform extensive numerical experiments, including classical test problems such as the double Mach reflection and shock-bubble interaction in the Minkowski spacetime, and axisymmetric ultra-relativistic jet flows in cylindrical coordinates. 
    Additionally, we accurately simulate the accretion process onto rotating black holes within the Kerr metric and study the properties of the event horizon. The results demonstrate that the PCP-OEDG method is capable of successfully simulating extreme GRHD problems involving strong shocks, extremely low densities/pressures, high Lorentz factors, and/or strong gravitational fields, providing a reliable numerical tool for studying complex physical phenomena in both special and general relativity.
\end{itemize}

This paper is organized as follows. In Section \ref{sec2}, we introduce the governing equations of GRHD. Section \ref{sec3} presents the high-order PCP-OEDG method for GRHD equations in the W-form, including the semi-discrete DG formulation in Subsection \ref{subsec3-1}, the fully-discrete OEDG method in Subsection \ref{subsec3-2}, and the PCP theory and techniques in Subsection \ref{subsec3-3}. In Section \ref{sec4}, we illustrate our PCP-OEDG method for RHD equations in various metrics and coordinate systems. Section \ref{sec5} provides extensive numerical tests to validate the effectiveness of the PCP-OEDG method. Finally, we draw conclusions in Section \ref{sec6}.

Throughout the paper, we employ the Einstein summation convention over repeated indices, and the geometrized unit system so that the speed of light in vacuum and the gravitational constant are equal to one.

\section{Governing Equations of Relativistic Hydrodynamics}\label{sec2}

The $(d+1)$-dimensional spacetime GRHD equations can be formulated in a covariant form as \cite{F2008}:
\begin{align}\label{eq: form1}
\begin{cases}
\partial_{\mu}(\rho u^{\mu}) = 0,\\
\partial_{\mu}T^{\mu\nu} = 0,
\end{cases}
\end{align}
which represent the local conservation laws of baryon number density (the continuity equation) and the stress-energy tensor $T^{\mu \nu}$ (the conservation of energy and momentum). The Greek indices $\mu$ and $\nu$ in Eqs.~\eqref{eq: form1} run from $0$ to $d$. Here, $\rho$ denotes the rest-mass density, and $u^{\mu} = \gamma(1, v^1, \ldots, v^d)$ represents the $(d+1)$-velocity of the fluid, where $(v^1, \ldots, v^d)$ is the $d$-dimensional spatial velocity of the fluid, and $\gamma$ is the Lorentz factor, to be defined later. The operator $\partial_{\mu}$ denotes the covariant derivative associated with the $(d+1)$-dimensional spacetime metric $g_{\mu\nu}$, with the line element given by $\mathrm{d}s^2 = g_{\mu\nu} \mathrm{d}x^{\mu} \mathrm{d}x^{\nu}$. The coordinates in the $(d+1)$-dimensional spacetime are $x^{\mu} = (t, x^1, \ldots, x^d)$.

For an ideal fluid, neglecting the effects of viscosity and heat transfer, the stress-energy tensor is given by
\begin{align}\label{stress-energy}
T^{\mu\nu} = \rho h u^{\mu} u^{\nu} + p g^{\mu\nu},
\end{align}
where $p$ is the pressure, $\rho$ is the rest-mass density, and $h$ is the relativistic specific enthalpy, defined by
\begin{align}\label{EOS}
h = 1 + e + \frac{p}{\rho},
\end{align}
with $e$ representing the specific internal energy. The metric tensor $g^{\mu\nu}$ is the inverse of $g_{\mu\nu}$, and it satisfies the relation
\begin{align}\label{contravariant}
g^{\mu\nu}g_{\nu\lambda} = \delta^{\mu}_{\lambda},
\end{align}
where $\delta^{\mu}_{\lambda}$ is the Kronecker delta:
\begin{align*}
\delta^{\mu}_{\lambda} =
\begin{cases}
1, & \text{if } \mu = \lambda,\\
0, & \text{if } \mu \neq \lambda.
\end{cases}
\end{align*}
The system is closed by an equation of state that relates the fundamental thermodynamic variables. In this study, we adopt the ideal equation of state:
\begin{align}\label{ideal EOS}
e = \frac{p}{(\Gamma - 1)\rho},
\end{align}
where $\Gamma$ is the adiabatic index, which satisfies $\Gamma \in (1,2]$. The condition $\Gamma \le 2$ ensures that the speed of sound remains less than the speed of light, thereby maintaining relativistic causality \cite{WT2015}.

In scenarios where the fluid's self-gravity is negligible compared to the background gravitational field, the dynamics of the system are fully described by Eqs.~\eqref{eq: form1} and \eqref{ideal EOS}. If the self-gravity cannot be neglected, the GRHD equations must be solved in conjunction with the Einstein field equations:
\begin{align*}
G^{\mu\nu} = 8 \pi T^{\mu\nu},
\end{align*}
which relate the curvature of spacetime to the energy-momentum distribution. 
In this paper, we assume that the spacetime metric $g_{\mu\nu}$ and its derivatives $\partial g_{\mu\nu} / \partial x^{\sigma}$ are either given or can be computed numerically using a solver for the Einstein equations at each time step. Furthermore, the metric tensor $g_{\mu\nu}$ is assumed to be a real, symmetric tensor with a signature $(-, +, \dots, +)$.

In numerical computations, it is often advantageous to reformulate the covariant equations \eqref{eq: form1} using the $(3+1)$ formalism, also known as the ADM  decomposition \cite{ADM1962}. In this formalism, spacetime is decomposed into a series of non-intersecting spacelike hypersurfaces, each characterized by a unit normal vector given by $(1/\alpha, -\beta^i/\alpha)$. The lapse function $\alpha > 0$ measures the proper time between adjacent hypersurfaces, and the shift vector $\beta^i$ describes the displacement of the hypersurfaces relative to each other over time. 
The line element in the $(3+1)$ formalism is expressed as
\begin{align*}
\mathrm{d}s^2 = -(\alpha^2 - \beta_i \beta^i)\mathrm{d}t^2 + 2 \beta_i \mathrm{d}x^i \mathrm{d}t + \chi_{ij} \mathrm{d}x^i \mathrm{d}x^j,
\end{align*}
where $\chi_{ij}$ is the induced metric on each spacelike hypersurface. The components $\beta_i$ are the covariant components of the shift vector, defined by $\beta_i = \chi_{ij} \beta^j$. 
The GRHD equations \eqref{eq: form1} can then be rewritten as a first-order hyperbolic system \cite{BFIMM1997}:
\begin{align}\label{eq: form2}
\frac{1}{\sqrt{-g}}\left(\frac{\partial \sqrt{\chi}\mathbf{U}}{\partial t} + \frac{\partial \sqrt{-g}\mathbf{G}^i(\mathbf{U})}{\partial x^i}\right) = \mathbf{R}(\mathbf{U}),
\end{align}
where $g = \mathrm{det}(g_{\mu\nu})$ satisfies $\sqrt{-g} = \alpha \sqrt{\chi}$ with $\chi = \mathrm{det}(\chi_{ij})$. For curved spacetime, the source terms $\mathbf{R}(\mathbf{U})$ in Eq.~\eqref{eq: form2} arise from the spacetime geometry. The conservative vector, flux vectors, and source term are given by
\begin{align}
\mathbf{U} &= (D,\,{\bm m},\,E)^{\top},\label{con 1}\\
\mathbf{G}^i &= (D\tilde{v}^i,\,{\bm m}\tilde{v}^i + p\mathbf{e}_{i},\,E\tilde{v}^i + pv^i)^{\top},\quad i = 1,\,\ldots,\,d, \label{flux 1}\\
\mathbf{R} &= \left(0,\,T^{\mu\nu}g_{\nu\sigma}\Gamma_{\mu i}^{\sigma},\,T^{\mu 0}\partial_{\mu}\alpha - \alpha T^{\mu\nu}\Gamma_{\mu\nu}^{0}\right)^{\top},\label{source 1}
\end{align}
where $\mathbf{e}_i$ denotes the $i$th row of the identity matrix of size $d$, $\tilde{v}^i = v^i - \beta^i/\alpha$, and the fluid velocity vector ${\bm v} = (v^1,\,\ldots,\,v^d)$ is defined by
\begin{align}
v^i = \frac{u^i}{\alpha u^{0}} + \frac{\beta^i}{\alpha}.
\end{align}
In \eqref{source 1}, $\Gamma_{\mu\nu}^{\sigma}$ are the $(d+1)$-dimensional Christoffel symbols, defined by
\begin{align}\label{Christoffel symbols}
\Gamma_{\mu\nu}^{\sigma} = \frac{1}{2}g^{\sigma\lambda}\left(\partial_{\mu}g_{\nu\lambda} + \partial_{\nu}g_{\mu\lambda} - \partial_{\lambda}g_{\mu\nu}\right).
\end{align}
The conserved variables $D,\,{\bm m} = (m_1,\,\ldots,\,m_d)$, and $E$ represent the mass density, the momentum vector, and the total energy, respectively. Let $\mathbf{Q} = (\rho,\,{\bm v},\,p)^{\top}$ denote the primitive variable vector. Then, $\mathbf{U}$ can be calculated from $\mathbf{Q}$ by
\begin{align}\label{pri2con}
D = \rho \gamma,\qquad
{\bm m} = Dh\gamma {\bm v},\qquad
E = Dh\gamma - p,
\end{align}
where the Lorentz factor $\gamma = \alpha u^0 = 1/\sqrt{1 - \|{\bm v}\|_{\mathbf{X}}^2}$, with $\|{\bm v}\|_{\mathbf{X}}^2 = \chi_{ij}v^i v^j$. The matrix $\mathbf{X} = (\chi^{ij})_{1 \le i,j \le d}$ is symmetric and positive definite, typically associated with the coordinates $(t, x^i)$. For simplicity, we denote $\|{\bm v}\|_{\mathbf{I}} =: \|{\bm v}\|$, where $\mathbf{I}$ is the identity matrix of size $d$.

In addition to ensuring the positivity of density and pressure (internal energy), as in the non-relativistic case, another physical constraint is that the fluid velocity must remain subluminal. The physically admissible state set of $\mathbf{U}$ is defined by
\begin{align}\label{adm state 1}
\mathcal{G}^{(0)} = \left\{\mathbf{U} = (D,\,{\bm m},\,E)^{\top}\,:\, D>0,\,p(\mathbf{U})>0,\,\|{\bm v}(\mathbf{U})\|_{\mathbf{X}}<1\right\}.
\end{align}
Preserving the conserved variables within the admissible state set is necessary for maintaining the hyperbolicity of the system. This ensures the existence of $(d+2)$ real eigenvalues for the Jacobian matrix $\partial(n_i\mathbf{G}^i)/\partial \mathbf{U}$ \cite{F2008}:
\begin{align}\label{G-eigenvalues}
\lambda_{\bf n}^{(0)} &= \frac{1}{1 - \|{\bm v}\|_{\mathbf{X}}^2c_s^2}\left(n_iv^i(1-c_s^2) - c_s\gamma^{-1}\sqrt{(1 - \|{\bm v}\|_{\mathbf{X}}^2c_s^2)(n_i n^i) - \left(1 - c_s^2\right)(n_iv^i)^2}\right) - \frac{n_i\beta^i}{\alpha},\\
\lambda_{\bf n}^{(1)} &= \ldots = \lambda_{\bf n}^{(d)} = n_iv^i - \frac{n_i\beta^i}{\alpha},\\
\lambda_{\bf n}^{(d+1)} &= \frac{1}{1 - \|{\bm v}\|_{\mathbf{X}}^2c_s^2}\left(n_iv^i(1-c_s^2) + c_s\gamma^{-1}\sqrt{(1 - \|{\bm v}\|_{\mathbf{X}}^2c_s^2)(n_i n^i) - \left(1 - c_s^2\right)(n_iv^i)^2}\right) - \frac{n_i\beta^i}{\alpha},\label{G-eigenvalues2}
\end{align}
where ${\bf n} = (n_1,\,\ldots,\,n_d)$ is any non-zero vector, and the local sound speed for the ideal equation of state is defined by $c_s = \sqrt{\frac{\Gamma p}{\rho h}}$. 

Due to the strong nonlinearity, $p(\mathbf{U})$ and ${\bm v}(\mathbf{U})$ in the last two constraints in \eqref{adm state 1} are implicit functions of $\mathbf{U}$ without explicit formulas. Consequently, verifying these constraints for the solutions of a numerical scheme is challenging. This complexity makes it very difficult to study the properties of $\mathcal{G}^{(0)}$ and to develop PCP schemes for \eqref{eq: form2} that ensure the numerical solution remains within $\mathcal{G}^{(0)}$. Our previous work \cite{W2017} discovered an equivalent but much simpler representation of $\mathcal{G}^{(0)}$, which is explicitly expressed in terms of $\mathbf{U}$:
\begin{align}\label{adm state 2}
\mathcal{G}_{\chi}^{(1)} = \left\{\mathbf{U} = (D,\,{\bm m},\,E)^{\top}\,:\,D>0,\,q_{\chi}(\mathbf{U})  = E - \sqrt{D^2 + {\bm m}\mathbf{X}{\bm m}^{\top}}>0\right\},
\end{align}
From the equivalent representation $\mathcal{G}_{\chi}^{(1)}$ of $\mathcal{G}^{(0)}$, one can discover that the admissible state set \eqref{adm state 2} is spacetime-dependent and varies across different points in a curved spacetime. Specifically, if ${\bf X} \neq \hat{\bf X}$, then $\mathcal{G}_{\chi}^{(1)} \neq \mathcal{G}_{\hat{\chi}}^{(1)}$. Consequently, the basic convexity of $\mathcal{G}_{\chi}^{(1)}$, which is typically required by existing PCP frameworks, cannot be utilized when studying PCP schemes for GRHD. This is because convexity techniques are not applicable between inequivalent admissible state sets. 
To address this challenge, we follow our previous work \cite{W2017} and introduce a linear operator in the local spacetime that maps elements of $\mathcal{G}_{\chi}^{(1)}$ to a spacetime-independent set $\mathcal{G}^{(2)}$, given by
\begin{align}
\mathbf{W} = \sqrt{\chi}\mathbf{L}\mathbf{U} \in \mathcal{G}^{(2)},\qquad \forall \,\mathbf{U}\in \mathcal{G}_{\chi}^{(1)},
\end{align}
where
\begin{align}\label{adm state 3}
\mathcal{G}^{(2)}  = \left\{\mathbf{W} = (W_0,\,\ldots,\,W_{d+1})^{\top}\,:\,W_0>0,\,q(\mathbf{W}) = W_{d+1} - \left(\sum_{i = 0}^dW_i^2\right)^{1/2}>0\right\},
\end{align}
is a spacetime-independent admissible state set expressed in terms of $\mathbf{W}$. Here, the square matrix $\mathbf{L} = \mathrm{diag}\{1,\,\mathbf{\Theta},\,1\}$, and $\mathbf{\Theta}$ is the Cholesky decomposition of the matrix $\mathbf{X}$, satisfying $\mathbf{\Theta}^{\top}\mathbf{\Theta} = \mathbf{X}$; see \cite{W2017} for more details.

The spacetime-independent admissible state set $\mathcal{G}^{(2)}$ has the following properties. 

\begin{lemma}[Convexity \cite{W2017}]\label{lem1}
The function $q(\mathbf{W})$ defined in \eqref{adm state 3} is concave with respect to $\mathbf{W}$. The admissible set $\mathcal{G}^{(2)}$ is an open convex set. 
\end{lemma}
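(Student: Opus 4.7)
The plan is to reduce everything to one observation: the function $q$ is an affine function minus the standard Euclidean norm, so classical convex analysis delivers both conclusions cleanly. First I would rewrite
\begin{equation*}
q(\mathbf{W}) = W_{d+1} - \|\mathbf{W}_\ast\|_2, \qquad \mathbf{W}_\ast := (W_0,W_1,\ldots,W_d),
\end{equation*}
which isolates the only nonlinear piece as the Euclidean norm of the projection $\mathbf{W} \mapsto \mathbf{W}_\ast$.

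Next I would establish concavity of $q$. The Euclidean norm is convex by the triangle inequality, so for any $\mathbf{W}, \widetilde{\mathbf{W}}$ and any $\theta\in[0,1]$,
\begin{equation*}
\|\theta \mathbf{W}_\ast + (1-\theta)\widetilde{\mathbf{W}}_\ast\|_2 \le \theta\|\mathbf{W}_\ast\|_2 + (1-\theta)\|\widetilde{\mathbf{W}}_\ast\|_2.
\end{equation*}
Since $\mathbf{W}\mapsto \mathbf{W}_\ast$ is linear and $W_{d+1}$ is an affine (hence both concave and convex) function of $\mathbf{W}$, composing and summing yields
\begin{equation*}
q(\theta\mathbf{W} + (1-\theta)\widetilde{\mathbf{W}}) \ge \theta q(\mathbf{W}) + (1-\theta) q(\widetilde{\mathbf{W}}),
\end{equation*}
so $q$ is concave on $\mathbb{R}^{d+2}$.

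With concavity in hand, convexity of $\mathcal{G}^{(2)}$ follows by the standard fact that strict superlevel sets of concave functions are convex: if $q(\mathbf{W})>0$ and $q(\widetilde{\mathbf{W}})>0$, concavity immediately gives $q(\theta\mathbf{W}+(1-\theta)\widetilde{\mathbf{W}})>0$. The half-space $\{W_0>0\}$ is trivially convex, and $\mathcal{G}^{(2)}$ is their intersection, hence convex. Openness is immediate from the continuity of $W_0$ and $q$: both defining inequalities are strict, so small perturbations of any $\mathbf{W}\in\mathcal{G}^{(2)}$ remain in the set.

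Honestly, there is no substantive obstacle here; the whole argument is a two-line application of convex analysis once $q$ is recognized as \emph{affine minus norm}. The only points worth stating carefully are (i) the appeal to the triangle inequality to get convexity of the norm, and (ii) the standard superlevel-set lemma for concave functions; both are elementary and can be inlined in a few sentences without any computation.
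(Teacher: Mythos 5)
Your argument is correct, and it is essentially the standard proof: the paper itself does not prove Lemma~\ref{lem1} but cites \cite{W2017}, where the result is established by the same elementary observation that $q(\mathbf{W})$ is an affine function minus a Euclidean norm (hence concave by the triangle inequality), so that $\mathcal{G}^{(2)}$ is the intersection of the open half-space $\{W_0>0\}$ with a strict superlevel set of a concave function, which is open and convex. Nothing further is needed.
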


Next, we introduce an equivalent form of the GRHD equations, called the W-form \cite{W2017}, by multiplying Eq.~\eqref{eq: form2} by the invertible matrix $\mathbf{L}$ from the left. This transformation ensures that the admissible evolved variables in the W-form exactly match the set $\mathcal{G}^{(2)}$:
\begin{align}\label{eq: form3}
\frac{\partial \mathbf{W}}{\partial t} + \frac{\partial \mathbf{H}^i(\mathbf{W})}{\partial x^i} = \mathbf{S}(\mathbf{W}),
\end{align}
where
\begin{align}
&\mathbf{W} = \sqrt{\chi}(D,\,{\bm m}\mathbf{\Theta}^{\top},\,E)^{\top},\label{GRHDcon}\\
&\mathbf{H}^i(\mathbf{W}) = \sqrt{-g}\mathbf{L}\mathbf{G}^i, \qquad i = 1,\,\ldots,\,d,\\
&\mathbf{S}(\mathbf{W}) = \sqrt{\chi}\frac{\partial \mathbf{L}}{\partial t}\mathbf{U} + \sqrt{-g}\left(\mathbf{L}\mathbf{R} + \frac{\partial \mathbf{L}}{\partial x^i}\mathbf{G}^i\right).
\end{align}
The spectral radius of the Jacobian matrix $\partial(n_i\mathbf{H}^i)/\partial \mathbf{W}$ for any non-zero vector ${\bf n} = (n_1,\,\ldots,\,n_d)$ is defined by
\begin{align}\label{spectral radius}
\eta_{\bf n} = \frac{\alpha}{1 - \|{\bm v}\|_{\mathbf{X}}^2c_s^2}\left\{|n_iv^i|(1-c_s^2) + c_s\gamma^{-1}\sqrt{(1-\|{\bm v}\|_{\mathbf{X}}^2c_s^2)(n_i n^i) - (1-c_s^2)(n_iv^i)^2}\right\} + |n_i\beta^i|.
\end{align}
In flat spacetime, the line element is given by
\[
\mathrm{d}s^2 = -\mathrm{d}t^2 + (\mathrm{d}x^1)^2 + \cdots + (\mathrm{d}x^d)^2,
\]
where the spacetime metric $g_{\mu\nu} = \mathrm{diag}\{-1, 1, \ldots, 1\}$ is the Minkowski tensor. In this case, the $(3+1)$ ADM formulation and the W-form of the special RHD equations coincide, both reducing to a system of conservation laws:
\begin{align}\label{eq:form4}
\frac{\partial \mathbf{U}}{\partial t} + \frac{\partial \mathbf{F}_i(\mathbf{U})}{\partial x^i} = \mathbf{0},
\end{align}
where the conservative vector and flux vectors are defined as
\begin{align}
\mathbf{U} &= (D,\,m_1,\,\ldots,\,m_d,\,E)^{\top},\label{con}\\
\mathbf{F}_i &= (Dv_i,\,m_1v_i + pe_{1i},\,\ldots,\,m_dv_i + pe_{di},\,m_iv_i)^{\top}, \qquad i = 1,\,\ldots,\,d,\label{flux}
\end{align}
with $e_{ij}$ denoting the element in the $i$-th row and $j$-th column of the identity matrix of size $d$.

\section{Robust PCP-OEDG Schemes for GRHD Equations in W-Form}\label{sec3}
In this section, we introduce the high-order PCP-OEDG method for solving the GRHD system in the W-form:
\begin{align}\label{Wform}
\frac{\partial \mathbf{W}}{\partial t} + \nabla \cdot \mathbf{H}(\mathbf{W}) = \mathbf{S}(\mathbf{W}),
\end{align}
where $\mathbf{H} = (\mathbf{H}^1, \ldots, \mathbf{H}^d)$ denotes the fluxes. Without loss of generality, we consider numerical methods for a $d$-dimensional spatial domain $\Omega$, where $d = 1, 2, \text{or } 3$. The spatial coordinate vector in the computational domain $\Omega$ is represented by $\mathbf{x} = (x^1, \ldots, x^d)$.

\subsection{Semi-Discrete DG Formulation}\label{subsec3-1}
Let $\mathcal{T}_h$ represent a partition of $\Omega$ into a finite number of cells, where cells are polygons for $d = 2$ and polyhedra for $d = 3$, with disjoint interiors satisfying $\overline{\Omega} = \cup_{K \in \mathcal{T}_h} K$. We denote by $\mathbf{n}_{\mathcal{E}} = \left(n_1, n_2, \ldots, n_d\right)$ the outward unit normal vector on the cell interface $\mathcal{E}$. For any subset $D \subseteq \Omega$, the notation $|D|$ denotes the $d$-dimensional Lebesgue measure of $D$.

The $\mathbb{P}^m$-based discontinuous finite element space on the mesh $\mathcal{T}_h$ is defined as
\begin{align*}
V_h^m := \left\{v(\mathbf{x}) \in L^2(\Omega)\,:\,v(\mathbf{x})|_K \in \mathbb{P}^m(K),\,\forall \, K \in \mathcal{T}_h\right\},
\end{align*}
where $\mathbb{P}^m(K)$ represents the space of polynomials on cell $K$ of degree at most $m$, specifically:
\begin{align*}
\mathbb{P}^m(K) = \text{span}\left\{ \varphi_K^{(\mathbf{a})}(\mathbf{x}),~  |\mathbf{a}| \le m \right\}
\end{align*}
with $\{\varphi_K^{(\mathbf{a})}(\mathbf{x})\}$ denoting the local orthogonal basis functions on $K$. 
Here, 
the vector $\mathbf{a} = (a_1,\,\ldots,\,a_d)$ is a multi-index with $a_j$ denoting the degree of $\varphi_K^{(\mathbf{a})}(\mathbf{x})$ in the variable $x^j$, and $|\mathbf{a}| = \sum_{j=1}^d a_j$ denotes the total degree. 
In particular, we set $\varphi_K^{(\mathbf{0})}(\mathbf{x}) \equiv 1$ for all $\mathbf{x} \in K$. The dimensionality of $\mathbb{P}^m(K)$ is denoted by $N$. 

Multiplying equation \eqref{Wform} by a test function $v_h(\mathbf{x}) \in V_h^m$, integrating by parts over each cell $K$, and approximating the exact solution $\mathbf{W}$ by $\mathbf{W}_h \in [V_h^m]^{d+2}$, we obtain the semi-discrete DG formulation:
\begin{align}\label{DG form1}
\int_K \frac{\partial \mathbf{W}_h(t,\mathbf{x})}{\partial t} v_h(\mathbf{x}) \, \mathrm{d}\mathbf{x} &= \int_K \mathbf{S}(\mathbf{W}_h(t,\mathbf{x})) v_h(\mathbf{x}) \, \mathrm{d}\mathbf{x} + \int_K \mathbf{H}(\mathbf{W}_h(t,\mathbf{x})) \cdot \nabla v_h(\mathbf{x}) \, \mathrm{d}\mathbf{x} \notag \\
&\quad - \sum_{\mathcal{E} \in \partial K} \int_{\mathcal{E}} \widehat{\mathbf{H}}(\mathbf{W}_h(t,\mathbf{x}); \mathbf{n}_{\mathcal{E}}) v_h(\mathbf{x}) \, \mathrm{d}s, \quad \forall v_h \in V_h^m,
\end{align}
where $\widehat{\mathbf{H}}(\mathbf{W}_h(t,\mathbf{x}); \mathbf{n}_{\mathcal{E}})$ represents a suitable numerical flux, often taken as
\[
\widehat{\mathbf{H}}(\mathbf{W}_h^{K^-}, \mathbf{W}_h^{K^+}; \mathbf{n}_{\mathcal{E}}),
\]
with $\mathbf{W}_h^{K^-}$ and $\mathbf{W}_h^{K^+}$ denoting the values of $\mathbf{W}_h$ on either side of the interface $\mathcal{E}$ from the interior and exterior of $K$, respectively. For clarity, we may omit the time dependence in subsequent expressions when it does not lead to confusion. In this work, we utilize the HLL flux:
\begin{align}\label{HLL flux}
\widehat{\mathbf{H}}^{\text{HLL}}(\mathbf{W}^-, \mathbf{W}^+; \mathbf{n}_{\mathcal{E}}) := \frac{ \left( s^+ \mathbf{H}(\mathbf{W}^-) - s^- \mathbf{H}(\mathbf{W}^+) \right) \cdot \mathbf{n}_{\mathcal{E}} + s^+ s^- (\mathbf{W}^+ - \mathbf{W}^-) }{s^+ - s^-},
\end{align}
with
\begin{align}
s^-(\mathbf{W}^-, \mathbf{W}^+; \mathbf{n}_{\mathcal{E}}) &= \alpha\min\left\{\lambda_{\mathbf{n}_{\mathcal{E}}}^{(0)}(\mathbf{W}^-),  \lambda_{\mathbf{n}_{\mathcal{E}}}^{(0)}(\mathbf{W}^+), 0 \right\}, \label{s-}\\
s^+(\mathbf{W}^-, \mathbf{W}^+; \mathbf{n}_{\mathcal{E}}) &= \alpha\max\left\{ \lambda_{\mathbf{n}_{\mathcal{E}}}^{(d+1)}(\mathbf{W}^-), \lambda_{\mathbf{n}_{\mathcal{E}}}^{(d+1)}(\mathbf{W}^+), 0 \right\},\label{s+}
\end{align}
where $\alpha$ denotes the lapse function, and $\lambda_{\mathbf{n}_{\mathcal{E}}}^{(0)}$ and $\lambda_{\mathbf{n}_{\mathcal{E}}}^{(d+1)}$ represent the smallest and largest eigenvalues of the Jacobian matrix $\frac{\partial \mathbf{G}\cdot{\mathbf{n}_{\mathcal{E}}}}{\partial \mathbf{U}}$ in the direction of $\mathbf{n}_{\mathcal{E}}$, given by \eqref{G-eigenvalues} and \eqref{G-eigenvalues2}, respectively.


Let $\mathbf{W}_h(t,\mathbf{x}) = \left(W_{h}^{(0)}, W_{h}^{(1)}, \ldots, W_{h}^{(d+1)}\right)^{\top}$ represent the DG solution. Let $\boldsymbol{\phi}^{(q)}_K(\mathbf{x}) \in \mathbb{R}^{N_q \times 1}$ be the column vector formed by the basis functions $\left\{{\varphi}_K^{(\mathbf{a})}(\mathbf{x}) : |\mathbf{a}| = q\right\}$ of total degree $q$, where $N_q$ is the cardinality of the index set $\{\mathbf{a} : |\mathbf{a}| = q\}$. Each component $W_{h}^{(r)}(t, \mathbf{x}) \in V_h^m$ can be expanded as
\begin{align*}
W_{h}^{(r)}(t, \mathbf{x}) = \sum_{q = 0}^m \mathbf{w}_{r}^{(q)}(t)\boldsymbol{\phi}_K^{(q)}(\mathbf{x}), \quad \forall \mathbf{x} \in K,\quad r = 0, \ldots, d+1,
\end{align*}
where $\mathbf{w}_{r}^{(q)}(t)$ is a row vector of unknown modal coefficients corresponding to the basis vector $\boldsymbol{\phi}_K^{(q)}(\mathbf{x})$. 
Thus, the approximate solution $\mathbf{W}_h \in [V_h^m]^{d+2}$ can be expressed in matrix form: 
\begin{align}\label{matrix form}
\mathbf{W}_h(t,\mathbf{x}) = \sum_{q = 0}^m \mathbf{\mathcal{W}}^{(q)}_K(t)\boldsymbol{\phi}_K^{(q)}(\mathbf{x}) = \mathbf{\mathcal{W}}_K(t)\mathbf{\Phi}_K(\mathbf{x}), \quad \forall \mathbf{x} \in K,
\end{align}
where the modal coefficient matrix $\mathbf{\mathcal{W}}_K(t)$ and the basis matrix $\mathbf{\Phi}_K(\mathbf{x})$ are defined as
\begin{align}\label{matrix}
\mathbf{\mathcal{W}}^{(q)}_K(t) = 
\begin{bmatrix}
\mathbf{w}_{0}^{(q)}(t) \\
\mathbf{w}_{1}^{(q)}(t) \\
\vdots \\
\mathbf{w}_{d+1}^{(q)}(t)
\end{bmatrix}_{(d+2) \times N_q}, \quad 
\mathbf{\mathcal{W}}_K(t) = 
\begin{bmatrix}
\mathbf{w}_{0}^{(0)}(t) & \mathbf{w}_{0}^{(1)}(t) & \cdots & \mathbf{w}_{0}^{(m)}(t) \\
\mathbf{w}_{1}^{(0)}(t) & \mathbf{w}_{1}^{(1)}(t) & \cdots & \mathbf{w}_{1}^{(m)}(t) \\
\vdots & \vdots & \vdots & \vdots \\
\mathbf{w}_{d+1}^{(0)}(t) & \mathbf{w}_{d+1}^{(1)}(t) & \cdots & \mathbf{w}_{d+1}^{(m)}(t)
\end{bmatrix}_{(d+2) \times N}, \quad
\mathbf{\Phi}_K(\mathbf{x}) =
\begin{bmatrix}
\boldsymbol{\phi}_K^{(0)}(\mathbf{x}) \\
\boldsymbol{\phi}_K^{(1)}(\mathbf{x}) \\
\vdots \\
\boldsymbol{\phi}_K^{(m)}(\mathbf{x})
\end{bmatrix}_{N \times 1}.
\end{align}
Given the arbitrariness of $v_h(\mathbf{x}) \in V_h^m$, the semi-discrete DG scheme \eqref{DG form1} can be written as
\begin{align}\label{DG form2}
\int_K \frac{\partial \mathbf{W}_h(t,\mathbf{x})}{\partial t} \mathbf{\Phi}_K^{\top}(\mathbf{x}) \, \mathrm{d}\mathbf{x} 
= & \int_K \mathbf{S}(\mathbf{W}_h(t,\mathbf{x})) \mathbf{\Phi}_K^{\top}(\mathbf{x}) \, \mathrm{d}\mathbf{x} 
+ \int_K \mathbf{H}(\mathbf{W}_h(t,\mathbf{x})) \nabla \mathbf{\Phi}_K^{\top}(\mathbf{x}) \, \mathrm{d}\mathbf{x} \notag \\
& - \sum_{\mathcal{E} \in \partial K} \int_{\mathcal{E}} \widehat{\mathbf{H}}^{\text{HLL}}(\mathbf{W}_h^-, \mathbf{W}_h^+; \mathbf{n}_{\mathcal{E}}) \mathbf{\Phi}_K^{\top}(\mathbf{x}) \, \mathrm{d}s.
\end{align}
Substituting \eqref{matrix form} into \eqref{DG form2} and applying suitable quadrature rules to approximate the flux and source terms, we obtain the matrix formulation:
\begin{align}\label{DG form3}
\frac{\mathrm{d} \mathbf{\mathcal{W}}_K(t)}{\mathrm{d} t} = \left( \mathbf{A}_K^S + \mathbf{A}_K^H - \mathbf{A}_K^{\widehat{H}} \right) \mathbf{A}_K^{-1}, \quad \forall K \in \mathcal{T}_h,
\end{align}
where the mass matrix $\mathbf{A}_K \in \mathbb R^{N\times N}$ and the matrices $\mathbf{A}_K^S\in \mathbb R^{(d+2)\times N}$, $\mathbf{A}_K^H\in \mathbb R^{(d+2)\times N}$, and $\mathbf{A}_K^{\widehat{H}}\in \mathbb R^{(d+2)\times N}$ are defined as follows:
\begin{align}
{\bf A}_K & := \int_{K}\mathbf{\Phi}_K(\mathbf{x}) \mathbf{\Phi}_K^{\top}(\mathbf{x})\mathrm{d}\mathbf{x} 
= {\rm diag} \left\{ \int_{K} \left( {\varphi}_K^{(\mathbf{a})}(\mathbf{x})\right)^2 d \mathbf{x} : |\mathbf{a}| \le m   \right \} , \label{mass}\\
{\bf A}_K^S & :=  |K| \sum_{p =1}^{Q_K}{\omega}_{K,p}\mathbf{S}(\mathcal{W}_K(t)\mathbf{\Phi}_K({\mathbf{x}}_{K,p})) \mathbf{\Phi}_K^{\top}({\mathbf{x}}_{K,p}) \approx \int_{K}\mathbf{S}(\mathbf{W}_h(t,\mathbf{x})) \, \mathbf{\Phi}_K^{\top}(\mathbf{x})\,\mathrm{d}\mathbf{x},\notag\\
{\bf A}_K^H & := |K| \sum_{p =1}^{Q_K}{\omega}_{K,p}\mathbf{H}(\mathcal{W}_K(t)\mathbf{\Phi}_K({\mathbf{x}}_{K,p}))\nabla \mathbf{\Phi}_K^{\top}({\mathbf{x}}_{K,p})  \approx \int_{K} \mathbf{H}(\mathbf{W}_h(t,\mathbf{x})) \nabla \mathbf{\Phi}_K^{\top}(\mathbf{x})\mathrm{d}\mathbf{x} ,\notag\\
{\bf A}_K^{\widehat H} & :=  \sum_{\mathcal{E} \in \partial K  \atop  K' \cap K = \mathcal{E}} \sum_{p =1}^{Q_\mathcal{E}}|\mathcal{E}|{\omega}_{\mathcal{E},p}\widehat{\mathbf{H}}^{HLL}(\mathcal{W}_{K}(t)\mathbf{\Phi}_K({\mathbf{x}}_{\mathcal{E},p}),\mathcal{W}_{K'}(t)\mathbf{\Phi}_{K'} ( {\mathbf{x}}_{\mathcal{E},p});\mathbf{n}_{\mathcal{E}})\mathbf{\Phi}_K^{\top}({\mathbf{x}}_{\mathcal{E},p}) \notag\\
&\,\,\approx \sum_{{\mathcal{E}} \in \partial K}\int_{\mathcal{E}} \widehat{\mathbf{H}}^{HLL}(\mathbf{W}_h^{K^-}, \mathbf{W}_h^{K^+};\mathbf{n}_{\mathcal{E}})\mathbf{\Phi}_K^{\top}\mathrm{d}s.\notag
\end{align}
Here, $|\mathcal{E}|$ denotes the $(d-1)$-dimensional measure of the interface $\mathcal{E}$, $\{\mathbf{x}_{K,p}\}_{p=1}^{Q_K}$ and $\{\mathbf{x}_{\mathcal{E},p}\}_{p=1}^{Q_{\mathcal{E}}}$ are the quadrature nodes on $K$ and $\mathcal{E}$, respectively, and $\{\omega_{K,p}\}_{p=1}^{Q_K}$ and $\{\omega_{\mathcal{E},p}\}_{p=1}^{Q_{\mathcal{E}}}$ are the corresponding quadrature weights.

\subsection{Fully-Discrete OEDG Method}\label{subsec3-2}
For simplicity, the time interval $[0,T]$ is divided into non-uniform time steps at mesh points $\{t_n\}_{n=0}^M$ such that: (i) $t_0 = 0$; (ii) $t_{n+1} = t_n + \Delta t_n$; and (iii) $t_M = T$. The time step size $\Delta t_n$ is determined according to a suitable CFL condition to ensure stability.

Let $\Lambda_K = \{ K \} \cup \{ K' : K' \cap K = \mathcal{E} \in \partial K \}$ denote the set consisting of cell $K$ and its adjacent cells that share a common interface with $K$. Due to the compactness property of DG schemes, the right-hand side of \eqref{DG form3} depends only on the information from cell $K$ and its adjacent cells. To make this dependence explicit, we define the operator
$$
\mathcal{L}_K\left(  \left\{ \mathbf{\mathcal{W}}_{K'} \right\}_{K' \in \Lambda_K}  \right) := 
\left( \mathbf{A}_K^S + \mathbf{A}_K^H - \mathbf{A}_K^{\widehat{H}}\right) \mathbf{A}_K^{-1}.
$$
Thus, the semi-discrete system \eqref{DG form2} can be regarded as a system of ordinary differential equations (ODEs):
\begin{align*}
\frac{\mathrm{d}\mathbf{\mathcal{W}}_K}{\mathrm{d}t} = \mathcal{L}_K \left(  \left\{ \mathbf{\mathcal{W}}_{K'} \right\}_{K' \in \Lambda_K}  \right), \quad \forall K \in \mathcal{T}_h.
\end{align*}
This ODE system can be discretized in time using explicit RK methods. When using conventional RKDG methods, numerical solutions of the GRHD system may suffer from spurious oscillations near discontinuities. To address this, following our recent work \cite{PSW2024}, we apply an OE procedure to suppress spurious oscillations. The OEDG method alternates between evolving the semi-discrete DG scheme and solving a damping equation:
\begin{align}\label{split}
\frac{\mathrm{d}\mathbf{\mathcal{W}}_K}{\mathrm{d}t} = \mathcal{L}_K \left(  \left\{ \mathbf{\mathcal{W}}_{K'} \right\}_{K' \in \Lambda_K}  \right), \qquad
\frac{\mathrm{d}\mathbf{\mathcal{W}}_{\sigma,K}}{\mathrm{d}t} = -\mathbf{\mathcal{W}}_{\sigma,K} \mathbf{D} \left(  \left\{ \mathbf{\mathcal{W}}_{K'} \right\}_{K' \in \Lambda_K}  \right),
\end{align}
where the damping operator $\mathbf{D}(\mathbf{\mathcal{W}})$ is a diagonal matrix of the form
\begin{align*}
\mathbf{D}(\mathbf{\mathcal{W}}) = \text{diag} \left\{ d_0(\mathbf{\mathcal{W}}), d_1(\mathbf{\mathcal{W}}) \mathbf{I}_{N_1}, \dots, d_m(\mathbf{\mathcal{W}}) \mathbf{I}_{N_m} \right\},
\end{align*}
with $\mathbf{I}_{N_q}$ being the identity matrix of size $N_q$, and damping coefficients $\{d_j\}_{1 \leq j \leq m}$ to be defined later. 
The second ODE system in \eqref{split} serves as a damping mechanism, evolving the DG solution to an essentially oscillation-free one. Since the damping operator $\mathbf{D} \left(  \left\{ \mathbf{\mathcal{W}}_{K'} \right\}_{K' \in \Lambda_K}  \right)$ is independent of $\mathbf{\mathcal{W}}_{\sigma}$, this ODE system is linear and can be solved exactly using an exponential operator:
\begin{align}\label{OEsolu}
\mathbf{\mathcal{W}}_{\sigma,K}(\tau) = \mathbf{\mathcal{W}}_{\sigma,K}(0) \exp \left( - \tau \mathbf{D}\left(  \left\{ \mathbf{\mathcal{W}}_{K'} \right\}_{K' \in \Lambda_K}  \right) \right), \quad \tau \geq 0.
\end{align}
The OEDG method retains the high-order accuracy of the original DG method, as the damping $\mathbf{\mathcal{W}}_{\sigma,K} \mathbf{D} \left(  \left\{ \mathbf{\mathcal{W}}_{K'} \right\}_{K' \in \Lambda_K}  \right)$ is a high-order term for smooth solutions. 
This will be confirmed by our numerical experiments.

The fully-discrete OEDG method, integrating the OE procedure after each RK stage, is as follows:
\begin{itemize}
    \item Initialize $\mathbf{\mathcal{W}}_{\sigma,K}^{n,0} = \mathbf{\mathcal{W}}_{\sigma,K}^n$.
    \item For $\ell = 0, 1, \dots, k-1$, compute the intermediate modal coefficient matrices:
    \begin{itemize}
        \item {\bf RK update:}
        \begin{align*}
        \mathbf{\mathcal{W}}_K^{n,\ell+1} = \sum_{0 \leq s \leq \ell} \left( c_{\ell s} \mathbf{\mathcal{W}}_{\sigma,K}^{n,s} + \Delta t_n d_{\ell s} \mathcal{L}_K \left(  \left\{ \mathbf{\mathcal{W}}_{\sigma,K'}^{n,s} \right\}_{K' \in \Lambda_K}  \right) \right);
        \end{align*}
        \item {\bf OE procedure:}
        \begin{align}\label{OEpro}
        \mathbf{\mathcal{W}}_{\sigma,K}^{n,\ell+1} = \mathbf{\mathcal{W}}_K^{n,\ell+1} \exp \left( - \Delta t_n \mathbf{D} \left(  \left\{ \mathbf{\mathcal{W}}_{K'}^{n,\ell+1} \right\}_{K' \in \Lambda_K}  \right) \right);
        \end{align}
    \end{itemize}
    \item Set $\mathbf{\mathcal{W}}_{\sigma,K}^{n+1} = \mathbf{\mathcal{W}}_{\sigma,K}^{n,k}$.
\end{itemize}
Here, $c_{\ell s}$ and $d_{\ell s}$ are the RK coefficients, with $\sum_{0 \leq s \leq \ell} c_{\ell s} = 1$. The matrix $\mathbf{\mathcal{W}}_K^n$ represents the modal coefficient matrix of the numerical solution $\mathbf{W}_h^n$ on cell $K$ at time $t_n$. Similarly, $\mathbf{\mathcal{W}}_{\sigma,K}^{n,\ell+1}$ is the modal coefficient matrix after $\ell$ RK stages and OE procedures.

In fact, the OE procedure \eqref{OEpro} is the solution to the damping equation \eqref{split}, with $\tau = \Delta t_n$ and initial data $\mathbf{\mathcal{W}}_{\sigma,K}(0) = \mathbf{\mathcal{W}}^{n,\ell+1}_K$. It can be interpreted as a nonlinear filter. The OEDG method, originally proposed in \cite{PSW2024}, is the first to bridge damping and filtering techniques by decoupling the damping equations from the nonlinear DG schemes. This approach was motivated by the DG-coupled damping technique developed by Lu, Liu, and Shu in \cite{LLS2021,LLS2022}.

In \cite{PSW2024}, the OE procedure was originally derived by solving the following problem, using the original RKDG solution $\mathbf{W}^{n,\ell+1}_h$ after the $\ell$-th RK stage as the initial value:
\begin{align}\label{Ini}
\begin{cases}
\displaystyle\frac{\mathrm{d}}{\mathrm{d}\tau}\int_K\mathbf{W}_{\sigma}v_h\,\mathrm{d}\mathbf{x} + \sum_{r=0}^m\delta^r_K(\mathbf{W}^{n,\ell+1}_h)\int_K(\mathbf{W}_{\sigma} - P_h^{r-1}\mathbf{W}_{\sigma})v_h\,\mathrm{d}\mathbf{x} = 0,\quad \forall v_h \in V_h^m,\\
\mathbf{W}_{\sigma}(0,\mathbf{x}) = \mathbf{W}^{n,\ell+1}_h(\mathbf{x}),
\end{cases}
\end{align}
where the damping coefficients $\delta^r_K(\mathbf{W}^{n,\ell+1}_h)$ will be defined later. Here, $P_h^r$ is the standard $L^2$ projection operator onto $V_h^r$ for $r \geq 0$, and $P_h^{-1}$ is defined as $P_h^0$. Note that both $\mathbf{W}^{n,\ell+1}_h$ and $\mathbf{W}_{\sigma}$ belong to $[V_h^m]^{d+2}$, and they can be expressed on $K$ as
\begin{align}\label{com2}
\mathbf{W}_{\sigma}(\tau,\mathbf{x}) = \sum_{q=0}^m \mathbf{\mathcal{W}}^{(q)}_{\sigma,K}(\tau)\boldsymbol{\phi}_K^{(q)}(\mathbf{x}), \qquad \mathbf{W}^{n,\ell+1}_h(\mathbf{x}) = \mathbf{W}_{\sigma}(0,\mathbf{x}) = \sum_{q=0}^m \mathbf{\mathcal{W}}^{(q)}_{\sigma,K}(0)\boldsymbol{\phi}_K^{(q)}(\mathbf{x}).
\end{align}
By the orthogonality of the basis functions, we have
\begin{align}\label{com3}
\mathbf{W}_{\sigma} - P_h^{r-1}\mathbf{W}_{\sigma} = \sum_{q = \max\{1,r\}}^m \mathbf{\mathcal{W}}^{(q)}_{\sigma,K}(\tau)\boldsymbol{\phi}_K^{(q)}(\mathbf{x}).
\end{align}
Substituting \eqref{com2}--\eqref{com3} into \eqref{Ini}, the solution $\mathbf{W}_{\sigma}$ is obtained as
\begin{align}\label{OE}
\mathbf{W}^{n,\ell+1}_{\sigma} := \mathbf{W}_{\sigma}(\tau,\mathbf{x}) = \mathbf{\mathcal{W}}^{(0)}_{\sigma,K}(0)\boldsymbol{\phi}_K^{(0)}(\mathbf{x}) + \sum_{q=1}^m \exp\left(-\Delta t_n\sum_{r=0}^q\delta_K^r(\mathbf{W}^{n,\ell+1}_h)\right)\mathbf{\mathcal{W}}^{(q)}_{\sigma,K}(0)\boldsymbol{\phi}_K^{(q)}(\mathbf{x}).
\end{align}
The modal coefficient matrix of $\mathbf{W}^{n,\ell+1}_{\sigma}$ is  the matrix $\mathbf{\mathcal{W}}_{\sigma,K}^{n,\ell+1}$ in \eqref{OEpro} modified by the OE procedure. The damping coefficients are defined as
$$
d_0\left(  \left\{ \mathbf{\mathcal{W}}^{n,\ell+1}_{K'} \right\}_{K' \in \Lambda_K} \right) := 0, \qquad d_q\left(  \left\{ \mathbf{\mathcal{W}}^{n,\ell+1}_{K'} \right\}_{K' \in \Lambda_K} \right) := \sum_{r=0}^q\delta_K^r(\mathbf{W}^{n,\ell+1}_h), \quad 1 \leq q \leq m.
$$

The coefficient $\delta^r_K(\mathbf{W}^{n,\ell+1}_h)$, which depends on $\mathbf{W}^{n,\ell+1}_h$, is a crucial factor. It must be small in smooth regions to maintain accuracy, but sufficiently large near discontinuities to effectively suppress oscillations. A highly effective, scale-invariant, and evolution-invariant damping coefficient was proposed in \cite{PSW2024}:
\begin{align*}
\delta_K^r(\mathbf{W}^{n,\ell+1}_h) = \max_{0 \leq i \leq d+1} \left\{ \sum_{\mathcal{E} \in \partial K} \eta_{\mathbf{n}_{\mathcal{E}}}^K \frac{\sigma_{\mathcal{E},K}^r(W_h^{(i)})}{h_{\mathcal{E},K}} \right\},
\end{align*}
where $\eta_{\mathbf{n}_{\mathcal{E}}}^K$ is the spectral radius of the matrix $\sum_{i=1}^d n_i \frac{\partial \mathbf{H}_i}{\partial \mathbf{W}}(\overline{\mathbf{W}}_{K}^{n,\ell+1})$, with $n_i$ and $\mathbf{H}_i$ representing the $i$-th component of the outward unit normal vector $\mathbf{n}_{\mathcal{E}}$ and the flux vector $\mathbf{H}$, respectively. Here, $W_h^{(i)}$ is the $(i+1)$-th component of $\mathbf{W}^{n,\ell+1}_h$, and $\overline{\mathbf{W}}_{K}^{n,\ell+1}$ is the cell average of $\mathbf{W}_h^{n,\ell+1}({\bf x})$ over $K$. The quantity $h_{\mathcal{E},K} = \sup_{\mathbf{x} \in K} \mathrm{dist}(\mathbf{x}, \mathcal{E})$ is the maximum distance between points in $K$. 
The coefficient $\sigma_{\mathcal{E},K}^r(W_h^{(i)})$ is defined by
\begin{align*}
\sigma^r_{\mathcal{E},K}(W_h^{(i)}) =
\begin{cases}
0, & \text{if} \quad W_h^{(i)} \equiv \mathrm{avg}_\Omega(W_h^{(i)}), \\
\displaystyle \frac{(2r+1) h_{\mathcal{E},K}^r}{2(2m-1) r!} \frac{\frac{1}{|{\mathcal{E}}|} \int_{\mathcal{E}} \sqrt{\sum_{|\mathbf{a}|=r} [\![ \partial^{\mathbf{a}}W_h^{(i)} ]\!]^2} \, \mathrm{d}s}{\| W_h^{(i)} - \mathrm{avg}_\Omega(W_h^{(i)}) \|_{L^{\infty}(\Omega)}}, & \text{otherwise},
\end{cases}
\end{align*}
where $\mathrm{avg}_\Omega(v) = \frac{1}{|\Omega|} \int_\Omega v(\mathbf{x}) \, \mathrm{d}\mathbf{x}$ denotes the global average of $v$ over the entire domain $\Omega$, and $[\![\partial^{\mathbf{a}} W_h^{(i)}]\!]$ is the jump of $\partial^{\mathbf{a}} W_h^{(i)}$ across the cell interface $\mathcal{E}$, with 
$\partial^{\mathbf{a}} W_h^{(i)}(\mathbf{x}) = \frac{\partial^{|\mathbf{a}|} W_h^{(i)}(\mathbf{x})}{\partial (x^1)^{a_1} \cdots \partial (x^d)^{a_d}}.$

\begin{remark}
From \eqref{OE} and the fact that $d_0 \equiv 0$, we observe that the OE procedure only affects the higher-order moments of the DG solution. This implies that: (i) the cell average (zero moment) of the numerical solution remains unchanged during the OE procedure, preserving local mass conservation, and (ii) the OE procedure does not interfere with the DG spatial discretization or the RK time discretization, making it easily implementable in existing DG codes as an independent module. The OEDG method also has several desirable properties, including stability with normal time step sizes, scale invariance, evolution invariance, and high simplicity and efficiency, without requiring characteristic decomposition. More details on the OE procedure can be found in \cite{PSW2024}.
\end{remark}


\subsection{Physical-Constraint-Preserving Numerical Approach}\label{subsec3-3}

In this subsection, we design fully PCP, high-order DG schemes via the following three steps:
\begin{itemize}
    \item {\em Provable PCP Property of Cell Averages}: We first employ the GQL approach to rigorously prove that the high-order OEDG schemes, with a suitable HLL flux, satisfy the weak PCP property, i.e., preserve the PCP property of the updated cell averages, if the DG solution at the previous time step satisfies the physical constraints. This ensures that the cell averages of the evolved variables remain within the admissible state set.
    \item {\em PCP Limiter for Point Values}: Based on the weak PCP property, we then design a PCP limiter to enforce the PCP property of the OEDG solution polynomials at certain quadrature points. This limiter guarantees that the solution at these critical points remains in the admissible state set $\mathcal{G}^{(2)}$, thereby ensuring that the conserved variables $\mathbf{U}$ remain in $\mathcal{G}_{\chi}^{(1)}$. 
    \item {\em PCP Recovery of Primitive Variables}: The final step involves developing provably convergent, iterative algorithms for robustly recovering the corresponding primitive variables ${\bf Q}=(\rho, \bm{v}, p)^\top$. These algorithms ensure that the approximate primitive variables always satisfy the physical constraints throughout the iterations.
\end{itemize}
Consequently, the numerical solutions evolved by the OEDG method remain within the admissible state set and always respect the physical constraints throughout all the computational processes.

\subsubsection{Proof of PCP Property for Cell Averages}\label{sec:PCPave}

The admissible state set $\mathcal{G}_{\chi}^{(1)}$ in \eqref{adm state 2} or $\mathcal{G}^{(2)}$ in \eqref{adm state 3} involves 
nonlinear constraints, whose preservation is difficult to analyze in theory.  
To facilitate the proof of the PCP property for cell averages, we first introduce 
the GQL technique \cite{WS2023}, which transforms nonlinear constraints into a series of equivalent linear constraints.

\begin{lemma}[GQL representation]\label{GQL}
The admissible state set $\mathcal{G}_{\chi}^{(1)}$ as defined in \eqref{adm state 2} is equivalent to 
\begin{align}
\mathcal{G}_{\chi}^* = \left\{\mathbf{U} = (D,{\bm m},E)^{\top}\,: \, \mathbf{U}\cdot {\bm \xi}>0 \,\, \forall {\bm \xi} \in \Xi_*  \right\}
\end{align}
with the vector set 
\begin{align}\label{eq:DefXi}
\Xi_* := 
\{ {\bm \xi}_1\} \cup \left\{ 
{\bm \xi}_*= \left(-\sqrt{1 - {\bm v}_* \mathbf{X} {\bm v}_*^{\top}},\,-{\bm v}_*\mathbf{X} ,\,1\right)^{\top}\,:~{\bm v}_*\in \mathbb{R}^d,\,\|{\bm v}_*\|_{\mathbf{X}}< 1
\right\}, 
\end{align}
where ${\bm \xi}_1 = (1,\,0,\,\cdots,\,0)^\top$ is the first column of the identity matrix of size $(d+2)$, 
and 
${\bm v}_*$ is referred to as the extra free auxiliary variable in the GQL framework. Note that ${\bm \xi}$ is independent of $\bf U$, and all the constraints in the equivalent representation $\mathcal{G}_{\chi}^*$ become linear with respect to $\mathbf{U}$. 
\end{lemma}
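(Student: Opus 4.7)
The plan is to establish the set equality $\mathcal{G}_\chi^{(1)} = \mathcal{G}_\chi^*$ by proving both inclusions, with a weighted Cauchy--Schwarz inequality as the central tool. The linear constraint coming from $\bm{\xi}_1$ is trivial: $\mathbf{U}\cdot\bm{\xi}_1 = D$ reproduces the common constraint $D>0$ verbatim. For every admissible auxiliary vector $\bm{v}_*\in\mathbb{R}^d$ with $\|\bm{v}_*\|_{\mathbf{X}}<1$, a direct expansion using the symmetry of $\mathbf{X}$ gives
\begin{equation*}
\mathbf{U}\cdot\bm{\xi}_* = E - D\sqrt{1-\|\bm{v}_*\|_{\mathbf{X}}^2} - \bm{m}\mathbf{X}\bm{v}_*^{\top},
\end{equation*}
so the task reduces to showing, under $D>0$, the equivalence between the scalar inequality $q_\chi(\mathbf{U}) = E - \sqrt{D^2 + \bm{m}\mathbf{X}\bm{m}^{\top}} > 0$ and positivity of the right-hand side above for every admissible $\bm{v}_*$.

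For the forward inclusion $\mathcal{G}_\chi^{(1)}\subseteq\mathcal{G}_\chi^*$, I would endow $\mathbb{R}^{d+1}$ with the block-diagonal inner product $\langle\bm{x},\bm{y}\rangle_{\mathbf{M}} := \bm{x}^{\top}\mathbf{M}\bm{y}$ where $\mathbf{M}=\mathrm{diag}(1,\mathbf{X})$, and introduce the vectors $\bm{a} := (D,\bm{m})^{\top}$ and $\bm{b} := (\sqrt{1-\|\bm{v}_*\|_{\mathbf{X}}^2},\,\bm{v}_*)^{\top}$. Elementary computation yields $\langle\bm{b},\bm{b}\rangle_{\mathbf{M}} = 1$, $\langle\bm{a},\bm{a}\rangle_{\mathbf{M}} = D^2 + \bm{m}\mathbf{X}\bm{m}^{\top}$, and $\langle\bm{a},\bm{b}\rangle_{\mathbf{M}} = D\sqrt{1-\|\bm{v}_*\|_{\mathbf{X}}^2} + \bm{m}\mathbf{X}\bm{v}_*^{\top}$. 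The Cauchy--Schwarz inequality then provides $\langle\bm{a},\bm{b}\rangle_{\mathbf{M}} \le \sqrt{D^2+\bm{m}\mathbf{X}\bm{m}^{\top}} < E$ for every $\mathbf{U}\in\mathcal{G}_\chi^{(1)}$, which yields $\mathbf{U}\cdot\bm{\xi}_* > 0$ uniformly over admissible $\bm{v}_*$.

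For the converse $\mathcal{G}_\chi^*\subseteq\mathcal{G}_\chi^{(1)}$, I need to exhibit a single $\bm{v}_*$ whose linear inequality recovers $q_\chi(\mathbf{U})>0$. Guided by the equality case of Cauchy--Schwarz, I would force $\bm{b}$ to be parallel to $\bm{a}$: concretely, set $\bm{v}_* := c\,\bm{m}$ with $c := 1/\sqrt{D^2+\bm{m}\mathbf{X}\bm{m}^{\top}}$, which is well defined because $D>0$ is already available from $\bm{\xi}_1$. A short verification gives $\|\bm{v}_*\|_{\mathbf{X}}^2 = \bm{m}\mathbf{X}\bm{m}^{\top}/(D^2+\bm{m}\mathbf{X}\bm{m}^{\top}) < 1$, so the chosen $\bm{v}_*$ is admissible, and the consistency identity $cD = \sqrt{1-\|\bm{v}_*\|_{\mathbf{X}}^2}$ holds automatically. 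Substituting collapses $\mathbf{U}\cdot\bm{\xi}_*$ to $E - \sqrt{D^2+\bm{m}\mathbf{X}\bm{m}^{\top}}$, and the hypothesis $\mathbf{U}\cdot\bm{\xi}_*>0$ becomes exactly $q_\chi(\mathbf{U})>0$.

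The main obstacle is conceptual rather than computational: recognizing the correct weighted inner product that recasts the radical $\sqrt{D^2+\bm{m}\mathbf{X}\bm{m}^{\top}}$ as a Cauchy--Schwarz bound, and then verifying that the extremal auxiliary variable $\bm{v}_*$ lies strictly inside the open ball $\|\bm{v}_*\|_{\mathbf{X}}<1$, so that the supremum is genuinely attained within the parameter range defining $\Xi_*$. Once that geometric picture is in place, both inclusions reduce to direct algebraic verification.
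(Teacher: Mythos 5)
Your proposal is correct and follows essentially the same route as the paper: the forward inclusion is the same Cauchy--Schwarz bound (the paper writes it directly for the sum ${\bm v}_*\mathbf{X}{\bm m}^{\top} + D\sqrt{1-\|{\bm v}_*\|_{\mathbf{X}}^2}$, which is exactly your weighted inner product with $\mathbf{M}=\mathrm{diag}(1,\mathbf{X})$), and the converse uses the identical extremal choice ${\bm v}_* = {\bm m}/\sqrt{D^2+{\bm m}\mathbf{X}{\bm m}^{\top}}$. The only cosmetic difference is that you make the inner-product structure and the admissibility check $\|{\bm v}_*\|_{\mathbf{X}}<1$ explicit, which the paper leaves implicit.
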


\begin{proof}
To show the equivalence between $\mathcal{G}_{\chi}^{(1)}$ and $\mathcal{G}_{\chi}^*$, we prove that these two sets are subsets of each other.

{\bf Step 1}: Show that $\mathcal{G}_{\chi}^{(1)} \subseteq \mathcal{G}_{\chi}^*$:

For any $\mathbf{U} = (D, {\bm m}, E)^\top \in \mathcal{G}_{\chi}^{(1)}$, by definition, we have $D > 0$ and $q_{\chi}(\mathbf{U}) > 0$. Using the Cauchy--Schwarz inequality, we obtain
\begin{align*}
\mathbf{U}\cdot {\bm \xi}_* &= E - {\bm v}_*\mathbf{X}{\bm m}^{\top} - D \sqrt{1-{\bm v}_* \mathbf{X} {\bm v}_*^{\top}} \\
&\geq E - \sqrt{{\bm m} \mathbf{X} {\bm m}^{\top} + D^2} \sqrt{{\bm v}_* \mathbf{X} {\bm v}_*^{\top} + (1-{\bm v}_* \mathbf{X} {\bm v}_*^{\top})} \\
&= E - \sqrt{{\bm m} \mathbf{X} {\bm m}^{\top} + D^2} = q_{\chi}(\mathbf{U}) > 0. 
\end{align*}
This together with $\mathbf{U}\cdot {\bm \xi}_1 = D>0$ yields $\mathbf{U}\in \mathcal{G}_{\chi}^*$.

{\bf Step 2}: Show that $\mathcal{G}_{\chi}^* \subseteq \mathcal{G}_{\chi}^{(1)}$:

For any $\mathbf{U}\in \mathcal{G}_{\chi}^*$, choose ${\bm v}_* = \frac{{\bm m}}{\sqrt{{\bm m} \mathbf{X} {\bm m}^{\top} + D^2}}$. This gives
\begin{align*}
0 < \mathbf{U}\cdot {\bm \xi}_* = E - {\bm v}_*\mathbf{X}{\bm m}^{\top} - D \sqrt{1-{\bm v}_* \mathbf{X} {\bm v}_*^\top} = E - \sqrt{{\bm m} \mathbf{X} {\bm m}^{\top} + D^2} = q_{\chi}(\mathbf{U}).
\end{align*}
Thus, $\mathbf{U}\in \mathcal{G}_{\chi}^{(1)}$. This completes the proof.
\end{proof}

\begin{lemma}\label{lem:1}
Given any $\mathbf{U} \in \mathcal{G}_{\chi}^*$, for any unit vector $\mathbf{n}=(n_1,\dots,n_d) \in \mathbb{R}^d$, if $s^+\ge \alpha \lambda^{(d+1)}_{\mathbf{n}}(\mathbf{U})$ and $s^-\le \alpha \lambda^{(0)}_{\mathbf{n}}(\mathbf{U})$, then

\begin{align}
\pm s^{\pm}\mathbf{U} \cdot {\bm \xi}> \pm\alpha \mathbf{G}_{\mathbf{n}}(\mathbf{U})\cdot {\bm \xi}\qquad  \forall {\bm \xi} \in \Xi_*,
\end{align}
where $\alpha>0$ is the lapse function, ${\mathbf{G}_\mathbf{n}} = \sum_{i=1}^n n_i {\bf G}^i$, and 
$\lambda^{(0)}_{\mathbf{n}}$ and $\lambda^{(d+1)}_{\mathbf{n}}$ are the smallest and largest eigenvalues of the Jacobian matrix $\partial{\mathbf{G}_{\bf n}}/\partial \mathbf{U}$. 
\end{lemma}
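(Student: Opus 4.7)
The plan is to invoke the GQL representation from Lemma~\ref{GQL}, which reduces the two inequalities to showing $(s^+\mathbf{U} - \alpha\mathbf{G}_\mathbf{n}(\mathbf{U}))\cdot\bm\xi > 0$ and $(\alpha\mathbf{G}_\mathbf{n}(\mathbf{U}) - s^-\mathbf{U})\cdot\bm\xi > 0$ for every $\bm\xi \in \Xi_*$. The central algebraic tool is the clean splitting
\[
\mathbf{G}_\mathbf{n}(\mathbf{U}) = \tilde v_\mathbf{n}\,\mathbf{U} + p\,\mathbf{b}_\mathbf{n}, \qquad \mathbf{b}_\mathbf{n} := (0,\mathbf{n},v_\mathbf{n})^{\top},
\]
where $\mathbf{b}_\mathbf{n}$ collects only the pressure contributions of the flux. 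Substituting this identity yields $(s^+\mathbf{U} - \alpha\mathbf{G}_\mathbf{n}(\mathbf{U}))\cdot \bm\xi = (s^+ - \alpha\tilde v_\mathbf{n})\,\mathbf{U}\cdot \bm\xi - \alpha p\,\mathbf{b}_\mathbf{n}\cdot \bm\xi$, and symmetrically for $s^-$. I then verify the two cases $\bm\xi\in\{\bm\xi_1,\bm\xi_*\}$ in turn.

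For $\bm\xi = \bm\xi_1$, the pressure piece drops out since $\mathbf{b}_\mathbf{n}\cdot\bm\xi_1 = 0$ and $\mathbf{U}\cdot\bm\xi_1 = D$, so the claim reduces to $(s^+ - \alpha\tilde v_\mathbf{n})D > 0$. Plugging the eigenvalue formulas \eqref{G-eigenvalues}--\eqref{G-eigenvalues2} into $\lambda^{(d+1)}_\mathbf{n} - \tilde v_\mathbf{n}$ and rationalizing, this difference becomes a positive quantity whose numerator is proportional to the Cauchy--Schwarz slack $n_i n^i - v_\mathbf{n}^2 > 0$ (which is positive since $\|\bm v\|_\mathbf{X} < 1$); since $\mathbf{U}\in\mathcal{G}_\chi^*\subseteq\mathcal{G}^{(0)}$ implies $p>0$ and hence $c_s>0$, the difference is strictly positive, and the hypothesis $s^+\geq \alpha\lambda^{(d+1)}_\mathbf{n}$ settles this case. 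The $s^-$ case is analogous, using $\tilde v_\mathbf{n} - \lambda^{(0)}_\mathbf{n}>0$.

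The substantive case is $\bm\xi = \bm\xi_*$, where $\mathbf{b}_\mathbf{n}\cdot\bm\xi_* = v_\mathbf{n} - \bm v_*\mathbf{X}\mathbf{n}^{\top}$ and the reduced inequality becomes
\[
(s^+-\alpha\tilde v_\mathbf{n})\,\mathbf{U}\cdot \bm\xi_* > \alpha p\,(v_\mathbf{n}-\bm v_*\mathbf{X}\mathbf{n}^{\top}).
\]
Since $\mathbf{U}\cdot\bm\xi_* > 0$ by GQL and $s^+ - \alpha\tilde v_\mathbf{n}\geq \alpha(\lambda^{(d+1)}_\mathbf{n} - \tilde v_\mathbf{n})>0$, the inequality is automatic when $v_\mathbf{n}\leq \bm v_*\mathbf{X}\mathbf{n}^{\top}$. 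In the complementary case, the plan is to rewrite $\mathbf{U}\cdot \bm\xi_* = Dh\gamma(1 - \bm v\mathbf{X}\bm v_*^{\top}) - p - D\sqrt{1-\|\bm v_*\|_\mathbf{X}^2}$ using the primitive identities $\bm m = Dh\gamma\bm v$ and $E = Dh\gamma - p$, and then combine this with the rationalized formula for $\lambda^{(d+1)}_\mathbf{n}-\tilde v_\mathbf{n}$, Cauchy--Schwarz in the metric $\mathbf{X}$, and the ideal-gas identity $c_s^2 = \Gamma p/(\rho h)$ to extract the strict lower bound. The $s^-$ case is entirely symmetric, using $\tilde v_\mathbf{n} - \lambda^{(0)}_\mathbf{n}$ in place of $\lambda^{(d+1)}_\mathbf{n} - \tilde v_\mathbf{n}$.

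The main obstacle is this final estimate: since $\bm v_*$ varies over the whole open set $\|\bm v_*\|_\mathbf{X}<1$, the negative pressure contribution on the right can be appreciable when $\bm v_*$ approaches the boundary, and the bound must be uniform in $\bm v_*$. The guiding points are that the infimum of $\mathbf{U}\cdot\bm\xi_*$ over admissible $\bm v_*$ equals $q_\chi(\mathbf{U})>0$ (attained at $\bm v_* = \bm m/\sqrt{D^2 + \bm m\mathbf{X}\bm m^{\top}}$, as in the proof of Lemma~\ref{GQL}), and that the acoustic gap $\lambda^{(d+1)}_\mathbf{n} - \tilde v_\mathbf{n}$ carries a factor $c_s/\gamma$ of precisely the right order to dominate the pressure contribution after Cauchy--Schwarz; the strict positivity of $c_s$ closes the remaining gap.
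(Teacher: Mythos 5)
Your setup is sound and, up to the reduction, coincides with the paper's: the splitting $\mathbf{G}_{\mathbf{n}}(\mathbf{U})=\tilde{v}^{\mathbf{n}}\mathbf{U}+p\,(0,\mathbf{n},v^{\mathbf{n}})^{\top}$, the easy case ${\bm \xi}={\bm \xi}_1$, and the observation that the ${\bm \xi}_*$ case reduces to $c^{+}\,\mathbf{U}\cdot{\bm \xi}_* > \alpha p\,(v^{\mathbf{n}}-{\bm v}_*\mathbf{X}\mathbf{n}^{\top})$ with $c^{+}=s^{+}-\alpha\tilde{v}^{\mathbf{n}}\ge \alpha(\lambda^{(d+1)}_{\mathbf{n}}-\tilde v^{\mathbf{n}})>0$ are all correct. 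But the substantive step --- proving this inequality uniformly over all ${\bm v}_*$ with $\|{\bm v}_*\|_{\mathbf{X}}<1$ --- is exactly where the lemma lives, and your proposal does not contain an argument for it. The two ``guiding points'' you offer cannot be assembled into one: the infimum of $\mathbf{U}\cdot{\bm \xi}_*$ (namely $q_{\chi}(\mathbf{U})$) and the supremum of the pressure term are attained at \emph{different} ${\bm v}_*$, and the decoupled bound $c_2\,q_{\chi}(\mathbf{U})>\alpha p\,(v^{\mathbf{n}}+\|\mathbf{n}\|_{\mathbf{X}})$ is false in general. A static fluid already shows this: there $q_{\chi}(\mathbf{U})=p/(\Gamma-1)$ and $c_2=\alpha c_s\|\mathbf{n}\|_{\mathbf{X}}$, so the decoupled inequality would require $c_s>\Gamma-1$, which fails whenever $p/\rho$ is small. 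So ``$c_s/\gamma$ is of precisely the right order'' is not enough; the estimate must track the joint dependence of both sides on ${\bm v}_*$.

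The paper handles this by taking the supremum over ${\bm v}_*$ exactly (the Cauchy--Schwarz step \eqref{eq:wko333}), which converts the uniform claim into the single scalar inequality \eqref{c-cond1}, and then compares the positive roots of two quadratics: $f_1$, whose unique positive root $c_1$ characterizes \eqref{c-cond1}, and $f_2$, whose positive root is $c_2=\alpha(\lambda^{(d+1)}_{\mathbf{n}}-\tilde v^{\mathbf{n}})$. The closing move is the nontrivial algebraic identity \eqref{eq:key} expressing $f_1$ through $f_2$ with coefficients $\delta_1>0$ and $\delta_2>0$, where $\delta_2>0$ uses the ideal-gas relations \emph{and} the causality restriction $\Gamma\in(1,2]$; this gives $f_1(c_2)>0$, hence $c_2>c_1$ and the result. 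Your plan never produces an analogue of this identity (or any other mechanism coupling the two sides), and it nowhere uses $\Gamma\le 2$, which the counterexample above shows is essential: as $\Gamma\to 2$ the inequality becomes borderline in the low-density limit. As written, the proof has a genuine gap at its crux, even though the reduction surrounding it is correct.
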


\begin{proof}
From the definition in \eqref{flux 1}, we can express the flux $\mathbf{G}(\mathbf{U})=( {\bf G}^1,\dots,{\bf G}^d )$ as
\begin{align*}
\mathbf{G}(\mathbf{U}) = \mathbf{U}\tilde{\mathbf{v}} + \begin{pmatrix}
    \mathbf{0}\\
    p\mathbf{I}_d
    \\
    p\mathbf{v}
\end{pmatrix}, 
\end{align*}
where $\tilde{v}^i = v^i - \beta^i/\alpha$, and $\mathbf{I}_d$ is the identity matrix of size $d$. For convenience, we introduce the following notations: $\bm v^{\mathbf{n}} = \sum_i^d n_i v^i$, $\tilde{\bm v}^{\mathbf{n}} = \sum_i^d n_i \tilde{v}^i$, and $c^+ := s^+ - \alpha \tilde{\bm v}^{\mathbf{n}}$. 
For any vector ${\bm \xi}={\bm \xi}_*$ in \eqref{eq:DefXi}, 
we have 
\begin{align} \nonumber
\Big[s^+ \mathbf{U} - \alpha \mathbf{G}_{\mathbf{n}}(\mathbf{U})\Big]\cdot {\bm \xi}_* &= \left[(s^+ - \alpha \tilde{\bm v}^{\mathbf{n}})\mathbf{U} - \alpha(0, p \mathbf{n}, p \bm v^{\mathbf{n}})^{\top}\right]\cdot {\bm \xi}_* \\ \nonumber
&= c^+\mathbf{U}\cdot {\bm \xi}_* - \alpha (0, p \mathbf{n}, p \bm v^{\mathbf{n}})^{\top}\cdot {\bm \xi}_* \\ \nonumber
&= (c^+E - \alpha p \bm v^{\mathbf{n}}) - {\bm v}_*\mathbf{X} (c^+{\bm m} - \alpha p \mathbf{n})^{\top} - c^+D \sqrt{1 - {\bm v}_* \mathbf{X} {\bm v}_*^{\top}} \\
&\geq (c^+E - \alpha p \bm v^{\mathbf{n}}) - \sqrt{(c^+{\bm m} - \alpha p \mathbf{n}) \mathbf{X} (c^+{\bm m} - \alpha p \mathbf{n})^{\top} + (c^+)^2 D^2}. \label{eq:wko333}
\end{align}
We aim to prove  
\begin{align}\label{c-cond1}
(c^+E - \alpha p \bm v^{\mathbf{n}}) - \sqrt{(c^+{\bm m} - \alpha p \mathbf{n}) \mathbf{X} (c^+{\bm m} - \alpha p \mathbf{n})^{\top} + (c^+)^2 D^2} > 0.
\end{align}
Consider the function 
\begin{align*}
f_1(c) &:= (cE - \alpha p \bm v^{\mathbf{n}})^2 - \left[(c{\bm m} - \alpha p \mathbf{n}) \mathbf{X} (c{\bm m} - \alpha p \mathbf{n})^{\top} + c^2 D^2\right]\\
&=  (E^2 - {\bm m} \mathbf{X} {\bm m}^{\top} - D^2)c^2 - 2\alpha(Ep \bm v^{\mathbf{n}} - p {\bm m} \mathbf{X} \mathbf{n}^\top)c + [(\bm v^{\mathbf{n}})^2 - \mathbf{n}\mathbf{X} \mathbf{n}^{\top} ] \alpha^2 p^2.
\end{align*}
Since $\mathbf{U} \in \mathcal{G}_{\chi}^*$, we know $E^2 - {\bm m} \mathbf{X} {\bm m}^{\top} - D^2 > 0$, implying $f(c)$ is a quadratic polynomial with respect to $c$. 
Moreover, 
$$f_1(0)= [(\bm v^{\mathbf{n}})^2 - \mathbf{n}\mathbf{X} \mathbf{n}^{\top} ] \alpha^2 p^2 < 0,$$
because 
$
(\bm v^{\mathbf{n}})^2 = ({\bm v} \mathbf{X} \mathbf{n}^\top)^2 = {\bm v} \mathbf{X} {\bm v}^{\top} \mathbf{n} \mathbf{X} \mathbf{n}^{\top} < \mathbf{n} \mathbf{X} \mathbf{n}^{\top}.
$ 
Hence, $f_1(c) = 0$ has a positive root (denoted by $c_1$) and a negative root.  This implies 
that the target inequality \eqref{c-cond1} is equivalent to 
\begin{align}\label{cond2}
c^+>c_1.
\end{align}
Next, define 
\begin{align*}
c_2 := \alpha(\lambda_{\mathbf{n}}^{(d+1)} - \tilde{\bm v}^{\mathbf{n}})& = \frac{\alpha\left[-c_s^2(1-\|\bm v\|_{\mathbf{X}}^2)\bm v^{\mathbf{n}} + c_s\gamma^{-1}\sqrt{(1 - c_s^2 \bm \|\bm v\|_{\mathbf{X}}^2) \mathbf{n} \mathbf{X} \mathbf{n}^{\top} - (1 - c_s^2)(\bm v^{\mathbf{n}})^2}\right]}{1 - \|\bm v\|_{\mathbf{X}}^2c_s^2}.
\end{align*}
We notice that $c_2$ is the unique positive root of the following quadratic function
\begin{align*}
f_2(c) := \alpha^{-2}(1-\|\bm v\|_{\mathbf{X}}^2c_s^2)\,c^2 + 2\alpha^{-1}\bm v^{\mathbf{n}}c_s^2(1-\|\bm v\|_{\mathbf{X}}^2)\,c + c_s^2(1-\|\bm v\|_{\mathbf{X}}^2)[(\bm v^{\mathbf{n}})^2 - \mathbf{n}\mathbf{X}\mathbf{n}^{\top}].
\end{align*}
After careful investigation, we observe that the function $f_1(c)$ can be reformulated as
\begin{align}\label{eq:key}
f_1(c) = \delta_1f_2(c) + \frac{\delta_2 \alpha^2}{c_s^2(1 - \|\bm v\|_{\mathbf{X}}^2)}\left[\frac{1 - \|\bm v\|_{\mathbf{X}}^2c_s^2}{\alpha^2}c^2 - f_2(c)\right],
\end{align}
with
\begin{align*}
\delta_1 := \frac{\alpha^2 (E^2 - {\bm m} \mathbf{X} {\bm m}^{\top} - D^2)}{1 - \|\bm v\|_{\mathbf{X}}^2c_s^2},\qquad 
\delta_2 := \frac{c_s^2\left[(\rho h -p)^2 - \rho^2 - p^2\|\bm v\|^2_{\mathbf{X}}\right]}{1 - \|\bm v\|_{\mathbf{X}}^2c_s^2}-p^2.
\end{align*}
For the ideal gas, substituting $c_s^2=\frac{\Gamma p}{\rho h}$ and $h = 1 + \frac{\Gamma p}{(\Gamma-1)\rho}$, we deduce that
\begin{align*}
\delta_2 =\frac{(\Gamma+1)\rho p}{\Gamma(\Gamma-1)} + \frac{(2 - \Gamma)p}{(\Gamma-1)^2}>0,
\end{align*}
where we have used $\Gamma \in (1,2]$. Since $f_2(c_2) =0$, it follows from \eqref{eq:key} that 
\begin{align}\label{eq:ccc}
f_1(c_2) = \frac{\delta_2 ( 1 - \|\bm v\|_{\mathbf{X}}c_s^2) }{c_s^2(1 - \|\bm v\|_{\mathbf{X}}^2)}c_2^2 >0. 
\end{align}
Recall that the quadratic function $f_1(c)$ has only one positive root $c_1$, implying $f_1(c)>0$ for $c>c_1$ 
and $f_1(c)<0$ for $0< c \le c_1$. This fact, combined with \eqref{eq:ccc}, yields $c_2 > c_1$.
Therefore, if $s^+\ge \alpha \lambda^{(d+1)}_{\mathbf{n}}(\mathbf{U})$, then 
$$
c^+ = s^+ - \alpha  \tilde{\bm v}^{\mathbf{n}} \ge \alpha(\lambda_{\mathbf{n}}^{(d+1)} - \tilde{\bm v}^{\mathbf{n}}) = c_2 > c_1, 
$$
which satisfy \eqref{cond2}, or equivalently \eqref{c-cond1}. 
By combining \eqref{eq:wko333} with \eqref{c-cond1}, we obtain 
\begin{align}\label{wkl1}
\Big[s^+ \mathbf{U} - \alpha \mathbf{G}_{\mathbf{n}}(\mathbf{U})\Big]\cdot {\bm \xi}_*>0. 
\end{align}
Since $\mathbf{U}\cdot {\bm \xi}_1>0$, it holds that 
\begin{align}\label{wkl2}
\Big[s^+ \mathbf{U} - \alpha \mathbf{G}_{\mathbf{n}}(\mathbf{U})\Big]\cdot \boldsymbol{\xi}_1 = \left[( s^+ - \alpha \tilde{\bm v}^{\mathbf{n}})\mathbf{U} - \alpha(0, p \mathbf{n}, p \bm v^{\mathbf{n}})^{\top}\right]\cdot \boldsymbol{\xi}_1 = ( s^+ - \alpha \tilde{\bm v}^{\mathbf{n}})\mathbf{U}\cdot \boldsymbol{\xi}_1>0.
\end{align}
By combining \eqref{wkl1} with \eqref{wkl2}, we have 
\begin{align*} 
s^+ \mathbf{U} \cdot {\bm \xi}> \alpha \mathbf{G}_{\mathbf{n}}(\mathbf{U})\cdot {\bm \xi}\qquad  \forall {\bm \xi} \in \Xi_*.
\end{align*}
Similarly, for $s^- \le \alpha \lambda_{\mathbf{n}}^{(0)}(\mathbf{U})$, we can prove that 
\begin{align*} 
-s^- \mathbf{U} \cdot {\bm \xi}> -\alpha \mathbf{G}_{\mathbf{n}}(\mathbf{U})\cdot {\bm \xi}\qquad  \forall {\bm \xi} \in \Xi_*.
\end{align*}
The proof is completed. 
\end{proof}

\begin{lemma}\label{Coro_1}
Given any $\mathbf{U} \in \mathcal{G}_{\chi}^*$, for any unit vector $\mathbf{n}=(n_1,\dots,n_d) \in \mathbb{R}^d$, if $s^+\ge \alpha \lambda^{(d+1)}_{\mathbf{n}}(\mathbf{U})$ and $s^-\le \alpha \lambda^{(0)}_{\mathbf{n}}(\mathbf{U})$, then
\begin{align}\label{cor1}
\pm s^{\pm}\mathbf{W} \cdot \widehat{{\bm \xi}}> \pm \mathbf{H}_{\mathbf{n}}(\mathbf{W})\cdot \widehat{{\bm \xi}}\qquad  \forall  \,  \widehat{{\bm \xi}} \in \widehat{\bm \Xi}_*,
\end{align}
where $\mathbf{W} = \sqrt{\chi}\mathbf{L}\mathbf{U}$, 
$\mathbf{H}_{\mathbf{n}} = \sum_{i=1}^d n_i {\bf H}^i$, and 
\begin{align}\label{thea}
\widehat{\bm \Xi}_* := \{\bm \xi_1\}\cup \left\{\widehat{\bm \xi}_* = \left(-\sqrt{1 - \|{\bm \hat{\bm v}}_*\|^2},\,-{\bm \hat{\bm v}}_*,\,1\right)^{\top}\,\,:\, \hat{\bm v}_* \in \mathbb{R}^d,\,\|\hat{\bm v}_*\|^2 < 1\right\}.
\end{align}
\end{lemma}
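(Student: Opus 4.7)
The plan is to reduce Lemma \ref{Coro_1} to Lemma \ref{lem:1} by exploiting the linear-algebraic relation $\mathbf{W}=\sqrt{\chi}\,\mathbf{L}\mathbf{U}$, $\mathbf{H}^i=\sqrt{-g}\,\mathbf{L}\,\mathbf{G}^i$, together with a bijective reparametrization of the test vectors that maps $\widehat{\bm \Xi}_*$ onto $\Xi_*$. The key observation is that the Cholesky factor $\mathbf{\Theta}$ (satisfying $\mathbf{\Theta}^\top\mathbf{\Theta}=\mathbf{X}$) realizes an isometry between the Euclidean unit ball and the ellipsoid $\{\|\bm v_*\|_{\mathbf{X}}<1\}$.

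First, I would set up the bijection between $\widehat{\bm \Xi}_*$ and $\Xi_*$ as follows. For any $\widehat{\bm\xi}_*\in\widehat{\bm \Xi}_*$ with free parameter $\hat{\bm v}_*$, define $\bm v_*:=\hat{\bm v}_*\,\mathbf{\Theta}^{-\top}$, so that $\hat{\bm v}_*=\bm v_*\mathbf{\Theta}^\top$. Since $\mathbf{\Theta}$ is invertible, this is a bijection, and
\[
\|\hat{\bm v}_*\|^{2}=\bm v_*\mathbf{\Theta}^{\top}\mathbf{\Theta}\,\bm v_*^{\top}=\bm v_*\mathbf{X}\bm v_*^{\top}=\|\bm v_*\|_{\mathbf{X}}^{2},
\]
so the admissibility $\|\hat{\bm v}_*\|<1$ is equivalent to $\|\bm v_*\|_{\mathbf{X}}<1$. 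With $\mathbf{L}=\mathrm{diag}\{1,\mathbf{\Theta},1\}$, a direct computation then yields
\[
\mathbf{L}^{\top}\widehat{\bm\xi}_* \;=\; \bigl(-\sqrt{1-\|\hat{\bm v}_*\|^{2}},\,-\mathbf{\Theta}^{\top}\hat{\bm v}_*^{\top},\,1\bigr)^{\top} \;=\; \bigl(-\sqrt{1-\bm v_*\mathbf{X}\bm v_*^{\top}},\,-\bm v_*\mathbf{X},\,1\bigr)^{\top} \;=\; \bm \xi_*\in\Xi_*,
\]
while $\mathbf{L}^{\top}\bm\xi_1=\bm\xi_1$ trivially. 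Hence the map $\widehat{\bm\xi}\mapsto\mathbf{L}^{\top}\widehat{\bm\xi}$ is a bijection $\widehat{\bm \Xi}_*\to\Xi_*$.

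Second, I would transfer the inner products from $(\mathbf{U},\mathbf{G}_{\mathbf n})$ to $(\mathbf{W},\mathbf{H}_{\mathbf n})$. Using $\sqrt{-g}=\alpha\sqrt{\chi}$ together with the defining relations for the W-form, for any $\widehat{\bm\xi}\in\widehat{\bm \Xi}_*$ and its image $\bm\xi=\mathbf{L}^{\top}\widehat{\bm\xi}\in\Xi_*$,
\[
\mathbf{W}\cdot\widehat{\bm\xi}=\sqrt{\chi}\,(\mathbf{L}\mathbf{U})^{\top}\widehat{\bm\xi}=\sqrt{\chi}\,\mathbf{U}\cdot(\mathbf{L}^{\top}\widehat{\bm\xi})=\sqrt{\chi}\,\mathbf{U}\cdot\bm\xi,
\]
\[
\mathbf{H}_{\mathbf n}(\mathbf{W})\cdot\widehat{\bm\xi}=\sqrt{-g}\,(\mathbf{L}\mathbf{G}_{\mathbf n})^{\top}\widehat{\bm\xi}=\sqrt{-g}\,\mathbf{G}_{\mathbf n}(\mathbf{U})\cdot\bm\xi=\alpha\sqrt{\chi}\,\mathbf{G}_{\mathbf n}(\mathbf{U})\cdot\bm\xi.
\]

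Finally, I would invoke Lemma \ref{lem:1}: since $\mathbf{U}\in\mathcal{G}^{*}_{\chi}$ and $s^{\pm}$ satisfy the assumed wave-speed bounds, $\pm s^{\pm}\mathbf{U}\cdot\bm\xi>\pm\alpha\,\mathbf{G}_{\mathbf n}(\mathbf{U})\cdot\bm\xi$ for all $\bm\xi\in\Xi_*$. Multiplying this inequality by $\sqrt{\chi}>0$ and substituting the two identities above immediately yields $\pm s^{\pm}\mathbf{W}\cdot\widehat{\bm\xi}>\pm\mathbf{H}_{\mathbf n}(\mathbf{W})\cdot\widehat{\bm\xi}$ for every $\widehat{\bm\xi}\in\widehat{\bm \Xi}_*$, which is \eqref{cor1}. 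The only nontrivial step is verifying $\mathbf{L}^{\top}\widehat{\bm\xi}_*=\bm\xi_*$ under the reparametrization $\hat{\bm v}_*=\bm v_*\mathbf{\Theta}^{\top}$, but this reduces to the Cholesky identity $\mathbf{\Theta}^{\top}\mathbf{\Theta}=\mathbf{X}$; everything else is bookkeeping. I do not anticipate a genuine obstacle.
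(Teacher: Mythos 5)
Your proposal is correct and follows essentially the same route as the paper's own proof: reparametrize $\hat{\bm v}_*=\bm v_*\mathbf{\Theta}^{\top}$ (so that $\|\hat{\bm v}_*\|=\|\bm v_*\|_{\mathbf X}$ and $\bm\xi_*$ is the image of $\widehat{\bm\xi}_*$ under the block-diagonal map built from $\mathbf{\Theta}$), transfer the inner products via $\mathbf{W}=\sqrt{\chi}\mathbf{L}\mathbf{U}$, $\mathbf{H}_{\mathbf n}=\sqrt{-g}\,\mathbf{L}\mathbf{G}_{\mathbf n}$ and $\sqrt{-g}=\alpha\sqrt{\chi}$, treat $\bm\xi_1$ separately, and invoke Lemma \ref{lem:1}. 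The only cosmetic difference is that you write the transformation of test vectors as $\mathbf{L}^{\top}\widehat{\bm\xi}_*$ where the paper (with its looser row/column conventions) writes $\mathbf{L}\widehat{\bm\xi}_*$; componentwise the identified vector is the same.
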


\begin{proof}
Let $\bm v_* = \hat{\bm v}_*(\mathbf{\Theta^{\top}})^{-1}$. Note that 
\begin{align*}
\bm v_* \mathbf{X} \bm v_*^{\top} = \hat{\bm v}_*(\Theta^{\top})^{-1} \mathbf{X}\Theta^{-1}  \hat{\bm v}_*^{\top} = \| \hat{\bm v}_*\|^2 \le 1,
\end{align*}
and 
\begin{align*}
\bm \xi_* =  \left(-\sqrt{1 - \bm v_* \mathbf{X} \bm v_*^{\top}},\,-\bm v_*\mathbf{X},\,1 \right)^\top =\left(-\sqrt{1 -\| \hat{\bm v}_*\|^2},\,-\hat{\bm v}_*\mathbf{\Theta},\,1 \right)^\top = {\bf L} \widehat{\bm {\xi}}_*.
\end{align*}
It follows, by simple algebraic manipulations, that 
\begin{align}\label{cor1:pro 1}
\mathbf{U}\cdot \bm \xi_* =\sqrt{\chi}^{-1} \mathbf{W}\cdot\widehat{\bm {\xi}}_*,\qquad\qquad
\alpha \mathbf{G}_{\mathbf{n}}(\mathbf{U})\cdot \bm \xi_*  =  \sqrt{\chi}^{-1} {\bf H}_{\mathbf{n}}\cdot\widehat{\bm {\xi}}_*.
\end{align}
Thanks to Lemma \ref{lem:1}, we have 
$
\pm s^{\pm}\mathbf{U}^+ \cdot \bm \xi_*>  \pm \alpha \mathbf{G}_{\mathbf{n}}(\mathbf{U})\cdot \bm \xi_*
$, which 
together with \eqref{cor1:pro 1} implies 
\begin{align}\label{eq:Wieq}
    \pm s^{\pm}\mathbf{W} \cdot \widehat{\bm {\xi}}_*> \pm \mathbf{H}_{\mathbf{n}}(\mathbf{W})\cdot \widehat{\bm {\xi}}_*.
\end{align}
Note that
\begin{align*}
\mathbf{U} \cdot \bm \xi_1 = \sqrt{\chi}^{-1}\mathbf{W} \cdot \bm \xi_1,\qquad
\alpha \mathbf{G}_{\mathbf{n}}(\mathbf{U})\cdot \bm \xi_1 =  \sqrt{\chi}^{-1}\mathbf{H}_{\mathbf{n}}(\mathbf{W})\cdot \bm \xi_1.
\end{align*}
Recalling that Lemma \ref{lem:1} has shown $\pm s^{\pm}\mathbf{U} \cdot \bm \xi_1> \pm\alpha \mathbf{G}_{\mathbf{n}}(\mathbf{U})\cdot \bm \xi_1$, we obtain 
$$\pm s^{\pm}\mathbf{W} \cdot \bm \xi_1> \pm \mathbf{H}_{\mathbf{n}}(\mathbf{W})\cdot \bm \xi_1,$$
which together with \eqref{eq:Wieq} yields \eqref{cor1} and completes the proof. 
\end{proof}

Similar to Lemma \ref{GQL}, the GQL approach \cite{WS2023} can also be used to derive an 
 equivalent yet linear representation of the admissible state set $\mathcal{G}^{(2)}$ defined in \eqref{adm state 3}, given by 
\begin{align*}
\mathcal{G}^* = \left\{\mathbf{W} = (W_0,\,W_1,\,\cdots,\,W_d)^{\top}:\,\mathbf{W}\cdot \widehat{\bm{\xi}}>0 ,\,\, \forall \widehat{\bm{\xi}} \in \widehat{\bm{\Xi}}_*\right\}.
\end{align*}
where $\widehat{\bm{\Xi}}_*$ is defined by \eqref{thea}.
\begin{remark}
For the ADM form and the W-form, based on the relationships between their evolved variables $\mathbf{U}$ and $\mathbf{W}$, along with the GQL representations of the admissible state sets, we establish the following equivalence relations:
\begin{align}
\mathbf{U} \in \mathcal{G}_{\chi}^{(1)} \Longleftrightarrow \mathbf{U} \in \mathcal{G}_{\chi}^* \Longleftrightarrow \mathbf{W} \in \mathcal{G}^{(2)} \Longleftrightarrow \mathbf{W} \in \mathcal{G}^*.
\end{align}
\end{remark}
\begin{lemma}\label{lem3}
For any $\mathbf{W} \in \mathcal{G}^*$, it holds that 
\begin{align}\label{lem3:eq}
\lambda_{\bf W} \mathbf{W}\cdot \widehat{\bm \xi} \ge -{\mathbf{S}(\mathbf{W})}\cdot \widehat{\bm \xi} \qquad  \forall  \,  \widehat{{\bm \xi}} \in \widehat{\bm \Xi}_*. 
\end{align}
Here, $\lambda_{\mathbf{W}} =0$ if $q\left({\mathbf{S}(\mathbf{W})}\right)\ge 0$; otherwise $\lambda_{\mathbf{W}}$ is the unique positive solution to 
\begin{align}\label{lem3:eq2}
q\left(\mathbf{W} +\lambda_{\mathbf{W}}^{-1}{\mathbf{S}(\mathbf{W})}\right) =0.
\end{align}
\end{lemma}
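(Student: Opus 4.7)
\medskip

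\noindent\textbf{Proof proposal for Lemma \ref{lem3}.}
The plan is to first reduce the nonlinear inequality \eqref{lem3:eq} to a statement about an auxiliary state lying in (the closure of) the admissible set $\mathcal{G}^*$, and then to exploit the concavity and positive homogeneity of the scalar function $q(\cdot)$ in the $\mathcal{G}^{(2)}$-representation to build the scalar $\lambda_\mathbf{W}$ we need. A key structural observation will be that the first component of the source vector vanishes, i.e.\ $\mathbf{S}(\mathbf{W})\cdot\bm\xi_1=0$: this follows directly from the definition of $\mathbf{S}$, because the first row of $\mathbf{L}={\rm diag}\{1,\mathbf{\Theta},1\}$ is the constant vector $(1,0,\dots,0)$ (so its time and space derivatives vanish) and the first component of $\mathbf{R}$ in \eqref{source 1} is zero. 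Thus $S_0=0$, which will make all the Cauchy--Schwarz estimates below go through.

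\medskip
\noindent\emph{Case 1: $q(\mathbf{S})\ge 0$.} In this regime $\lambda_\mathbf{W}=0$, so \eqref{lem3:eq} reduces to showing $\mathbf{S}(\mathbf{W})\cdot\widehat{\bm\xi}\ge 0$ for every $\widehat{\bm\xi}\in\widehat{\bm\Xi}_*$. For $\widehat{\bm\xi}=\bm\xi_1$ this is just $S_0=0\ge0$. For $\widehat{\bm\xi}=\widehat{\bm\xi}_*$, I would expand
\[
\mathbf{S}\cdot\widehat{\bm\xi}_* = S_{d+1} - \hat{\bm v}_*\cdot(S_1,\dots,S_d)^\top - S_0\sqrt{1-\|\hat{\bm v}_*\|^2}
\]
and bound the inner product by Cauchy--Schwarz against the unit vector $(\hat{\bm v}_*,\sqrt{1-\|\hat{\bm v}_*\|^2})$, so that $\mathbf{S}\cdot\widehat{\bm\xi}_*\ge S_{d+1}-\sqrt{\sum_{i=0}^d S_i^2}=q(\mathbf{S})\ge 0$.

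\medskip
\noindent\emph{Case 2: $q(\mathbf{S})<0$.} Here I would first establish the well-posedness of the defining equation \eqref{lem3:eq2} by studying $g(\mu):=q(\mathbf{W}+\mu\mathbf{S})$ on $\mu\ge0$. By Lemma \ref{lem1}, $q$ is concave; it is also positively homogeneous of degree one, so $g$ is concave. We have $g(0)=q(\mathbf{W})>0$ because $\mathbf{W}\in\mathcal{G}^*$, and by homogeneity $g(\mu)/\mu=q(\mathbf{W}/\mu+\mathbf{S})\to q(\mathbf{S})<0$ as $\mu\to\infty$, so $g(\mu)\to-\infty$. The intermediate value theorem yields a root $\mu_0>0$; uniqueness follows from concavity, since any second root $\mu_1\in(0,\mu_0)$ would satisfy $0=g(\mu_1)\ge(1-\mu_1/\mu_0)g(0)+(\mu_1/\mu_0)g(\mu_0)>0$, a contradiction. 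Setting $\lambda_\mathbf{W}:=1/\mu_0$ gives the unique positive solution to \eqref{lem3:eq2}.

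\medskip
The target inequality \eqref{lem3:eq} is then equivalent, after division by $\lambda_\mathbf{W}>0$, to $\mathbf{Y}\cdot\widehat{\bm\xi}\ge 0$ for $\mathbf{Y}:=\mathbf{W}+\mu_0\mathbf{S}$. This I would verify separately for the two families of $\widehat{\bm\xi}$'s: for $\bm\xi_1$, $\mathbf{Y}\cdot\bm\xi_1=W_0+\mu_0 S_0=W_0>0$ using $S_0=0$ and $\mathbf{W}\cdot\bm\xi_1>0$; for $\widehat{\bm\xi}_*$, the same Cauchy--Schwarz trick as in Case 1 gives
\[
\mathbf{Y}\cdot\widehat{\bm\xi}_* \ge Y_{d+1}-\sqrt{\textstyle\sum_{i=0}^d Y_i^2}=q(\mathbf{Y})=0
\]
by construction of $\mu_0$. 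I expect the main subtlety to be the Case~2 existence/uniqueness step, because it requires carefully combining concavity with the positive-homogeneity of $q$ to control the behaviour at both $\mu=0$ and $\mu\to\infty$; once that is in place, the rest is a uniform Cauchy--Schwarz-type estimate tailored to the GQL test vectors $\widehat{\bm\Xi}_*$, and crucially relies on the structural fact $S_0\equiv 0$ to neutralize the $\sqrt{1-\|\hat{\bm v}_*\|^2}$ term.
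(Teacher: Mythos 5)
Your proposal is correct and follows essentially the same route as the paper's proof: the $\bm\xi_1$ case is handled via the vanishing first component of $\mathbf{S}(\mathbf{W})$, the $\widehat{\bm\xi}_*$ case via the same Cauchy--Schwarz bound $(\lambda_{\mathbf{W}}\mathbf{W}+\mathbf{S})\cdot\widehat{\bm\xi}_*\ge q(\lambda_{\mathbf{W}}\mathbf{W}+\mathbf{S})$, and $\lambda_{\mathbf{W}}$ is defined through the root of $q(\mathbf{W}+\lambda^{-1}\mathbf{S})=0$, whose existence and uniqueness you justify (more explicitly than the paper, which simply invokes concavity) by combining concavity with the degree-one homogeneity of $q$. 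Working with $\mathbf{Y}=\mathbf{W}+\mu_0\mathbf{S}$ and dividing by $\lambda_{\mathbf{W}}$ is only a cosmetic reformulation of the paper's argument with $\lambda_{\mathbf{W}}\mathbf{W}+\mathbf{S}$.
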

\begin{proof}
Since the first component of ${\mathbf{S}(\mathbf{W})}$ is zero,  we have 
\begin{align*}
\lambda_{\bf W} \mathbf{W}\cdot \bm \xi_1 \ge 0 = -{\mathbf{S}(\mathbf{W})}\cdot \bm \xi_1.
\end{align*}
Next, we focus on the case $\widehat{\bm \xi} = \widehat{\bm \xi}_*$, where $\widehat{\bm \xi}_*$ is defined in \eqref{thea}. 
Similar to Step 1 of the proof of Lemma \ref{GQL}, using the Cauchy--Schwarz inequality gives 
\begin{align}\label{eq:wkl33}
    (\lambda_{\bf W} \mathbf{W} + {\mathbf{S}(\mathbf{W})} ) \cdot \widehat{\bm \xi}_* \ge q (\lambda_{\bf W} \mathbf{W} + {\mathbf{S}(\mathbf{W})} ).
\end{align}
If $q\left({\mathbf{S}(\mathbf{W})}\right)\ge 0$, then taking $\lambda_{\bf W}=0$ in \eqref{eq:wkl33} yields \eqref{lem3:eq}; otherwise equation \eqref{lem3:eq2} has a unique positive root due to the concavity of $q({\bf W})$ with respect to $\bf W$, and using \eqref{eq:wkl33} and $q (\lambda_{\bf W} \mathbf{W} + {\mathbf{S}(\mathbf{W})} ) = \lambda_{\bf W} q\left(\mathbf{W} +\lambda_{\mathbf{W}}^{-1}{\mathbf{S}(\mathbf{W})}\right) =0$ gives \eqref{lem3:eq}. 
\end{proof}


Next, we will use the above lemmas to analyze the PCP property for the updated cell averages of the proposed (OE) DG schemes with the HLL flux.
We focus on the forward Euler method for time discretization, while our analysis and conclusons are also valid for the high-order SSP time discretizations. 

Let $\overline{\mathbf{W}}_K^n$ be  the cell average of $\mathbf{W}_h^n({\bf x})$ on cell $K$ at time $t_n$. Taking the test function $v_h(\mathbf{x}) =1$ and using the forward Euler time discretization, we obtain the evolution equation for the cell averages of our (OE) DG schemes: 
\begin{align}\label{first_order00}
\overline{\mathbf{W}}_K^{n+1} = \overline{\mathbf{W}}_K^n - \frac{\Delta t_n}{|K|}
\sum_{\mathcal{E} \in \partial K \atop K' \cap K = \mathcal{E}} 
\sum_{p = 1}^{Q_\mathcal{E}} |\mathcal{E}| {\omega}_{\mathcal{E},p} 
\widehat{\mathbf{H}}^{HLL}\left(\mathbf{W}_h^n(\mathbf{x}_{\mathcal{E},p})\big|_K, 
\mathbf{W}_h^n(\mathbf{x}_{\mathcal{E},p})\big|_{K'}; \mathbf{n}_{\mathcal{E}}\right)  + \Delta t_n \sum_{p = 1}^{Q_K} {\omega}_{K,p} \mathbf{S}(\mathbf{W}_h^n({\mathbf{x}}_{K,p})).
\end{align}
On each cell $K$, we assume that there exists a positive quadrature with algebraic accuracy $\ge m$ and the quadrature nodes including $\{ \mathbf{x}_{\mathcal{E},p}: 1\le p \le Q_{\mathcal{E}}, \mathcal{E} \in \partial K \}$. This quadrature leads to the following convex decomposition of a cell average on $K$: 
\begin{align}\label{CAD}
\frac1{|K|}\int_K f({\bm x}) d{\bm x} = \sum_{p = 1}^{\widehat Q_K}\widehat{\omega}_{K,p} f(\widehat{\mathbf{x}}_{K,p})+ \sum_{\mathcal{E} \in \partial K}\sum_{p=1}^{Q_{\mathcal{E}}} \widehat{\omega}_{\mathcal{E},p} f(\mathbf{x}_{\mathcal{E},p}) \qquad \forall f \in \mathbb P^m,
\end{align}
where the weights $\widehat{\omega}_{\mathcal{E},p}$ and $\widehat{\omega}_{K,p}$ are all positive and satisfy
\begin{align*}
\sum_{p=1}^{\widehat Q_K}\widehat{\omega}_{K,p}  
 +\sum_{e \in \partial K}\sum_{p=1}^{Q_{\mathcal{E}}}\widehat {\omega}_{\mathcal{E},p}=1.
\end{align*}
Such cell average decomposition \eqref{CAD} has been successfully constructed for Cartesian meshes in any space dimension and triangular meshes; see \cite{ZS2010,CDW2023,CDW2024,ZXS2012,DCW2024}. 
It will play an important role in the theoretical PCP proof. 
Define the point set 
\begin{align}\label{ssk}
\mathbb{S}_K = \left\{\mathbf{x}_{K,p}\right\}_{p=1}^{Q_K} \cup \left\{\widehat{\mathbf{x}}_{K,p}\right\}_{p=1}^{\widehat Q_K}\cup
\left(\bigcup_{\mathcal{E} \in \partial K}\left\{\mathbf{x}_{\mathcal{E},p}\right\}_{p = 1}^{Q_{\mathcal{E}}}\right),
\end{align}
which includes all the quadrature nodes involved in \eqref{first_order00} and \eqref{CAD}. 
Assume that the DG solution $\mathbf{W}_h^n({\bf x})$ satisfies 
\begin{align}\label{PCPpointcond}
\mathbf{W}_h^n({\bf x}) \in {\mathcal G}^* \qquad \forall {\bf x} \in \mathbb{S}_K, \quad \forall K \in {\mathcal T}_h,
\end{align}
which can be enforced by a PCP limiter presented later if the OEDG solution does not automatically meet this condition. 
For convenience, denote 
\begin{align}\label{vex3}
\mathbf{W}_h^n(\widehat{\mathbf{x}}_{K,p}) = \widehat{\mathbf{W}}_{K,p}\in {\mathcal G}^*, \quad
\mathbf{W}_h^n(\mathbf{x}_{K,p}) = \mathbf{W}_{K,p}\in {\mathcal G}^*, \quad
\mathbf{W}_h^n(\mathbf{x}_{\mathcal{E},p})|_{K'} = \mathbf{W}_{\mathcal{E},p}^+\in {\mathcal G}^*, \quad 
\mathbf{W}_h^n(\mathbf{x}_{\mathcal{E},p})|_K = \mathbf{W}_{\mathcal{E},p}^-\in {\mathcal G}^*. 
\end{align}
According to the quadrature for the source term and the cell average decomposition \eqref{CAD}, we can split the cell average into a convex combination of several point values:
\begin{align}\label{vex2}
\overline{\mathbf{W}}_K^n &= \frac{1}{|K|}\int_K \mathbf{W}_h^n(\mathbf{x}) \,\mathrm{d}\mathbf{x} = \sum_{p = 1}^{Q_K}{\omega}_{K,p} \mathbf{W}_{K,p},
\\
\label{vex1}
\overline{\mathbf{W}}_K^n & = \sum_{p = 1}^{\widehat Q_K}\widehat{\omega}_{K,p} \widehat{\mathbf{W}}_{K,p}+ \sum_{\mathcal{E} \in \partial K}\sum_{p=1}^{Q_{\mathcal{E}}} \widehat{\omega}_{\mathcal{E},p} \mathbf{W}_{\mathcal{E},p}^-.
\end{align}

Under the condition \eqref{PCPpointcond}, Lemma \ref{lem3} implies that 
\begin{align}\label{lem3:eq_app}
 {\mathbf{S}( \mathbf{W}_{K,p} )}\cdot \widehat{\bm \xi} \ge -\lambda_{ \mathbf{W}_{K,p} } ~ \mathbf{W}_{K,p}  \cdot \widehat{\bm \xi} \qquad  \forall  \,  \widehat{{\bm \xi}} \in \widehat{\bm \Xi}_*.
\end{align}
Thanks to Lemma \ref{Coro_1},  if the HLL speeds satisfy 
\begin{align}\label{wkl-HLL}
s^+_{\mathcal{E},p} \ge  \alpha \max\left\{\lambda_{\mathbf{n}_{\mathcal{E}}}^{(d+1)}(\mathbf{W}^{-}_{\mathcal{E},p}),\,\lambda_{\mathbf{n}_{\mathcal{E}}}^{(d+1)}(\mathbf{W}^{+}_{\mathcal{E},p}),\,0\right\}, \quad 
s^-_{\mathcal{E},p} \le \alpha \min\left\{\lambda_{\mathbf{n}_{\mathcal{E}}}^{(0)}(\mathbf{W}^{-}_{\mathcal{E},p}),\,\lambda_{\mathbf{n}_{\mathcal{E}}}^{(0)}(\mathbf{W}^{+}_{\mathcal{E},p}),\,0\right\}, \quad \forall p,\mathcal{E},
\end{align}
then
\begin{align*}
\pm s^{\pm}_{\mathcal{E},p}\mathbf{W}_{\mathcal{E},p}^-\cdot \widehat{\bm {\xi}} > \pm \mathbf{H}_{\mathbf{n}_{\mathcal{E}}}(\mathbf{W}_{\mathcal{E},p}^-)\cdot \widehat{\bm {\xi}},\qquad \pm s^{\pm}_{\mathcal{E},p}\mathbf{W}_{\mathcal{E},p}^+ \cdot \widehat{\bm {\xi}} > \pm \mathbf{H}_{\mathbf{n}_{\mathcal{E}}}(\mathbf{W}_{\mathcal{E},p}^+)\cdot \widehat{\bm {\xi}}, \qquad \forall  \,  \widehat{{\bm \xi}} \in \widehat{\bm \Xi}_*.
\end{align*}
We then have the following estimate: 
\begin{align*}
\begin{aligned}
-&\widehat{\mathbf{H}}^{HLL}(\mathbf{W}_{\mathcal{E},p}^-,\mathbf{W}_{\mathcal{E},p}^+;\mathbf{n}_{\mathcal{E}})\cdot \widehat{\bm {\xi}} \\
&= -\frac{s_{\mathcal{E},p}^+\mathbf{H}_{\mathbf{n}_{\mathcal{E}}}(\mathbf{W}_{\mathcal{E},p}^-)-s_{\mathcal{E},p}^-\mathbf{H}_{\mathbf{n}_{\mathcal{E}}}(\mathbf{W}_{\mathcal{E},p}^+)+ s_{\mathcal{E},p}^-s_{\mathcal{E},p}^+\left(\mathbf{W}_{\mathcal{E},p}^+  -\mathbf{W}_{\mathcal{E},p}^-\right)}{s_{\mathcal{E},p}^+ - s_{\mathcal{E},p}^-}\cdot \widehat{\bm {\xi}} \\
&= \left(s_{\mathcal{E},p}^-\mathbf{W}_{\mathcal{E},p}^--\mathbf{H}_{\mathbf{n}_{\mathcal{E}}}(\mathbf{W}_{\mathcal{E},p}^-)\right)\cdot \widehat{\bm {\xi}}  - s_{\mathcal{E},p}^-\frac{\left(s_{\mathcal{E},p}^+\mathbf{W}_{\mathcal{E},p}^+ -\mathbf{H}_{\mathbf{n}_{\mathcal{E}}}(\mathbf{W}_{\mathcal{E},p}^+)\right) +\left(\mathbf{H}_{\mathbf{n}_{\mathcal{E}}}(\mathbf{W}_{\mathcal{E},p}^-) -s_{\mathcal{E},p}^-\mathbf{W}_{\mathcal{E},p}^-\right)}{s_{\mathcal{E},p}^+ - s_{\mathcal{E},p}^-}\cdot \widehat{\bm {\xi}}\\
&\ge 
-\left(s^+_{\mathcal{E},p} -s_{\mathcal{E},p}^-\right)\mathbf{W}_{\mathcal{E},p}^-\cdot \widehat{\bm {\xi}} \qquad \forall  \,  \widehat{{\bm \xi}} \in \widehat{\bm \Xi}_*.
\end{aligned}
\end{align*}
This, together with \eqref{lem3:eq_app} and \eqref{first_order00}, yields 
\begin{align} \nonumber
\overline{\mathbf{W}}_K^{n+1} \cdot \widehat{\bm {\xi}} & = \overline{\mathbf{W}}_K^n \cdot \widehat{\bm {\xi}} - \frac{\Delta t_n}{|K|}  \sum_{\mathcal{E} \in \partial K } \sum_{p =1}^{Q_\mathcal{E}}|\mathcal{E}|{\omega}_{\mathcal{E},p} \widehat{\mathbf{H}}^{HLL}(\mathbf{W}_{\mathcal{E},p}^-,\mathbf{W}_{\mathcal{E},p}^+;\mathbf{n}_{\mathcal{E}})\cdot \widehat{\bm {\xi}}
+ \Delta t_n \sum_{p =1}^{Q_K}{\omega}_{K,p} {\mathbf{S}( \mathbf{W}_{K,p} )}\cdot \widehat{\bm \xi}
\\
& > \overline{\mathbf{W}}_K^n \cdot \widehat{\bm {\xi}} - \frac{\Delta t_n}{|K|}  \sum_{\mathcal{E} \in \partial K } \sum_{p =1}^{Q_\mathcal{E}}|\mathcal{E}|{\omega}_{\mathcal{E},p}  \left(s^+_{\mathcal{E},p} -s_{\mathcal{E},p}^-\right)\mathbf{W}_{\mathcal{E},p}^-\cdot \widehat{\bm {\xi}} 
- \Delta t_n \sum_{p =1}^{Q_K}{\omega}_{K,p} \lambda_{ \mathbf{W}_{K,p} } ~ \mathbf{W}_{K,p}  \cdot \widehat{\bm \xi}.  \label{wkl331}
\end{align}
Based on \eqref{vex1} and \eqref{vex2}, we observe that 
\begin{align} \nonumber
  \sum_{\mathcal{E} \in \partial K } \sum_{p =1}^{Q_\mathcal{E}}|\mathcal{E}|{\omega}_{\mathcal{E},p}  \left(s^+_{\mathcal{E},p} -s_{\mathcal{E},p}^-\right)\mathbf{W}_{\mathcal{E},p}^-\cdot \widehat{\bm {\xi}} 
&= \sum_{\mathcal{E} \in \partial K } \sum_{p =1}^{Q_\mathcal{E}}|\mathcal{E}| \left( 
 \frac{{\omega}_{\mathcal{E},p}}{ \widehat  {\omega}_{\mathcal{E},p}}  \left(s^+_{\mathcal{E},p} -s_{\mathcal{E},p}^-\right)
 \right)   \widehat  {\omega}_{\mathcal{E},p}
 \mathbf{W}_{\mathcal{E},p}^-\cdot \widehat{\bm {\xi}}   
 \\ \nonumber
 & \le  \left( \max_{ \mathcal{E} \in \partial K } \max_{1\le p \le Q_{\mathcal E}} \frac{ |\mathcal{E}| {\omega}_{\mathcal{E},p}}{ \widehat  {\omega}_{\mathcal{E},p}}  \left(s^+_{\mathcal{E},p} -s_{\mathcal{E},p}^-\right)   \right) 
 \sum_{\mathcal{E} \in \partial K } \sum_{p =1}^{Q_\mathcal{E}}  \widehat  {\omega}_{\mathcal{E},p}
 \mathbf{W}_{\mathcal{E},p}^-\cdot \widehat{\bm {\xi}}   
 \\ \label{wkl332}
 & \le   \left( \max_{ \mathcal{E} \in \partial K } \max_{1\le p \le Q_{\mathcal E}} \frac{|K|{\omega}_{\mathcal{E},p}}{ \widehat  {\omega}_{\mathcal{E},p}}  \left(s^+_{\mathcal{E},p} -s_{\mathcal{E},p}^-\right)   \right)  \overline{\mathbf{W}}_K^n \cdot \widehat{\bm {\xi}},
\end{align}
and 
\begin{align}\nonumber
\sum_{p =1}^{Q_K}{\omega}_{K,p} \lambda_{ \mathbf{W}_{K,p} } ~ \mathbf{W}_{K,p}  \cdot \widehat{\bm \xi} & \le \Big(\max_{1\le p \le Q_{K}} \lambda_{ \mathbf{W}_{K,p} } \Big) 
\sum_{p =1}^{Q_K}{\omega}_{K,p} \mathbf{W}_{K,p}  \cdot \widehat{\bm \xi}
\\ \label{wkl333}
& = \Big(\max_{1\le p \le Q_{K}} \lambda_{ \mathbf{W}_{K,p} } \Big)  \overline{\mathbf{W}}_K^n \cdot \widehat{\bm {\xi}}. 
\end{align}
Combining \eqref{wkl332} and \eqref{wkl333} with \eqref{wkl331}, we have 
for any $\widehat{{\bm \xi}} \in \widehat{\bm \Xi}_*$ that 
\begin{align*}
\overline{\mathbf{W}}_K^{n+1} \cdot \widehat{\bm {\xi}} & > 
\overline{\mathbf{W}}_K^n \cdot \widehat{\bm {\xi}} - \frac{\Delta t_n}{|K|}   \left( \max_{ \mathcal{E} \in \partial K } \max_{1\le p \le Q_{\mathcal E}} \frac{|\mathcal{E}|{\omega}_{\mathcal{E},p}}{ \widehat  {\omega}_{\mathcal{E},p}}  \left(s^+_{\mathcal{E},p} -s_{\mathcal{E},p}^-\right)   \right)  \overline{\mathbf{W}}_K^n \cdot \widehat{\bm {\xi}}
- \Delta t_n \Big(\max_{1\le p \le Q_{K}} \lambda_{ \mathbf{W}_{K,p} } \Big)  \overline{\mathbf{W}}_K^n \cdot \widehat{\bm {\xi}}
\\
&=
\left( 1 -  \frac{\Delta t_n}{|K|}   \left( \max_{ \mathcal{E} \in \partial K } \max_{1\le p \le Q_{\mathcal E}} \frac{|\mathcal{E}|{\omega}_{\mathcal{E},p}}{ \widehat  {\omega}_{\mathcal{E},p}}  \left(s^+_{\mathcal{E},p} -s_{\mathcal{E},p}^-\right)   \right) - \Delta t_n \max_{1\le p \le Q_{K}} \lambda_{ \mathbf{W}_{K,p} }    \right) \overline{\mathbf{W}}_K^n \cdot \widehat{\bm {\xi}},  
\end{align*}
which is positive for any $\widehat{{\bm \xi}} \in \widehat{\bm \Xi}_*$ under the following CFL-type condition:
\begin{align}\label{PCP_CFL}
\Delta t_n \left( \max_{1\le p \le Q_{K}} \lambda_{ \mathbf{W}_{K,p} } 
+ 
 \max_{ \mathcal{E} \in \partial K } \max_{1\le p \le Q_{\mathcal E}} \frac{|\mathcal{E}|{\omega}_{\mathcal{E},p}}{ |K| \widehat  {\omega}_{\mathcal{E},p}}  \left(s^+_{\mathcal{E},p} -s_{\mathcal{E},p}^-\right) 
\right) \le 1 \qquad \forall K \in {\mathcal T}_h. 
\end{align}

In conclusion, we have proven the following weak PCP property for the updated cell average. 

\begin{theorem}\label{pcp1}
If the DG solution ${\bf W}_h({\bf x})$ satisfies \eqref{PCPpointcond} and the HLL speeds satisfy \eqref{wkl-HLL} for all $K \in {\mathcal T}_h$, then the 
  updated cell averages $\overline{\bf W}_K^{n+1} $ given by our scheme \eqref{first_order00} belong to the admissible state set ${\mathcal G}^*$ or equivalently ${\mathcal G}^{(2)}$, under the CFL-type condition \eqref{PCP_CFL}. 
 \end{theorem}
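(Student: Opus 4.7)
The plan is to leverage the GQL representation of $\mathcal{G}^{(2)}$ via the equivalent linear set $\mathcal{G}^*$, so that the nonlinear admissibility of $\overline{\mathbf{W}}_K^{n+1}$ is replaced by the family of linear inequalities $\overline{\mathbf{W}}_K^{n+1}\cdot\widehat{\bm\xi}>0$ for every $\widehat{\bm\xi}\in\widehat{\bm\Xi}_*$. Once the constraint is linearized, the scheme \eqref{first_order00} decomposes cleanly into three pieces to be estimated separately against a fixed $\widehat{\bm\xi}$: the contribution of $\overline{\mathbf{W}}_K^n$ itself, the boundary HLL flux terms, and the volumetric source terms. The overall strategy is to bound each piece from below by a multiple of $\overline{\mathbf{W}}_K^n\cdot\widehat{\bm\xi}$ and then collect the prefactors into a single CFL-type condition.

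First I would use the cell-average decomposition \eqref{CAD} to rewrite $\overline{\mathbf{W}}_K^n$ in two complementary convex forms, namely \eqref{vex2} (volume-quadrature average) and \eqref{vex1} (boundary-plus-interior average). Under the pointwise hypothesis \eqref{PCPpointcond} all nodal values in \eqref{vex3} lie in $\mathcal{G}^*$, so both representations are available as upper bounds on positive linear functionals of $\overline{\mathbf{W}}_K^n$. For the boundary terms, at each face node I would apply Lemma \ref{Coro_1} to both $\mathbf{W}_{\mathcal{E},p}^-$ and $\mathbf{W}_{\mathcal{E},p}^+$ using the HLL wave-speed assumption \eqref{wkl-HLL}, and then combine the resulting strict inequalities through the HLL formula \eqref{HLL flux} to collapse the two flux evaluations into a single bound of the form $-\widehat{\mathbf{H}}^{HLL}\cdot\widehat{\bm\xi}\ge -(s^+_{\mathcal{E},p}-s^-_{\mathcal{E},p})\mathbf{W}_{\mathcal{E},p}^-\cdot\widehat{\bm\xi}$. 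For the source terms, Lemma \ref{lem3} applied at each volume node directly yields $\mathbf{S}(\mathbf{W}_{K,p})\cdot\widehat{\bm\xi}\ge -\lambda_{\mathbf{W}_{K,p}}\mathbf{W}_{K,p}\cdot\widehat{\bm\xi}$ with $\lambda_{\mathbf{W}_{K,p}}\ge 0$ defined through the concavity of $q$.

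Inserting these two lower bounds into \eqref{first_order00} produces a right-hand side that is a nonnegative linear combination of the positive quantities $\mathbf{W}_{\mathcal{E},p}^-\cdot\widehat{\bm\xi}$ and $\mathbf{W}_{K,p}\cdot\widehat{\bm\xi}$ together with $\overline{\mathbf{W}}_K^n\cdot\widehat{\bm\xi}$. The crucial step is then to absorb the boundary contributions into the convex decomposition \eqref{vex1} by rewriting each weight as $\omega_{\mathcal{E},p}=(\omega_{\mathcal{E},p}/\widehat{\omega}_{\mathcal{E},p})\widehat{\omega}_{\mathcal{E},p}$ and pulling the maximum ratio out of the sum, and simultaneously to absorb the source contributions into \eqref{vex2} by pulling $\max_p \lambda_{\mathbf{W}_{K,p}}$ out of the source quadrature. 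Factoring the common positive quantity $\overline{\mathbf{W}}_K^n\cdot\widehat{\bm\xi}$ yields a coefficient whose nonnegativity is exactly the CFL condition \eqref{PCP_CFL}, giving strict positivity for every $\widehat{\bm\xi}\in\widehat{\bm\Xi}_*$ and hence $\overline{\mathbf{W}}_K^{n+1}\in\mathcal{G}^*\equiv\mathcal{G}^{(2)}$.

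The main technical obstacle I anticipate is the bookkeeping around two distinct sets of quadrature weights: the face quadrature weights $\omega_{\mathcal{E},p}$ used inside the DG flux integral and the decomposition weights $\widehat{\omega}_{\mathcal{E},p}$ coming from \eqref{CAD}. Producing a sharp, face-by-face CFL bound requires that the cell-average decomposition share its boundary nodes with the face quadrature, so that the ratio $|\mathcal{E}|\omega_{\mathcal{E},p}/(|K|\widehat{\omega}_{\mathcal{E},p})$ in \eqref{PCP_CFL} appears naturally after index matching; the existing literature on convex decompositions on Cartesian and triangular meshes must be invoked at this point. A second delicate aspect is that the two decompositions \eqref{vex1} and \eqref{vex2} of the same $\overline{\mathbf{W}}_K^n$ are used simultaneously, once for the flux piece and once for the source piece, and it is this separation that produces the additive rather than multiplicative structure of the CFL bound, with the wave-speed term and $\max_p\lambda_{\mathbf{W}_{K,p}}$ entering as independent summands.
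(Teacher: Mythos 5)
Your proposal matches the paper's own proof essentially step for step: linearize admissibility via the GQL set $\mathcal{G}^*$, bound the HLL flux terms with Lemma \ref{Coro_1} to get $-\widehat{\mathbf{H}}^{HLL}\cdot\widehat{\bm\xi}\ge -(s^+_{\mathcal{E},p}-s^-_{\mathcal{E},p})\mathbf{W}^-_{\mathcal{E},p}\cdot\widehat{\bm\xi}$, bound the source with Lemma \ref{lem3}, and absorb the two contributions using the dual convex decompositions \eqref{vex1} and \eqref{vex2} with the weight-ratio trick, which yields precisely the additive CFL condition \eqref{PCP_CFL}. The argument is correct and no substantive difference from the paper's route is present.
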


The condition \eqref{PCPpointcond} may not be always satisfied by the (OE) DG solutions and should be enforced by a PCP limiter, which will be introduced in Section \ref{sec:PCPlim}. 
Although our above PCP analysis was focused on the forward Euler time discretization, it is also valid for the high-order SSP methods, which are formally the convex combination of forward Euler steps. When a high-order SSP RK method is used, the condition \eqref{PCPpointcond} should be enforced at each RK stage.

\subsubsection{PCP Limiter}\label{sec:PCPlim}

Although the high-order accurate OEDG method can effectively suppress spurious oscillations, it does not inherently guarantee that the numerical solution $\mathbf{W}_h^n$ always satisfies \eqref{PCPpointcond}, particularly in cases involving low density (or pressure) or a large Lorentz factor. Therefore, it is necessary to apply a PCP limiter to enforce the condition \eqref{PCPpointcond}.

Denote the OEDG solution at the $n$-th time level as $\mathbf{W}_{\sigma}^n(\mathbf{x}) = \left(W_{\sigma}^{(0)}(t_n,\mathbf{x}), W_{\sigma}^{(1)}(t_n,\mathbf{x}), \dots, W_{\sigma}^{(d+1)}(t_n,\mathbf{x})\right)^\top$. Let $\overline{\mathbf{W}}_{\sigma,K}^n = (\overline{W}_{\sigma,K}^{(0)}, \overline{W}_{\sigma,K}^{(1)}, \dots, \overline{W}_{\sigma,K}^{(d+1)})^\top \in \mathcal{G}^{(2)}$ represent the cell average of $\mathbf{W}_{\sigma,K}^n(\mathbf{x})$ over cell $K$. 
A PCP limiter is applied after each OE procedure to modify $\mathbf{W}_{\sigma}^n(\mathbf{x})$ to $\widetilde{\mathbf{W}}_{\sigma}^n(\mathbf{x})$. This modification ensures that the updated solution satisfies
\begin{align*}
\widetilde{\mathbf{W}}_{\sigma}^n(\mathbf{x}) \in \mathcal{G}^{(2)} = {\mathcal G}^* \qquad \forall \, \mathbf{x} \in \mathbb{S}_K, \quad \forall K \in {\mathcal T}_h, 
\end{align*}
where $\mathbb{S}_K$ denotes a set of critical points in cell $K$. According to Theorem \ref{pcp1}, under the CFL-type condition \eqref{PCP_CFL}, the updated cell-averaged OEDG numerical solution $\overline{\mathbf{W}}_{\sigma}^{n+1}$ will remain in $\mathcal{G}^{(2)}$. This then ensures the validity of the PCP limiter at the next time step. 

The PCP limiter \cite{W2017} is an extension of the limiters for non-relativistic and special relativistic hydrodynamics \cite{ZS20102,WT2015,QSY2016,WT2017}. Specifically, the PCP limiting procedure on each cell $K$ consists of the following two steps. 

\vspace{2mm}
\textbf{Step 1: Enforcing the positivity of the first component of $\mathbf{W}$.}

Define $W_{\min}^{(0)} = \min\limits_{\mathbf{x} \in \mathbb{S}_K} W_{\sigma}^{(0)}(t_n, \mathbf{x})$. Set $\epsilon_1 = \min\left\{ 10^{-13}, \overline{W}_{\sigma,K}^{(0)} \right\}$ to avoid the effects of round-off errors. If $W_{\min}^{(0)} < \epsilon_1$, modify the first component of the OEDG solution as follows:
\begin{align*}
\widehat{W}_{\sigma}^{(0)}(t_n, \mathbf{x}) = \theta_1 \left(W_{\sigma}^{(0)}(t_n, \mathbf{x}) - \overline{W}_{\sigma,K}^{(0)}\right) + \overline{W}_{\sigma,K}^{(0)},
\end{align*}
where the scaling factor $\theta_1$ is given by
\begin{align*}
\theta_1 = \frac{\overline{W}_{\sigma,K}^{(0)} - \epsilon_1}{\overline{W}_{\sigma,K}^{(0)} - W_{\min}^{(0)}}.
\end{align*}
The limited solution is then denoted by $\widehat{\mathbf{W}}_{\sigma}^n(\mathbf{x}) = \left(\widehat{W}_{\sigma}^{(0)}(t_n, \mathbf{x}), W_{\sigma}^{(1)}(t_n, \mathbf{x}), \dots, W_{\sigma}^{(d+1)}(t_n, \mathbf{x})\right)^\top$.

\vspace{2mm}
\textbf{Step 2: Enforcing the positivity of $q(\mathbf{W})$.}

Define $q_{\min} = \min\limits_{\mathbf{x} \in \mathbb{S}_K} q(\widehat{\mathbf{W}}_{\sigma}^n(\mathbf{x}))$. Set $\epsilon_2 = \min\left\{ 10^{-13}, q(\overline{\mathbf{W}}_{\sigma,K}^n)\right\}$. If $q_{\min} < \epsilon_2$, modify $\widehat{\mathbf{W}}_{\sigma}^n(\mathbf{x})$ as follows:
\begin{align*}
\widetilde{\mathbf{W}}_{\sigma}^n(\mathbf{x}) = \theta_2 \left(\widehat{\mathbf{W}}_{\sigma}^n(\mathbf{x}) - \overline{\mathbf{W}}_{\sigma,K}^n\right) + \overline{\mathbf{W}}_{\sigma,K}^n,
\end{align*}
where the scaling factor $\theta_2$ is defined as
\begin{align*}
\theta_2 = \frac{q(\overline{\mathbf{W}}_{\sigma,K}^n) - \epsilon_2}{q(\overline{\mathbf{W}}_{\sigma,K}^n) - q_{\min}}.
\end{align*}


\subsubsection{PCP Convergent Iteration Algorithms for Recovering Primitive Variables}

As seen in the DG formulation \eqref{DG form2}, it is necessary to evaluate the flux vectors $\mathbf{H}^i(\mathbf{W})$, the source term $\mathbf{S}(\mathbf{W})$, and the HLL wave speeds $s^\pm$ in each cell at every RK stage. These quantities depend on both the spacetime metric and the evolved variables $\mathbf{W}$, as well as the primitive variables $\mathbf{Q} = (\rho, {\bm v}, p)^\top$. However, due to the nonlinear coupling introduced by relativistic effects, the primitive variables $\mathbf{Q}$ cannot be explicitly expressed in terms of the evolved variables $\mathbf{W}$. 
Thus, it is essential to recover the primitive variables $\mathbf{Q}$ from the evolved variables $\mathbf{W}$ during the computation. Typically, this involves solving a nonlinear algebraic equation to obtain an intermediate variable (e.g., pressure $p$), followed by calculating the remaining primitive variables using this intermediate variable and $\mathbf{W}$. This process is known as the {\bf primitive variable recovery problem}, a challenging aspect encountered in almost all numerical schemes for RHD, but not present in the non-relativistic case, where the primitive variables can be directly derived from conserved quantities.

In previous sections \ref{sec:PCPave} and \ref{sec:PCPlim}, we have discussed how to ensure the PCP property for the evolved variables $\mathbf{W}$ for both cell averages and point values. To guarantee that our schemes fully maintain the PCP property throughout the entire computational process, it is crucial to ensure that the recovered primitive variables also satisfy the physical constraints. In this subsection, we introduce efficient and provably PCP-convergent iteration algorithms, which extend those developed for the special relativistic case \cite{CW2022, CQW2024}. The PCP convergent algorithms ensure the approximately recovered primitive variables always meet the physical constraints, provided that the given evolved variables $\mathbf{W} \in \mathcal{G}^{(2)}$.

Given $\mathbf{W} = \sqrt{\chi}(D, {\bm m}\mathbf{\Theta}^\top, E)^\top =: (\widehat{D}, \widehat{{\bm m}}, \widehat{E})^\top$ and the spacetime metric information, the corresponding conserved quantities in the ADM formulation \eqref{eq: form2} are computed as
$$
\mathbf{U} =  (D, {\bm m}, E)^\top = \left(\frac{\widehat{D}}{\sqrt{\chi}}, \frac{1}{\sqrt{\chi}} \widehat{{\bm m}} \big( \mathbf{\Theta}^\top \big)^{-1}, \frac{\widehat{E}}{\sqrt{\chi}} \right)^\top \in {\mathcal G}_{\chi}^{(1)}.
$$
Using $\gamma = \left(1 - \frac{{\bm m}\mathbf{X}{\bm m}^\top}{E + p}\right)^{-\frac{1}{2}}$ and the inverse transformation of \eqref{pri2con}, the pressure $p(\mathbf{U})$ is determined by solving the following nonlinear algebraic equation:
\begin{align}\label{nonlinear eq1}
\phi_{\mathbf{U}}(p) := \frac{p}{\Gamma - 1} - E + \frac{{\bm m}\mathbf{X}{\bm m}^\top}{E + p} + D\sqrt{1 - \frac{{\bm m}\mathbf{X}{\bm m}^\top}{(E + p)^2}} = 0.
\end{align}
Once the approximate $p(\mathbf{U})$ is found, the velocity and rest-mass density are computed as
\begin{align} 
{\bm v}(\mathbf{U}) = \frac{{{\bm m}}}{{E} + p(\mathbf{U})},\qquad \rho(\mathbf{U}) = D \sqrt{1 - \|{\bm v}(\mathbf{U})\|_{\bf X}^2}. \label{nonlinear eq2}
\end{align}
Note that the function $\phi_{\mathbf{U}}(p)$ has the following properties:
\begin{align}\label{properties}
\phi_{\mathbf{U}}(0) < 0,\qquad \lim_{p \to +\infty} \phi_{\mathbf{U}}(p) = +\infty,\qquad \phi_{\mathbf{U}}'(p) > 0 \quad \forall p \in [0, +\infty).
\end{align}
Thus, $\phi_{\mathbf{U}}(p)$ is monotonically increasing with a unique positive root for $p \in [0, +\infty)$. Since obtaining an analytical expression for $p(\mathbf{U})$ from the nonlinear equation \eqref{nonlinear eq1} is generally intractable, iterative numerical methods are used.

For a given $\mathbf{W} \in \mathcal{G}^{(2)}$ or $\mathbf{U} \in \mathcal{G}_{\chi}^{(1)}$, an iterative algorithm is considered PCP if its iterative sequence $\{p^{(n)}\}_{n \geq 1}$ satisfies the following conditions:
\begin{enumerate}
    \item The sequence converges to the physical pressure $p(\mathbf{U})$, i.e., $\lim_{n \to +\infty} p^{(n)} = p(\mathbf{U})$.
    \item The sequence remains non-negative, i.e., $p^{(n)} \geq 0$ for all $n \geq 1$.
\end{enumerate}
The first condition ensures convergence, while the second guarantees stability and that the primitive variables remain PCP during the iterations. Indeed, if the recovered pressure $p(\mathbf{U}) > 0$, then $\|{\bm v}(\mathbf{U})\|_{\mathbf{X}} < 1$ and $\rho(\mathbf{U}) > 0$, meaning that maintaining a positive pressure is sufficient to ensure the PCP property for the recovered primitive variables.

Inspired by our previous work on the special RHD case \cite{CQW2024, CW2022}, we extend several PCP convergent algorithms to the primitive variable recovery process for GRHDs. 

\vspace{3mm}

\noindent\textbf{Algorithm \uppercase\expandafter{\romannumeral1}: Provably PCP, Linearly Convergent Hybrid Algorithm.}

This algorithm combines the bisection method with the fixed-point iteration method to recover primitive variables.

\vspace{2mm}

\textbf{Bisection Method:}  
From \eqref{nonlinear eq1}, we know that
\begin{align*}
p(\mathbf{U}) = (\Gamma - 1)\left(E - \frac{{\bf m} {\bf X} {\bf m}^\top}{E + p(\mathbf{U})} - D \sqrt{1 - \frac{{\bf m} {\bf X} {\bf m}^\top}{(E + p(\mathbf{U}))^2}}\right)
\le (\Gamma - 1)\left(E - D \sqrt{1 - \frac{{\bf m} {\bf X} {\bf m}^\top}{E^2}}\right) =: p_R^{(0)}.
\end{align*}
This shows that $p(\mathbf{U})$ lies within the interval $[0, p_R^{(0)}]$, which can be used as the initial range for the bisection method. Due to the monotonicity of $\phi_{\mathbf{U}}(p)$, the bisection method is convergent and PCP. The error estimate is given by
\begin{align*}
|p^{(n)} - p(\mathbf{U})| \le \frac{p_R^{(0)}}{2^{n+1}},
\end{align*}
where $p^{(n)}$ is the midpoint of the $n$-th iteration interval. This indicates that the method converges linearly with a contraction rate of $0.5$.

\vspace{2mm}

\textbf{Fixed-Point Iteration:} 
By rearranging the nonlinear equation \eqref{nonlinear eq1}, we derive an equivalent form:
\begin{align*}
p = p - (\Gamma - 1)\phi_{\mathbf{U}}(p),
\end{align*}
which suggests the following fixed-point iteration scheme:
\begin{align}\label{fixedpoint}
p^{(n)} = p^{(n-1)} - (\Gamma - 1)\phi_{\mathbf{U}}(p^{(n-1)}), \qquad n = 1, 2, \dots
\end{align}
For any $p > 0$ and $\Gamma \in (1,2]$, we have
\begin{align*}
0 \le 1 - (\Gamma - 1)\phi_{\mathbf{U}}'(p) = (\Gamma - 1)\frac{{\bf m} {\bf X} {\bf m}^\top}{(E + p)^2}\left(1 - \frac{D}{\sqrt{(E + p)^2 - {\bf m} {\bf X} {\bf m}^\top}}\right)
 \le (\Gamma - 1)\frac{{\bf m} {\bf X} {\bf m}^\top}{E^2} =: \delta < 1.
\end{align*}
This shows that the iterative function $p - (\Gamma - 1)\phi_{\mathbf{U}}(p)$ is monotonically increasing for $p \in [0, +\infty)$ and has a Lipschitz constant less than 1. By the Banach fixed-point theorem, the sequence $\{p^{(n)}\}_{n \geq 1}$ converges to $p(\mathbf{U})$ at a rate of $\delta$. In addition, $p^{(n)}$ remains positive as long as $p^{(0)} > 0$.

\vspace{2mm}

\textbf{Hybrid Strategy:} 
Both the bisection and fixed-point iteration methods converge linearly, with rates of $0.5$ and $\delta$, respectively. To improve convergence speed, we can construct a hybrid algorithm by selectively applying these two methods based on their contraction rates. Specifically:
\begin{align*} 
\begin{cases} 
\text{Apply the bisection method with initial interval } [0, p_R^{(0)}], & \text{if} \quad \delta > 0.5, \\
\text{Apply fixed-point iteration \eqref{fixedpoint} with } p^{(0)} = p_R^{(0)}/2, & \text{otherwise}. 
\end{cases}
\end{align*}
The procedure for this hybrid iteration algorithm is summarized in Algorithm \ref{alg1}.

\begin{algorithm}[H]
\caption{Provably PCP, Linearly Convergent Hybrid Algorithm} 
\label{alg1} 
\renewcommand{\algorithmicrequire}{\textbf{Input:}}
\renewcommand{\algorithmicensure}{\textbf{Output:}}
\begin{algorithmic}
\REQUIRE Given $\mathbf{W} \in \mathcal{G}^{(2)}$ or $\mathbf{U} = (D, {\bf m}, E)^\top \in \mathcal{G}_{\chi}^{(1)}$ and the spacetime metric information.  
$\epsilon_{r}$ is the machine epsilon, and $\epsilon_{tol}$ is the tolerance for error. 
\STATE $p_L \leftarrow 0$;
\STATE $p_R \leftarrow (\Gamma-1)\left(E - D \sqrt{1 - \frac{{\bf m} {\bf X} {\bf m}^\top}{E^2}}\right)$;
\STATE $p \leftarrow \frac{p_R}{2}$;
\STATE $\phi_0 \leftarrow \epsilon_{tol} + 1$;
\STATE Set $r = \min\left\{0.5, (\Gamma-1)\frac{{\bf m} {\bf X} {\bf m}^\top}{E^2} + \epsilon_r \right\},\, n = 0,\, N = \frac{\log(\epsilon_r/p)}{\log(r)}$;
\WHILE {$\phi_0 > \epsilon_{tol}$ \& $n \le N$}
\STATE Compute $\phi_{\mathbf{U}} \leftarrow \frac{p}{\Gamma -1} - E + \frac{{\bf m} {\bf X} {\bf m}^\top}{E + p} + D \sqrt{1 - \frac{{\bf m} {\bf X} {\bf m}^\top}{(E + p)^2}}$;
\IF{$r < 0.5$}
\STATE $p \leftarrow p - (\Gamma - 1)\phi_{\mathbf{U}}$;
\ELSE
\IF{$\phi_{\mathbf{U}} < 0$}
\STATE $p_L \leftarrow p$;
\ELSE
\STATE $p_R \leftarrow p$;
\ENDIF
\STATE $p \leftarrow \frac{p_L + p_R}{2}$;
\ENDIF
\STATE $n \leftarrow n + 1$; \quad $\phi_0 \leftarrow |\phi_{\mathbf{U}}|$;
\ENDWHILE
\ENSURE $p$ is an approximation to $p(\mathbf{U})$.
\end{algorithmic}
\end{algorithm}

The above iterative algorithm is linearly convergent. Next, we introduce a PCP convergent Newton method, which converges quadratically. 

\vspace{3mm}

\noindent\textbf{Algorithm \uppercase\expandafter{\romannumeral2}: Provably PCP, Quadratically Convergent Newton Algorithm}

\vspace{2mm}

\textbf{Step 1: Reformulation of the Nonlinear Equation \eqref{nonlinear eq1}.}

By multiplying both sides of equation \eqref{nonlinear eq1} by $(E + p)$, we obtain
\begin{align}\label{nonlinear eq4}
\widehat{\phi}_{\mathbf{U}}(p) := {\bf m} {\bf X} {\bf m}^\top + \left(E + p\right)\left(\frac{p}{\Gamma-1} - E\right) + D\sqrt{\left(E + p\right)^2 - {\bf m} {\bf X} {\bf m}^\top} = 0.
\end{align}

\vspace{2mm}

\textbf{Step 2: Providing a Robust Initial Value.}

The initial value for the Newton method is defined as
\begin{align}\label{eq:robustP0}
p^{(0)} =
\begin{cases}
0, & \text{if} \quad D E + {\bf m} {\bf X} {\bf m}^\top \ge E^2, \\
p_{\star}, & \text{otherwise},
\end{cases}
\end{align}
where
\begin{align}\label{eq:pstar}
p_\star = \frac{(\Gamma - 2)E + \sqrt{(2 - \Gamma)^2 E^2 - 4(\Gamma - 1)\left({\bf m} {\bf X} {\bf m}^\top + D\sqrt{E^2 - {\bf m} {\bf X} {\bf m}^\top} - E^2  \right)}}{2}.
\end{align}

\vspace{2mm}

\textbf{Step 3: Iterative Formula.}

The iterative sequence is given by
\begin{align}\label{eq:Newton}
p^{(n+1)} = p^{(n)} - \frac{\widehat{\phi}_{\mathbf{U}}(p^{(n)})}{\widehat{\phi}_{\mathbf{U}}'(p^{(n)})},
\end{align}
where
\begin{align*}
\widehat{\phi}_{\mathbf{U}}'(p) = \frac{2 p + E(2 - \Gamma)}{\Gamma - 1} + \frac{D\left(E + p\right)}{\sqrt{\left(E + p\right)^2 - {\bf m} {\bf X} {\bf m}^\top}}.
\end{align*}

Similar to the special RHD case \cite{CQW2024}, we can rigorously prove the following properties of the Newton iteration \eqref{eq:Newton} for solving $\widehat{\phi}_{\mathbf{U}}(p)$:
\begin{itemize}
    \item The Newton iteration \eqref{eq:Newton} is always PCP convergent when the initial value is $p^{(0)} = 0$. This ensures that the algorithm preserves the physical constraints from the start with this specific initial condition.
    
    \item If $D E + {\bf m} {\bf X} {\bf m}^\top < E^2$, then the Newton iteration \eqref{eq:Newton} is always PCP convergent for any non-negative initial value $p^{(0)} \geq 0$. In this case, the initial guess $p_\star$, as defined in \eqref{eq:pstar}, is typically closer to the true root $p(\mathbf{U})$ and may serve as a more efficient starting point compared to $p^{(0)} = 0$.
\end{itemize}
Consequently, the Newton iteration \eqref{eq:Newton}, when initialized with the robust starting value \eqref{eq:robustP0}, is provably PCP convergent.
Moreover, if an iteration value $p^{(n_*)} \in [ p(\mathbf{U}), +\infty )$ appears, the subsequent sequence $\{ p^{(n)} \}_{n \geq n_*}$ will converge monotonically to $p(\mathbf{U})$. This monotonicity is crucial for detecting and preventing oscillations in the Newton iteration when the sequence approaches $p(\mathbf{U})$. 
It can also be shown that $p(\mathbf{U})$ is not a repeated root of $\widehat{\phi}_{\mathbf{U}}(p)$. As a result, the Newton iteration \eqref{eq:Newton} converges quadratically. 

For the reader's convenience, the pseudo code for this Newton algorithm is summarized in Algorithm \ref{alg2}. 
Given its robustness and efficiency, this algorithm is highly recommended for practitioners and is employed in our simulations.

\begin{algorithm}[H]
\caption{Provably PCP, Quadratically Convergent Newton Algorithm}
\label{alg2}
\renewcommand{\algorithmicrequire}{\textbf{Input:}}
\renewcommand{\algorithmicensure}{\textbf{Output:}}
\begin{algorithmic}
\REQUIRE Given $\mathbf{W} \in \mathcal{G}^{(2)}$ or $\mathbf{U} = (D, {\bf m}, E)^\top \in \mathcal{G}_{\chi}^{(1)}$ with spacetime metric information. Set $\epsilon_{tol} = 10^{-14}$.
\IF{$DE + {\bf m} {\bf X} {\bf m}^\top \ge E^2$}
\STATE $p \leftarrow 0$.
\ELSE
\STATE $p \leftarrow \frac{(\Gamma - 2)E + \sqrt{(2 - \Gamma)^2 E^2 - 4(\Gamma - 1)\left({\bf m} {\bf X} {\bf m}^\top+ D\sqrt{E^2 - {\bf m} {\bf X} {\bf m}^\top}  - E^2  \right)}}{2}$.
\ENDIF
\STATE Set $z \leftarrow \epsilon_{tol} + 1$, $N_{osc} \leftarrow 0$, $\phi_0 \leftarrow 0$.
\WHILE{$|z| > \epsilon_{tol}$ \& $N_{osc} < 3$}
\STATE Compute $\phi \leftarrow {\bf m} {\bf X} {\bf m}^\top + \left(E + p\right)\left(\frac{p}{\Gamma - 1} - E\right) + D\sqrt{\left(E + p\right)^2 - {\bf m} {\bf X} {\bf m}^\top}$.
\STATE Compute $d\phi \leftarrow \frac{2 p + E(2 - \Gamma)}{\Gamma - 1} + \frac{D\left(E + p\right)}{\sqrt{\left(E + p\right)^2 - {\bf m} {\bf X} {\bf m}^\top}}$.
\STATE Update $z \leftarrow \frac{\phi}{d\phi}$; \quad $p \leftarrow p - z$.
\IF{$\phi_0 \times \phi < 0$}
\STATE $N_{osc} \leftarrow N_{osc} + 1$.
\ENDIF
\STATE Update $\phi_0 \leftarrow \phi$.
\ENDWHILE
\ENSURE $p$ is an approximation to $p(\mathbf{U})$.
\end{algorithmic}
\end{algorithm}

\section{Illustrations and Applications of PCP-OEDG Schemes to RHD under Various Spacetime Metrics}\label{sec4}

In this section, we demonstrate the application of the proposed PCP-OEDG method for the RHD equations under different spacetime metrics and coordinate systems.

\subsection{Application to Special RHD Equations under the Minkowski Metric in Cartesian Coordinates}

This subsection showcases the PCP-OEDG method applied to the special RHD equations in flat spacetime (Minkowski metric). For simplicity, we focus on 1D and 2D cases.

\subsubsection{1D Case}

We consider the 1D special RHD system in the spatial domain $\Omega = [a,b]$:
\begin{align}\label{1DSRHD}
\frac{\partial \mathbf{U}}{\partial t} + \frac{\partial \mathbf{F}(\mathbf{U})}{\partial x} = \mathbf{0},
\end{align}
where $\mathbf{U} = (D, m, E)^{\top}$ is the vector of conserved variables, and $\mathbf{F} = (Dv, m v + p, m)^{\top}$ is the flux. The computational domain $\Omega$ is divided into $N_{x}$ uniform cells:
\begin{align}
a = x_{\frac{1}{2}} < x_{\frac{3}{2}} < \cdots < x_{N_{x} + \frac{1}{2}} = b,
\end{align}
where the $j$-th cell is denoted by $K_j = [x_{j-\frac{1}{2}}, x_{j+\frac{1}{2}}]$. The center of cell $K_j$ is $x_{j} = \frac{x_{j-\frac{1}{2}} + x_{j+\frac{1}{2}}}{2}$, and the cell width is $h_{x} = \frac{b-a}{N_{x}}$. 
We define the Legendre polynomials $\{\widehat{\phi}^{(q)}(\xi): 0 \leq q \leq m\}$ on the reference cell $\widehat{K} = [-1,1]$:
\begin{align*}
\widehat{\phi}^{(0)} = 1, \quad \widehat{\phi}^{(1)} = \xi, \quad \widehat{\phi}^{(2)} = \frac{1}{2}(3\xi^2 - 1), \quad \widehat{\phi}^{(3)} = \frac{1}{2}(5\xi^3 - 3\xi), \quad \dots
\end{align*}
Let $F_{K_j}: \widehat{K} \to K_j$ be the linear mapping defined by $x = F_{K_j}(\xi) := x_{j} + \frac{h_{x}}{2}\xi$ with $\xi \in \widehat{K}$. The basis functions on $K_j$ are then given by ${\phi}^{(q)}_{K_j}(x) = \widehat{\phi}^{(q)}(F_{K_j}^{-1}(x))$. In this 1D case, $N = \text{dim}(\mathbb{P}^m) = m+1$.

The semi-discrete DG scheme for \eqref{1DSRHD} is to find $\mathbf{U}_h \in [V_h^m]^3$ such that
\begin{align*}
\int_{K_j} \frac{\partial \mathbf{U}_h(t,x)}{\partial t} v_h(x) \,\mathrm{d}x = \int_{K_j} \mathbf{F}(\mathbf{U}_h(t,x)) v_h'(x) \,\mathrm{d}x - \widehat{\mathbf{F}}_{j+\frac{1}{2}}v_h(x_{j+\frac{1}{2}}^-) + \widehat{\mathbf{F}}_{j-\frac{1}{2}}v_h(x_{j-\frac{1}{2}}^+), \quad \forall v_h \in V_h^m,
\end{align*}
where the numerical fluxes are given by
\[
\widehat{\mathbf{F}}_{j+\frac{1}{2}} = \widehat{\mathbf{F}}(\mathbf{U}_{j+ \frac{1}{2}}^-, \mathbf{U}_{j+ \frac{1}{2}}^+), \quad \widehat{\mathbf{F}}_{j-\frac{1}{2}} = \widehat{\mathbf{F}}(\mathbf{U}_{j- \frac{1}{2}}^-, \mathbf{U}_{j- \frac{1}{2}}^+),
\]
and $\mathbf{U}_{j+ \frac{1}{2}}^{\pm} = \mathbf{U}_h(t, x_{j+\frac{1}{2}}^{\pm})$ represent the left and right limiting values at $x_{j+\frac{1}{2}}$. For the 1D case, the HLL flux is
\begin{align}\label{HLL flux for RHD}
\widehat{\mathbf{F}}^{\mathrm{HLL}}(\mathbf{U}^-, \mathbf{U}^+) = \frac{s^+ \mathbf{F}(\mathbf{U}^-) - s^- \mathbf{F}(\mathbf{U}^+) + s^+s^- (\mathbf{U}^+ - \mathbf{U}^-)}{s^+ - s^-},
\end{align}
where $s^-, s^+$ are the wave speeds, computed as degenerate versions of \eqref{s-} and \eqref{s+} with $\alpha = 1$ and the eigenvalues $\lambda_{x}^{(0)} = \frac{v-c_s}{1-v c_s}$ and ${\lambda_{x}^{(2)}}=\frac{v+c_s}{1+v c_s}$.

The semi-discrete DG scheme can be written in matrix form as
\begin{align}\label{1DRHDDG}
\frac{\mathrm{d}\mathcal{U}_{K_j}(t)}{\mathrm{d}t} = \left(\mathbf{A}^{F}_{K_j} - \mathbf{A}^{\widehat{F}}_{K_j}\right) \mathbf{A}^{-1}_{K_j}, \qquad  j=1,\,2,\,\cdots,\,N_x,
\end{align}
where $\mathcal{U}_{K_j}(t) \in \mathbb{R}^{3 \times (m+1)}$ contains the modal coefficients of $\mathbf{U}_h$ on $K_j$, and the mass matrix $\mathbf{A}_{K_j}$ is defined by
\[
\mathbf{A}_{K_j} = \text{diag}\left\{ \frac{h_x}{2i+1}\right\}_{0\le i \le m}.
\]
The matrices $\mathbf{A}^{F}_{K_j}$ and $\mathbf{A}^{\widehat{F}}_{K_j}$ are
\begin{align}
\mathbf{A}^{F}_{K_j} &= |K_j| \sum_{p=1}^Q \omega_{p} \mathbf{F}(\mathcal{U}_{K_j}(t)\mathbf{\Phi}_{K_j}(x_{j,p}))(\mathbf{\Phi}'_{K_j})^{\top}(x_{j,p}) \approx \int_{K_j} \mathbf{F}(\mathbf{U}_h(t,x))(\mathbf{\Phi}'_{K_j})^{\top} \,\mathrm{d}x, \label{AFK}\\
\mathbf{A}^{\widehat{F}}_{K_j} &= -\widehat{\mathbf{F}}_{j+\frac{1}{2}}\mathbf{\Phi}_{K_j}(x_{j+\frac{1}{2}}^-) + \widehat{\mathbf{F}}_{j-\frac{1}{2}}\mathbf{\Phi}_{K_j}(x_{j-\frac{1}{2}}^+),
\end{align}
where $\{x_{j,p}\}_{p=1}^Q$ are the Gauss quadrature points and $\{\omega_{p}\}_{p=1}^Q$ are the corresponding weights.

The scheme \eqref{1DRHDDG} on cell $K_j$ can be expressed as an ODE system: 
\[
\frac{\mathrm{d}\mathcal{U}_{K_j}(t)}{\mathrm{d}t} = \mathcal{L}_{K_j}\left(\{\mathcal{U}_{K}\}_{K \in \Lambda_{K_j}}\right), \qquad \Lambda_{K_j} = \left\{K_{j-1}, K_{j}, K_{j+1}\right\},
\]
where $\mathcal{L}_{K_j}\left(\{\mathcal{U}_{K}\}_{K \in \Lambda_{K_j}}\right) = \left(\mathbf{A}^{F}_{K_j} - \mathbf{A}^{\widehat{F}}_{K_j}\right)\mathbf{A}^{-1}_{K_j}$. 
By discretizing this ODE system in time using a RK method and applying the OE procedure after each stage, we obtain the OEDG method. For example, using a third-order explicit RK scheme, the OEDG method reads:
\begin{align}\label{eq:1D-OEDG}
\begin{cases}
\mathcal{U}_{\sigma,{K_j}}^{n,0} = \mathcal{U}_{\sigma,{K_j}}^{n}, \\
\mathcal{U}_{{K_j}}^{n,1} = \mathcal{U}_{\sigma,{K_j}}^{n,0} + \Delta t_n \mathcal{L}_{K_j}\left(\{\mathcal{U}_{\sigma,{K}}^{n,0}\}_{{K} \in \Lambda_{K_j}}\right), &\mathcal{U}_{\sigma,{K_j}}^{n,1} = \mathcal{U}_{{K_j}}^{n,1}\exp\left(-\Delta t_n \mathbf{D}\left(\{\mathcal{U}_{K}^{n,1}\}_{K \in \Lambda_{K_j}}\right)\right), \\
\mathcal{U}_{{K_j}}^{n,2} = \frac{3}{4}\mathcal{U}_{\sigma,K_j}^{n,0} + \frac{1}{4}\left(\mathcal{U}_{\sigma,K_j}^{n,1} + \Delta t_n \mathcal{L}_{K_j}\left(\{\mathcal{U}_{\sigma,K}^{n,1}\}_{K \in \Lambda_{K_j}}\right)\right), &\mathcal{U}_{\sigma,K_j}^{n,2} = \mathcal{U}_{K_j}^{n,2}\exp\left(-\Delta t_n \mathbf{D}\left(\{\mathcal{U}_{K}^{n,2}\}_{K \in \Lambda_{K_j}}\right)\right), \\
\mathcal{U}_{K_j}^{n,3} = \frac{1}{3}\mathcal{U}_{\sigma,K_j}^{n,0} + \frac{2}{3}\left(\mathcal{U}_{\sigma,K_j}^{n,2} + \Delta t_n \mathcal{L}_{K_j}\left(\{\mathcal{U}_{\sigma,K}^{n,2}\}_{K \in \Lambda_{K_j}}\right)\right), &\mathcal{U}_{\sigma,K_j}^{n,3} = \mathcal{U}_{K_j}^{n,3}\exp\left(-\Delta t_n \mathbf{D}\left(\{\mathcal{U}_{K}^{n,3}\}_{K \in \Lambda_{K_j}}\right)\right),
\end{cases}
\end{align}
where
\[
\mathbf{D}\left(\{\mathcal{U}_K^{n,\ell+1}\}_{K \in \Lambda_{K_j}}\right) = \text{diag}\left\{d_0\left(\{\mathcal{U}_K^{n,\ell+1}\}_{K \in \Lambda_{K_j}}\right), \dots, d_m\left(\{\mathcal{U}_K^{n,\ell+1}\}_{K \in \Lambda_{K_j}}\right)\right\}, \quad \ell = 0, 1, 2,
\]
with
\[
d_0\left(\{\mathcal{U}_K^{n,\ell+1}\}_{K \in \Lambda_{K_j}}\right) =0,\quad d_q\left(\{\mathcal{U}_K^{n,\ell+1}\}_{K \in \Lambda_{K_j}}\right) =\sum_{r =0}^q \delta_{K_j}^r(\mathbf{U}^{n,\ell+1}_h), \quad 1 \leq q \leq m.
\]
The damping coefficient is
\[
\delta_{K_j}^r(\mathbf{U}^{n,\ell+1}_h) = \frac{\eta_j }{h_x} \max_{0 \leq i \leq 2} \left\{\sigma_{K_j}^r(U_h^{(i)})\right\},
\]
where $U_h^{(i)}$ is the $(i+1)$-th component of $\mathbf{U}^{n,\ell+1}_h$, $\eta_j$ is the spectral radius of the Jacobian matrix $\frac{\partial \mathbf{F}}{\partial \mathbf{U}}(\overline{\mathbf{U}}^{n,\ell+1}_{j})$, and
\[
\sigma^r_{K_j}(U_h^{(i)}) =
\begin{cases}
0, & \text{if} \quad U_h^{(i)} \equiv \text{avg}_\Omega(U_h^{(i)}), \\
\displaystyle
\frac{(2r+1)h_x^r}{2(2m-1)r!} \frac{\left|[\![\partial^r_x U_h^{(i)}]\!]_{j-\frac{1}{2}}\right| + \left|[\![\partial^r_x U_h^{(i)}]\!]_{j+\frac{1}{2}}\right|}{\|U_h^{(i)} - \text{avg}_\Omega(U_h^{(i)})\|_{L^{\infty}(\Omega)}}, & \text{otherwise}.
\end{cases}
\]

For special RHD, the GQL representation of the admissible state set is
\[
\mathcal{G}^* = \left\{\mathbf{U} = (D, m, E)^{\top}: \mathbf{U} \cdot \widehat{\bm{\xi}} > 0 \,\, \forall \, \widehat{\bm{\xi}} \in \widehat{\bm{\Xi}}_*\right\},
\]
with the vector set $\widehat{\bm{\Xi}}_*$ defined in \eqref{thea}. Let $\left\{x_{j,p}^{\text{GL}}\right\}_{p =1}^L$ represent the $L$-point Gauss-Lobatto quadrature nodes on $K_j$, where $L = \lceil \frac{m+3}{2} \rceil$. The corresponding weights are denoted by $\{\omega_p^{\text{GL}}\}_{p=1}^L$. The cell average admits the convex decomposition
\begin{align}\label{con avg}
\overline{\mathbf{U}}_j^n = \sum_{p=2}^{L-1}\omega_p^{\text{GL}} \mathbf{U}_h^n(x_{j,p}^{\text{GL}}) + \omega_1^{\text{GL}} \left(\mathbf{U}_{j-\frac{1}{2}}^+ + \mathbf{U}_{j+\frac{1}{2}}^-\right).
\end{align}
Thus, the point set in \eqref{ssk} becomes
\[
\mathbb{S}_{j} = \left\{x_{j,p}^{\text{GL}}\right\}_{p=1}^L \cup \left\{x_{j,p}\right\}_{p=1}^Q,
\]
which includes all the quadrature points involved in \eqref{AFK} and \eqref{con avg}.

As a direct consequence of Theorem \ref{pcp1}, we obtain the following result on the PCP property of our OEDG schemes for 1D special RHD.

\begin{corollary}
Let  $s_{j+\frac{1}{2}}^+ = \max \left\{\lambda_x^{(2)} (\mathbf{U}_{j+\frac12}^-), \lambda_x^{(2)} (\mathbf{U}_{j+\frac12}^+), 0 \right\}$ and $s_{j+\frac{1}{2}}^- = \min \left\{\lambda_x^{(0)} (\mathbf{U}_{j+\frac12}^-), \lambda_x^{(0)} (\mathbf{U}_{j+\frac12}^+), 0 \right\}$ be the HLL wave speeds at $x = x_{j+\frac12}$. 
If the OEDG solution (with a PCP limiter) satisfies
\[
\mathbf{U}_\sigma^n(x) \in \mathcal{G}^* \quad \forall x \in \mathbb{S}_{j}, \, \forall j,
\]
then the updated cell averages $\overline{\mathbf{U}}_j^{n+1}$, as computed by the OEDG scheme with the forward Euler time discretization, belong to the admissible state set $\mathcal{G}^*$, or equivalently $\mathcal{G}^{(2)}$, under the CFL-type condition
\[
\frac{\Delta t_n}{h_x} \max_j \left\{s_{j+\frac{1}{2}}^+ - s_{j+\frac{1}{2}}^-\right\} < \omega_1^{\tt GL}.
\]
\end{corollary}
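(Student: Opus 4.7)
The plan is to obtain the corollary as a direct specialization of Theorem \ref{pcp1} to the 1D special relativistic setting, where the Minkowski metric simplifies many quantities considerably. First I would verify that, under the flat spacetime with $g_{\mu\nu}=\mathrm{diag}\{-1,1\}$, we have $\alpha=1$, $\beta=0$, $\chi=1$, $\mathbf{X}=\mathbf{I}$, and $\mathbf{L}=\mathbf{I}$, so that $\mathbf{W}=\mathbf{U}$, $\mathbf{H}=\mathbf{F}$, and the admissible sets coincide: $\mathcal{G}_\chi^{(1)}=\mathcal{G}^{(2)}=\mathcal{G}^*$. The HLL wave speeds $s^\pm_{j+1/2}$ defined in the corollary match \eqref{s-}--\eqref{s+} with $\lambda_x^{(0)}$ and $\lambda_x^{(2)}$ in place of the general eigenvalues, so the hypothesis \eqref{wkl-HLL} of Theorem \ref{pcp1} is satisfied by construction.

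Next I would handle the source term: since \eqref{1DSRHD} is a homogeneous conservation law, $\mathbf{S}(\mathbf{W})\equiv\mathbf{0}$, which makes $q(\mathbf{S}(\mathbf{W}))=0\ge 0$ and hence $\lambda_{\mathbf{W}_{K,p}}=0$ in Lemma \ref{lem3} for every quadrature point. Consequently, the first term inside the parentheses of the CFL condition \eqref{PCP_CFL} vanishes, and only the flux contribution remains.

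The third step is to translate the abstract decomposition \eqref{CAD} into the 1D Gauss--Lobatto decomposition \eqref{con avg}. In this case the ``interfaces'' of $K_j$ are the two endpoints $x_{j\pm 1/2}$, each of which carries a single quadrature point with weight $\omega_{\mathcal{E},p}=1$ and $(d-1)$-dimensional measure $|\mathcal{E}|=1$, while the corresponding decomposition weights are $\widehat{\omega}_{\mathcal{E},p}=\omega_1^{\tt GL}$ at both endpoints. Since $|K_j|=h_x$, the factor $\frac{|\mathcal{E}|\omega_{\mathcal{E},p}}{|K|\widehat{\omega}_{\mathcal{E},p}}$ in \eqref{PCP_CFL} reduces to $\frac{1}{h_x\,\omega_1^{\tt GL}}$. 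Finally, I would need to check that the point set $\mathbb{S}_j$ in the corollary coincides with the general set $\mathbb{S}_K$ of \eqref{ssk}, which is immediate because the interior volume quadrature uses $\{x_{j,p}\}_{p=1}^Q$ and the cell-average decomposition uses the Gauss--Lobatto nodes $\{x_{j,p}^{\tt GL}\}_{p=1}^L$ (the endpoints of which serve as the interface quadrature points).

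Substituting these specializations into \eqref{PCP_CFL} yields $\frac{\Delta t_n}{h_x\,\omega_1^{\tt GL}}\max_j(s^+_{j+1/2}-s^-_{j+1/2})\le 1$, which rearranges exactly to the stated CFL condition, and Theorem \ref{pcp1} then gives $\overline{\mathbf{U}}_j^{n+1}\in\mathcal{G}^*$. I do not anticipate a serious obstacle here, since the entire argument is a bookkeeping exercise of matching notation; the only subtlety is being careful that the HLL speeds defined with $\alpha=1$ in the corollary indeed meet the bound \eqref{wkl-HLL} at each interface $x_{j+1/2}$ from both the left and right trace values, which is ensured by the $\max/\min$ definitions of $s^\pm_{j+1/2}$.
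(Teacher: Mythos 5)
Your proposal is correct and follows exactly the route the paper intends: the paper presents this corollary as a direct consequence of Theorem \ref{pcp1}, and your specialization (flat metric giving $\mathbf{W}=\mathbf{U}$, vanishing source so $\lambda_{\mathbf{W}_{K,p}}=0$, the 1D Gauss--Lobatto decomposition with $|\mathcal{E}|=\omega_{\mathcal{E},p}=1$, $\widehat{\omega}_{\mathcal{E},p}=\omega_1^{\tt GL}$, $|K|=h_x$) reproduces precisely the stated CFL condition. The only cosmetic difference is that your substitution yields the non-strict bound $\le\omega_1^{\tt GL}$ from \eqref{PCP_CFL}, of which the corollary's strict inequality is a sufficient case, so nothing is lost.
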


\subsubsection{2D Case}\label{sec:srhd2D}

We consider the 2D special RHD system
\begin{align}\label{2DSRHD}
\frac{\partial \mathbf{U}}{\partial t} + \frac{\partial \mathbf{F}_1(\mathbf{U})}{\partial x} + \frac{\partial \mathbf{F}_2(\mathbf{U})}{\partial y} = \mathbf{0},
\end{align}
where the vector of conserved variables is $\mathbf{U} = (D, m_1, m_2, E)^{\top}$, and the fluxes in the $x$- and $y$-directions are
\begin{align}\label{2D flux}
\mathbf{F}_1 = (Dv_1, m_1v_1 + p, m_2v_1, m_1)^{\top}, \qquad 
\mathbf{F}_2 = (Dv_2, m_1v_2 + p, m_2v_2 + p, m_2)^{\top}.
\end{align} 

The computational domain $\Omega$ is divided into $N_{x}\times N_{y}$ uniform rectangular cells $K_{ij} = [x_{i-\frac{1}{2}}, x_{i+\frac{1}{2}}] \times [y_{j-\frac{1}{2}}, y_{j+\frac{1}{2}}]$, for $1 \le i \le N_{x}$ and $1 \le j \le N_{y}$, where $h_{x}$ and $h_{y} $ represent the spatial step sizes in the $x$ and $y$ directions, respectively. The center of cell $K_{ij}$ is denoted by $(x_{i}, y_{j})$, with $x_{i} = \frac12 ({x_{i-\frac{1}{2}} + x_{i+\frac{1}{2}}})$ and $y_{j} = \frac12 ({y_{j-\frac{1}{2}} + y_{j+\frac{1}{2}}})$. 
The basis function vectors $\{\hat{\boldsymbol{\phi}}^{(q)}(\boldsymbol{\xi}): 0 \le q \le m\}$ on the reference cell $\widehat{K} = [-1,1]^2$ are given by 
\begin{align*}
\hat{\boldsymbol{\phi}}^{(0)} = (1)^{\top}, \qquad 
\hat{\boldsymbol{\phi}}^{(1)} = \left(\xi_x, \,\xi_y\right)^{\top}, \qquad 
\hat{\boldsymbol{\phi}}^{(2)} = \left(\xi_x^2 - \frac{1}{3}, \,\xi_x \xi_y, \,\xi_y^2 - \frac{1}{3}\right)^{\top}, \quad \dots
\end{align*}
Let $F_{K_{ij}}: \widehat{K} \to K_{ij}$ be the linear mapping defined by
\[
x = x_{i} + \frac{h_{x}}{2}\xi_x, \quad y = y_{j} + \frac{h_{y}}{2}\xi_y, \quad \xi_x, \xi_y \in [-1,1].
\]
The basis functions $\{\boldsymbol{\phi}^{(q)}_{K_{ij}}(\mathbf{x}): 0 \le q \le m\}$ of $\mathbb{P}^m(K_{ij})$ are obtained via this mapping. In the 2D case, the dimension of the polynomial space $\mathbb{P}^m$ is $N = \text{dim}(\mathbb{P}^m) = \frac12 (m+1)(m+2)$.

The semi-discrete DG method for \eqref{2DSRHD} is to find $\mathbf{U}_h \in [V_h^m]^4$ such that
\begin{align*}
\int_{K_{ij}} \frac{\partial \mathbf{U}_h(t, \mathbf{x})}{\partial t} v_h(\mathbf{x}) \,\mathrm{d}\mathbf{x} = \int_{K_{ij}} \mathbf{F}(\mathbf{U}_h(t, \mathbf{x})) \cdot \nabla v_h(\mathbf{x}) \,\mathrm{d}\mathbf{x} - \sum_{\mathcal{E} \in \partial K_{ij}} \int_{\mathcal{E}} \widehat{\mathbf{F}}(\mathbf{U}_h(t, \mathbf{x}); \mathbf{n}_{\mathcal{E}}) v_h(\mathbf{x}) \,\mathrm{d}s, \quad \forall v_h \in V_h^m,
\end{align*}
where $\mathcal{E}$ denotes the edges of the cell $K_{ij}$ and $\mathbf{n}_{\mathcal{E}}$ is the outward unit normal vector on $\mathcal{E}$. The numerical flux $\widehat{\mathbf{F}}$ is taken as the HLL flux. 
This scheme can be expressed in matrix form in terms of the modal coefficients $\mathcal{U}_{K_{ij}}(t) \in \mathbb{R}^{4 \times N}$. Discretizing this system further in time using a RK method and applying the OE procedure after each RK stage results in a fully-discrete OEDG scheme. 

Similar to the 1D case, the 2D OE procedure at the $\ell$-th RK stage is implemented as follows:
\begin{align*}
\mathcal{U}_{\sigma,K_{ij}}^{n,\ell+1} = \mathcal{U}_{K_{ij}}^{n,\ell+1} \exp\left(-\Delta t_n \mathbf{D}\Big(\{\mathcal{U}_{K}^{n,\ell+1}\}_{K \in \Lambda_{K_{ij}}}\Big)\right),
\end{align*}
where the damping operator $\mathbf{D}\Big(\{\mathcal{U}_{K}^{n,\ell+1}\}_{K \in \Lambda_{K_{ij}}}\Big)$ is defined as
\[
\mathbf{D}(\mathbf{\mathcal{U}}) = \text{diag}\left( d_0\Big(\{\mathcal{U}_{K}^{n,\ell+1}\}_{K \in \Lambda_{K_{ij}}}\Big), d_1\Big(\{\mathcal{U}_{K}^{n,\ell+1}\}_{K \in \Lambda_{K_{ij}}}\Big) \mathbf{I}_{N_1}, \dots, d_m\Big(\{\mathcal{U}_{K}^{n,\ell+1}\}_{K \in \Lambda_{K_{ij}}}\Big) \mathbf{I}_{N_m} \right),
\]
with $\mathbf{I}_{N_q}$ being the identity matrix of size $N_q$, where $N_q$ represents the number of basis functions of $q$ degree $\boldsymbol{\phi}^{(q)}_{K_{ij}}(\mathbf{x})$. The components of the diagonal matrix $\mathbf{D}$ are defined as
\[
d_0(\{\mathcal{U}_K^{n,\ell+1}\}_{K \in \Lambda_{K_{ij}}}) = 0, \quad d_q(\{\mathcal{U}_K^{n,\ell+1}\}_{K \in \Lambda_{K_{ij}}}) = \sum_{r=0}^q \delta_{K_{ij}}^r(\mathbf{U}_h^{n,\ell+1}), \quad 1 \le q \le m,
\]
where the damping coefficient $\delta_{K_{ij}}^r(\mathbf{U}_h)$ is defined by
\[
\delta_{K_{ij}}^r(\mathbf{U}_h) = \max_{0 \le s \le 3} \left(\frac{\eta_{ij}^{x} \left(\sigma_{i + \frac{1}{2}, j}^r(U_h^{(s)}) + \sigma_{i - \frac{1}{2}, j}^r(U_h^{(s)})\right)}{h_x} + \frac{\eta_{ij}^{y} \left(\sigma_{i, j+\frac{1}{2}}^r(U_h^{(s)}) + \sigma_{i, j-\frac{1}{2}}^r(U_h^{(s)})\right)}{h_y}\right).
\]
Here, $\eta_{ij}^{x}$ and $\eta_{ij}^{y}$ represent the spectral radii of the Jacobian matrices in the $x$ and $y$ directions, respectively. The terms $\sigma_{i + \frac{1}{2}, j}^r(U_h^{(s)})$ and $\sigma_{i, j+ \frac{1}{2}}^r(U_h^{(s)})$ are given by
\[
\sigma^r_{i + \frac{1}{2}, j}(U_h^{(s)}) =
\begin{cases}
0, & \text{if} \quad U_h^{(s)} \equiv \mathrm{avg}_\Omega(U_h^{(s)}), \\
\displaystyle\frac{(2r+1)h_x^r}{2(2m-1)r!}  \frac{\frac{1}{h_y}\int_{y_{j-\frac{1}{2}}}^{y_{j+\frac{1}{2}}} \sqrt{\sum_{|\mathbf{a}| = r}[\![\partial^{\mathbf{a}}U_h^{(s)}]\!]_{i + \frac{1}{2}, j}^2}\,\mathrm{d}y}{\|U_h^{(s)} - \mathrm{avg}_\Omega(U_h^{(s)})\|_{L^{\infty}(\Omega)}}, & \text{otherwise},
\end{cases}
\]
\[
\sigma^r_{i, j + \frac{1}{2}}(U_h^{(s)}) =
\begin{cases}
0, & \text{if} \quad U_h^{(s)} \equiv \mathrm{avg}_\Omega(U_h^{(s)}), \\
\displaystyle \frac{(2r+1)h_y^r}{2(2m-1)r!} \frac{\frac{1}{h_x}\int_{x_{i-\frac{1}{2}}}^{x_{i+\frac{1}{2}}} \sqrt{\sum_{|\mathbf{a}| = r} [\![\partial^{\mathbf{a}}U_h^{(s)}]\!]_{i, j + \frac{1}{2}}^2} \,\mathrm{d}x}{\|U_h^{(s)} - \mathrm{avg}_\Omega(U_h^{(s)})\|_{L^{\infty}(\Omega)}}, & \text{otherwise}.
\end{cases}
\]

With the forward Euler time discretization, the evolution equation for the cell average on a rectangular cell $K_{ij}$ can be written as:
\begin{align}\label{avg 3}
\overline{\mathbf{U}}_{{ij}}^{n+1} 
= \overline{\mathbf{U}}_{{ij}}^n &- \frac{\Delta t_n}{h_x}\sum_{p=1}^{Q}\omega_p^G\left[\widehat{\mathbf{F}}_1\left(\mathbf{U}_{i + \frac{1}{2},p}^-, \mathbf{U}_{i + \frac{1}{2},p}^+\right) 
 - \widehat{\mathbf{F}}_1\left(\mathbf{U}_{i - \frac{1}{2},p}^-, \mathbf{U}_{i - \frac{1}{2},p}^+\right)\right] \notag \\
 &- \frac{\Delta t_n}{h_y}\sum_{p=1}^{Q}\omega_p^G\left[\widehat{\mathbf{F}}_2\left(\mathbf{U}_{p,j + \frac{1}{2}}^-, \mathbf{U}_{p,j + \frac{1}{2}}^+\right) 
 - \widehat{\mathbf{F}}_2\left(\mathbf{U}_{p,j - \frac{1}{2}}^-, \mathbf{U}_{p,j - \frac{1}{2}}^+\right)\right],
\end{align}
where $\widehat{\mathbf{F}}_1$ and $\widehat{\mathbf{F}}_2$ are the HLL numerical fluxes in the $x$ and $y$ directions, respectively. 
After using the PCP limiter, the OEDG solution is guaranteed to satisfy the physical constraints at the cell interface quadrature points:
\begin{align*}
\mathbf{U}_{i \pm \frac{1}{2},p}^- = \mathbf{U}_h^n(x_{i \pm \frac{1}{2}}^-,y_{j,p}^G) \in \mathcal{G}^*,\quad 
\mathbf{U}_{i \pm \frac{1}{2},p}^+ = \mathbf{U}_h^n(x_{i \pm \frac{1}{2}}^+,y_{j,p}^G)\in \mathcal{G}^*,\\
\mathbf{U}_{p,j \pm \frac{1}{2}}^- = \mathbf{U}_h^n(x_{i,p}^G,y_{j \pm \frac{1}{2}}^-)\in \mathcal{G}^*,\quad
\mathbf{U}_{p,j \pm \frac{1}{2}}^+ = \mathbf{U}_h^n(x_{i,p}^G,y_{j \pm \frac{1}{2}}^+)\in \mathcal{G}^*,
\end{align*}
where the superscripts $\pm$ on $x$, $y$, and $\mathbf{U}$ indicate the right/left-hand side limits; 
$\{x_{i,p}^G\}_{p=1}^Q$ and $\{y_{j,p}^G\}_{p=1}^Q$ denote the $Q$-point Gauss quadrature nodes in the intervals $[x_{i-\frac{1}{2}},\,x_{i+\frac{1}{2}}]$ and $[y_{j-\frac{1}{2}},\,y_{j+\frac{1}{2}}]$, respectively. The corresponding weights are denoted by $\{\omega_p^G\}_{p=1}^Q$, satisfying $\sum_{p=1}^Q \omega_p^G = 1$. We choose $Q = m+1$ for the standard Gauss quadrature. 

As demonstrated in Section \ref{sec:PCPave}, the convex decomposition of the cell averages, as given by \eqref{CAD}, is essential in the analysis and design of  PCP schemes. The feasible convex decomposition is not unique, and different decompositions lead to different point sets $\mathbb{S}_{K_{ij}}$ in the PCP limiter, resulting in varying theoretical PCP CFL conditions and impacting computational cost and efficiency. 
On rectangular meshes, a classical decomposition was proposed in \cite{ZS2010}, based on the tensor product of Gauss quadrature points and Gauss--Lobatto quadrature points. However, this decomposition is not optimal \cite{CDW2023}. Recently, the optimal decomposition that achieves the largest PCP CFL number was systematically studied in \cite{CDW2024}.

For $\mathbb{P}^2$ and $\mathbb{P}^3$ spaces, the optimal convex decomposition of cell averages is given as follows (for higher-degree polynomial spaces, see \cite{CDW2024}):
\begin{align}\label{avg 5}
\overline{\mathbf{U}}_{{ij}}^{n} &= \frac{\mu_1}{2}\sum_{p=1}^Q \omega_p^G\left[\mathbf{U}_{i - \frac{1}{2},p}^+ + \mathbf{U}_{i + \frac{1}{2},p}^-\right]  + \frac{\mu_2}{2} \sum_{p=1}^Q \omega_p^G\left[\mathbf{U}_{p,j - \frac{1}{2}}^+ + \mathbf{U}_{p,j + \frac{1}{2}}^-\right] + \omega \sum_r \mathbf{U}_h^n(\tilde{x}_r,\tilde{y}_r),
\end{align}
where the internal nodes are given by
\[
{\mathbb{S}}^{\tt CDW}_{ {ij}} = \left\{ (\tilde{x}_r,\tilde{y}_r) \right\} =
\begin{cases}
\left(x_i, y_j \pm \frac{h_y}{2\sqrt{3}} \sqrt{\frac{\zeta_* - \zeta_2}{\zeta_*}}\right), \quad \text{if} \quad \zeta_1 \ge \zeta_2, \\
\left(x_i \pm \frac{h_x}{2\sqrt{3}} \sqrt{\frac{\zeta_* - \zeta_1}{\zeta_*}}, y_j\right), \quad \text{if} \quad \zeta_1 < \zeta_2, 
\end{cases}
\]
with
\[
\zeta_1 = \frac{a_x}{h_x}, \quad \zeta_2 = \frac{a_y}{h_y}, \quad \zeta_* = \max\{\zeta_1, \zeta_2\}, \quad \psi = \zeta_1 + \zeta_2 + 2\zeta_*, \quad \mu_1 = \frac{\zeta_1}{\psi}, \quad \mu_2 = \frac{\zeta_2}{\psi}, \quad \omega = \frac{\zeta_*}{\psi}.
\]
The point set \eqref{ssk}, involved in the PCP limiter, becomes
\begin{align}\label{ssk2}
\mathbb{S}_{{ij}} = 
{\mathbb{S}}^{\tt CDW}_{{ij}} \cup  
\left\{ (x_{i \pm \frac{1}{2}}, y_{j,p}^G) \right\}_{p=1}^Q \cup 
\left\{ (x_{i,p}^G, y_{j \pm \frac{1}{2}}) \right\} 
\cup \left(\left\{x_{i,p}\right\}_{p = 1}^Q \otimes \left\{y_{j,p}\right\}_{p = 1}^Q\right).
\end{align}

For $1 \le p \le Q$, the HLL wave speeds are given by
\begin{align*}
s_{i \pm \frac{1}{2}, p}^+ &= \max\left\{\lambda_{x}^{(4)}(\mathbf{U}^{-}_{i \pm \frac{1}{2}, p}), \lambda_{x}^{(4)}(\mathbf{U}^{+}_{i \pm \frac{1}{2}, p}), 0\right\}, \quad 
s_{i \pm \frac{1}{2}, p}^- = \min\left\{\lambda_x^{(0)}(\mathbf{U}^{-}_{i \pm \frac{1}{2}, p}), \lambda_x^{(0)}(\mathbf{U}^{+}_{i \pm \frac{1}{2}, p}), 0\right\}, \\
s_{p, j \pm \frac{1}{2}}^+ &= \max\left\{\lambda_y^{(4)}(\mathbf{U}^{-}_{p, j \pm \frac{1}{2}}), \lambda_y^{(4)}(\mathbf{U}^{+}_{p, j \pm \frac{1}{2}}), 0\right\}, \quad 
s_{p, j \pm \frac{1}{2}}^- = \min\left\{\lambda_y^{(0)}(\mathbf{U}^{-}_{p, j \pm \frac{1}{2}}), \lambda_y^{(0)}(\mathbf{U}^{+}_{p, j \pm \frac{1}{2}}), 0\right\}.
\end{align*}
Following Theorem \ref{pcp1}, we obtain the following weak PCP property for the updated cell averages.

\begin{corollary}
If the OEDG solution (with a PCP limiter) satisfies
\[
\mathbf{U}_\sigma^n(x, y) \in \mathcal{G}^* \quad \forall (x, y) \in \mathbb{S}_{ij}, \, \forall i,j,
\]
then the updated cell averages $\overline{\mathbf{U}}_{ij}^{n+1}$, computed by the 2D $\mathbb{P}^2$- or $\mathbb{P}^3$-based OEDG scheme for \eqref{2DSRHD} with the forward Euler time discretization , belong to the admissible state set $\mathcal{G}^*$ (or equivalently $\mathcal{G}^{(2)}$), under the CFL-type condition:
\[
\Delta t_n \left(\frac{a_x}{h_x} + \frac{a_y}{h_y} + 2 \max\left\{\frac{a_x}{h_x}, \frac{a_y}{h_y}\right\}\right) \leq \frac12,
\]
where
\[
a_x := \max_{i,p} \left\{ s_{i+\frac{1}{2},p}^+ - s_{i+\frac{1}{2},p}^- \right\}, \quad a_y := \max_{j,p} \left\{ s_{p,j+\frac{1}{2}}^+ - s_{p,j+\frac{1}{2}}^- \right\}.
\]
\end{corollary}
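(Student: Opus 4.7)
The plan is to adapt the argument of Theorem \ref{pcp1} to the 2D flat-spacetime setting, where $\alpha=1$, $\chi=1$, $\mathbf{S}\equiv\mathbf{0}$, $\mathbf{U}=\mathbf{W}$, and $\mathbf{F}_i=\mathbf{H}^i$, so that $\mathcal{G}^{(2)}$ coincides with the GQL set $\mathcal{G}^*$ displayed just before the corollary. Via that GQL representation I only need to show $\overline{\mathbf{U}}_{ij}^{n+1}\cdot\widehat{\bm\xi}>0$ for every $\widehat{\bm\xi}\in\widehat{\bm\Xi}_*$. Starting from the cell-average update \eqref{avg 3}, I will invoke Lemma \ref{lem:1} (with $\chi=1$) at every Gauss point on each of the four faces of $K_{ij}$; the hypothesis $\mathbf{U}_\sigma^n(x,y)\in\mathcal{G}^*$ on $\mathbb{S}_{ij}$ guarantees that the lemma applies at each face trace and that the CDW interior points $\mathbf{U}_h^n(\tilde x_r,\tilde y_r)$ belong to $\mathcal{G}^*$.

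Following the chain of inequalities that leads to \eqref{wkl331}, the east-face estimate is
\[
-\widehat{\mathbf{F}}_1\!\bigl(\mathbf{U}^-_{i+1/2,p},\mathbf{U}^+_{i+1/2,p}\bigr)\cdot\widehat{\bm\xi}\ge -\bigl(s^+_{i+1/2,p}-s^-_{i+1/2,p}\bigr)\,\mathbf{U}^-_{i+1/2,p}\cdot\widehat{\bm\xi}.
\]
For the west face I will apply the same lemma with outward normal $-\mathbf{e}_x$, using the elementary identity $\widehat{\mathbf{F}}_1(\mathbf{U}^-,\mathbf{U}^+)=-\widehat{\mathbf{H}}^{\mathrm{HLL}}(\mathbf{U}^+,\mathbf{U}^-;-\mathbf{e}_x)$ and the fact that reversing the normal exchanges $\lambda^{(0)}_x\leftrightarrow-\lambda^{(3)}_x$, so that the quantity $s^+-s^-$ is preserved. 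The resulting bound features the interior trace $\mathbf{U}^+_{i-1/2,p}$ on the right; analogous estimates hold at the north and south faces. Summing over all faces and bounding $s^+_{\cdot}-s^-_{\cdot}\le a_x$ or $a_y$ as appropriate yields
\begin{align*}
\overline{\mathbf{U}}_{ij}^{n+1}\cdot\widehat{\bm\xi}\ge{}&\overline{\mathbf{U}}_{ij}^{n}\cdot\widehat{\bm\xi} - \frac{\Delta t_n\,a_x}{h_x}\sum_{p=1}^{Q}\omega_p^G\bigl[\mathbf{U}^+_{i-1/2,p}+\mathbf{U}^-_{i+1/2,p}\bigr]\cdot\widehat{\bm\xi}\\
&- \frac{\Delta t_n\,a_y}{h_y}\sum_{p=1}^{Q}\omega_p^G\bigl[\mathbf{U}^+_{p,j-1/2}+\mathbf{U}^-_{p,j+1/2}\bigr]\cdot\widehat{\bm\xi}.
\end{align*}

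The last step is to substitute the optimal CDW decomposition \eqref{avg 5} for $\overline{\mathbf{U}}_{ij}^{n}$: the coefficient multiplying each east--west face trace becomes $\tfrac12\mu_1-\Delta t_n a_x/h_x$, that multiplying each north--south trace becomes $\tfrac12\mu_2-\Delta t_n a_y/h_y$, while the interior-point contributions retain the positive weight $\omega$. Since $\mu_1/2=(a_x/h_x)/(2\psi)$ and $\mu_2/2=(a_y/h_y)/(2\psi)$ with $\psi=a_x/h_x+a_y/h_y+2\max\{a_x/h_x,a_y/h_y\}$, both non-negativity requirements collapse to the single inequality $\Delta t_n\,\psi\le 1/2$, which is exactly the stated CFL bound. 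Every surviving summand then pairs a non-negative coefficient with a strictly positive $\mathbf{U}\cdot\widehat{\bm\xi}$, and the strict inequality delivered by Lemma \ref{lem:1} gives $\overline{\mathbf{U}}_{ij}^{n+1}\cdot\widehat{\bm\xi}>0$ for every $\widehat{\bm\xi}\in\widehat{\bm\Xi}_*$. The only genuinely delicate part of the argument is the outward-normal/state-swap bookkeeping at the west and south faces when transporting Lemma \ref{lem:1}; once one verifies that the wave-speed gap $s^+-s^-$ is invariant under that swap, the rest of the proof is purely algebraic matching between the flux estimates and the CDW weights.
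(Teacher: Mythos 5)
Your proposal is correct and follows essentially the same route as the paper: the paper obtains this corollary directly by instantiating Theorem \ref{pcp1} (whose proof contains exactly the face estimates, the normal-reversal/state-swap bookkeeping, and the weight-ratio bound you spell out) with the CDW convex decomposition \eqref{avg 5}, whose weights $\mu_1/2$, $\mu_2/2$, $\omega$ reproduce your CFL arithmetic $\Delta t_n\,\psi\le 1/2$. Your term-by-term coefficient matching is just an explicit unwinding of the paper's max-ratio bound \eqref{wkl332}, so there is no substantive difference.
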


\subsection{Application to Axisymmetric RHD Equations in Cylindrical Coordinates}\label{sec:AxisRHD}

The line element for the Minkowski metric in cylindrical coordinates is given by:
\[
\mathrm{d}s^2 = - \mathrm{d}t^2 + \mathrm{d}r^2 + r^2 \mathrm{d}\phi^2 + \mathrm{d}z^2.
\]
In this subsection, we discuss the application of the OEDG method to the axisymmetric RHD equations (with no dependence on the azimuthal angle $\phi$) in cylindrical coordinates $(r,z)$:
\begin{align}\label{Axi RHD}
\frac{\partial \mathbf{U}}{\partial t} + \frac{\partial \mathbf{F}_1}{\partial r} + \frac{\partial \mathbf{F}_2}{\partial z} = {\bf S}(\mathbf{U}, r),
\end{align}
where the source term is given by
\[
{\bf S}(\mathbf{U}, r) = -\frac{1}{r}(Dv_1, m_1v_1, m_2v_2, m_1)^{\top},
\]
and the definitions of the conservative vector $\mathbf{U}$ and fluxes $\mathbf{F}_i$ for $i = 1, 2$ are the same as in \eqref{2D flux}. The fluid variables retain their definitions from the 2D special RHD system, except that the coordinates $x$ and $y$ now correspond to the radial and axial directions in cylindrical coordinates $(r, z)$. 

Assume the computational domain in the $(r, z)$ plane is divided into uniform rectangular meshes. The OE and PCP techniques for the axisymmetric RHD equations \eqref{Axi RHD} are analogous to those discussed in Section \ref{sec:srhd2D}. To ensure the weak PCP property $\overline{\mathbf{U}}_{{ij}}^{n+1} \in \mathcal{G}^*$, we consider the evolution equations for the cell averages of the OEDG solutions to \eqref{Axi RHD}, which read
\begin{align}\label{avg 4}
\overline{\mathbf{U}}_{{ij}}^{n+1} 
= \overline{\mathbf{U}}_{{ij}}^n &- \frac{\Delta t_n}{h_r}\sum_{p =1}^{Q}\omega_p^G\left[\widehat{\mathbf{F}}_1\left(\mathbf{U}_{i + \frac{1}{2},p}^-, \mathbf{U}_{i + \frac{1}{2},p}^+\right) 
 - \widehat{\mathbf{F}}_1\left(\mathbf{U}_{i - \frac{1}{2},p}^-, \mathbf{U}_{i - \frac{1}{2},p}^+\right)\right] \notag \\
 &- \frac{\Delta t_n}{h_z}\sum_{p=1}^{Q}\omega_p^G\left[\widehat{\mathbf{F}}_2\left(\mathbf{U}_{p,j + \frac{1}{2}}^-, \mathbf{U}_{p,j + \frac{1}{2}}^+\right) 
 - \widehat{\mathbf{F}}_2\left(\mathbf{U}_{p,j - \frac{1}{2}}^-, \mathbf{U}_{p,j - \frac{1}{2}}^+\right)\right] \notag \\
 &+ \Delta t_n \sum_{p=1}^Q \sum_{q=1}^Q \omega_p^G  \omega_q^G {\bf S}\left(\mathbf{U}_h^n(r_{i,p}^G,z_{q,j}^G), r_{i,p}^G\right),
\end{align}
where $\widehat{\mathbf{F}}_1$ and $\widehat{\mathbf{F}}_2$ are the numerical fluxes in the radial and axial directions, respectively. The optimal convex decomposition of the cell average $\overline{\mathbf{U}}_{ij}^n$ and the point set $\mathbb{S}_{{ij}}$ for the PCP limiter are analogous to \eqref{avg 5} and \eqref{ssk2}, respectively.

\begin{corollary}\label{eq:2Daxis}
Consider the 2D $\mathbb{P}^2$- or $\mathbb{P}^3$-based OEDG scheme. 
If the numerical solution  (with a PCP limiter) satisfies
\[
\mathbf{U}_\sigma^n(r, z) \in \mathcal{G}^* \quad \forall (r, z) \in \mathbb{S}_{ij}, \, \forall i,j,
\]
then the updated cell averages $\overline{\mathbf{U}}_{ij}^{n+1}$, given by \eqref{avg 4}, belong to the admissible state set $\mathcal{G}^*$ (or equivalently $\mathcal{G}^{(2)}$), under the CFL-type condition:
\[
\Delta t_n \left(\lambda_{\mathbf{U}}^* + 2 \frac{a_r}{h_r} + 2\frac{a_z}{h_z} + 4 \max\left\{\frac{a_r}{h_r}, \frac{a_z}{h_z}\right\}\right) \leq 1,
\]
where
\begin{align*}
&{\lambda_{\mathbf{U}}^*: = \max_{i,j,p,q}\lambda_{\mathbf{U}_{ij,pq}},
\qquad \lambda_{\mathbf{U}_{ij,pq}} = \min_{{i,j,p,q} \in \mathcal{P}_v}\left\{\frac{r_{i,p}^Gq(\mathbf{U}_{ij,pq})}{\left(p(\mathbf{U}_{ij,pq}) + q(\mathbf{U}_{ij,pq})\right)\left|v_1(\mathbf{U}_{ij,pq})\right|}\right\},
\qquad \mathbf{U}_{ij,pq} = \mathbf{U}_h^n(r_{i,p}^G,z_{q,j}^G),}\\
&\mathcal{P}_v = \left\{(i,j,p,q)\,:\, v_1(U_{ij,pq})>0\right\}, \qquad 
a_r := \max_{i,p} \left\{ s_{i+\frac{1}{2},p}^+ - s_{i+\frac{1}{2},p}^- \right\}, \qquad a_z := \max_{j,p} \left\{ s_{p,j+\frac{1}{2}}^+ - s_{p,j+\frac{1}{2}}^- \right\}.
\end{align*}
\end{corollary}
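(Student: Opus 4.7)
The plan is to follow the same GQL-based template as in Theorem~\ref{pcp1} and its 2D Cartesian adaptation, but with careful handling of the geometric source term $\mathbf{S}(\mathbf{U},r)=-\tfrac{1}{r}(Dv_1,\,m_1v_1,\,m_2v_2,\,m_1)^{\top}$. Concretely, I would rewrite the admissibility requirement $\overline{\mathbf{U}}_{ij}^{n+1}\in\mathcal{G}^{(2)}$ as $\overline{\mathbf{U}}_{ij}^{n+1}\cdot\widehat{\boldsymbol{\xi}}>0$ for every $\widehat{\boldsymbol{\xi}}\in\widehat{\bm{\Xi}}_*$, using the GQL representation $\mathcal{G}^*$ recalled after Lemma~\ref{Coro_1}. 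Throughout, I exploit that in flat spacetime $\chi\equiv 1$, $\alpha\equiv 1$, and $\mathbf{H}^i=\mathbf{F}_i$, so Lemma~\ref{Coro_1} applies directly to the Cartesian flux terms.

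Next, I would decompose $\overline{\mathbf{U}}_{ij}^{n}$ into the optimal convex combination \eqref{avg 5} that is compatible with both the boundary Gauss nodes appearing in the HLL flux sums and the interior set ${\mathbb{S}}^{\tt CDW}_{ij}$. Substituting this decomposition into \eqref{avg 4}, the flux contributions can be grouped edge-by-edge, so that each edge Gauss node $(x_{i\pm 1/2},y_{j,p}^G)$ or $(x_{i,p}^G,y_{j\pm 1/2})$ appears together with an upwind coefficient $\mu_1/2$ or $\mu_2/2$. Dotting with any $\widehat{\boldsymbol{\xi}}\in\widehat{\bm{\Xi}}_*$ and invoking Lemma~\ref{Coro_1} exactly as in the chain of estimates leading to \eqref{wkl332}, each edge combination $\pm s^{\pm}\mathbf{U}^{\pm}\cdot\widehat{\boldsymbol{\xi}}-\mathbf{F}_i\mathbf{n}_{\mathcal{E}}\cdot\widehat{\boldsymbol{\xi}}$ is nonnegative, and the residual coefficient multiplying $\overline{\mathbf{U}}_{ij}^n\cdot\widehat{\boldsymbol{\xi}}$ reduces to $1-\Delta t_n\bigl(2a_r/h_r+2a_z/h_z+4\max\{a_r/h_r,a_z/h_z\}\bigr)$, up to contributions from the interior nodes of ${\mathbb{S}}^{\tt CDW}_{ij}$ which only help (they equal positive weight times $\mathbf{U}_h(\tilde x_r,\tilde y_r)\cdot\widehat{\boldsymbol{\xi}}>0$). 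This isolates the factor $2a_r/h_r+2a_z/h_z+4\max\{a_r/h_r,a_z/h_z\}$ appearing in the CFL bound.

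For the source quadrature in \eqref{avg 4}, I would apply Lemma~\ref{lem3} pointwise: at each Gauss node $(r_{i,p}^G,z_{q,j}^G)$ with $v_1>0$, the source $\mathbf{S}(\mathbf{U}_{ij,pq},r_{i,p}^G)$ generically satisfies $q(\mathbf{S})<0$, so there exists a unique $\lambda_{\mathbf{U}_{ij,pq}}>0$ solving $q\bigl(\mathbf{U}_{ij,pq}+\lambda_{\mathbf{U}_{ij,pq}}^{-1}\mathbf{S}\bigr)=0$, yielding $\mathbf{S}\cdot\widehat{\boldsymbol{\xi}}\ge-\lambda_{\mathbf{U}_{ij,pq}}\mathbf{U}_{ij,pq}\cdot\widehat{\boldsymbol{\xi}}$ for every $\widehat{\boldsymbol{\xi}}\in\widehat{\bm\Xi}_*$; for $v_1\le 0$ a direct check using the explicit form of $\mathbf{S}$ and the Cauchy--Schwarz inequality shows $\mathbf{S}\cdot\widehat{\boldsymbol{\xi}}\ge 0$, so $\lambda_{\mathbf{U}_{ij,pq}}=0$, which is why the min in the definition of $\lambda_{\mathbf{U}}^*$ is restricted to $\mathcal{P}_v$. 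The explicit closed form stated in the corollary would then be obtained by solving the scalar equation $q(\mathbf{U}+\lambda^{-1}\mathbf{S})=0$: using $m_1v_1+m_2v_2=(E+p)\|\mathbf{v}\|^2$, $E+p=D^2/q\cdot(\cdot)$ type relations together with the identity $E-\sqrt{D^2+\|\mathbf{m}\|^2}=q$, and the fact that $\mathbf{S}$ is proportional to $-v_1/r$ times a vector built from $(D,m_1,m_2,E+p)$, the equation collapses to a linear (rather than quadratic) equation in $\lambda^{-1}$.

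Combining the flux and source estimates, $\overline{\mathbf{U}}_{ij}^{n+1}\cdot\widehat{\boldsymbol{\xi}}$ is bounded below by a positive multiple of $\overline{\mathbf{U}}_{ij}^n\cdot\widehat{\boldsymbol{\xi}}>0$ provided $\Delta t_n\bigl(\lambda_{\mathbf{U}}^*+2a_r/h_r+2a_z/h_z+4\max\{a_r/h_r,a_z/h_z\}\bigr)\le 1$, giving the stated CFL condition. The main obstacle I anticipate is the third step: carrying out the algebraic reduction of $q(\mathbf{U}+\lambda^{-1}\mathbf{S})=0$ to the clean formula in the corollary, while simultaneously verifying that in the $v_1\le 0$ branch the source contribution is sign-preserving for every $\widehat{\boldsymbol{\xi}}\in\widehat{\bm\Xi}_*$ (not merely for the basis vector $\boldsymbol{\xi}_1$); this will require a Cauchy--Schwarz argument tailored to the specific block structure of $\mathbf{S}$, analogous to Step 1 of the proof of Lemma~\ref{GQL}.
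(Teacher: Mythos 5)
Your proposal is essentially the paper's own route: this corollary is just Theorem \ref{pcp1} specialized to the axisymmetric system, using Lemma \ref{Coro_1} (with $\alpha=1$, $\chi=1$ in flat spacetime) for the HLL flux terms, the optimal convex decomposition \eqref{avg 5} to produce the factor $2a_r/h_r+2a_z/h_z+4\max\{a_r/h_r,a_z/h_z\}$, and Lemma \ref{lem3} for the geometric source, including the $v_1\le 0$ case where $q(\mathbf{S}(\mathbf{U},r))\ge 0$ forces $\lambda=0$. One caution on your final step: solving $q(\mathbf{U}+\lambda^{-1}\mathbf{S})=0$ is indeed linear in $\theta=v_1/(r\lambda)$ and yields $\lambda=\left(p(\mathbf{U})+q(\mathbf{U})\right)v_1/\left(r\,q(\mathbf{U})\right)$, which is the reciprocal of the quantity displayed in the corollary (that displayed expression is the admissible time-step scale $1/\lambda$), so keep this inversion straight when matching your computation to the stated formula.
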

 
\subsection{Application to GRHD Equations in Kerr--Schild Coordinates}

In this subsection, we consider the application of the PCP-OEDG method to GRHD simulations near a rotating Kerr black hole. Using the standard Boyer--Lindquist coordinates $(t, r, \theta, \phi)$, we transform to the Kerr--Schild (KS) coordinates $(\tilde{t}, r, \theta, \tilde{\phi})$ via the following coordinate transformations:
\begin{align*}
\mathrm{d}\tilde{\phi} &= \mathrm{d}\phi + \frac{a}{r^2 - 2Mr + a^2}\,\mathrm{d}r, \quad 
\mathrm{d}\tilde{t} = \mathrm{d}t + \left(\frac{1+Y}{1 + Y - Z}\right)\mathrm{d}r,
\end{align*}
where $M$ is the mass of the black hole and $a$ is its angular momentum per unit mass. In the KS coordinate system, the Kerr line element is expressed as \cite{FIP1999}:
\begin{align}\label{line element}
\mathrm{d}s^2 &= -\left(1-\frac{2Mr}{\varrho^2}\right)\mathrm{d}\tilde{t}^2 - \frac{4Mar\sin^2\theta}{\varrho^2}\mathrm{d}\tilde{t}\,\mathrm{d}\tilde{\phi} + \frac{4Mr}{\varrho^2}\,\mathrm{d}\tilde{t}\,\mathrm{d}r \notag \\
&\quad + \left(1+\frac{2Mr}{\varrho^2}\right)\mathrm{d}r^2 - 2a\left(1 + \frac{2Mr}{\varrho^2}\right)\sin^2\theta\,\mathrm{d}r\,\mathrm{d}\tilde{\phi} \notag \\
&\quad + \varrho^2\,\mathrm{d}\theta^2 + \sin^2\theta\left[\varrho^2 + a^2\left(1+\frac{2Mr}{\varrho^2}\right)\sin^2\theta\right]\,\mathrm{d}\tilde{\phi}^2,
\end{align}
with
\begin{align*}
\varrho^2 = r^2 + a^2\cos^2\theta, \quad Y = \frac{a^2\sin^2\theta}{\varrho^2}, \quad Z = \frac{2Mr}{\varrho^2}.
\end{align*}
The shift vector $\beta^i$ and the lapse function $\alpha$ are given by
\begin{align*}
\beta^i = (Z, 0, -a\sin^2\theta\,Z), \quad \alpha^2 = \frac{1}{1+Z}.
\end{align*}
The symmetric spatial 3-metric in the (3+1) ADM formulation is expressed as
\begin{align*}
\mathbf{X} = (\chi^{ij})_{1 \leq i,j \leq 3}=
\begin{bmatrix}
Z+1 & 0 & -a\sin^2\theta(Z+1) \\
0 & \varrho^2 & 0 \\
-a\sin^2\theta(Z+1) & 0 & \varrho^2 \sin^2\theta(1 + Y(1+Z))
\end{bmatrix},
\end{align*}
whose Cholesky decomposition is $\mathbf{X} = \mathbf{\Theta}^\top \mathbf{\Theta}$ with
\begin{align*}
\mathbf{\Theta} =
\begin{bmatrix}
\sqrt{Z+1} & 0 & \frac{(-a\sin^2\theta(Z+1))}{\sqrt{Z+1}} \\
0 & \varrho & 0 \\
0 & 0 &  \mathbf{\Theta}_{33}
\end{bmatrix},
\end{align*}
where $\mathbf{\Theta}_{33} = \sqrt{\varrho^2 \sin^2\theta(1 + Y(1+Z)) - \frac{[-a\sin^2\theta(Z+1)]^2}{Z+1}}.$

We restrict the hydrodynamic equations to the equatorial plane of the Kerr spacetime, $\theta = \frac{\pi}{2}$. In this case, the spacetime 3-metric $g_{\mu\nu}$ and its corresponding contravariant metric $g^{\mu\nu}$, in units where the black hole mass $M=1$, can be derived from \eqref{line element} and the transformation rule for contravariant tensors:
\begin{align}\label{metric}
g_{\mu\nu} =
\begin{bmatrix}
\frac{2-r}{r} & \frac{2}{r}  & -\frac{2a}{r} \\
\frac{2}{r} & \frac{2+r}{r}  & \frac{-a(r+2)}{r} \\
-\frac{2a}{r} & \frac{-a(r+2)}{r}  & \frac{r^3 + ra^2 +2a^2}{r}
\end{bmatrix}, \quad
g^{\mu\nu} =
\begin{bmatrix}
- \frac{2+r}{r} & \frac{2}{r}  & 0 \\
\frac{2}{r} & \frac{a^2 + r^2 - 2r}{r^2}  & \frac{a}{r^2} \\
0 & \frac{a}{r^2}  & \frac{1}{r^2}
\end{bmatrix}.
\end{align}
This type of metric is regular at the event horizon, facilitating the numerical computation of matter flows near the black hole. The spatial metric and its Cholesky decomposition simplify to
\begin{align} \label{smetric}
\mathbf{X} = 
\begin{bmatrix}
 \frac{2+r}{r}  & \frac{-a(r+2)}{r} \\
\frac{-a(r+2)}{r}  & \frac{r^3 + ra^2 + 2a^2}{r}
\end{bmatrix}, \qquad
\mathbf{\Theta} = 
\begin{bmatrix}
 \sqrt{\frac{2+r}{r}}  & -a{\sqrt{\frac{2+r}{r}}} \\
 0  & r
\end{bmatrix}.
\end{align}
From \eqref{metric} and \eqref{smetric}, we have
\begin{align}
\chi = \det(\chi_{ij}) = r(r+2), \qquad g = \det(g_{\mu\nu}) = -r^2.
\end{align}
In this setting, the (3+1) ADM form of the GRHD equations, as in \eqref{eq: form2}, becomes
\begin{align}\label{eq:bh}
\frac{\partial \mathbf{U}}{\partial t} + \frac{\partial (\alpha\mathbf{G}^r(\mathbf{U}))}{\partial r} + \frac{\partial (\alpha\mathbf{G}^{\tilde{\phi}}(\mathbf{U}))}{\partial \tilde{\phi}} = \mathbf{R}(\mathbf{U}),
\end{align}
where $\alpha = \sqrt{\frac{r}{r+2}}$ is the lapse function, and
\begin{align*}
\mathbf{U} &= (D, m_r, m_{\tilde{\phi}}, E)^{\top}, \qquad \mathbf{R}(\mathbf{U}) = (R_1, R_2, R_3, R_4)^{\top}, \\
\mathbf{G}^r &= (D\tilde{\bm{v}}^r, m_r\tilde{\bm{v}}^r + p, m_{\tilde{\phi}}\tilde{\bm{v}}^r, E\tilde{\bm{v}}^r + p\bm{v}^r)^{\top}, \\
\mathbf{G}^{\tilde{\phi}} &= (D\tilde{\bm{v}}^{\tilde{\phi}}, m_r\tilde{\bm{v}}^{\tilde{\phi}}, m_{\tilde{\phi}}\tilde{\bm{v}}^{\tilde{\phi}} + p, (E+p) \tilde{\bm{v}}^{\tilde{\phi}})^{\top}.
\end{align*}
Detailed expressions of the source term $\mathbf{R}(\mathbf{U})=(R_1,R_2,R_3,R_4)^\top$ 
are provided in \cite{FIP1999} and omitted here for brevity.

The conservative vector $\mathbf{U}$ should remain in the physically admissible state set
\begin{align*}
\mathcal{G}_{\chi}^{(1)} = \left\{\mathbf{U} = (D, m_r, m_{\tilde{\phi}}, E)^{\top} : D > 0, ~~ q_{\chi}(\mathbf{U}) > 0\right\},
\end{align*}
where 
\[
q_{\chi}(\mathbf{U}) = E - \sqrt{D^2 + r^2 m_{\tilde{\phi}}^2 + \frac{(r+2)(m_r - am_{\tilde{\phi}})^2}{r}}.
\]
Our PCP-OEDG schemes are based on the W-form of \eqref{eq:bh}, which can be written as
\begin{align}\label{W-form}
\frac{\partial \mathbf{W}}{\partial t} + \frac{\partial \mathbf{H}^{r}}{\partial r} + \frac{\partial \mathbf{H}^{\tilde{\phi}}}{\partial \tilde{\phi}} = \mathbf{S},
\end{align}
with
\begin{align*}
\mathbf{W} &= \left(\sqrt{r(r+2)}D,\,(r+2)(m_r - am_{\tilde{\phi}}),\,r\sqrt{r(r+2)}m_{\tilde{\phi}},\,\sqrt{r(r+2)}E\right)^{\top}, \\
\mathbf{H}^r &= \left(\alpha rD\tilde{\bm{v}}^r,\,r(m_r\tilde{\bm{v}}^r + p - am_{\tilde{\phi}}\tilde{\bm{v}}^r),\,\alpha r^2 m_{\tilde{\phi}}\tilde{\bm{v}}^r,\,\alpha r\left[E\tilde{\bm{v}}^r + p\bm{v}^r\right]\right)^{\top}, \\
\mathbf{H}^{\tilde{\phi}} &= \left(\alpha r D\tilde{\bm{v}}^{\tilde{\phi}},\,r \left[m_r\tilde{\bm{v}}^{\tilde{\phi}} - a(m_{\tilde{\phi}}\tilde{\bm{v}}^{\tilde{\phi}} + p)\right],\,\alpha r^2(m_{\tilde{\phi}}\tilde{\bm{v}}^{\tilde{\phi}} + p),\,\alpha r(E+p)\tilde{\bm{v}}^{\tilde{\phi}}\right)^{\top}, \\
\mathbf{S} &= \left(rR_1,\,\sqrt{1/\chi}(am_{\tilde{\phi}}\tilde{\bm{v}}^r - m_r\tilde{\bm{v}}^r - p) + \sqrt{\chi}(R_2 - aR_3),\,r^2R_3 + rm_{\tilde{\phi}}\tilde{\bm{v}}^r ,\,rR_4\right)^{\top}.
\end{align*}
For the W-form \eqref{W-form}, the admissible state set for the evolved variables $\mathbf{W}$ is spacetime-independent and given by
\begin{align*}
\mathcal{G}^{(2)} = \left\{\mathbf{W} = (\widehat{D}, \widehat{m}_r, \widehat{m}_{\tilde{\phi}}, \widehat{E})^{\top} : \widehat{D} > 0, ~~ q(\mathbf{W}) = \widehat{E} - \sqrt{\widehat{D}^2 + \widehat{m}_r^2 + \widehat{m}_{\tilde{\phi}}^2} > 0\right\}.
\end{align*}

Similar to Corollary \ref{eq:2Daxis}, we have the following theoretical result. 

\begin{corollary}
	Assume the computational domain in the $(r, \tilde{\phi})$ plane is divided into uniform rectangular meshes. If the OEDG solution (with a PCP limiter) satisfies
	\[
	\mathbf{W}_\sigma^n(r, \tilde{\phi}) \in \mathcal{G}^* \quad \forall (r, \tilde{\phi}) \in \mathbb{S}_{ij}, \, \forall i,j,
	\]
	then the updated cell averages $\overline{\mathbf{W}}_{ij}^{n+1}$, computed by the 2D $\mathbb{P}^2$- or $\mathbb{P}^3$-based OEDG scheme for the W-form \eqref{W-form} of the GRHD equations in Kerr--Schild coordinates with the forward Euler time discretization, belong to the admissible state set $\mathcal{G}^*$ (or equivalently $\mathcal{G}^{(2)}$), under the CFL-type condition:
	\[
	\Delta t_n \left(\lambda_{\max} + 2 \frac{a_r}{h_r} + 2 \frac{a_{\tilde{\phi}}}{h_{\tilde{\phi}}} + 4 \max\left\{\frac{a_r}{h_r}, \frac{a_{\tilde{\phi}}}{h_{\tilde{\phi}}}\right\}\right) \leq 1,
	\]
	where $\lambda_{\max}$ is the maximum value of $\{\lambda_{\mathbf{W}_{ijpq}}\}$ over the Gauss points $(r_{i,p}^G, \tilde{\phi}_{j,q}^G)$ for all $i,j,p,q$, and $\lambda_{\mathbf{W}}$ is defined as in Lemma \ref{lem3}. 
\end{corollary}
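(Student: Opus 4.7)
The proof follows directly from the general weak PCP framework established in Theorem \ref{pcp1}, specialized to the rectangular-mesh case with the optimal cell-average convex decomposition of \cite{CDW2024}. The plan is first to write out the forward Euler update \eqref{first_order00} for the cell averages $\overline{\bf W}_{ij}^{n+1}$ of the OEDG scheme applied to the W-form \eqref{W-form}, where the flux quadrature on each edge uses Gauss points $\{r_{i,p}^G\}$ or $\{\tilde\phi_{j,p}^G\}$ with weights $\{\omega_p^G\}$, and the volume quadrature for the source $\mathbf{S}$ uses the tensor-product Gauss rule with nodes $(r_{i,p}^G,\tilde\phi_{j,q}^G)$. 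Then I would invoke the optimal convex decomposition \eqref{avg 5} of \cite{CDW2024}, which for $\mathbb{P}^2$- and $\mathbb{P}^3$-spaces expresses $\overline{\mathbf{W}}_{ij}^n$ as a positive combination of the edge Gauss values (with weights $\tfrac{\mu_1}{2}\omega_p^G$ or $\tfrac{\mu_2}{2}\omega_p^G$) and a few interior nodes in $\mathbb{S}^{\tt CDW}_{ij}$; this decomposition uses all the quadrature points appearing in the flux integrals, so the point set $\mathbb{S}_{ij}$ in \eqref{ssk2} coincides with those where the PCP limiter enforces $\mathbf{W}_\sigma^n\in\mathcal{G}^*$.

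Next, with the HLL speeds chosen as
\begin{align*}
s^+_{i+\frac12,p} &= \alpha\max\bigl\{\lambda_{\mathbf{e}_r}^{(d+1)}(\mathbf{W}^-_{i+\frac12,p}),\lambda_{\mathbf{e}_r}^{(d+1)}(\mathbf{W}^+_{i+\frac12,p}),0\bigr\},\\
s^-_{i+\frac12,p} &= \alpha\min\bigl\{\lambda_{\mathbf{e}_r}^{(0)}(\mathbf{W}^-_{i+\frac12,p}),\lambda_{\mathbf{e}_r}^{(0)}(\mathbf{W}^+_{i+\frac12,p}),0\bigr\},
\end{align*}
and analogously in the $\tilde\phi$-direction, the hypothesis \eqref{wkl-HLL} of Theorem \ref{pcp1} is satisfied, so Lemma \ref{Coro_1} gives $\pm s^{\pm}\mathbf{W}^{\mp}\cdot\widehat{\bm\xi}>\pm\mathbf{H}_{\mathbf n}(\mathbf{W}^{\mp})\cdot\widehat{\bm\xi}$ for every $\widehat{\bm\xi}\in\widehat{\bm\Xi}_*$. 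Combining this bound on each edge flux with Lemma \ref{lem3} applied at every interior Gauss node for the source $\mathbf{S}$, and then pairing each edge term with the matching edge piece of the convex decomposition, one obtains a representation of $\overline{\mathbf{W}}_{ij}^{n+1}\cdot\widehat{\bm\xi}$ as a positive linear combination of the values $\mathbf{W}_\sigma^n({\bf x})\cdot\widehat{\bm\xi}>0$ for ${\bf x}\in\mathbb{S}_{ij}$, provided the coefficient of each such value stays nonnegative.

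The step that produces the stated CFL bound is just bookkeeping of these coefficients via \eqref{PCP_CFL}. The edge-Gauss weights of the optimal decomposition are $\widehat\omega_{\mathcal E,p}=\tfrac{\mu_1}{2}\omega_p^G$ on $r$-edges and $\tfrac{\mu_2}{2}\omega_p^G$ on $\tilde\phi$-edges, while the flux integral on an $r$-edge contributes with weight $\tfrac{|\mathcal E|}{|K|}\omega_p^G = \tfrac{\omega_p^G}{h_r}$, so the per-edge ratio $|\mathcal E|\omega_{\mathcal E,p}/(|K|\widehat\omega_{\mathcal E,p})$ reduces to $2/(\mu_1 h_r)$ and analogously $2/(\mu_2 h_{\tilde\phi})$; multiplying by $(s^+-s^-)\le a_r$ or $a_{\tilde\phi}$ respectively and using $\mu_1=\zeta_1/\psi$, $\mu_2=\zeta_2/\psi$ with $\psi=\zeta_1+\zeta_2+2\zeta_*$, $\zeta_1=a_r/h_r$, $\zeta_2=a_{\tilde\phi}/h_{\tilde\phi}$, $\zeta_*=\max\{\zeta_1,\zeta_2\}$, gives the bracketed expression $2a_r/h_r+2a_{\tilde\phi}/h_{\tilde\phi}+4\max\{a_r/h_r,a_{\tilde\phi}/h_{\tilde\phi}\}$. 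Adding the source contribution $\lambda_{\max}=\max_{i,j,p,q}\lambda_{\mathbf{W}_{ij,pq}}$ from Lemma \ref{lem3} (which absorbs the positivity threshold for $q(\mathbf{W}+\lambda^{-1}\mathbf{S})$), one recovers the stated CFL.

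The main obstacle is verifying that the Kerr--Schild W-form fits the hypotheses of Lemmas \ref{lem:1}--\ref{Coro_1} without modification, i.e., that the HLL bounds derived for a generic spatial metric $\mathbf X$ (with the Cholesky factor $\mathbf\Theta$) remain valid with the particular $\mathbf{\Theta}$ in \eqref{smetric} that contains the frame-dragging coupling and degenerates at $r=0$; on the computational domain (exterior of the singularity) both $\mathbf X$ and $\chi$ stay positive and smooth, so the GQL identities and the key algebraic estimate \eqref{eq:key} transfer verbatim. The remaining technical point is simply that the optimal decomposition of \cite{CDW2024} was formulated for $\mathbb{P}^2$ and $\mathbb{P}^3$ on rectangles in arbitrary coordinates; since the construction depends only on polynomial exactness of the tensor-product Gauss rule, it applies unchanged to the $(r,\tilde\phi)$-rectangles used here.
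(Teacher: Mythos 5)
Your proposal is correct and follows essentially the same route as the paper: the paper obtains this corollary (like its axisymmetric counterpart) by specializing Theorem \ref{pcp1} to uniform rectangular meshes with the optimal convex decomposition \eqref{avg 5} of \cite{CDW2024}, using Lemma \ref{Coro_1} for the HLL edge fluxes and Lemma \ref{lem3} for the geometric source, and your coefficient bookkeeping ($2a_r/(\mu_1 h_r)=2a_{\tilde\phi}/(\mu_2 h_{\tilde\phi})=2\psi$) reproduces exactly the stated CFL bound. Your added remarks on the regularity of the Kerr--Schild spatial metric on the computational domain and the coordinate-independence of the decomposition are consistent with the paper's setting.
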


\section{Numerical Examples}\label{sec5}

This section presents several examples of RHD problems to demonstrate the accuracy, effectiveness, and robustness of the proposed high-order PCP-OEDG schemes on uniform Cartesian meshes. For accuracy tests involving smooth problems, we couple the $\mathbb{P}^m$-based, $(m+1)$-th order OEDG method with a $(m+1)$-th order explicit RK method for temporal discretization. For problems with discontinuities, the classical third-order RK temporal discretization is employed. To ensure stability, we set the CFL numbers to $C_{cfl} = 0.3, 0.16, 0.1$ for the $\mathbb{P}^1$-, $\mathbb{P}^2$-, and $\mathbb{P}^3$-based OEDG schemes, respectively. 
All numerical experiments are implemented in C++ using double precision.


\subsection{1D Special RHD Examples}


In this subsection, we present several benchmark tests, including a smooth problem, a shock heating problem, a blast wave interaction, two Riemann problems, and a low-density perturbation problem, to validate the accuracy and effectiveness of the PCP-OEDG schemes in flat spacetime.

\begin{example}[1D Smooth Problem]\label{1D smooth exam1}
This first example is designed to verify the accuracy of the 1D PCP-OEDG schemes in the domain $\Omega = [0,1]$, under conditions of low density and pressure. The exact solutions are given by
\begin{align*}
\mathbf{Q}(x,t) = (\rho(x,t), v(x,t), p(x,t))^{\top} 
= \left(1 + 0.9999\sin(2\pi (x - 0.99t)), 0.99, 0.001\right)^{\top}.
\end{align*}
This example models a RHD sine wave propagating periodically, with the adiabatic index set to $5/3$. Table \ref{tb:1D exam1} presents the $L^1$, $L^2$, and $L^{\infty}$ errors, along with the corresponding convergence rates at time $t = 1$ for $\mathbb{P}^m$-based OEDG schemes with $m = 1, 2, 3$. 
The results show that the errors decay at rates exceeding the optimal $(m+1)$-th order convergence. This is due to the high-order damping effects, which dominate the numerical errors on coarse meshes. Additionally, the PCP limiting and OE procedures do not degrade the high-order accuracy of the schemes.
\end{example}

\begin{table}[!thb]
\center
\caption{Example \ref{1D smooth exam1}: Errors and convergence rates for $\mathbb{P}^m$-based OEDG method with $N$ uniform cells.}
\begin{tabular}[c]{c|c|c|c|c|c|c|c}
\toprule
$m$ &  $N$ &  $|| u - u_h||_{L^1(\Omega)}$ &rate & $|| u - u_h||_{L^2(\Omega)}$ &rate & $|| u - u_h||_{L^{\infty}(\Omega)}$ &rate\\
\hline
\multirow{6}{*}{1}
&64&5.4734e-03&	2.741&	6.5562e-03&	2.718&	1.4939e-02&	2.595\\ 
&128&9.1279e-04&	2.584&	1.1428e-03&	2.520&	3.0542e-03&	2.290\\ 
&256&1.8557e-04&	2.298&	2.2268e-04&	2.360&	3.8916e-04&	2.972\\ 
&512&4.5284e-05&	2.035&	5.1548e-05&	2.111&	8.1249e-05&	2.260\\ 
&1024&1.1297e-05&	2.003&	1.2628e-05&	2.029&	1.8809e-05&	2.111\\ 
&2048&2.8224e-06&	2.001&	3.1400e-06&	2.008&	4.5381e-06&	2.051\\ 
\hline
\multirow{5}{*}{2}
&64&3.7294e-05&	4.322&	4.6521e-05&	4.379&	2.0785e-04&	4.299\\ 
&128&2.4704e-06&	3.916&	3.0860e-06&	3.914&	5.6182e-06&	5.209\\ 
&256&2.6568e-07&	3.217&	3.1813e-07&	3.278&	5.4627e-07&	3.362\\ 
&512&3.2043e-08&	3.052&	3.6819e-08&	3.111&	6.0373e-08&	3.178\\ 
&1024&3.9489e-09&	3.021&	4.4633e-09&	3.044&	7.2314e-09&	3.062\\ 
\hline
\multirow{4}{*}{3}
&96&4.8395e-08&	4.924&	5.6041e-08&	4.893&	1.0730e-07&	4.747\\ 
&144&6.7437e-09&	4.861&	8.0129e-09&	4.797&	1.6226e-08&	4.659\\ 
&216&1.0012e-09&	4.704&	1.2177e-09&	4.647&	2.5526e-09&	4.562\\ 
&324&1.7013e-10&	4.371&	2.0030e-10&	4.452&	4.4682e-10&	4.298\\ 
\bottomrule
\end{tabular}\label{tb:1D exam1}
\end{table}


\begin{example}[Shock Heating Problem]\label{1D Rie exam9}
In this example, we consider the shock heating problem. The initial conditions are set as
\begin{align*}
\mathbf{Q}(x,0) =
\left(1,\, 1-10^{-10},\, \frac{10^{-4}}{3}\right)^{\top},
\end{align*}
with a reflecting boundary condition at the right endpoint of the domain and an inflow boundary condition at the left endpoint. As the initial gas moves toward the reflecting boundary, it is compressed and heated due to the conversion of kinetic energy into internal energy. This process forms a strong reflected shock wave that propagates to the left with a speed given by $v_s = \frac{(\Gamma - 1)W_0|v_0|}{W_0 + 1}$, where $\Gamma = 4/3$, $v_0 = 1 - 10^{-10}$, and $W_0 = (1 - v_0^2)^{-1/2} \approx 70710.675$. Behind the reflected shock wave, the gas comes to rest and attains a specific internal energy of $W_0 - 1$.

\begin{figure}[!thb]
\centering
\subfigure{\includegraphics[width=0.48\textwidth]{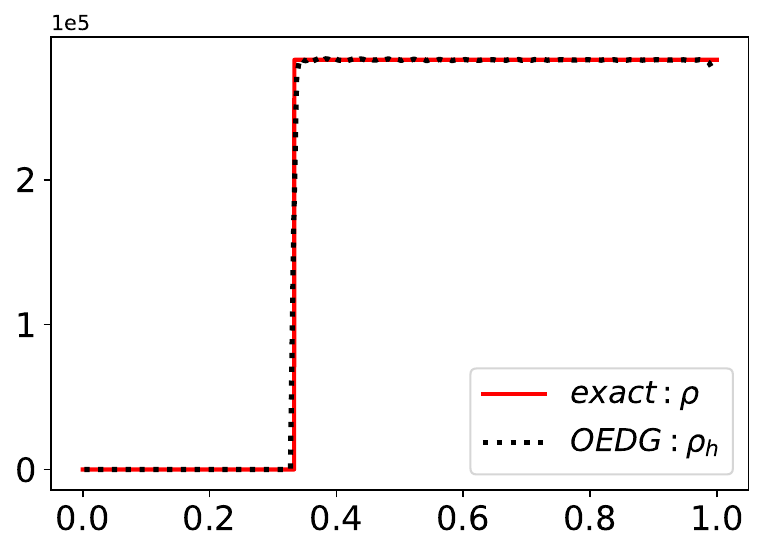}}
\subfigure{\includegraphics[width=0.48\textwidth]{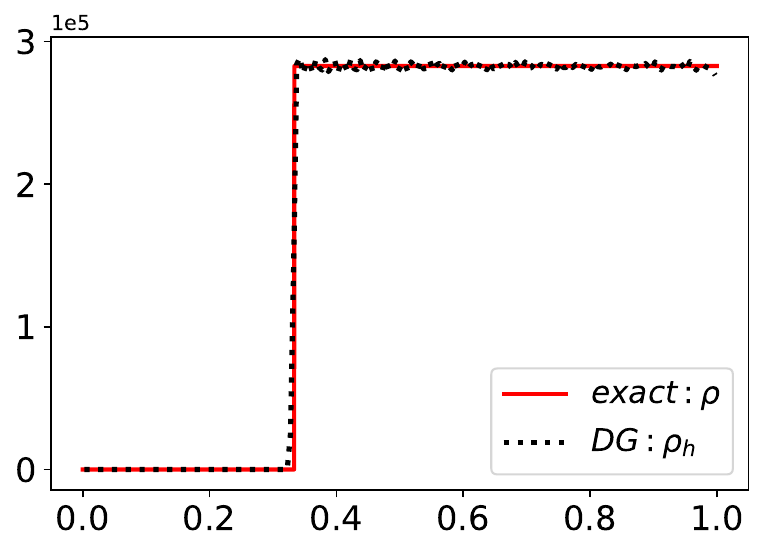}}
\subfigure{\includegraphics[width=0.48\textwidth]{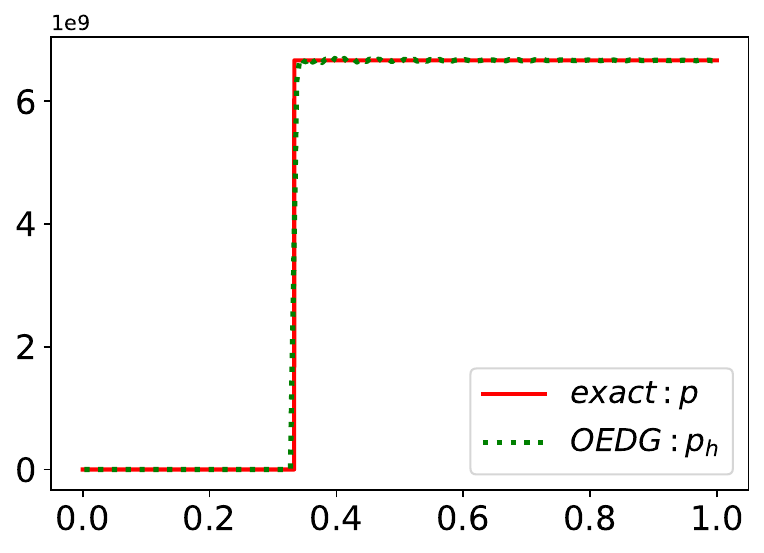}}
\subfigure{\includegraphics[width=0.48\textwidth]{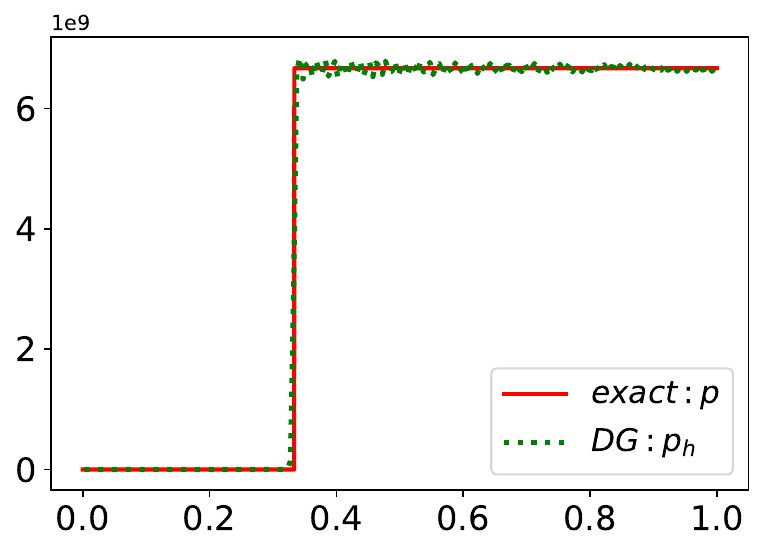}}
\caption{Example \ref{1D Rie exam9}: Numerical solutions at $t = 2$ obtained using the third-order PCP DG method with OE procedure (left) and without OE procedure (right) on a mesh of 200 uniform cells. Top: $\rho$; bottom: $p$.}\label{fig:DG 1D Rie exam9}
\end{figure}

Figure \ref{fig:DG 1D Rie exam9} shows the numerical solutions at $t = 2$ obtained using the third-order OEDG method (top row) and the third-order DG method without the OE procedure (bottom row) on a mesh of 200 uniform cells. For comparison, the exact solution is also provided. Both schemes sharply capture the shocks in this ultra-relativistic problem. 
The PCP limiter is critical for successfully simulating this challenging test. Without it, both the OEDG and conventional DG methods would break down. The OEDG method effectively eliminates spurious oscillations, though the well-known wall-heating phenomenon near the reflecting boundary at $x = 1$ is still observed, as also reported in \cite{WT2015, WT2017}. In contrast, the DG method without the OE procedure exhibits significant numerical oscillations, even when the PCP limiter is applied.

\end{example}

\begin{example}[Blast Wave Interaction]\label{1D Rie exam8}
The initial data for this problem consist of three constant states:
\begin{align*}
\mathbf{Q}(x,0) =
\begin{cases}
(1, 0, 1000)^{\top}, & \quad 0 \le x \le 0.1, \\
(1, 0, 0.01)^{\top}, & \quad 0.1 < x \le 0.9, \\
(1, 0, 100)^{\top}, & \quad 0.9 < x \le 1,
\end{cases}
\end{align*}
with outflow boundary conditions applied at both ends of the domain $[0,1]$. This problem simulates the development and collision of two strong relativistic blast waves, governed by an adiabatic index $\Gamma = 1.4$. The interaction produces complex wave structures, including new contact discontinuities, in a narrow region.

\begin{figure}[!thb]
\centering
\subfigure{\includegraphics[width=0.48\textwidth]{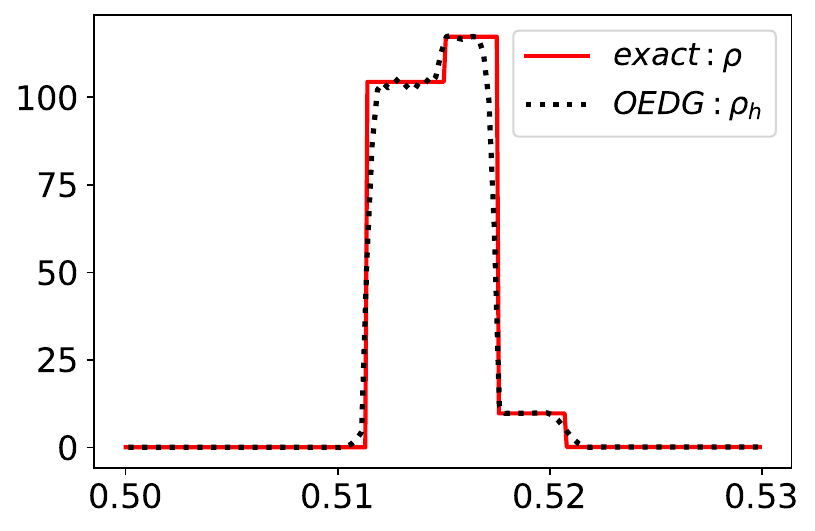}}
\subfigure{\includegraphics[width=0.48\textwidth]{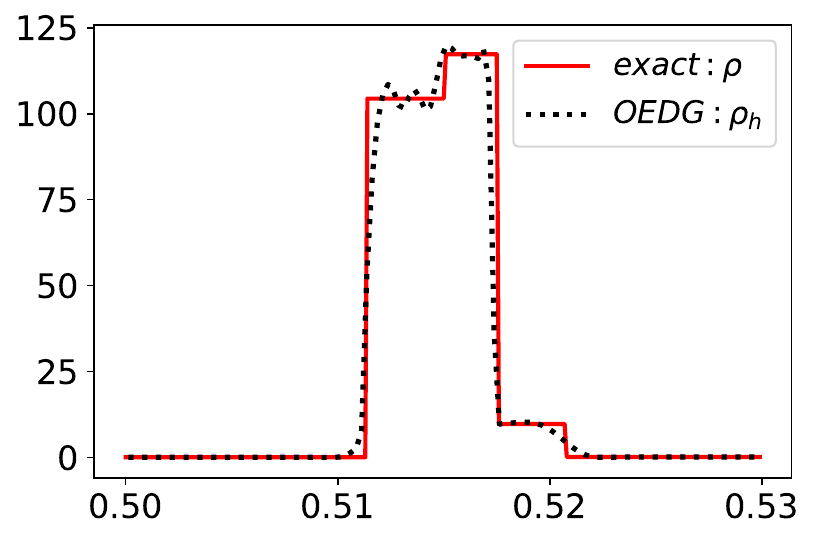}}
\subfigure  {\includegraphics[width=0.48\textwidth]{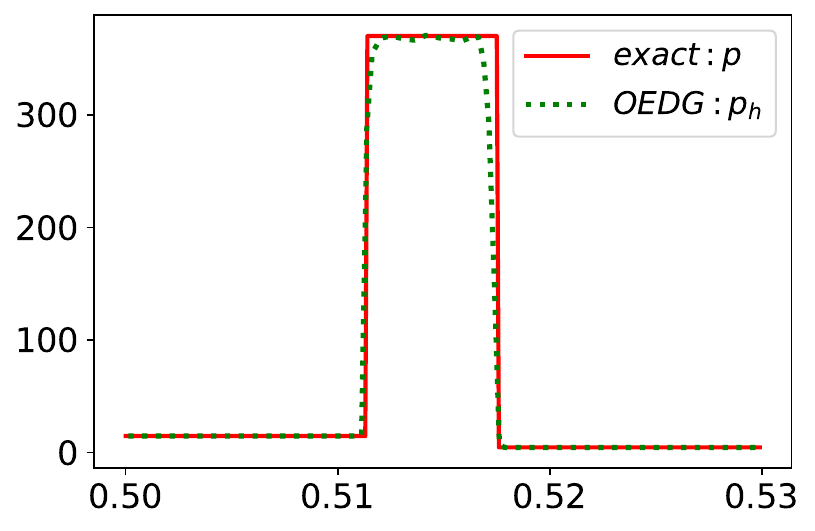}}
\subfigure {\includegraphics[width=0.48\textwidth]{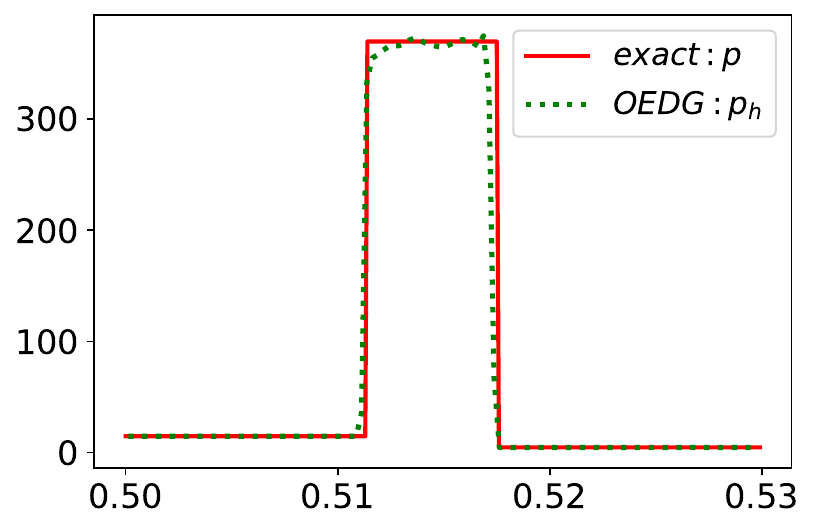}}
\caption{Example \ref{1D Rie exam8}: 
Close-up in $[0.5, 0.53]$ of the numerical solutions at $t = 0.43$, obtained using the third-order PCP DG method with OE procedure (left) and without OE procedure (right) on a mesh of 4000 uniform cells. Top: $\rho$; bottom: $p$.} 
\label{fig:DG 1D Rie exam8}
\end{figure}

Figure \ref{fig:DG 1D Rie exam8} presents the numerical solutions at $t = 0.43$, obtained using the third-order OEDG method and the DG method without the OE procedure on a mesh of 4000 uniform cells. The results show that the OEDG scheme effectively resolves the discontinuities and accurately captures the complex relativistic wave configurations in the collision region. In contrast, the DG method suffers from persistent spurious oscillations, highlighting the importance of the OE procedure in ensuring stability.
\end{example}

\begin{example}[1D Riemann Problem \uppercase\expandafter{\romannumeral1}]\label{1D Rie exam6}
The initial data for this Riemann problem are given by
\begin{align*}
\mathbf{Q}(x,0) =
\begin{cases}
(10, 0, 10^3)^{\top}, & \quad x < 0.5, \\
(1, 0, 0.01)^{\top}, & \quad x > 0.5,
\end{cases}
\end{align*}
with the domain $\Omega = [0,1]$ and outflow boundary conditions. The adiabatic index is set to $\Gamma = 5/3$.

\begin{figure}[!thb]
\centering
\subfigure[$\mathbb{P}^1$, $\rho$]{\includegraphics[width=0.32\textwidth]{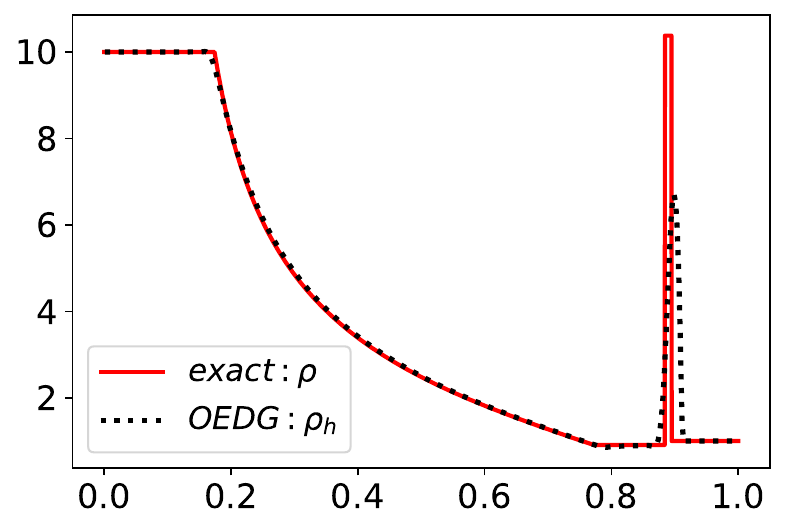}}
\subfigure[$\mathbb{P}^2$, $\rho$]{\includegraphics[width=0.32\textwidth]{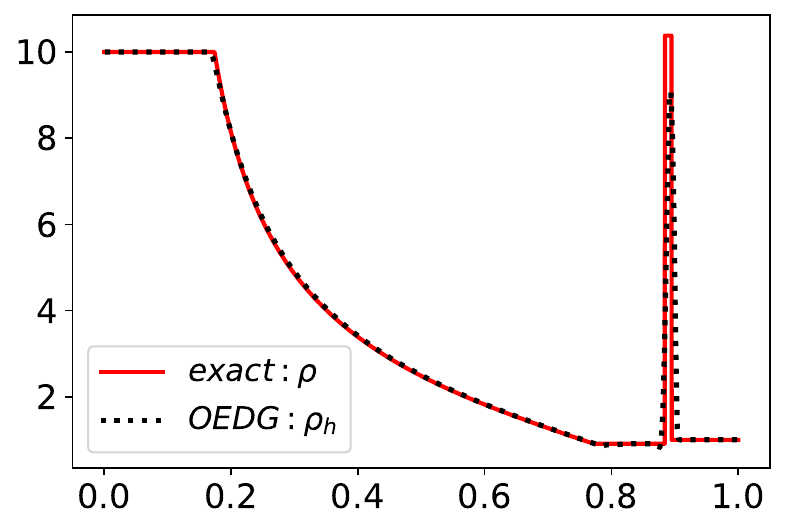}}
\subfigure[$\mathbb{P}^3$, $\rho$]{\includegraphics[width=0.32\textwidth]{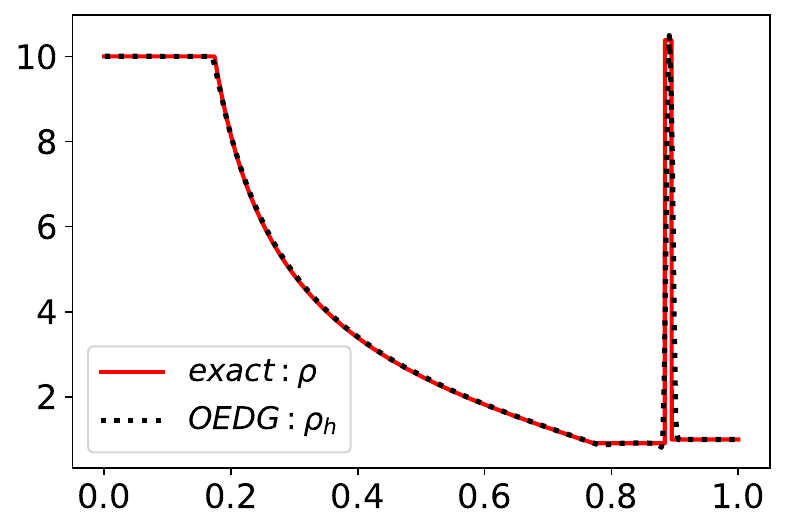}}
\subfigure[$\mathbb{P}^1$, $v$]{\includegraphics[width=0.32\textwidth]{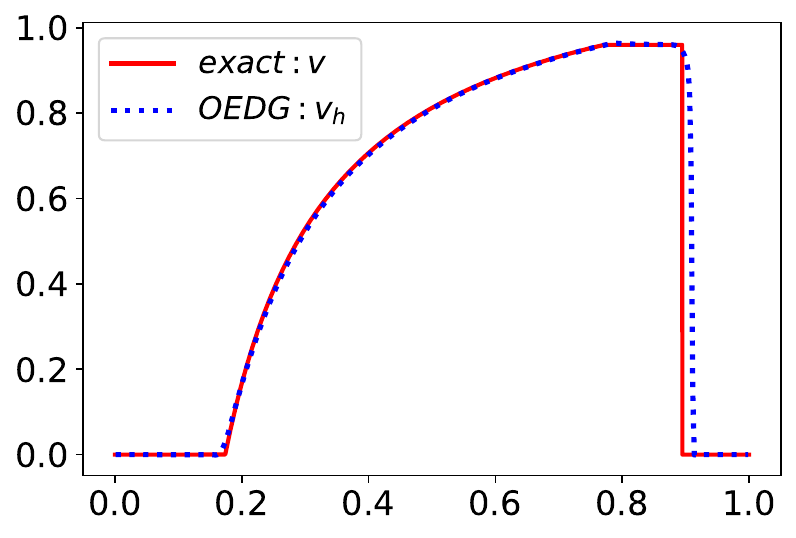}}
\subfigure[$\mathbb{P}^2$, $v$]{\includegraphics[width=0.32\textwidth]{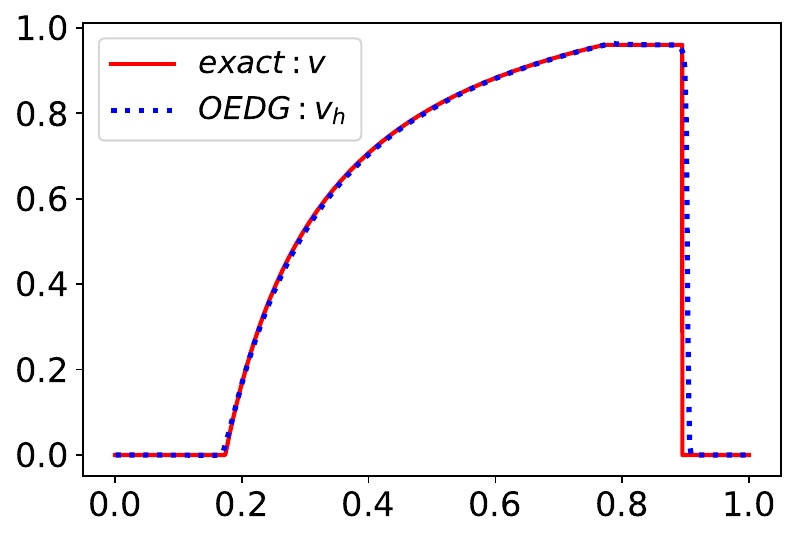}}
\subfigure[$\mathbb{P}^3$, $v$]{\includegraphics[width=0.32\textwidth]{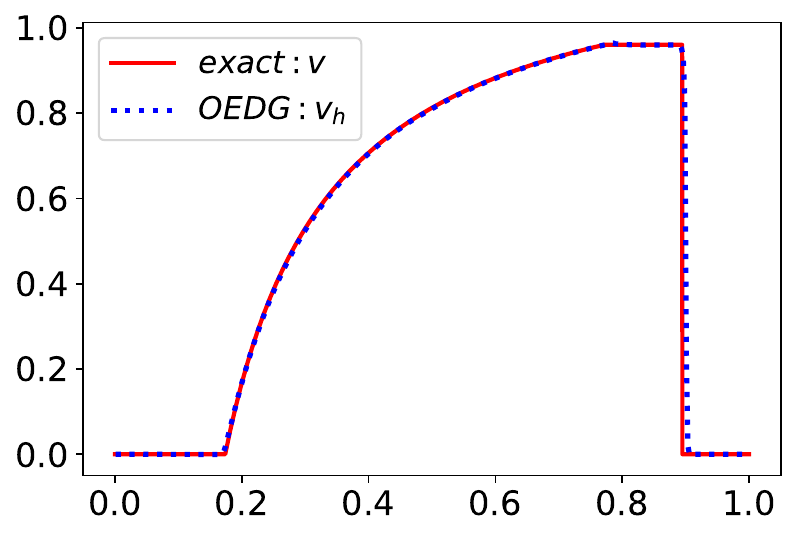}}
\subfigure[$\mathbb{P}^1$, $p$]{\includegraphics[width=0.32\textwidth]{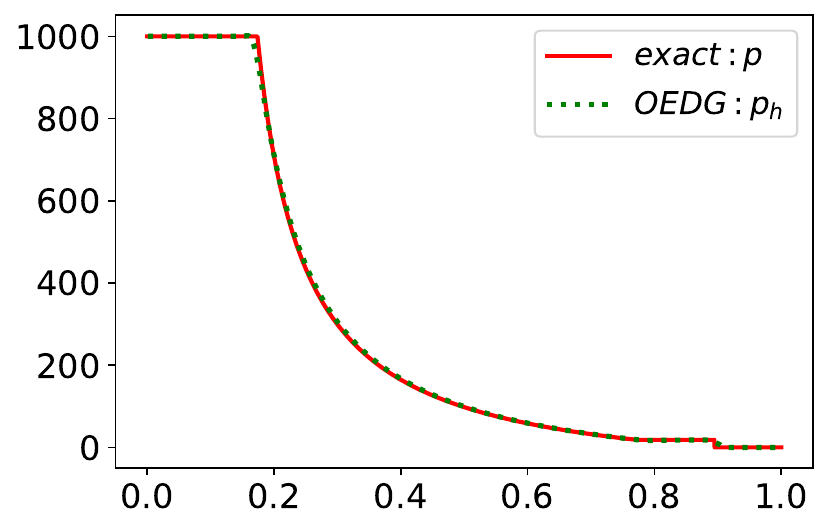}}
\subfigure[$\mathbb{P}^2$, $p$]{\includegraphics[width=0.32\textwidth]{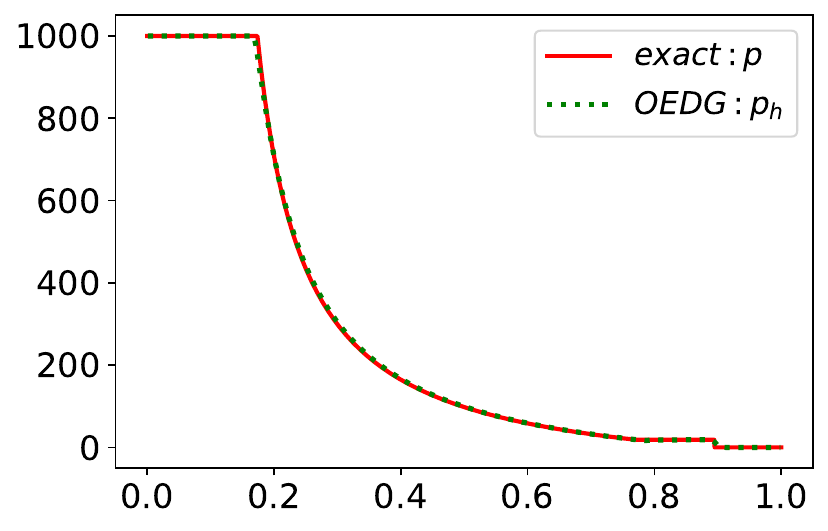}}
\subfigure[$\mathbb{P}^3$, $p$]{\includegraphics[width=0.32\textwidth]{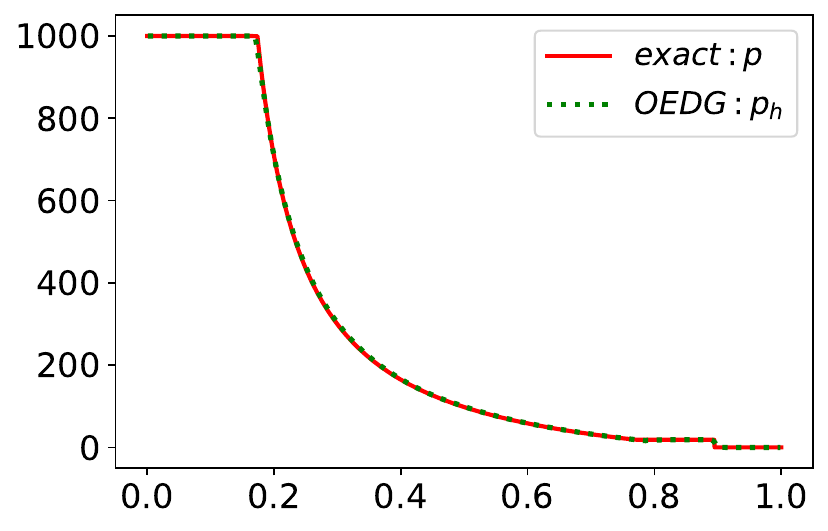}}
\caption{Example \ref{1D Rie exam6}: Numerical results for $\rho,\,v,\,p$ obtained using the $\mathbb{P}^m$-based PCP-OEDG method with 400 uniform cells at $t = 0.4$.}\label{fig:1D Rie exam6}
\end{figure}

In this problem, a highly curved profile for the rarefaction fan is observed due to relativistic effects. The region between the contact discontinuity and the right-moving shock wave is very narrow, making it challenging to resolve these waves accurately. Figure \ref{fig:1D Rie exam6} presents the numerical results at $t = 0.4$, obtained using the proposed PCP-OEDG method on a uniform mesh of 400 cells. As expected, the higher the order of the PCP-OEDG method, the better the resolution of the shock wave and contact discontinuity. Notably, no spurious oscillations are observed in the OEDG solutions.

\end{example}

\begin{example}[1D Riemann Problem \uppercase\expandafter{\romannumeral2}]\label{1D Rie exam7}
The initial data for this Riemann problem are given by
\begin{align*}
\mathbf{Q}(x,0) =
\begin{cases}
(1, 0, 10^4)^{\top}, & \quad x < 0.5, \\
(1, 0, 10^{-8})^{\top}, & \quad x > 0.5,
\end{cases}
\end{align*}
with the adiabatic index $\Gamma = 5/3$ in the domain $[0,1]$. The initial discontinuity evolves into a strong left-moving rarefaction wave, a rapidly right-moving contact discontinuity, and a rapidly right-moving shock wave.

\begin{figure}[!thb]
\centering
\subfigure[$\mathbb{P}^1$, $\rho$]{\includegraphics[width=0.32\textwidth]{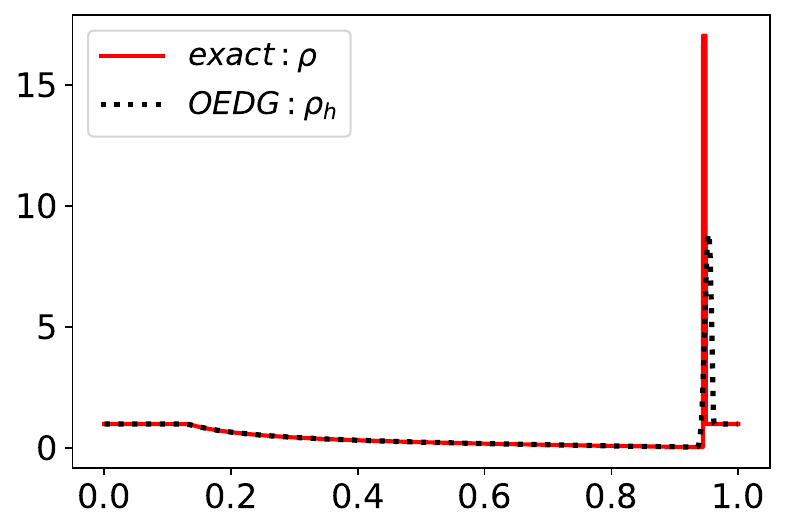}}
\subfigure[$\mathbb{P}^2$, $\rho$]{\includegraphics[width=0.32\textwidth]{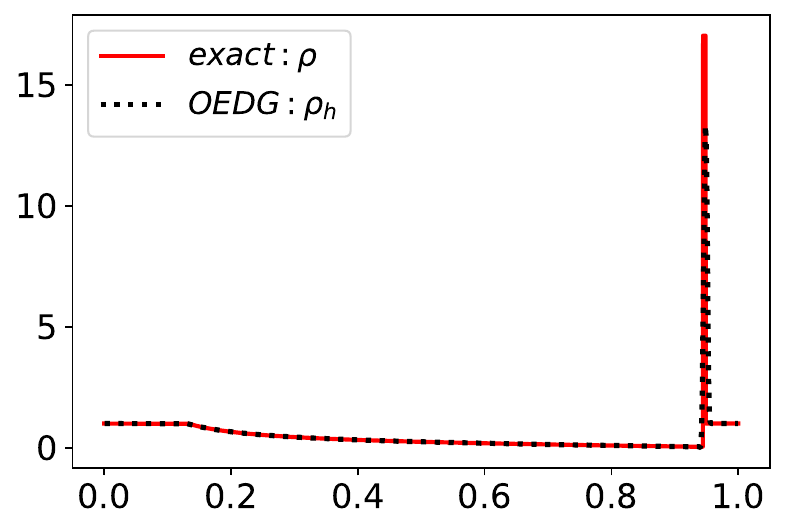}}
\subfigure[$\mathbb{P}^3$, $\rho$]{\includegraphics[width=0.32\textwidth]{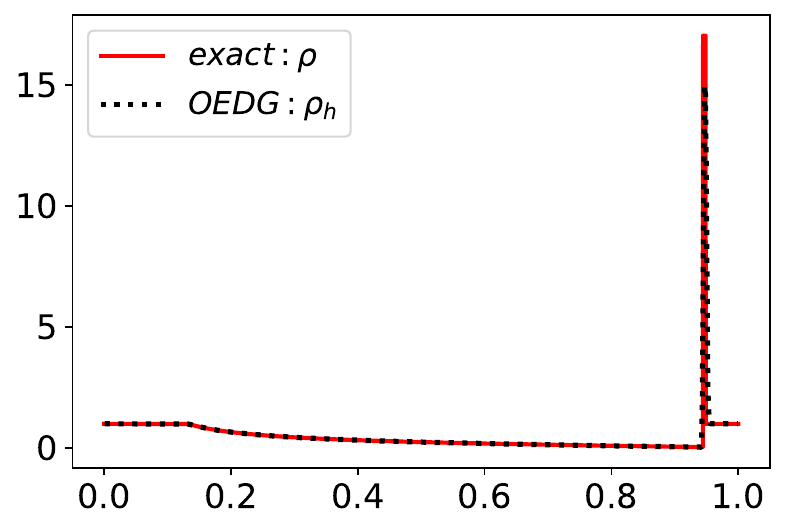}}
\subfigure[$\mathbb{P}^1$, $v$]{\includegraphics[width=0.32\textwidth]{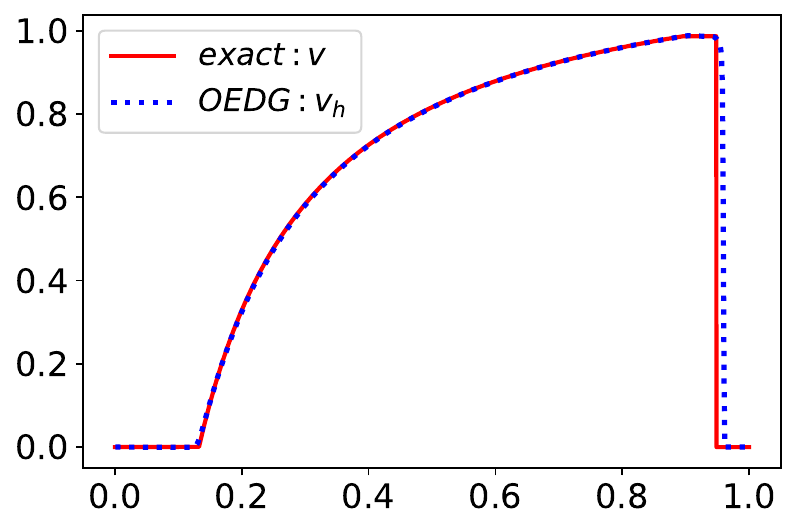}}
\subfigure[$\mathbb{P}^2$, $v$]{\includegraphics[width=0.32\textwidth]{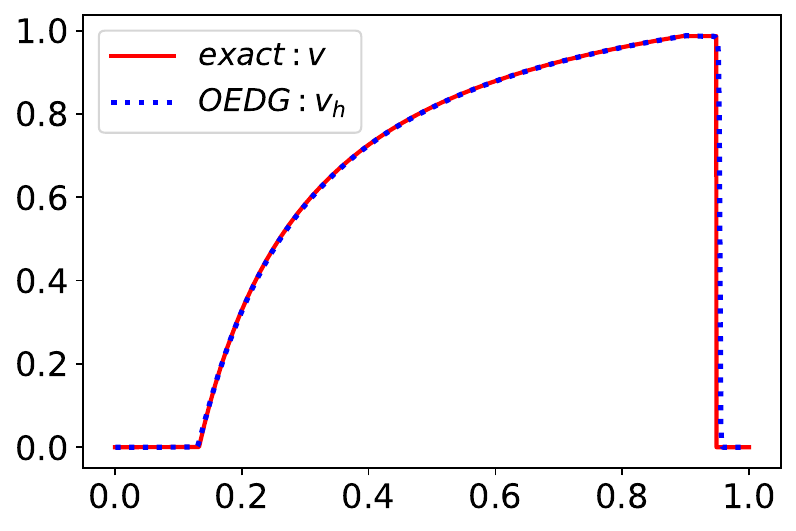}}
\subfigure[$\mathbb{P}^3$, $v$]{\includegraphics[width=0.32\textwidth]{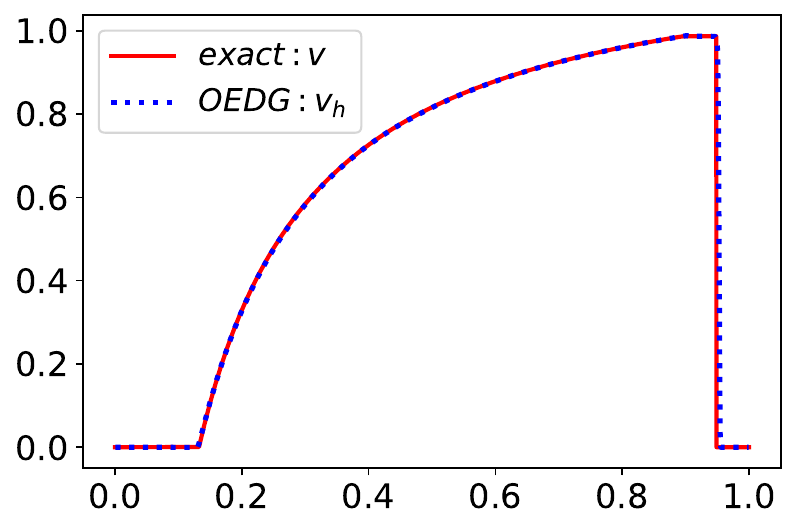}}
\subfigure[$\mathbb{P}^1$, $p$]{\includegraphics[width=0.32\textwidth]{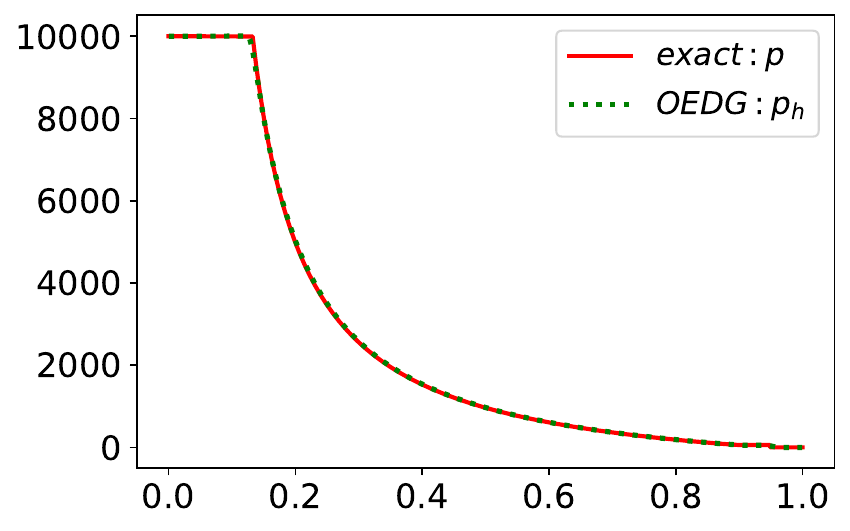}}
\subfigure[$\mathbb{P}^2$, $p$]{\includegraphics[width=0.32\textwidth]{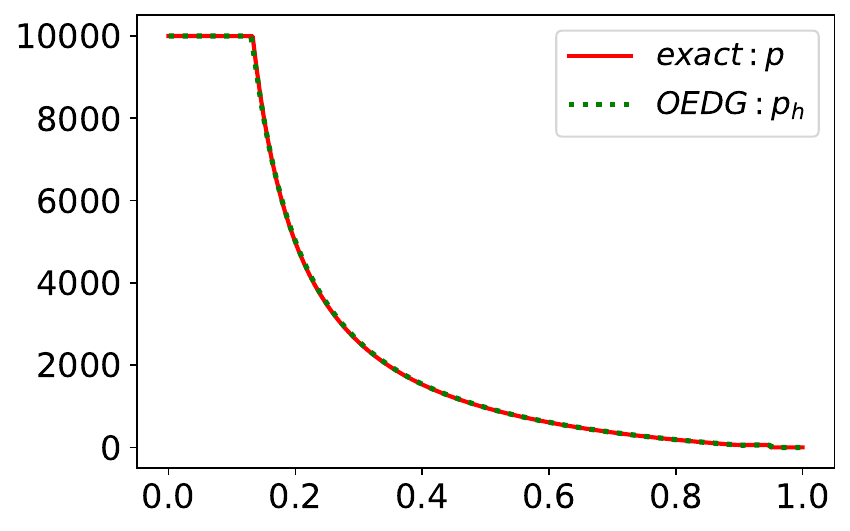}}
\subfigure[$\mathbb{P}^3$, $p$]{\includegraphics[width=0.32\textwidth]{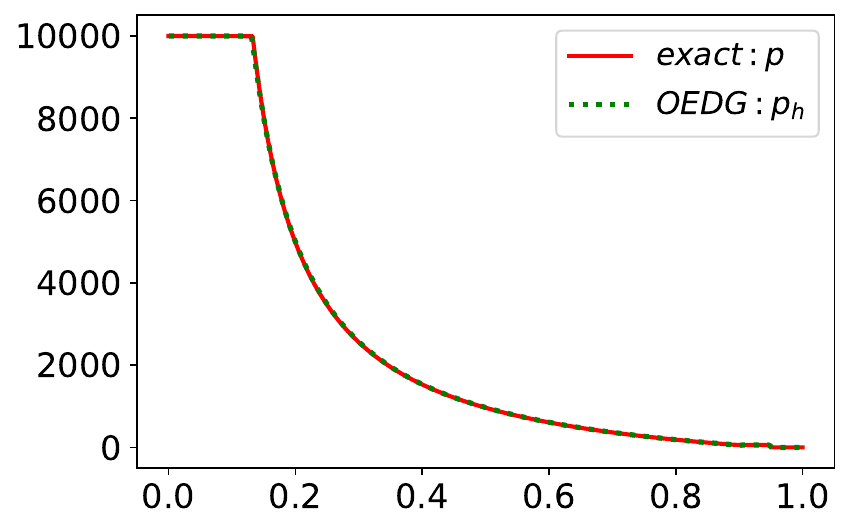}}
\caption{Example \ref{1D Rie exam7}: Numerical results for $\rho,\,v,\,p$ obtained using the $\mathbb{P}^m$-based PCP-OEDG method with 800 uniform cells at $t = 0.45$.}\label{fig:1D Rie exam7}
\end{figure}

The flow pattern in this example is similar to that of Example \ref{1D Rie exam6}, but this problem is more computationally challenging due to the ultra-relativistic effects. The PCP limiter is crucial for this example; without it, nonphysical states exceeding the admissible set would be produced, leading to simulation failure. Both the contact discontinuity and the shock wave move at speeds close to the speed of light, and the region between them has a width of approximately $0.00424$ at the end of the simulation. Figure \ref{fig:1D Rie exam7} shows the numerical results at $t = 0.45$ using 800 uniform cells. The results demonstrate that the high-order PCP-OEDG method provides excellent resolution and accurately captures the wave configuration without any nonphysical oscillations.
\end{example}

\begin{example}[Density Perturbation Problem]\label{1D Rie exam3}
This example validates the ability of the OEDG schemes to resolve high-frequency waves. The initial data are given by
\begin{align*}
\mathbf{Q}(x,0) =
\begin{cases}
(1, 0, 50)^{\top}, & \quad x < 0.5, \\
(2 + 0.3\sin(50x), 0, 10)^{\top}, & \quad x > 0.5,
\end{cases}
\end{align*}
with an adiabatic index of $\Gamma = 5/3$. The computational domain is $\Omega = [0,1]$, with outflow boundary conditions.

\begin{figure}[!thb]
\centering
\subfigure[$\mathbb{P}^1$]{\includegraphics[width=0.32\textwidth]{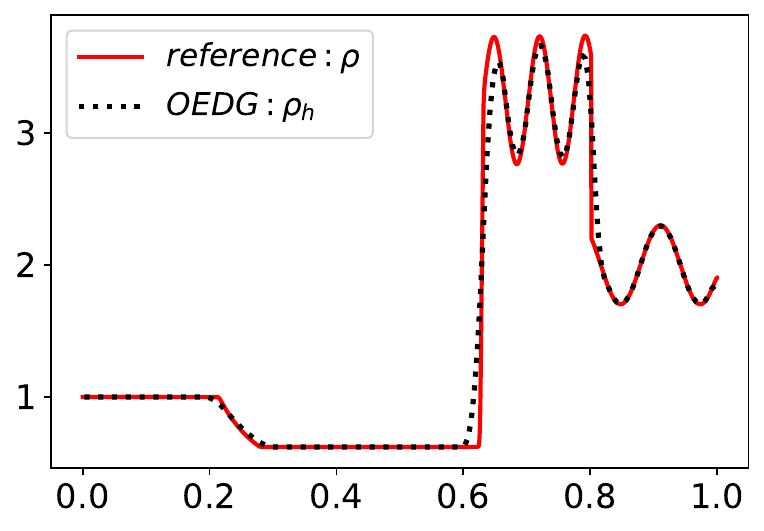}}
\subfigure[$\mathbb{P}^2$]{\includegraphics[width=0.32\textwidth]{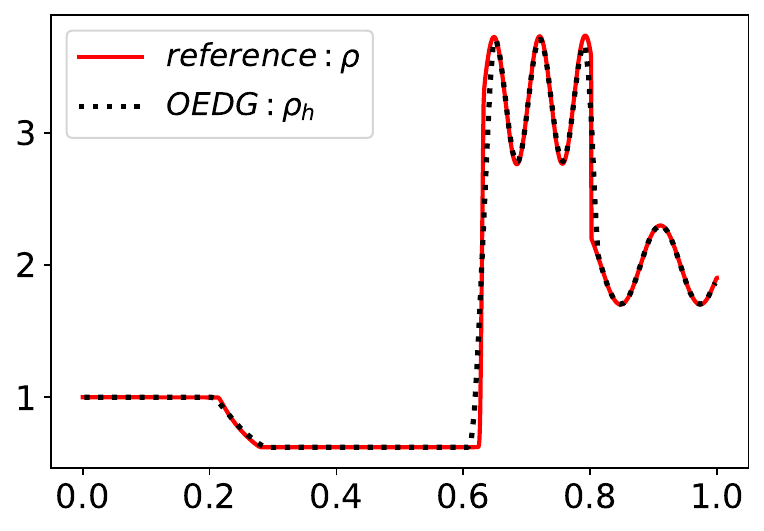}}
\subfigure[$\mathbb{P}^3$]{\includegraphics[width=0.32\textwidth]{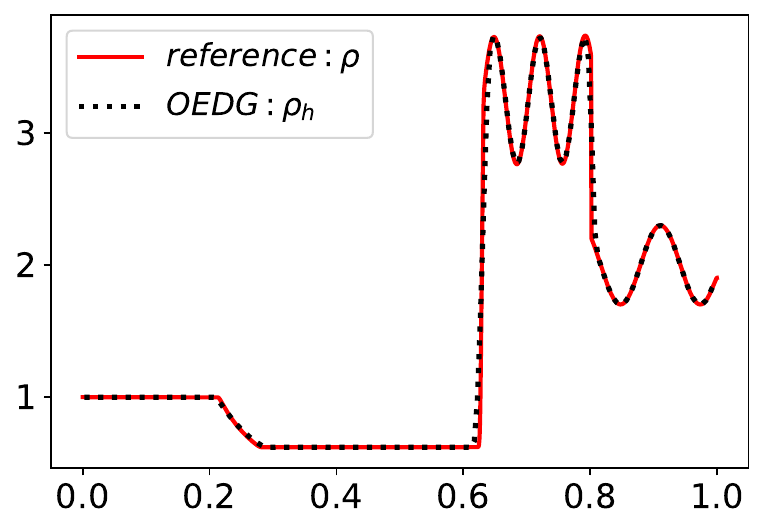}}
\caption{Example \ref{1D Rie exam3}: Numerical results for $\rho$ obtained using the $\mathbb{P}^m$-based OEDG method with 200 uniform cells at $t = 0.35$.}\label{fig:1D Rie exam3}
\end{figure}

Figure \ref{fig:1D Rie exam3} shows the numerical results at $t = 0.35$, using the $\mathbb{P}^m$-based OEDG method with $m = 1, 2, 3$, on a mesh of 200 uniform cells. 
The reference solution is produced by the first-order local Lax--Friedrichs scheme with $100,000$ uniform grids. 
The results indicate that the OEDG methods effectively resolve the small perturbation waves. Higher-order polynomial approximations ($\mathbb{P}^2$ and $\mathbb{P}^3$) demonstrate better resolution of high-frequency waves compared to the lower-order approximation ($\mathbb{P}^1$).

\end{example}

\subsection{2D Special RHD Examples}

In this subsection, we present several benchmark tests, including a smooth problem, two 2D Riemann problems, the double Mach reflection problem, and the shock-bubble interaction problem, to validate the accuracy and effectiveness of the 2D PCP-OEDG schemes in flat spacetime.

\begin{example}[2D Smooth Problem]\label{2D smooth exam1}
The accuracy of the 2D PCP-OEDG schemes is evaluated using a smooth problem in the domain $\Omega = [0,1]^2$, where the wave propagates at a $45^\circ$ angle to the $x$-axis. The exact solution is given by:
\begin{align*}
\mathbf{Q}(x,t) = (\rho, v_1, v_2, p)^{\top}(x,t) = \left(1 + 0.9999\sin(2\pi(x + y - 0.99\sqrt{2}t)), \frac{0.99}{\sqrt{2}}, \frac{0.99}{\sqrt{2}}, 0.01\right)^{\top}. 
\end{align*}
The computational domain $\Omega$ is divided into $N_x \times N_y$ uniform cells, with periodic boundary conditions applied. The adiabatic index is set to $5/3$. Table \ref{tb:2D exam1} lists the $L^1$, $L^2$, and $L^{\infty}$ errors and the corresponding convergence orders obtained at $t = 0.1$ using the $\mathbb{P}^m$-based OEDG method with $m = 1, 2, 3$, respectively. The convergence rate of the density $\rho$ approaches the theoretical $(m+1)$-th order as the mesh is refined.
\end{example}

\begin{table}[!thb]
\center
\caption{Example \ref{1D smooth exam1}: Errors and convergence rates for $\mathbb{P}^m$-based OEDG method with $N_x \times N_y$ uniform cells.}
\begin{tabular}[c]{c|c|c|c|c|c|c|c}
\toprule
$m$ &  $N_x \times N_y$ &  $|| u - u_h||_{L^1(\Omega)}$ &rate & $|| u - u_h||_{L^2(\Omega)}$ &rate & $|| u - u_h||_{L^{\infty}(\Omega)}$ &rate\\
\hline
\multirow{5}{*}{1}
&32  	$\times$	32 &	3.614E-02&	2.557&	4.356E-02&	2.524&	8.597E-02&	2.385\\ 
&64	    $\times$	64 &	6.436E-03&	2.489&	7.758E-03&	2.489&	2.092E-02&	2.039\\ 
&128	$\times$	128&	1.283E-03&	2.326&	1.550E-03&	2.323&	4.900E-03&	2.094\\ 
&256	$\times$	256&	2.780E-04&	2.207&	3.439E-04&	2.172&	1.121E-03&	2.128\\ 
&512	$\times$	512&	6.495E-05&	2.098&	8.233E-05&	2.063&	1.776E-04&	2.658\\
\hline
\multirow{5}{*}{2}
&32 	$\times$	32&	    2.193E-03&	5.431&	2.648E-03&	5.443&	6.188E-03&	5.641\\ 
&64 	$\times$	64&	    1.306E-04&	4.070&	1.669E-04&	3.988&	5.073E-04&	3.609\\ 
&128	$\times$	128&	1.098E-05&	3.572&	1.537E-05&	3.441&	5.605E-05&	3.178\\ 
&256	$\times$	256&	1.218E-06&	3.172&	1.695E-06&	3.181&	6.530E-06&	3.102\\ 
&512	$\times$	512&	1.481E-07&	3.040&	2.026E-07&	3.065&	7.852E-07&	3.056\\ 
\hline
\multirow{5}{*}{3}
&32 	$\times$	32&	    4.073E-04&	7.426&	5.153E-04&	7.354&	1.608E-03&	7.207\\ 
&64 	$\times$	64&	    1.487E-05&	4.775&	1.814E-05&	4.828&	4.984E-05&	5.012\\ 
&128	$\times$	128&	7.232E-07&	4.362&	8.883E-07&	4.352&	2.263E-06&	4.461\\ 
&256	$\times$	256&	4.044E-08&	4.161&	5.133E-08&	4.113&	1.204E-07&	4.232\\ 
&512	$\times$	512&	2.429E-09&	4.057&	3.141E-09&	4.030&	7.007E-09&	4.103\\ 
\bottomrule
\end{tabular}\label{tb:2D exam1}
\end{table}


\begin{example}[2D Riemann Problems]\label{2D:Rie exams}
In this example, we simulate three 2D Riemann problems, each with initial data consisting of four different constant states in the unit square $\Omega = [0,1]^2$. The domain $\Omega$ is divided into $400 \times 400$ uniform cells with outflow boundary conditions. To demonstrate the importance and effectiveness of the OE procedure, we compare the results obtained using both the proposed OEDG method and the conventional DG method without the OE procedure.

The initial states for the first 2D Riemann problem \cite{HT2012} are
\begin{align*}
\mathbf{Q}(x,y,0) =
\begin{cases}
(0.5, 0.5, -0.5, 5)^{\top}, & \quad x > 0.5, \, y > 0.5, \\
(1, 0.5, 0.5, 5)^{\top}, & \quad x < 0.5, \, y > 0.5, \\
(3, -0.5, 0.5, 5)^{\top}, & \quad x < 0.5, \, y < 0.5, \\
(1.5, -0.5, -0.5, 5)^{\top}, & \quad x > 0.5, \, y < 0.5,
\end{cases}
\end{align*}
with an adiabatic index $\Gamma = 5/3$. This problem models the interaction of four contact discontinuities (vortex sheets) in an ideal relativistic fluid. The second and third 2D Riemann problems \cite{WT2015} involve highly ultra-relativistic conditions with large velocities near the speed of light, making the proposed PCP technique essential for stable simulations.
Their initial conditions are
\[
\mathbf{Q}(x,y,0) =
\begin{cases}
(0.1, 0, 0, 0.01)^{\top}, & \quad x > 0.5, \, y > 0.5, \\
(0.1, 0.99, 0, 1)^{\top}, & \quad x < 0.5, \, y > 0.5, \\
(0.5, 0, 0, 1)^{\top}, & \quad x < 0.5, \, y < 0.5, \\
(0.1, 0, 0.99, 1)^{\top}, & \quad x > 0.5, \, y < 0.5,
\end{cases}
\]
and
\[
\mathbf{Q}(x,y,0) =
\begin{cases}
(0.1, 0, 0, 20)^{\top}, & \quad x > 0.5, \, y > 0.5, \\
(0.00414329639576, 0.9946418833556542, 0, 0.05)^{\top}, & \quad x < 0.5, \, y > 0.5, \\
(0.01, 0, 0, 0.05)^{\top}, & \quad x < 0.5, \, y < 0.5, \\
(0.00414329639576, 0, 0.9946418833556542, 0.05)^{\top}, & \quad x > 0.5, \, y < 0.5,
\end{cases}
\]
respectively.

\begin{figure}[!thb]
\centering
\subfigure[PCP-OEDG]{\includegraphics[width=0.48\textwidth]{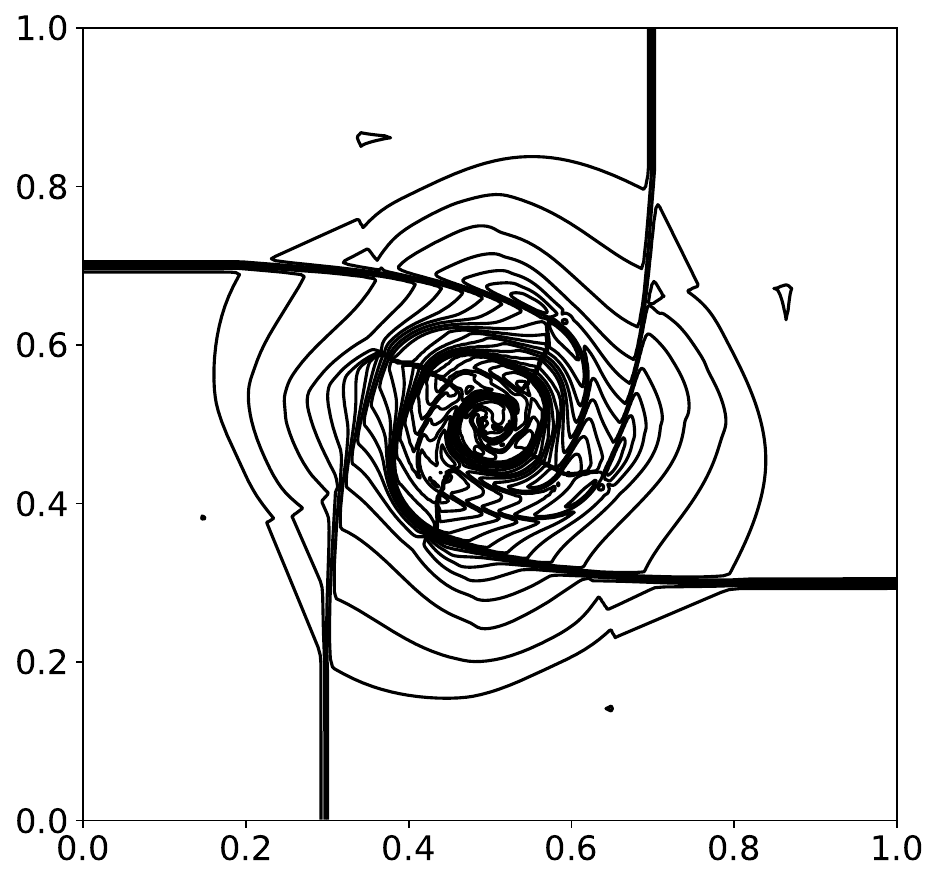}}
\subfigure[PCP-DG without OE]{\includegraphics[width=0.48\textwidth]{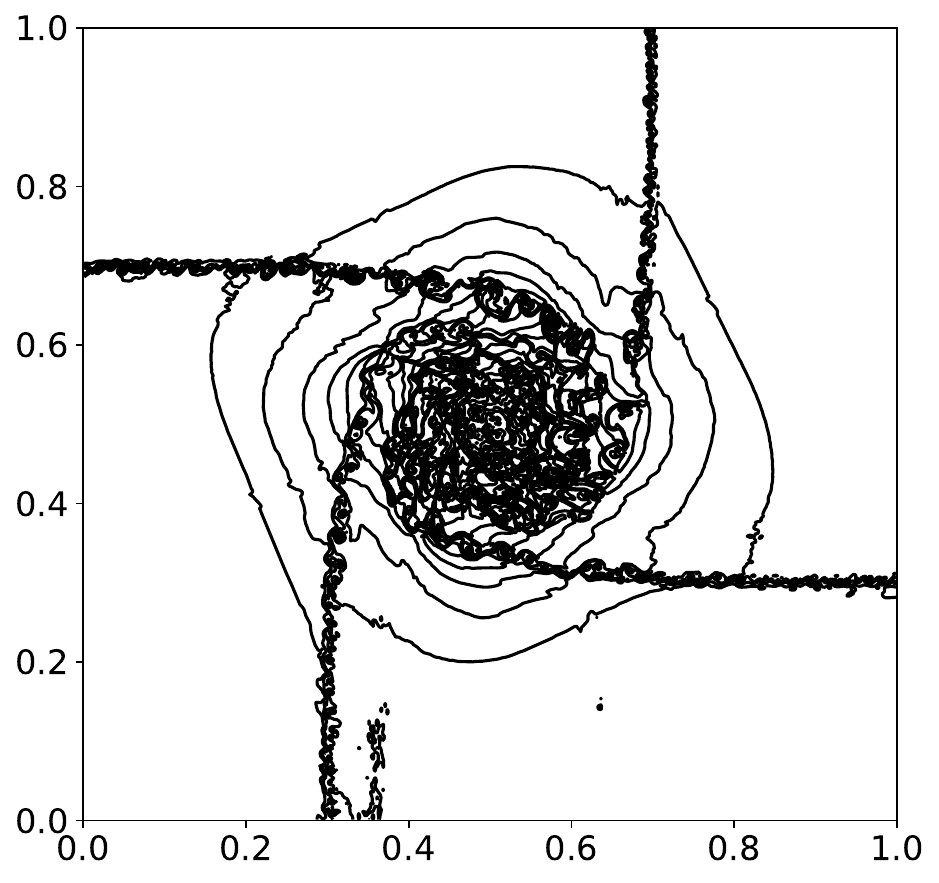}}
\caption{First 2D Riemann problem in Example \ref{2D:Rie exams}: Contours of $\log \rho$ at $t = 0.4$ for the OEDG schemes and the DG schemes without the OE procedure on a mesh of $400 \times 400$ uniform cells.}\label{fig:2D Rie exam3}
\end{figure}

Figure \ref{fig:2D Rie exam3} shows the contours of the density logarithm at $t = 0.4$ for the first 2D Riemann problem using both the third-order OEDG schemes and conventional DG schemes without the OE procedure. As time progresses, the four vortex sheets interact to form a spiral with low density around the center. The difference between the OEDG method and the DG method is evident: the DG method without the OE procedure exhibits significant spurious oscillations, whereas the OEDG solutions are free from such nonphysical oscillations, confirming the effectiveness of the OE process. 
The results for the second and third 2D Riemann problems are shown in Figures \ref{fig:2D:Rie exam6} and \ref{fig:2D:Rie exam5}. Again, the OE procedure is crucial for eliminating spurious oscillations in the conventional DG method, while the proposed PCP-OEDG method demonstrates robustness for these challenging test cases. Without the PCP limiter, both the OEDG and conventional DG methods break down due to nonphysical states that exceed the admissible set.

\begin{figure}[!thb]
\centering
\subfigure[PCP-OEDG]{\includegraphics[width=0.48\textwidth]{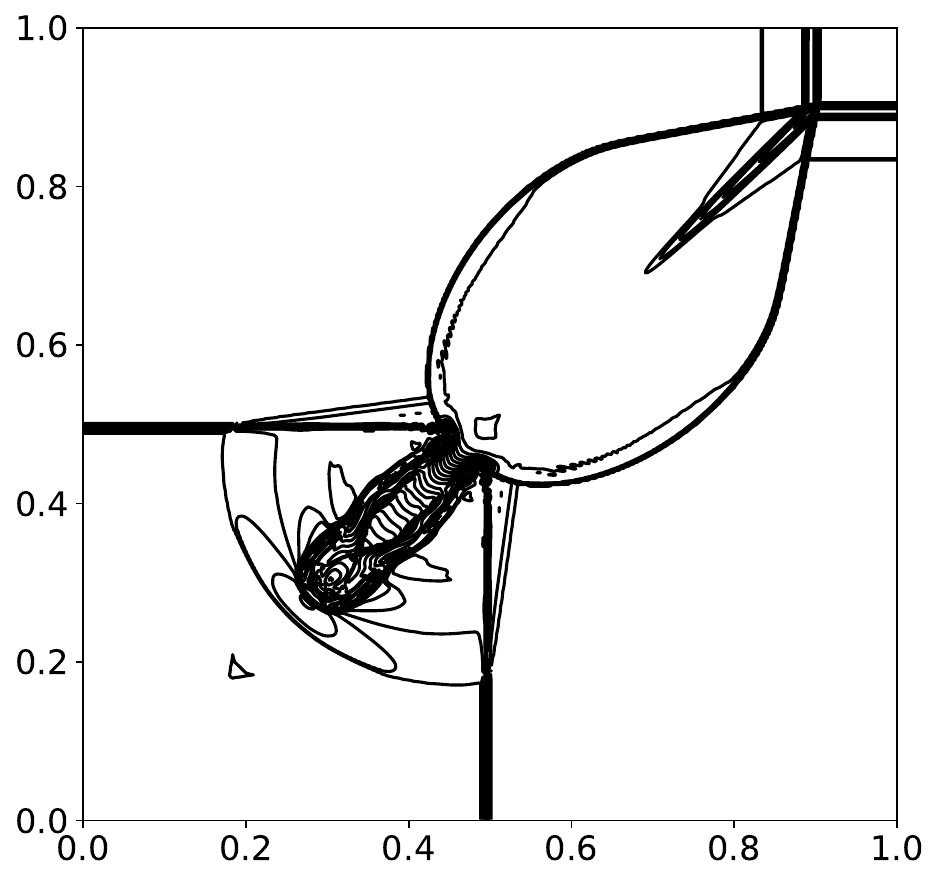}}
\subfigure[PCP-DG without OE]{\includegraphics[width=0.48\textwidth]{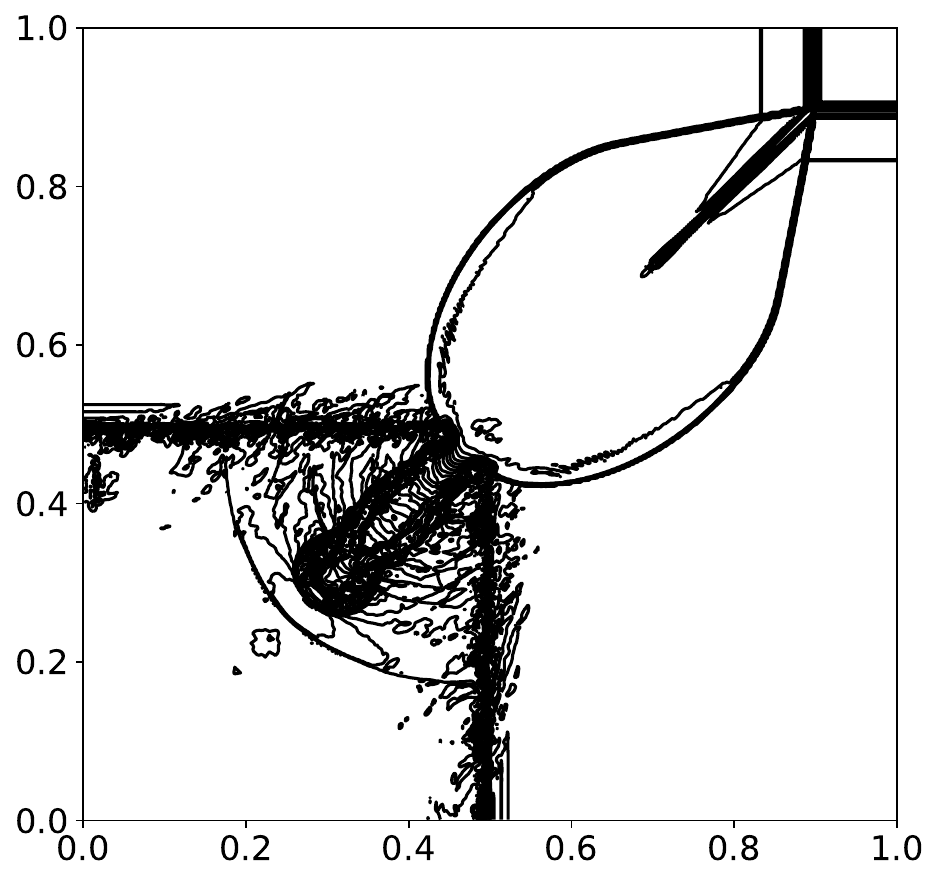}}
\caption{Second 2D Riemann problem in Example \ref{2D:Rie exams}: Contours of $\log \rho$ at $t = 0.4$ for the third-order PCP DG schemes with and without the OE procedure on a mesh of $400 \times 400$ uniform cells.}\label{fig:2D:Rie exam6}
\end{figure}

\begin{figure}[!thb]
\centering
\subfigure[PCP-OEDG]{\includegraphics[width=0.48\textwidth]{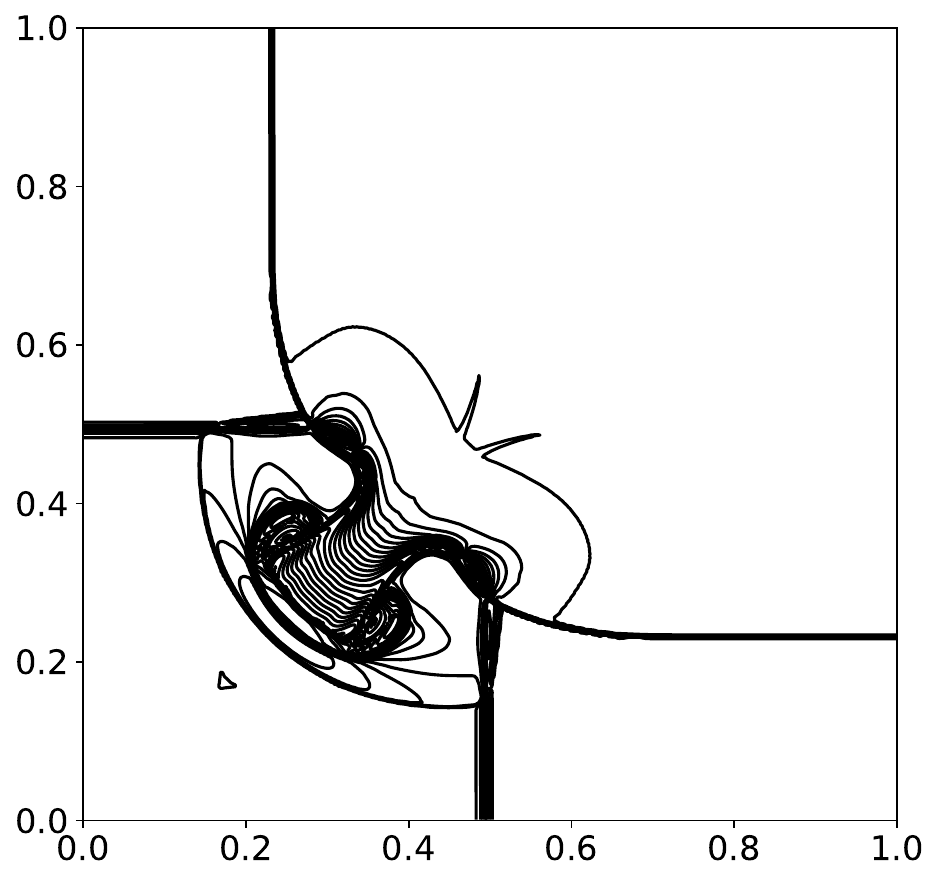}}
\subfigure[PCP-DG without OE]{\includegraphics[width=0.48\textwidth]{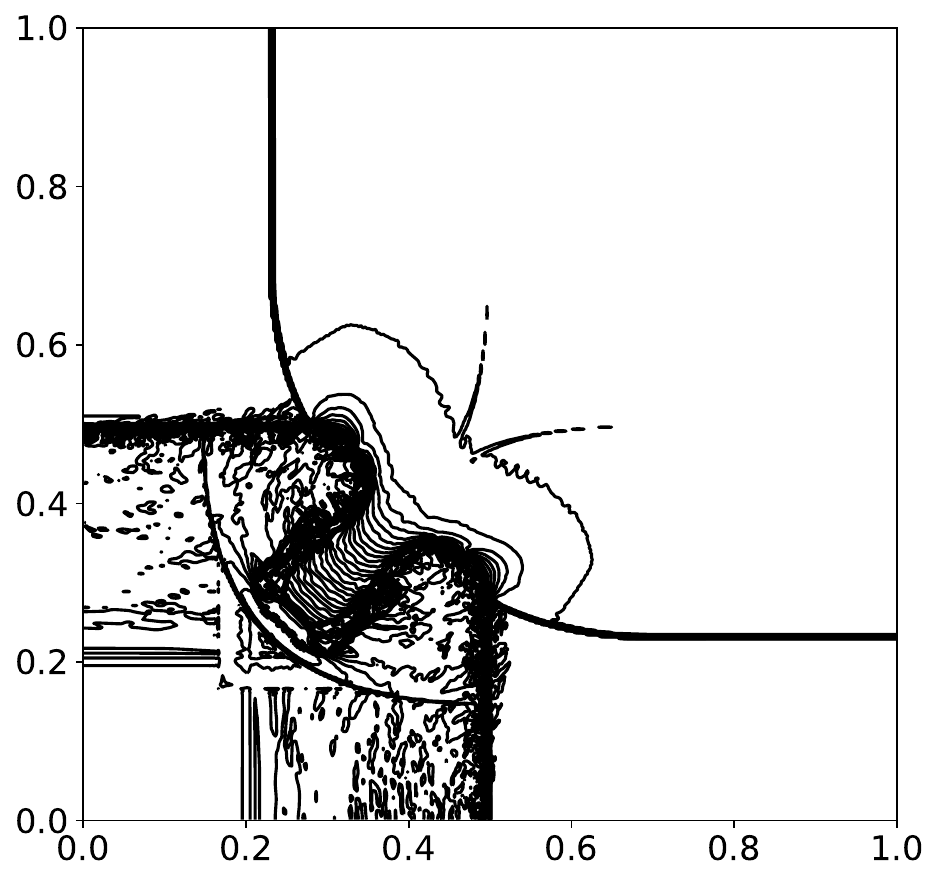}}
\caption{Third 2D Riemann problem in Example \ref{2D:Rie exams}: Contours of $\log \rho$ at $t = 0.4$ for the third-order PCP DG schemes with and without the OE procedure on a mesh of $400 \times 400$ uniform cells.}\label{fig:2D:Rie exam5}
\end{figure}

\end{example}

\begin{example}[Double Mach Reflection]\label{2D:Rie exam6}
The double Mach reflection problem is a standard benchmark for assessing high-resolution shock-capturing methods (see, e.g., \cite{HT2012, ZT2013}). Initially, a right-moving oblique shock wave, traveling at a velocity of $v_s = 0.4984$, is positioned at $(x, y) = (1/6, 0)$ and intersects the wall at a $60^\circ$ angle relative to the $x$-axis. The position of the shock front at time $t$ is described by
\[
h(x, t) = \sqrt{3}\left(x - \frac{1}{6}\right) - 2v_s t.
\]
The primitive variables for the states on either side of the shock are
\[
\mathbf{Q}_L = (8.564, 0.4247 \sin 60^\circ, -0.4247 \cos 60^\circ, 0.3808)^{\top}, \quad \mathbf{Q}_R = (1.4, 0, 0, 0.0025)^{\top}.
\]
The computational domain $\Omega = [0, 4] \times [0, 1]$ is discretized into $960 \times 240$ uniform cells. The adiabatic index is set to $\Gamma = 1.4$. The boundary conditions are specified as follows. 
(i) {\it Left boundary}: The post-shock state $\mathbf{Q}_L$ is imposed.
(ii) {\it Right boundary}: The pre-shock state $\mathbf{Q}_R$ is applied.
(iii) {\it Bottom boundary}: For $x \leq 1/6$, the left-side shock state is enforced, while for $x > 1/6$, a reflective boundary condition is used.
(iv) {\it Top boundary}: For $x > x_s$, the right-side shock state is applied, and for $x < x_s$, the left-side shock state is imposed, where $x_s$ is determined by solving $h(x, t) = 1$.

\begin{figure}[!thb]
\centering
\subfigure[$\mathbb{P}^1$]{\includegraphics[width=0.32\textwidth]{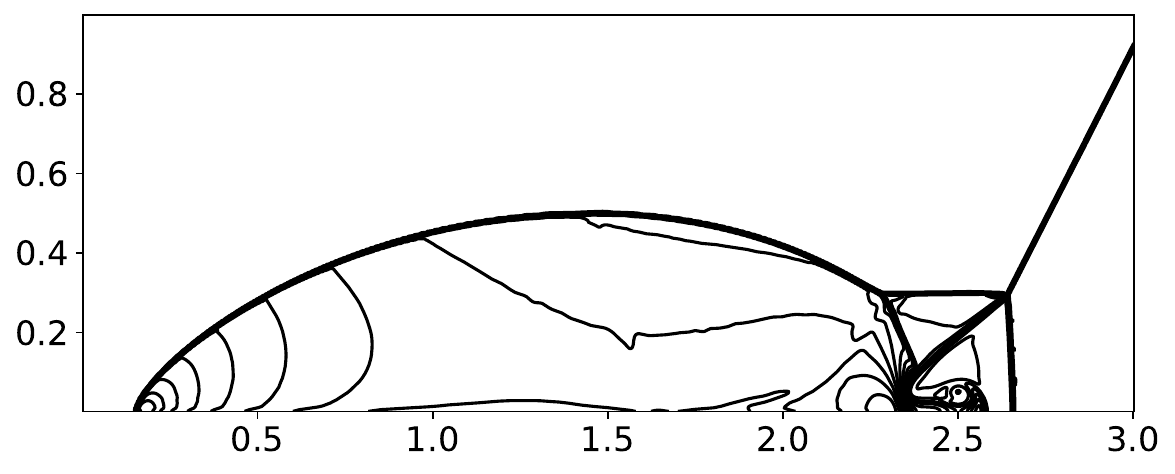}}
\subfigure[$\mathbb{P}^2$]{\includegraphics[width=0.32\textwidth]{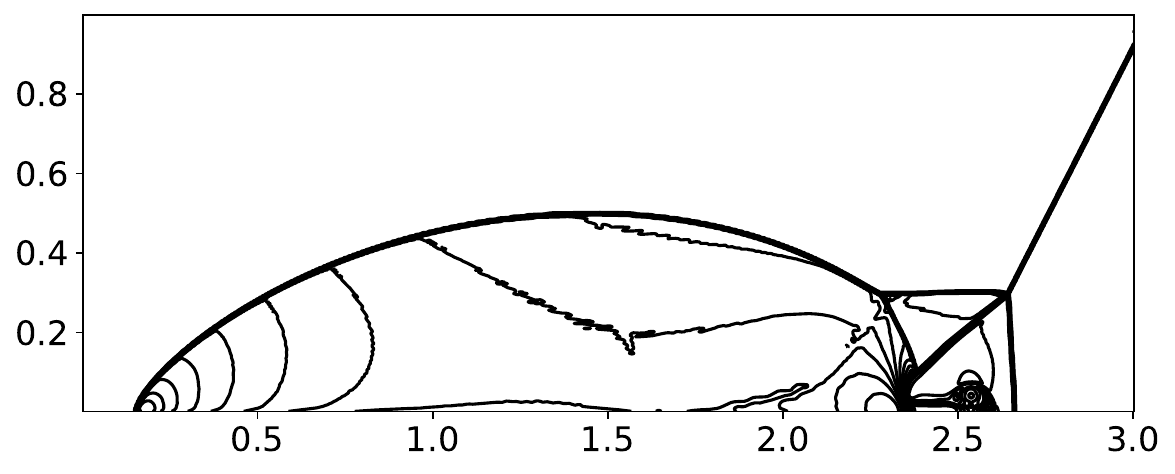}}
\subfigure[$\mathbb{P}^3$]{\includegraphics[width=0.32\textwidth]{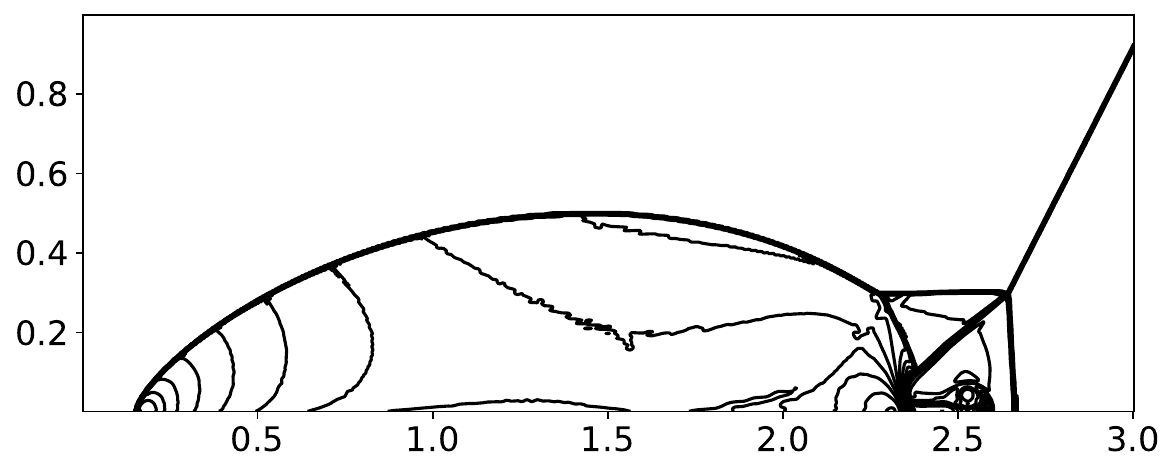}}
\caption{Example \ref{2D:Rie exam6}: Density logarithm contours at $t = 4$ using the $\mathbb{P}^m$-based OEDG method with $960 \times 240$ grid cells.}\label{fig:dm}
\end{figure}

Figure \ref{fig:dm} shows the density logarithm contours at $t = 4$ within the sub-domain $\Omega = [0, 3] \times [0, 1]$. The $\mathbb{P}^m$-based OEDG method for $m = 1, 2, 3$ accurately captures the complex structures around the double Mach reflection region, including the shock waves and contact discontinuities.
\end{example}

\begin{example}[Shock-Bubble Interaction]\label{2D:Rie exam7}
This example \cite{HT2012} extends the classical shock-bubble interaction problem, commonly studied in non-relativistic hydrodynamics, to demonstrate the shock-capturing capabilities of the proposed OEDG schemes. The computational domain is $\Omega = [0, 325] \times [-45, 45]$. Reflective boundary conditions are enforced at $y = \pm 45$, with fluid states specified on either side of the shock wave. Initially, a left-moving relativistic shock wave is positioned at $x = 265$, with the left and right states defined as:
\begin{align*}
\mathbf{Q}(x,y,0) =
\begin{cases}
(1, 0, 0, 0.05)^{\top}, & \quad x < 265, \\
(1.865225080631180, -0.196781107378299, 0, 0.15)^{\top}, & \quad x > 265.
\end{cases}
\end{align*}
Ahead of the shock wave, a cylindrical bubble is centered at $(215, 0)$ with a radius of $25$. The initial state of the fluid inside the bubble is given by:
\begin{align*}
\mathbf{Q}(x,y,0) = (0.1358, 0, 0, 0.05)^{\top}, \qquad \sqrt{(x-215)^2 + y^2} \leq 25,
\end{align*}
representing a lighter fluid compared to the surrounding medium.

\begin{figure}[!thb]
\centering
\subfigure[$\mathbb{P}^1,\,t = 90$]{\includegraphics[width=0.34\textwidth]{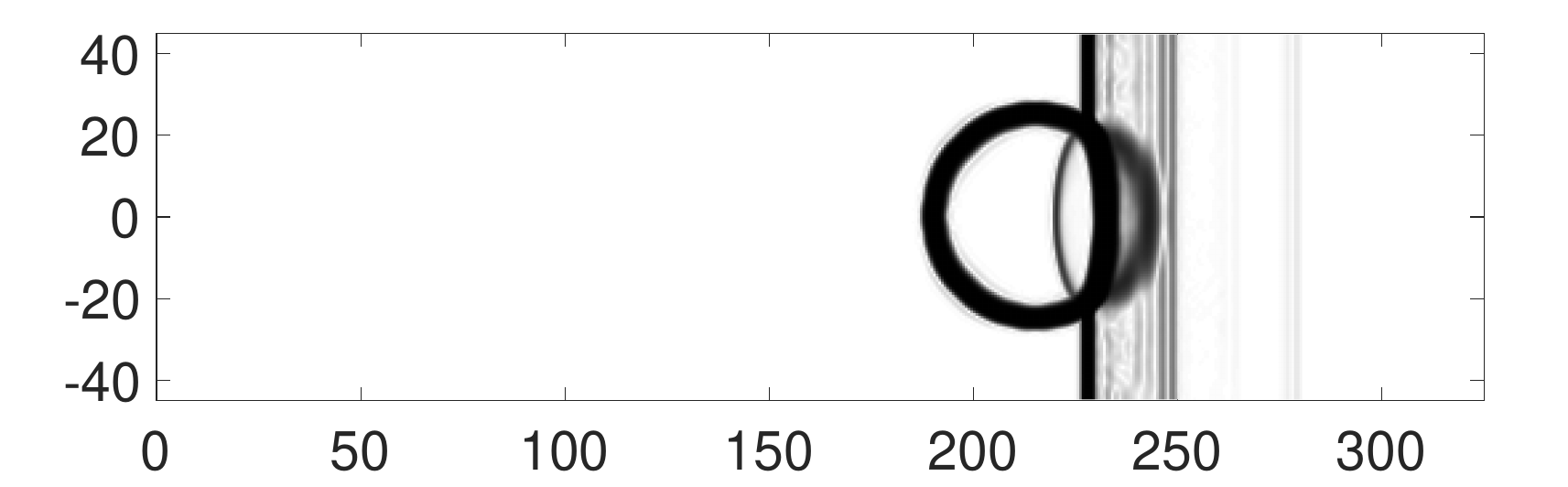}}\hspace{-3mm}
\subfigure[$\mathbb{P}^2,\,t = 90$]{\includegraphics[width=0.34\textwidth]{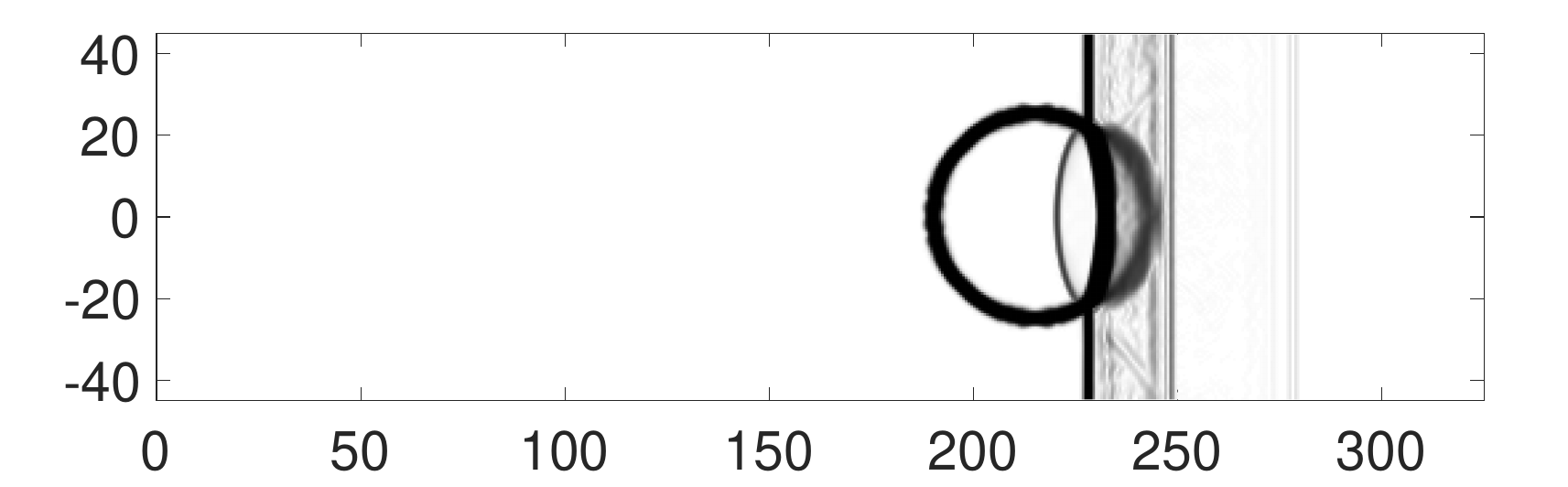}}\hspace{-3mm}
\subfigure[$\mathbb{P}^3,\,t = 90$]{\includegraphics[width=0.34\textwidth]{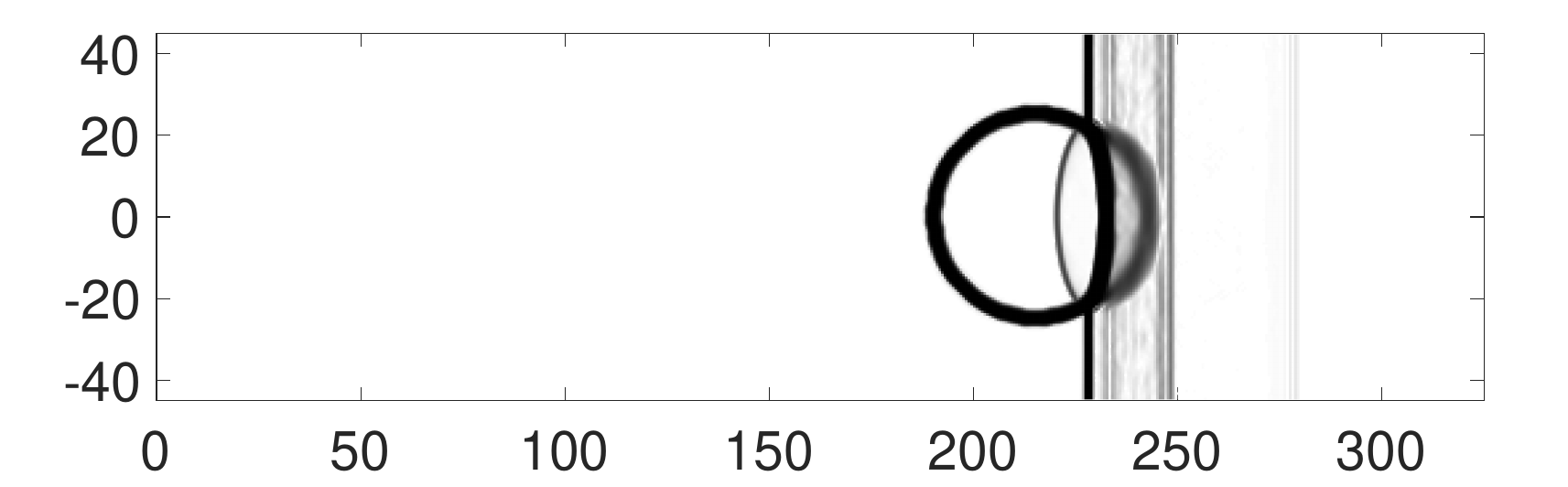}}\\
\subfigure[$\mathbb{P}^1,\,t = 270$]{\includegraphics[width=0.34\textwidth]{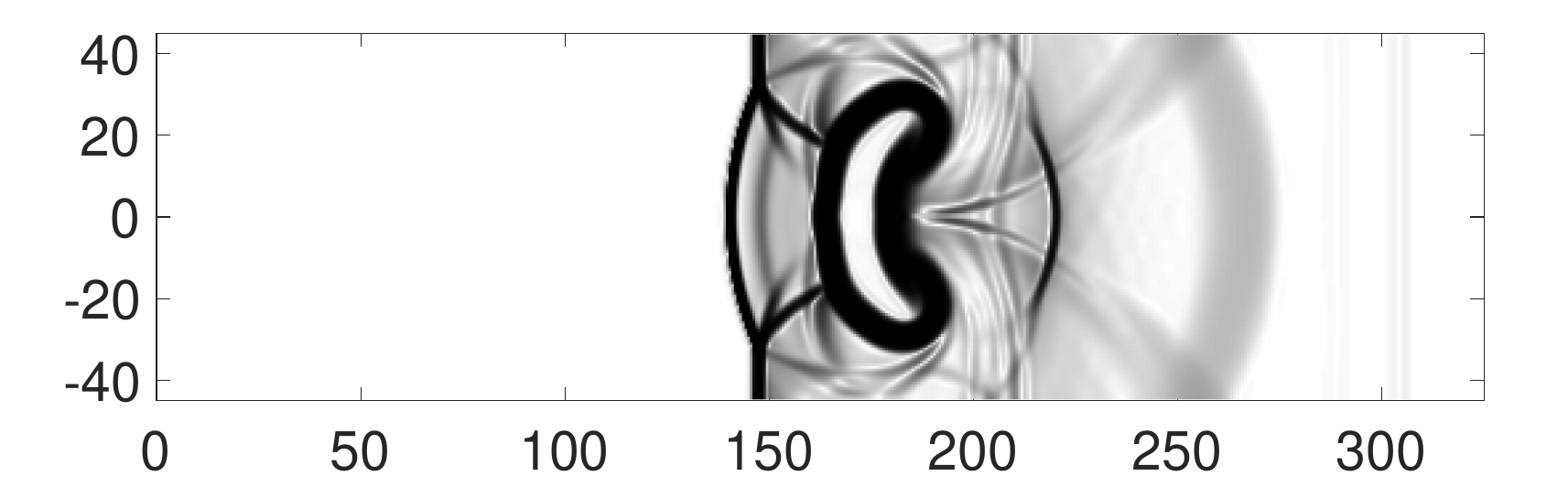}}\hspace{-3mm}
\subfigure[$\mathbb{P}^2,\,t = 270$]{\includegraphics[width=0.34\textwidth]{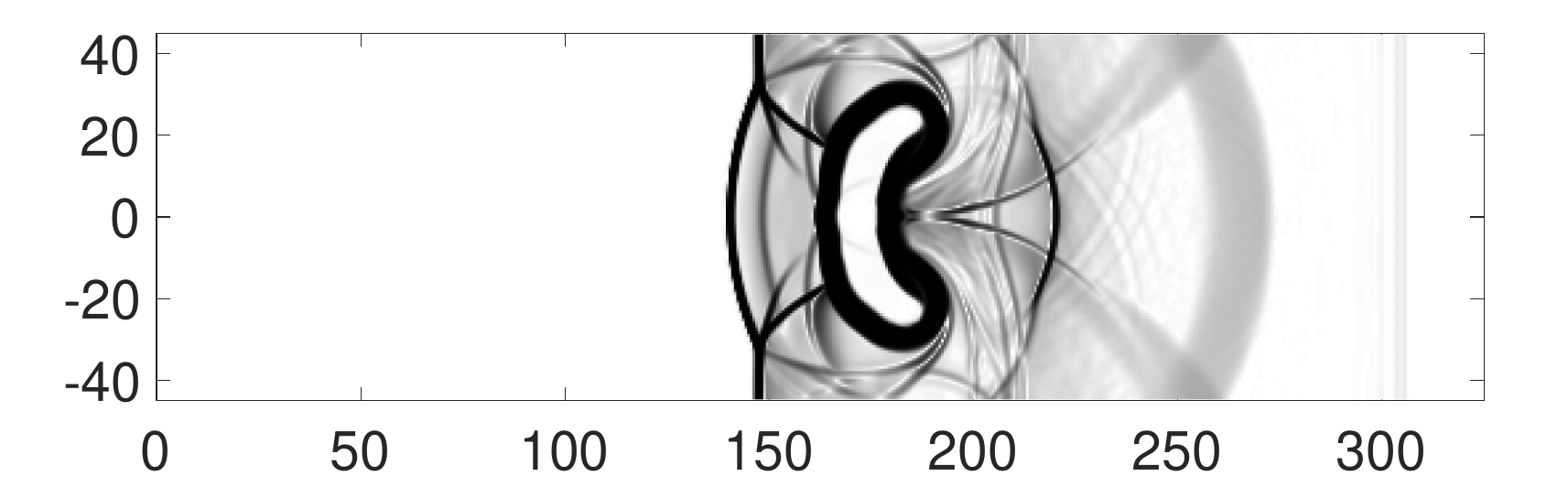}}\hspace{-3mm}
\subfigure[$\mathbb{P}^3,\,t = 270$]{\includegraphics[width=0.34\textwidth]{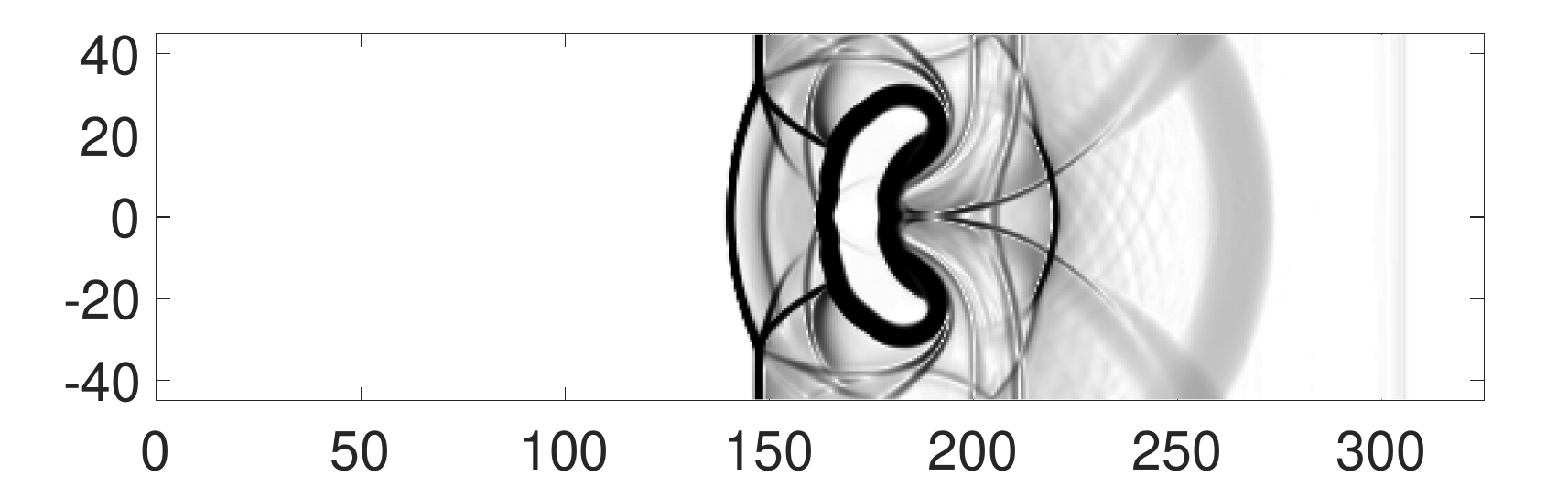}}\\
\subfigure[$\mathbb{P}^1,\,t = 450$]{\includegraphics[width=0.34\textwidth]{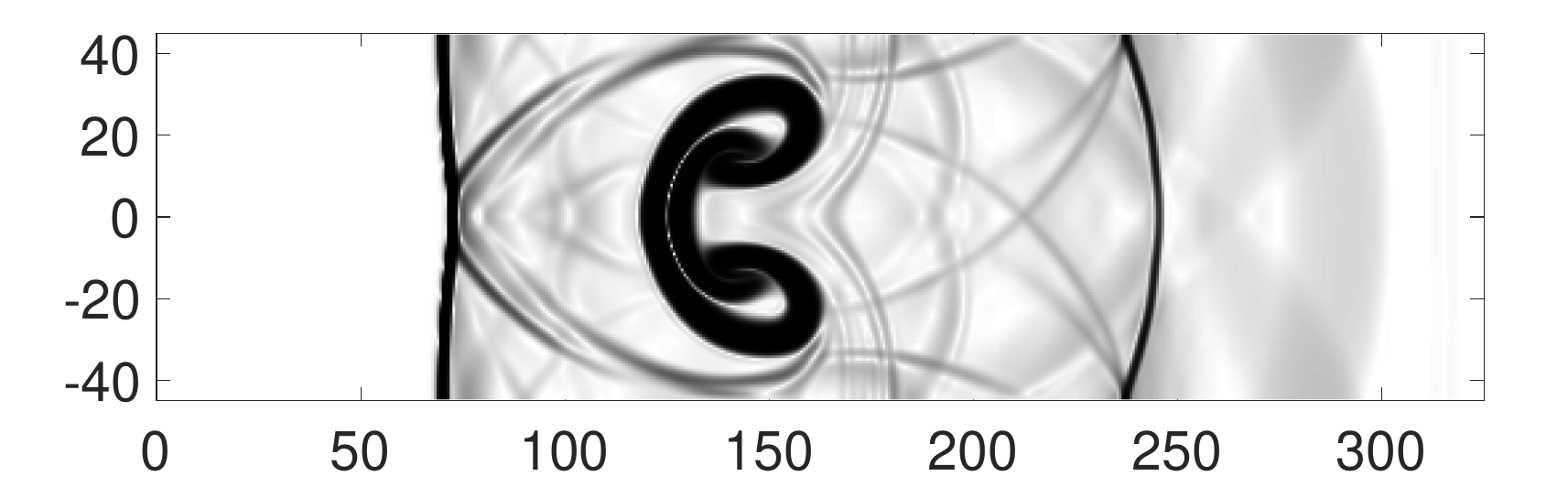}}\hspace{-3mm}
\subfigure[$\mathbb{P}^2,\,t = 450$]{\includegraphics[width=0.34\textwidth]{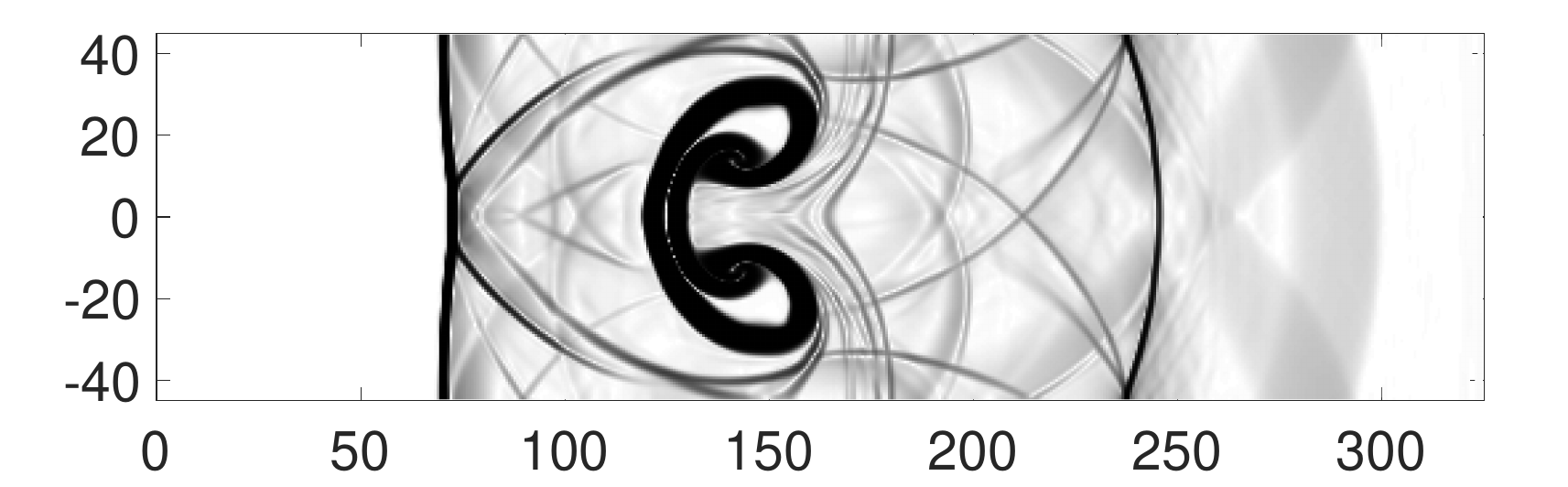}}\hspace{-3mm}
\subfigure[$\mathbb{P}^3,\,t = 450$]{\includegraphics[width=0.34\textwidth]{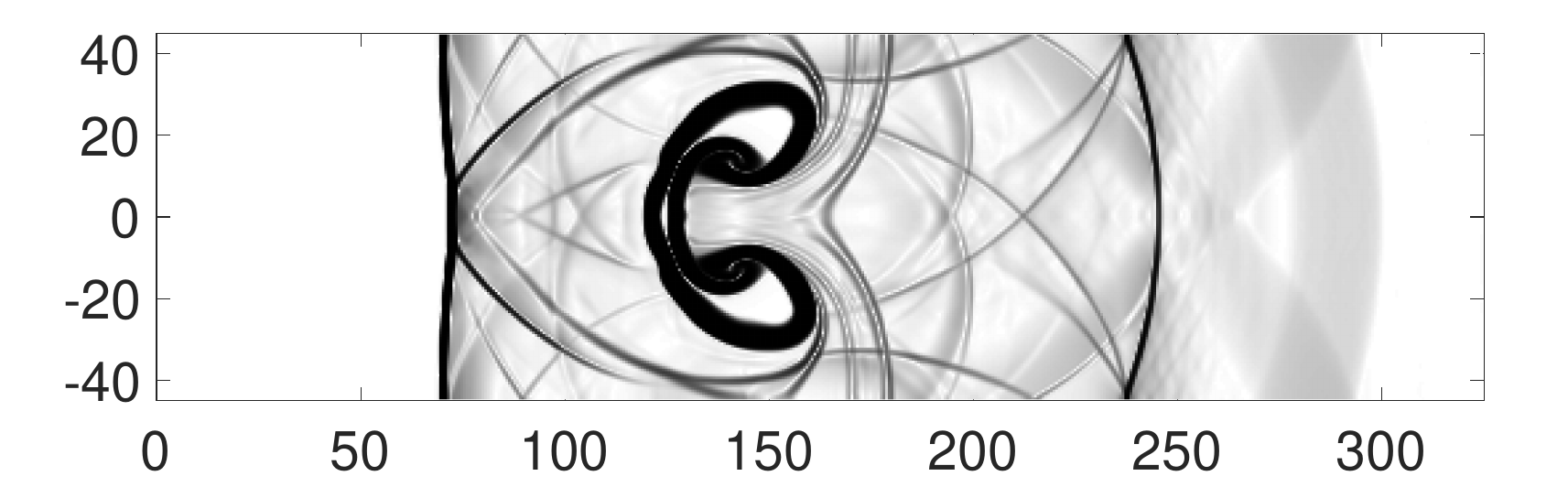}}
\caption{Example \ref{2D:Rie exam7}: Schlieren images of $\log \rho$ at different times for the $\mathbb{P}^m$-based OEDG method with $500 \times 140$ uniform cells.}\label{fig:2D:Rie exam7}
\end{figure}

Figure \ref{fig:2D:Rie exam7} presents schlieren images of the rest-mass density at $t = 90$, $270$, and $450$, computed using the OEDG schemes on a $500 \times 140$ uniform mesh. The results demonstrate the OEDG method's ability to capture the shock-wave dynamics and small-scale wave structures, such as the curling of the bubble interface during the interaction between the shock and the bubble. As expected, higher-order OEDG methods provide better resolution of shock waves and contact discontinuities, effectively eliminating spurious oscillations.
\end{example}

\subsection{Axisymmetric Relativistic Jets in Cylindrical Coordinates}

In this subsection, we simulate two high-speed relativistic jet flows relevant to extragalactic radio sources associated with active galactic nuclei. These simulations feature ultra-relativistic regions, strong shock waves, shear flows, and interface instabilities, posing significant challenges for numerical codes. Without the proposed PCP technique, both the DG codes with and without the OE procedure would fail, producing nonphysical states that violate the admissible set. 
The relativistic jets are modeled by solving the axisymmetric RHD equations in cylindrical coordinates $(r,z)$, as described in Section \ref{sec:AxisRHD}.

\begin{example}[Axisymmetric Relativistic Jet \uppercase\expandafter{\romannumeral1}]\label{2D:Rie exam9}
The first test case is based on the pressure-matched hot A1 model \cite{MMFIM1997}. The initial conditions for the relativistic jet are given by
\[
\mathbf{Q}(r, z, 0) = (1, 0, 0, 0.40611878453038897)^{\top},
\]
with an adiabatic index of $\Gamma = 4/3$. The computational domain, in cylindrical coordinates $(r, z)$, is $[0, 7] \times [0, 50]$, discretized into $210 \times 1500$ uniform cells. Symmetry boundary conditions are enforced at $r = 0$, a fixed inflow beam condition is applied at the nozzle $\{z = 0, r \leq 1\}$, and outflow boundary conditions are set on the remaining boundaries.  
Initially, a light jet beam with $\rho^b = 0.01$ is injected parallel to the $z$-direction through the bottom boundary inlet $(r \leq 1, z = 0)$. The beam propagates at a speed of $v^b = 0.99$, with relativistic effects from the high internal beam energy comparable to those from the near-light-speed velocity. The classical beam Mach number is $M_b = v^b/c_s = 1.72$, and the relativistic Mach number $M_r := M^b \gamma^b / \gamma_s$ is approximately $9.97$, where $\gamma^b = 1/\sqrt{1 - (v^b)^2}$ and $\gamma_s = 1/\sqrt{1 - c_s^2}$ are the Lorentz factors for the jet speed and local sound speed, respectively.

\begin{figure}[!thbp]
\centering
\subfigure[$\mathbb{P}^1,\,t = 10$]{\includegraphics[width=0.135\textwidth]{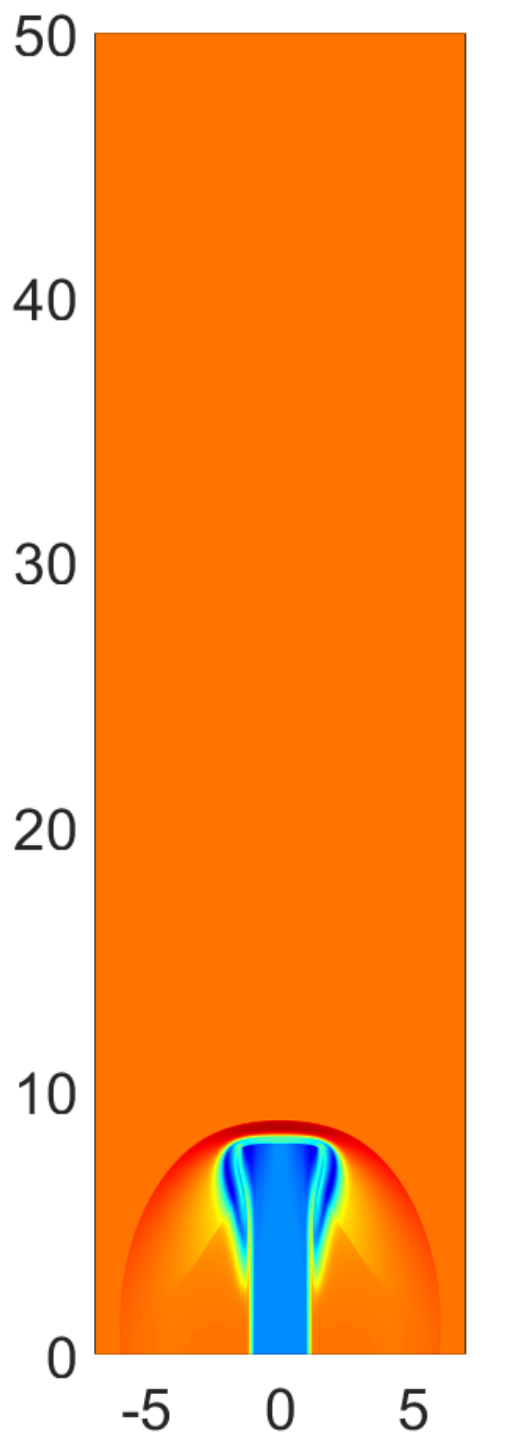}}\hspace{2mm}
\subfigure[$\mathbb{P}^1,\,t = 20$]{\includegraphics[width=0.135\textwidth]{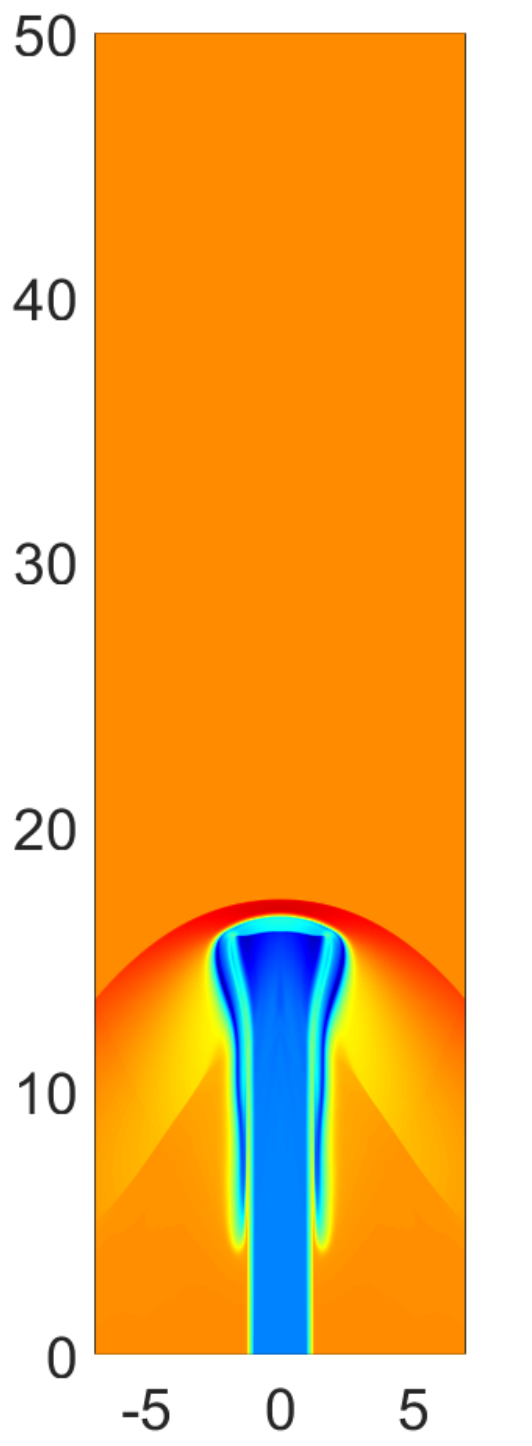}}\hspace{2mm}
\subfigure[$\mathbb{P}^1,\,t = 30$]{\includegraphics[width=0.135\textwidth]{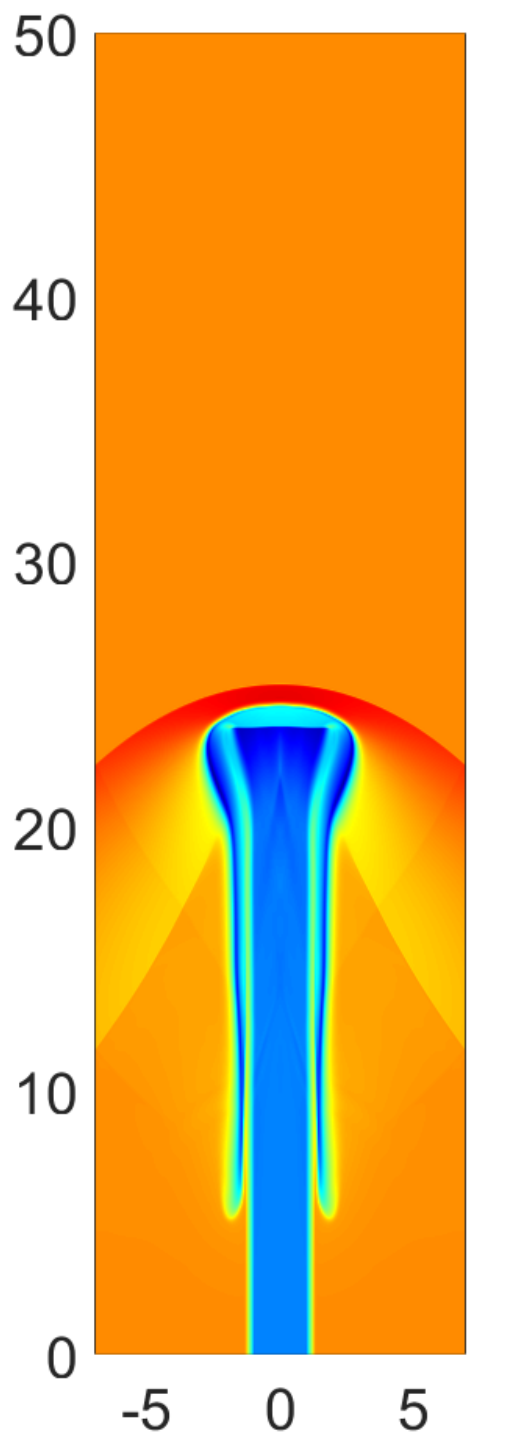}}\hspace{2mm}
\subfigure[$\mathbb{P}^1,\,t = 40$]{\includegraphics[width=0.135\textwidth]{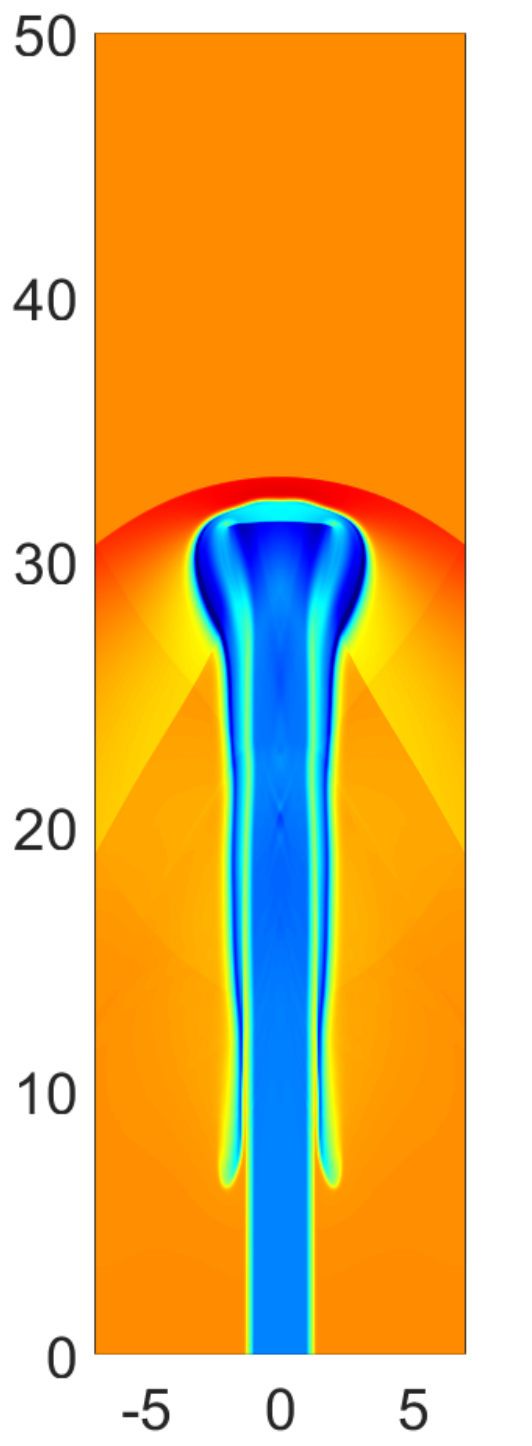}}\hspace{2mm}
\subfigure[$\mathbb{P}^1,\,t = 50$]{\includegraphics[width=0.135\textwidth]{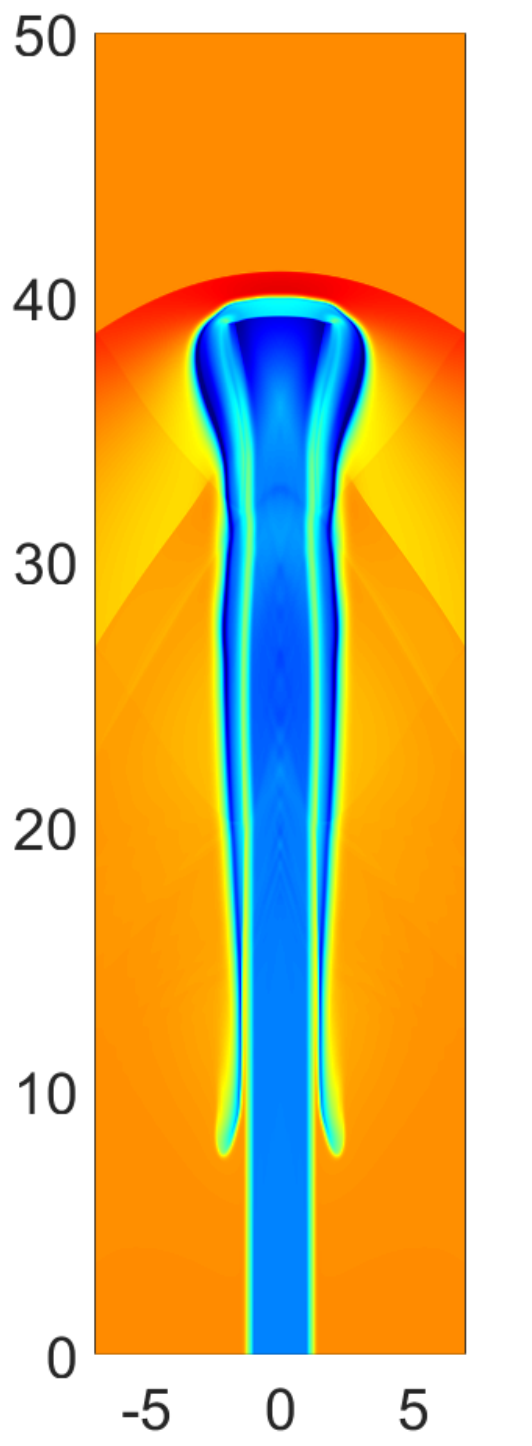}}\hspace{2mm}
\subfigure[$\mathbb{P}^1,\,t = 60$]{\includegraphics[width=0.135\textwidth]{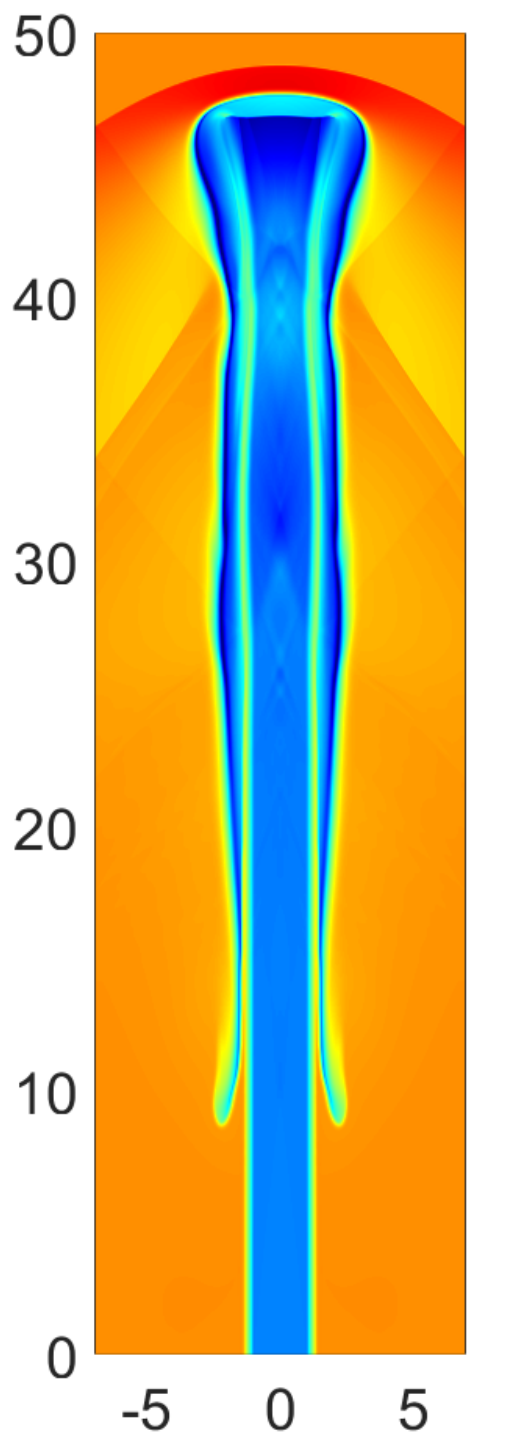}}\\
\subfigure[$\mathbb{P}^2,\,t = 10$]{\includegraphics[width=0.135\textwidth]{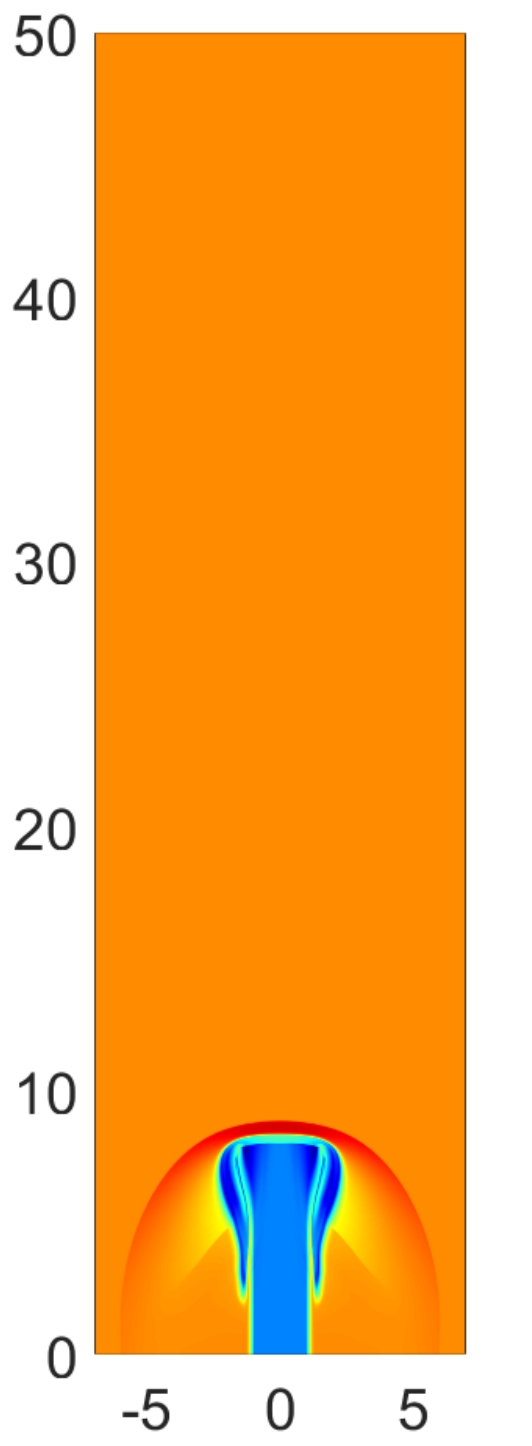}}\hspace{2mm}
\subfigure[$\mathbb{P}^2,\,t = 20$]{\includegraphics[width=0.135\textwidth]{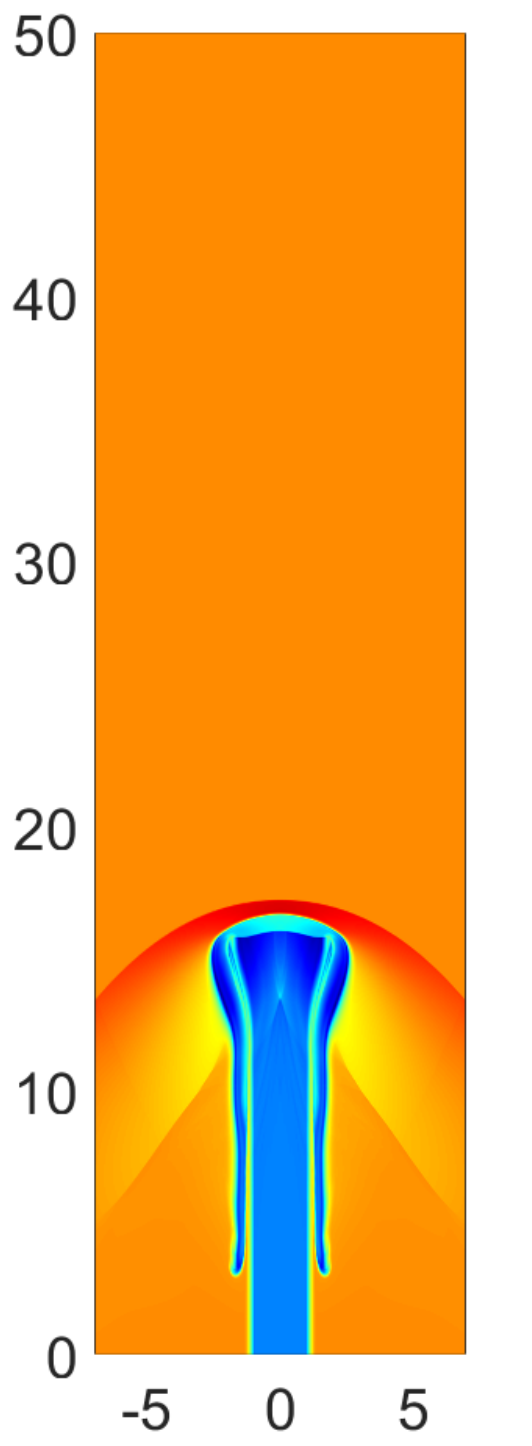}}\hspace{2mm}
\subfigure[$\mathbb{P}^2,\,t = 30$]{\includegraphics[width=0.135\textwidth]{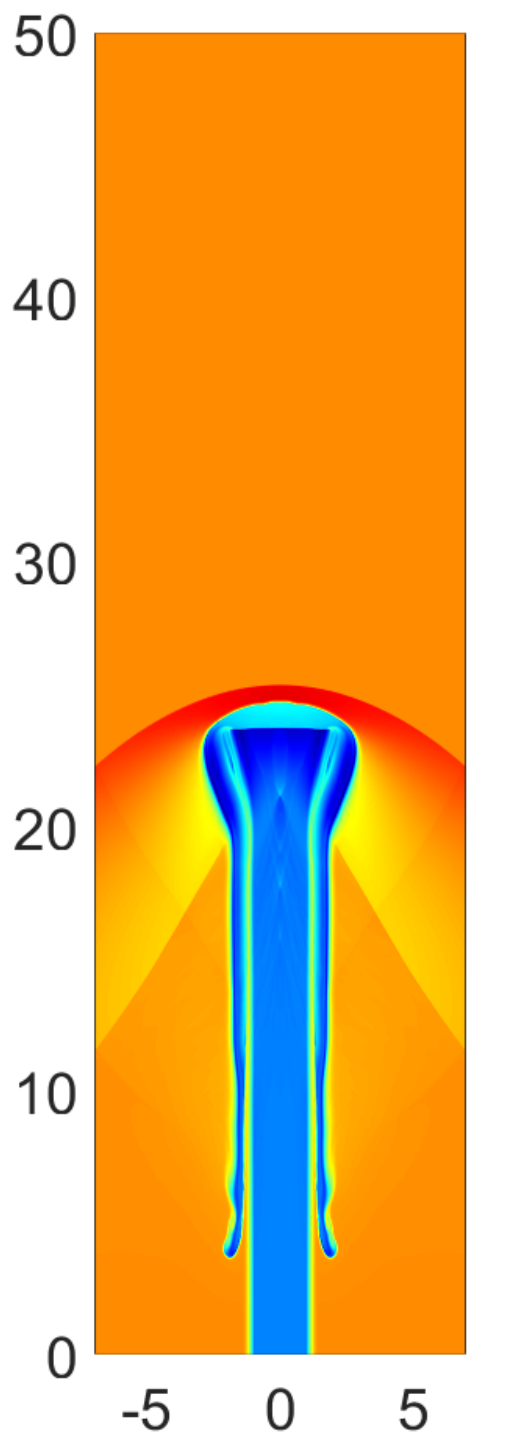}}\hspace{2mm}
\subfigure[$\mathbb{P}^2,\,t = 40$]{\includegraphics[width=0.135\textwidth]{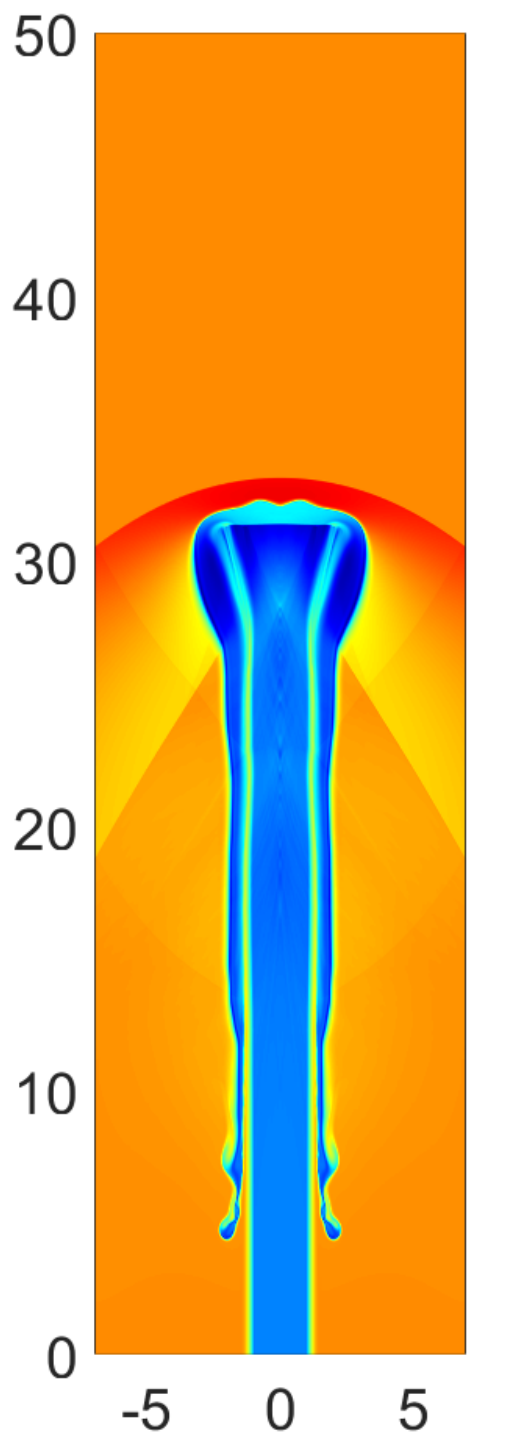}}\hspace{2mm}
\subfigure[$\mathbb{P}^2,\,t = 50$]{\includegraphics[width=0.135\textwidth]{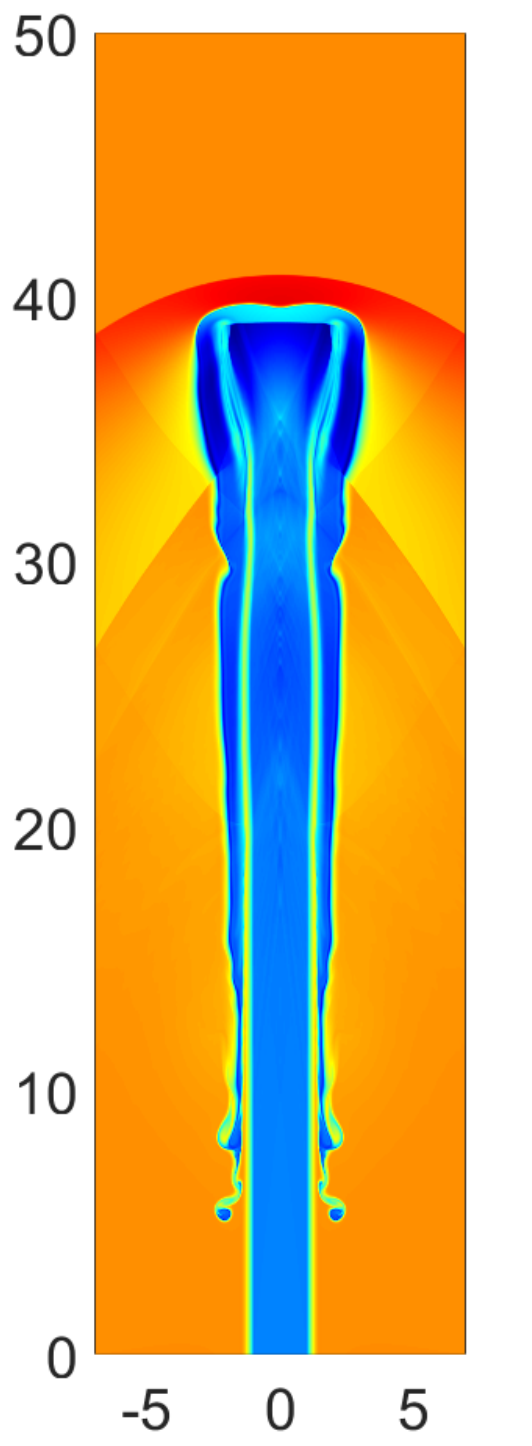}}\hspace{2mm}
\subfigure[$\mathbb{P}^2,\,t = 60$]{\includegraphics[width=0.135\textwidth]{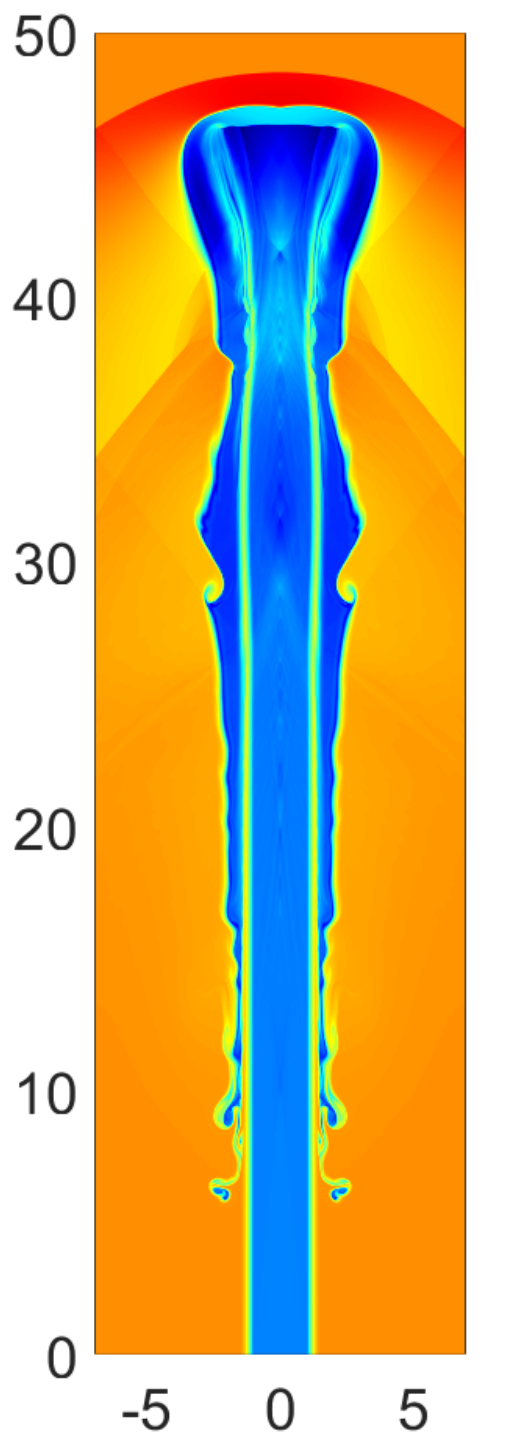}}\\
\subfigure[$\mathbb{P}^3,\,t = 10$]{\includegraphics[width=0.135\textwidth]{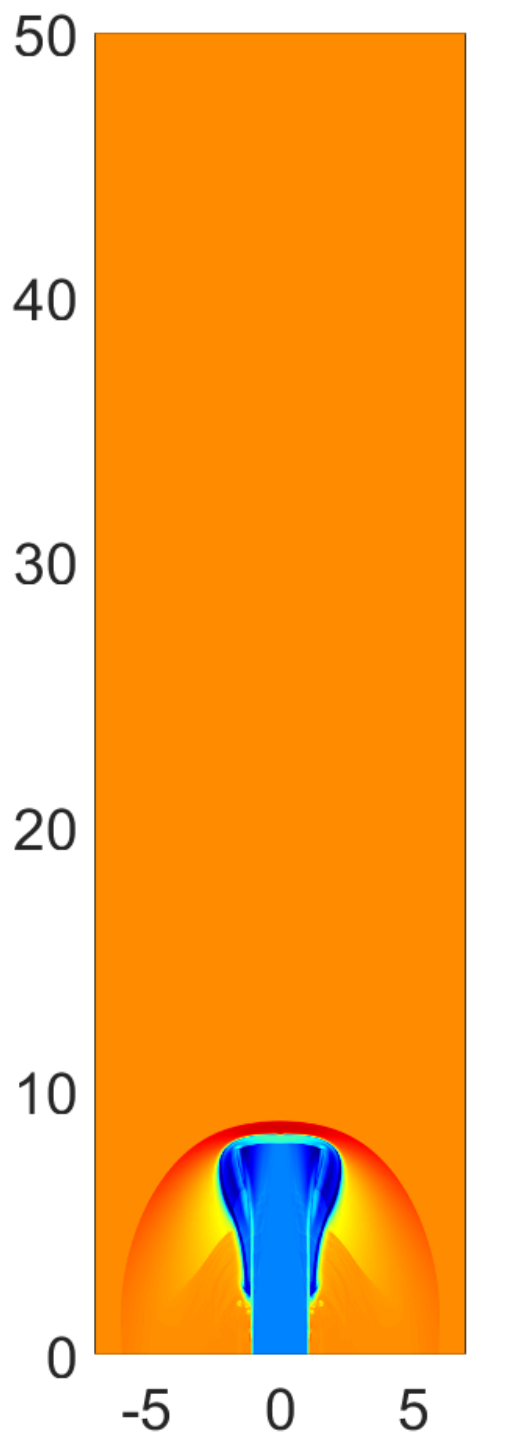}}\hspace{2mm}
\subfigure[$\mathbb{P}^3,\,t = 20$]{\includegraphics[width=0.135\textwidth]{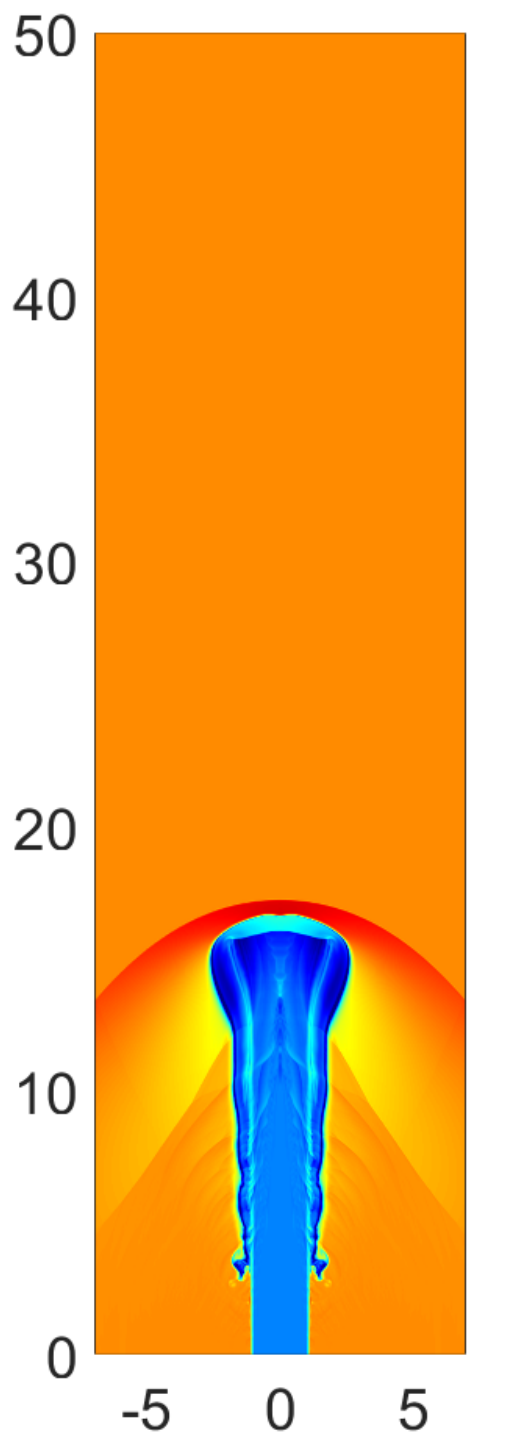}}\hspace{2mm}
\subfigure[$\mathbb{P}^3,\,t = 30$]{\includegraphics[width=0.135\textwidth]{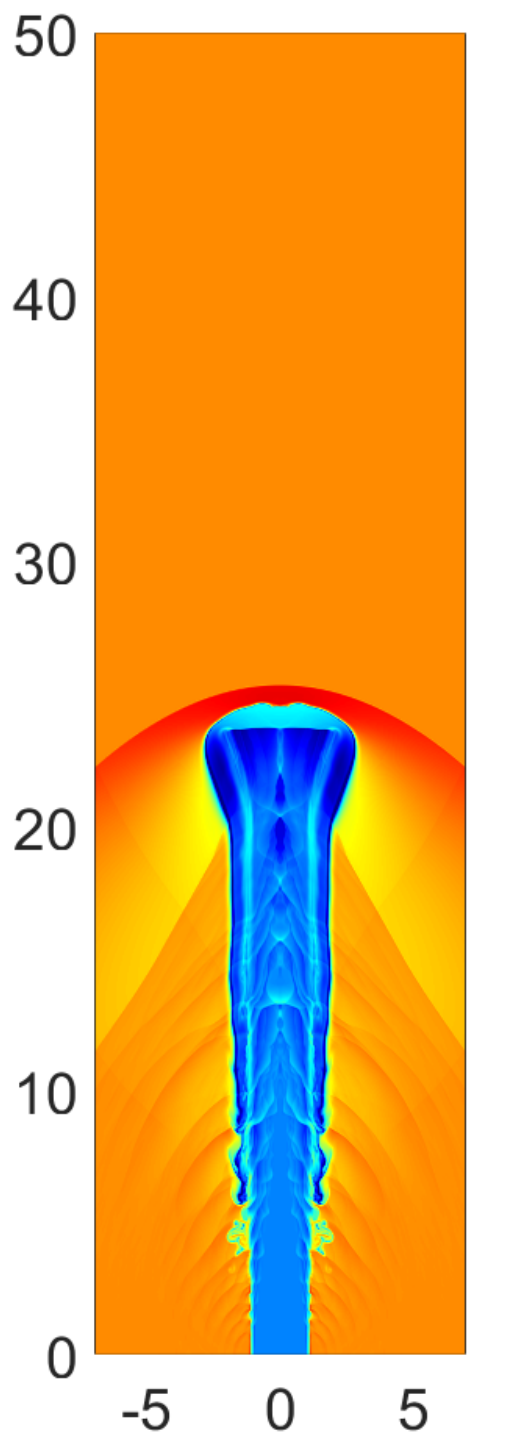}}\hspace{2mm}
\subfigure[$\mathbb{P}^3,\,t = 40$]{\includegraphics[width=0.135\textwidth]{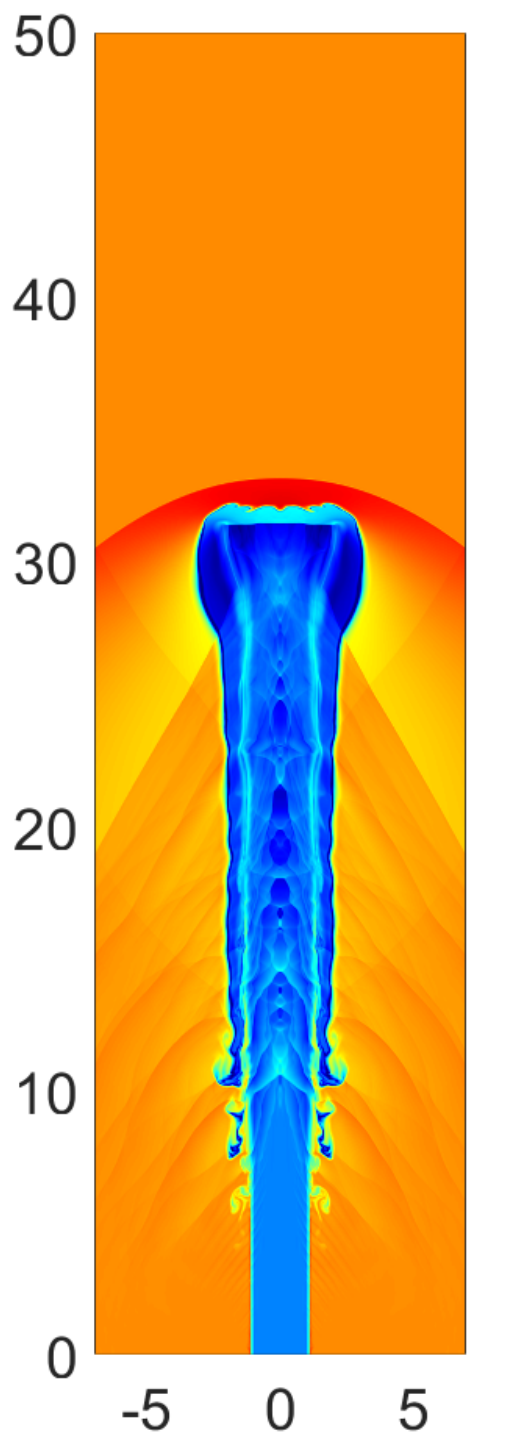}}\hspace{2mm}
\subfigure[$\mathbb{P}^3,\,t = 50$]{\includegraphics[width=0.135\textwidth]{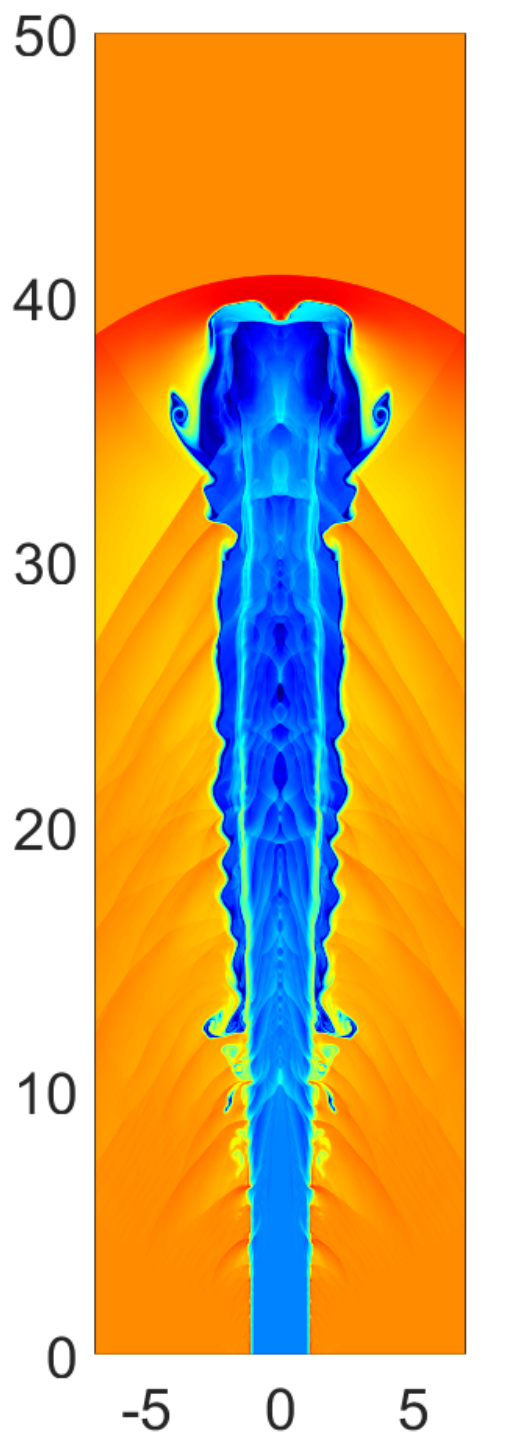}}\hspace{2mm}
\subfigure[$\mathbb{P}^3,\,t = 60$]{\includegraphics[width=0.135\textwidth]{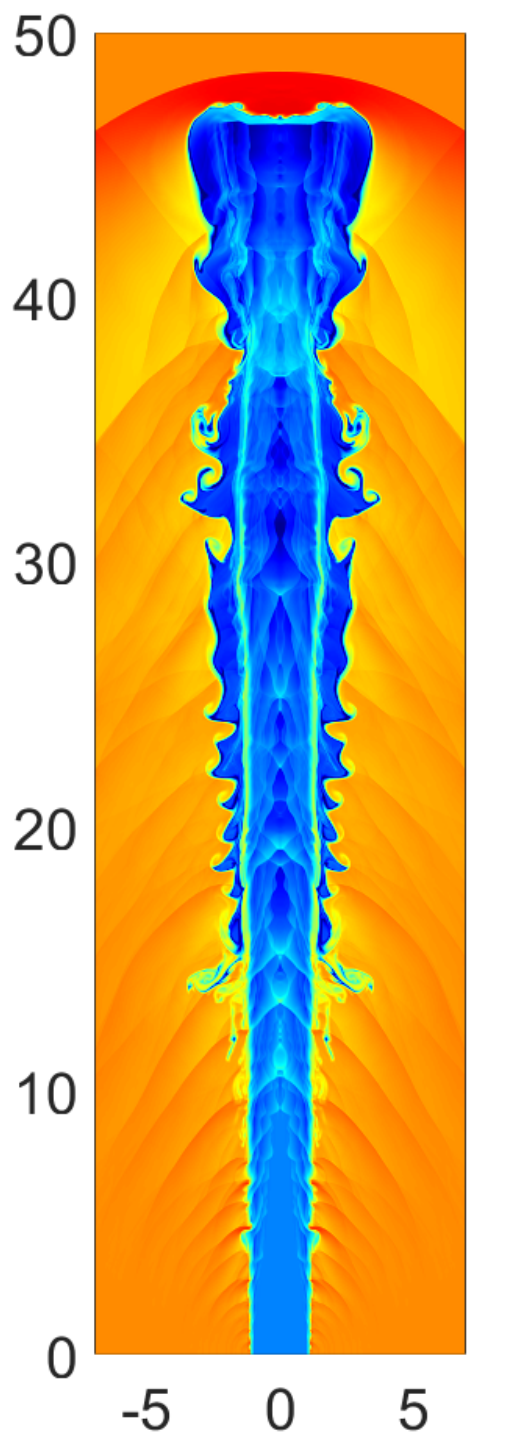}}
\caption{Example \ref{2D:Rie exam9}: Schlieren images of $\log \rho$ at different times for the $\mathbb{P}^m$-based PCP-OEDG method with $210 \times 1500$ uniform cells.}\label{fig:2D:Rie exam9}
\end{figure}

Figure \ref{fig:2D:Rie exam9} shows schlieren images of the rest-mass density logarithm $\log \rho$ in the domain $[-7, 7] \times [0, 50]$ at times $t = 10, 20, 30, 40, 50,$ and $60$, computed using the proposed PCP-OEDG schemes. The results demonstrate the accurate capture of the time evolution of a light, relativistic jet with high internal energy. The Mach shock at the jet's head is consistently resolved throughout the simulation. 
The PCP-OEDG schemes clearly resolve the beam/cocoon interface, and higher-order methods better capture the development of Kelvin--Helmholtz-type instabilities at this interface. These results confirm the robustness and shock-capturing capabilities of the PCP-OEDG schemes in handling complex relativistic jet dynamics.
\end{example}

\begin{figure}[!thbp]
\centering
\subfigure[$\mathbb{P}^1,\,t =60$]{\includegraphics[width=0.28\textwidth]{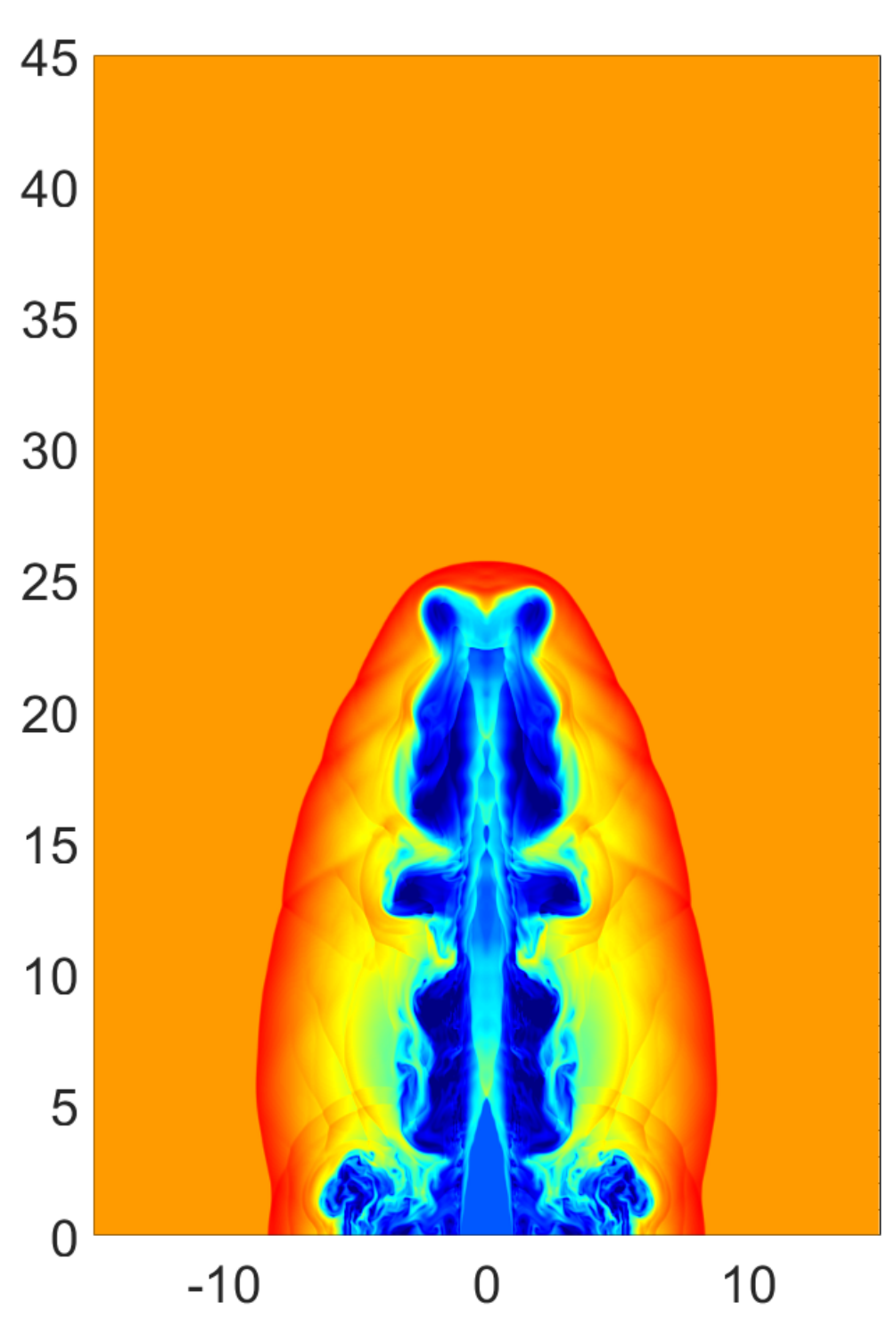}}
\subfigure[$\mathbb{P}^1,\,t =80$]{\includegraphics[width=0.28\textwidth]{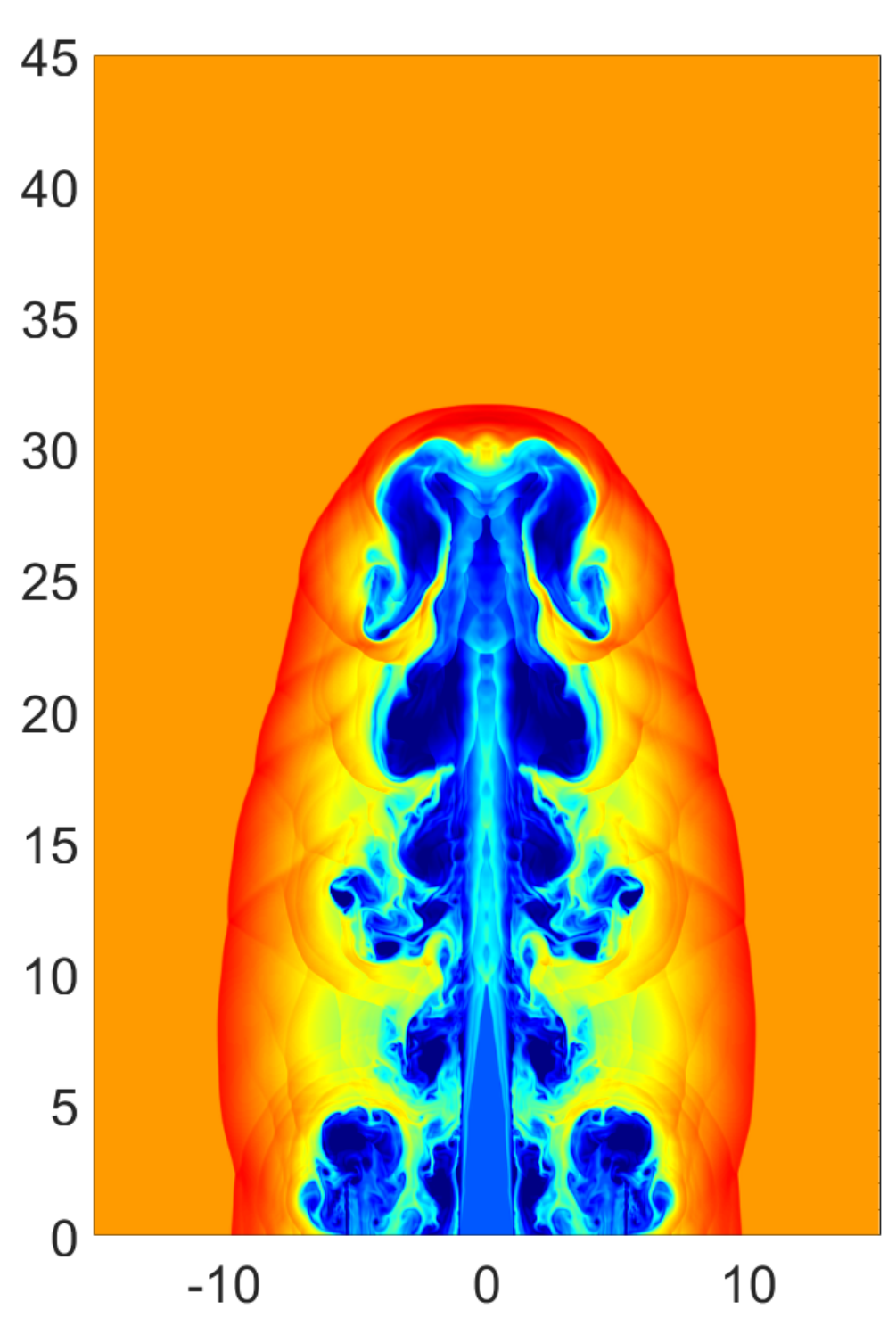}}
\subfigure[$\mathbb{P}^1,\,t =100$]{\includegraphics[width=0.28\textwidth]{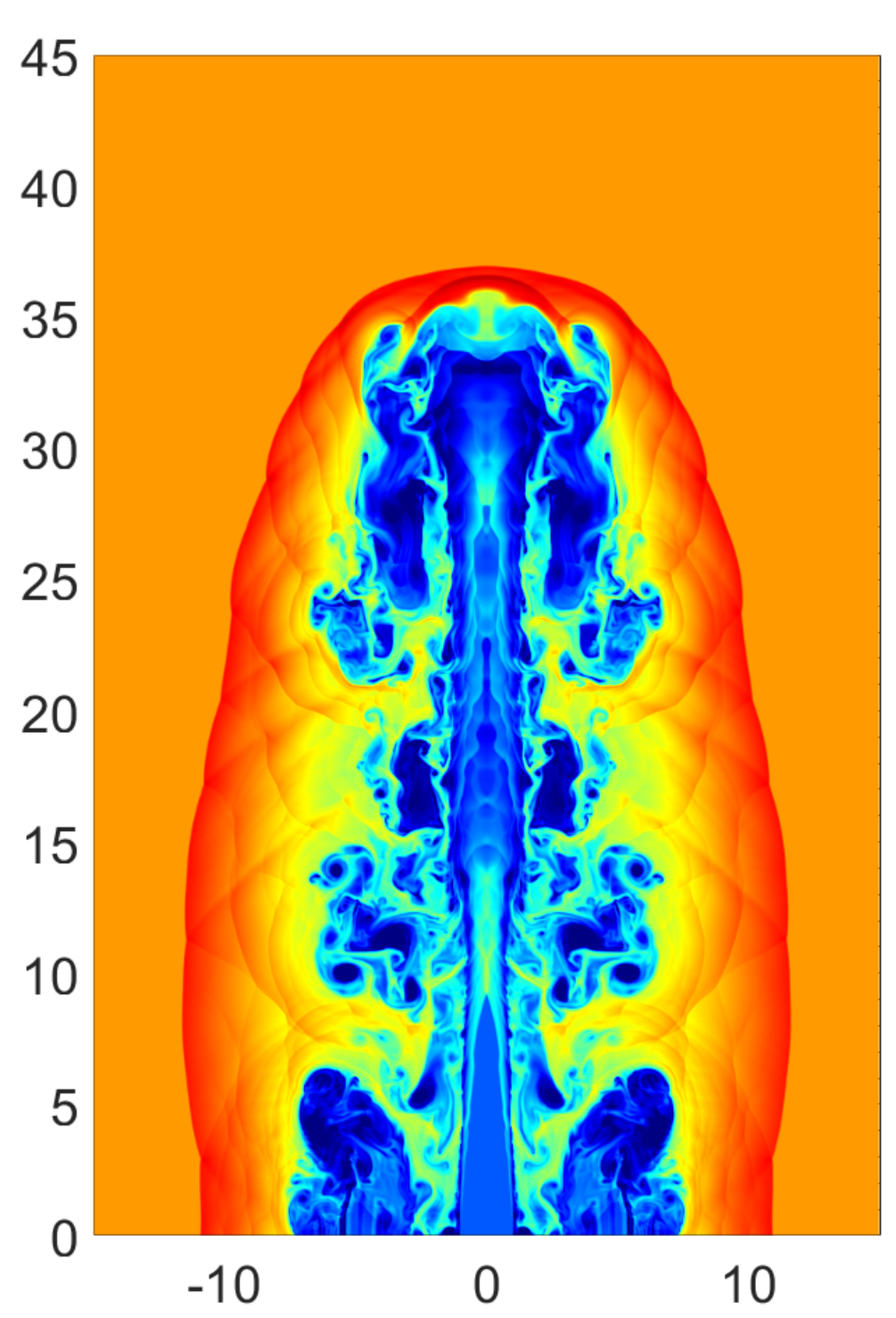}}\vspace{-4mm}\\
\subfigure[$\mathbb{P}^2,\,t =60$]{\includegraphics[width=0.28\textwidth]{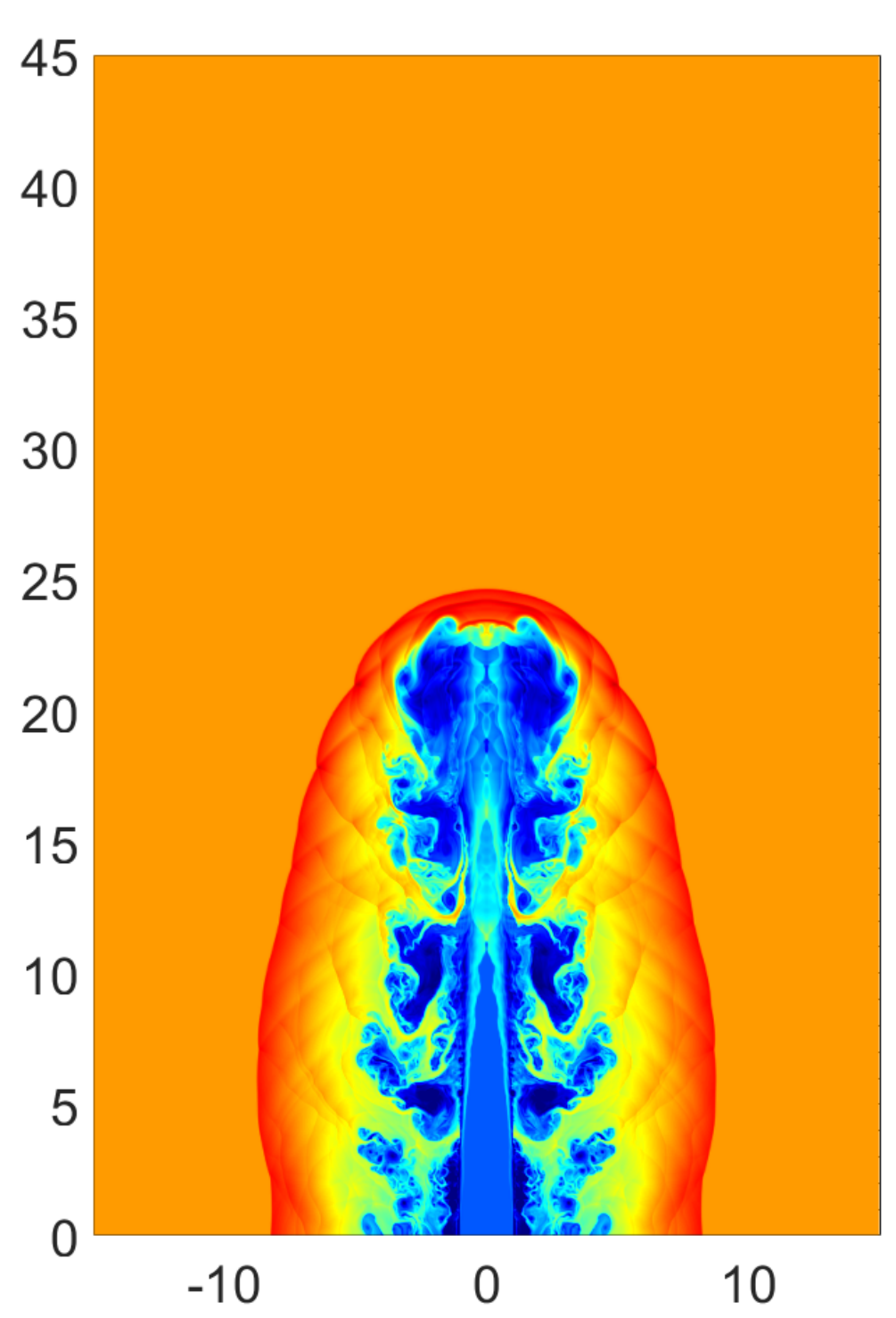}}
\subfigure[$\mathbb{P}^2,\,t =80$]{\includegraphics[width=0.28\textwidth]{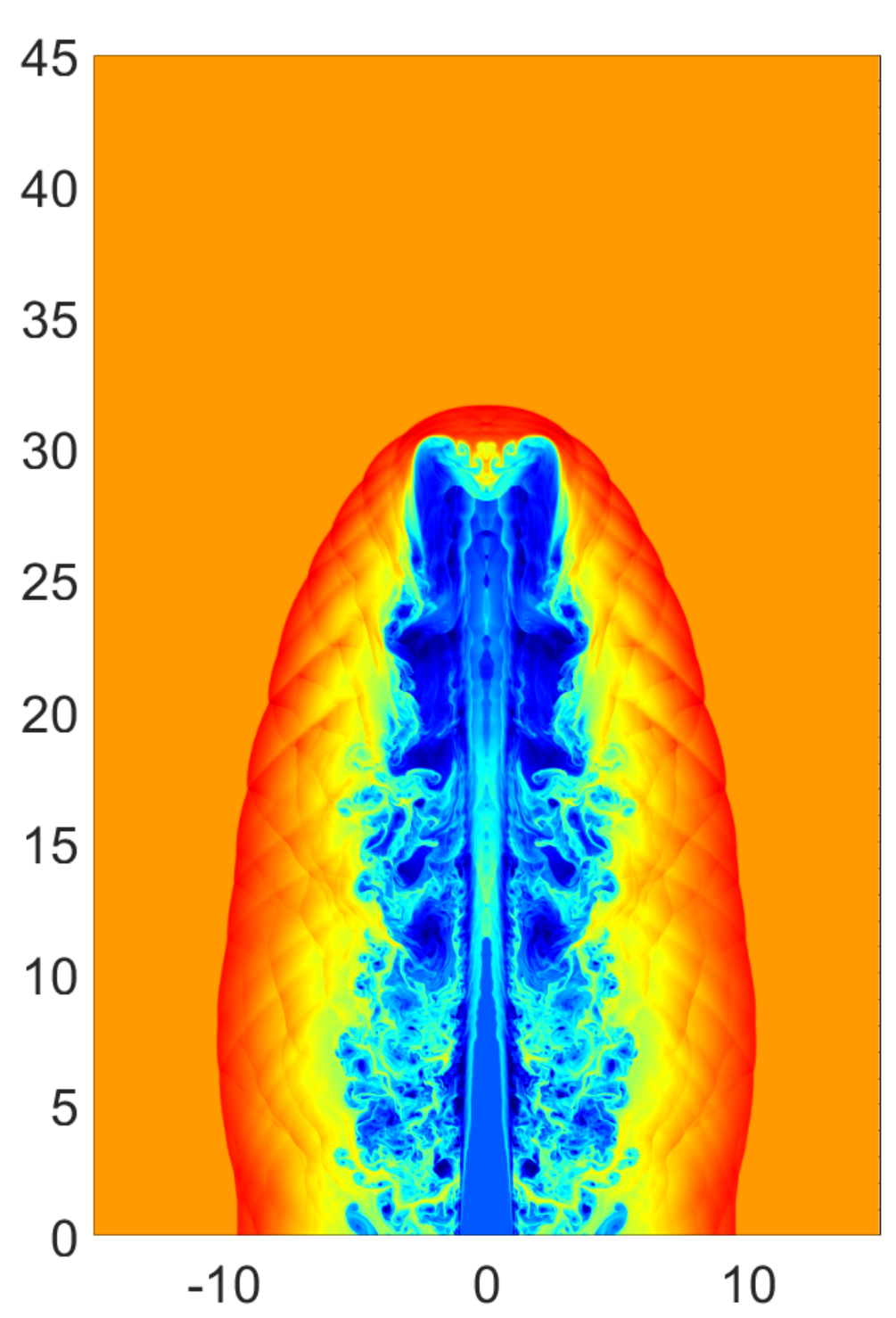}}
\subfigure[$\mathbb{P}^2,\,t =100$]{\includegraphics[width=0.28\textwidth]{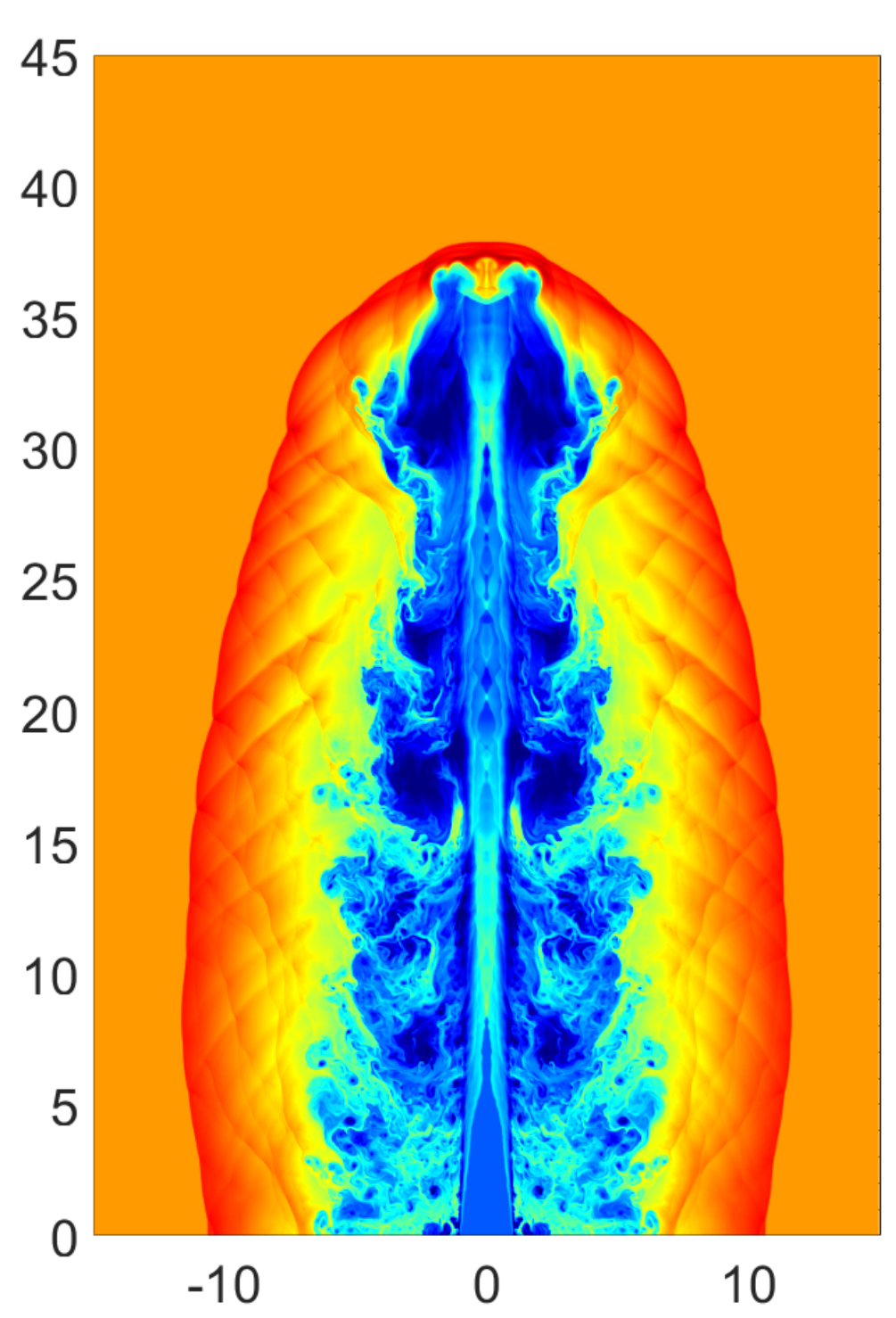}}\vspace{-4mm}\\
\subfigure[$\mathbb{P}^3,\,t =60$]{\includegraphics[width=0.28\textwidth]{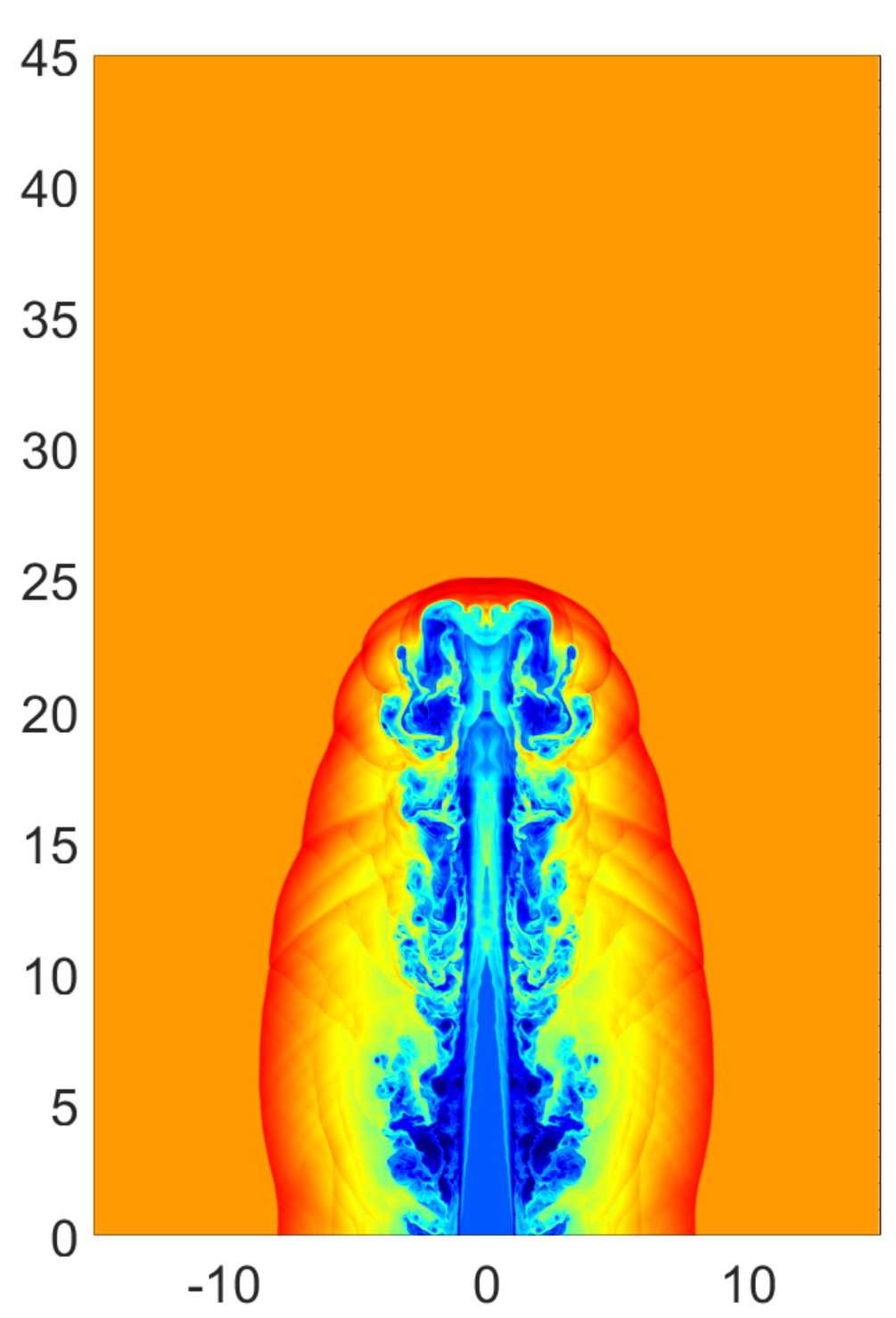}}
\subfigure[$\mathbb{P}^3,\,t =80$]{\includegraphics[width=0.28\textwidth]{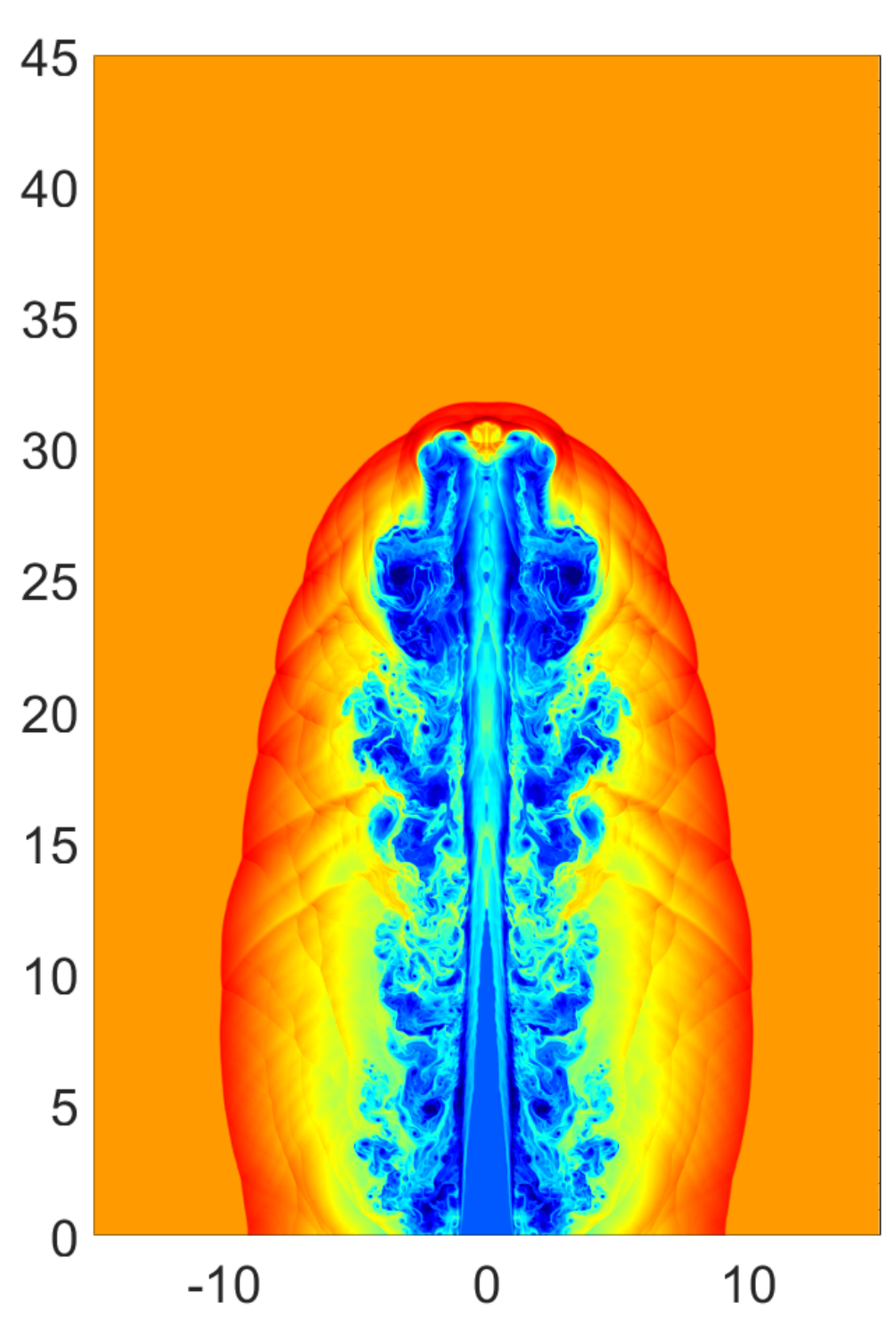}}
\subfigure[$\mathbb{P}^3,\,t =100$]{\includegraphics[width=0.28\textwidth]{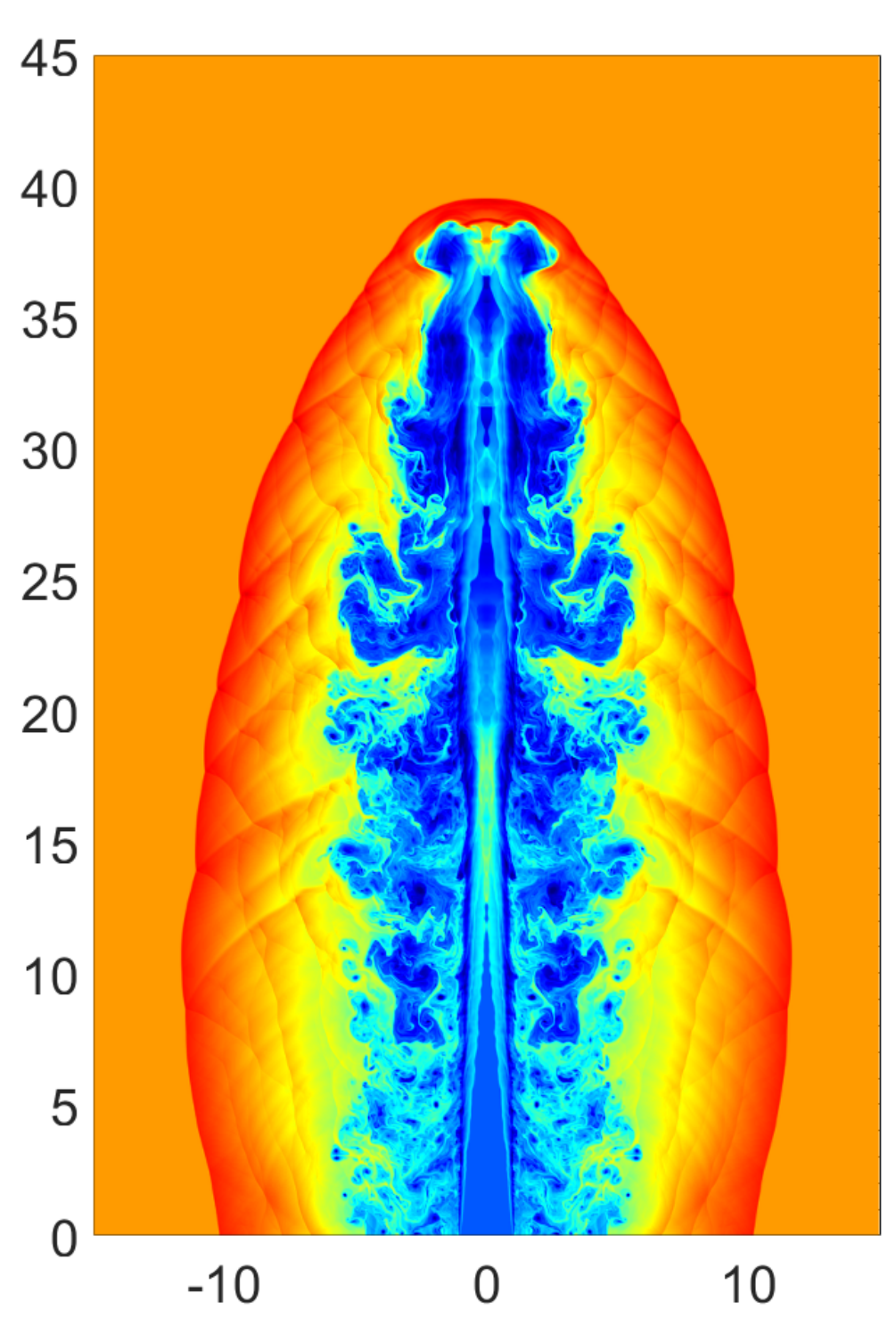}}
\vspace{-2mm}
\caption{Example \ref{2D:Rie exam8}: Schlieren images of $\log \rho$ at different times for the $\mathbb{P}^m$-based PCP-OEDG method with $360 \times 1080$ uniform cells.}\label{fig:2D:Rie exam8}
\end{figure}

\begin{example}[Axisymmetric Relativistic Jet \uppercase\expandafter{\romannumeral2}]\label{2D:Rie exam8}
The second relativistic jet simulation involves the pressure-matched, highly supersonic C2 jet model  \cite{MMFIM1997}, which has been extensively studied in the literature \cite{ZM2006, QSY2016, WT2015,CW2022}. The initial state of the jet is given by
\begin{align*}
\mathbf{Q}(r,z,0) = (1, 0, 0, 1.70305 \times 10^{-4})^{\top}.
\end{align*}
The computational domain is a 2D cylindrical box $[0,15] \times [0,45]$ in the $(r,z)$ plane, discretized into $360 \times 1080$ uniform cells. The initial relativistic jet shares the same rest-mass density and velocity as the hot A1 model but features a higher beam Mach number of $M_b = 6$, which corresponds to a relativistic Mach number of approximately $M_r = 41.95$. This model has an adiabatic index of $5/3$. Highly supersonic jets, also referred to as cold jets, are dominated by relativistic effects due to their high beam speeds, leading to significant differences between cold and hot relativistic jets.

Figure \ref{fig:2D:Rie exam8} presents schlieren images of the rest-mass density logarithm $\log \rho$ at $t = 60, 80$, and $100$ within the symmetrical domain $[-15, 15] \times [0, 45]$. As expected, a bow shock forms at the jet head, and Kelvin--Helmholtz instabilities develop, leading to the formation of large vortices that create turbulent structures. The morphology and dynamics of the relativistic jets depicted in Figure \ref{fig:2D:Rie exam8} align closely with those obtained using adaptive mesh refinement RHD codes in \cite{ZM2006} and high-order PCP finite difference/volume WENO methods in \cite{WT2015,CW2022}.
\end{example}

\subsection{Black Hole Accretion}

In this subsection, we apply the proposed PCP-OEDG schemes to simulate the evolution of matter accreting supersonically onto a rotating black hole in the general relativistic regime, as described in \cite{FIP1999}. We follow the setups in \cite{FIP1999}, using the Kerr metric in Kerr–Schild coordinates, which avoids coordinate singularities at the black hole horizon. The PCP-OEDG schemes demonstrate robust shock-capturing capabilities for general relativistic fluid flows. In particular, we will use these schemes to explore how the morphology of the accretion flow depends on various parameters, including the black hole’s angular momentum, asymptotic flow velocity, Mach number, and the adiabatic index of the fluid. 
These simulations will highlight the efficacy of PCP-OEDG methods in accurately capturing the complex dynamics of black hole accretion flows, which include general relativistic shock waves and other challenging phenomena characteristic of extreme astrophysical environments.

Table \ref{tb:2D exam10} summarizes the specific parameter values used in our simulations. Here, $\Gamma$ represents the adiabatic index, $v_{\infty}$ is the asymptotic flow velocity, and $r_{\min}$ and $r_{\max}$ are the minimum and maximum radial values of the computational domain, respectively. The asymptotic Mach number, $M_{\infty}$, is determined by $v_{\infty}$ and the sound speed $c_s$ using the relation $M_{\infty} = v_{\infty}/c_s$. The black hole's angular momentum parameter $a$ is positive, meaning the black hole rotates counter-clockwise. The location of the event horizon is given by $r^{+} = 1 + \sqrt{1 - a^2}$. All distance measurements are expressed in terms of the black hole’s mass.

The initial conditions for the accretion flow in Kerr--Schild coordinates $(r, \tilde{\phi})$ are defined as follows:
\begin{align*}
\begin{cases}
\rho(r, \tilde{\phi}) &= 1.0,\\
v^r(r, \tilde{\phi}) &= f_1(r)v_{\infty}\cos\tilde{\phi} + f_2(r)v_{\infty}\sin\tilde{\phi},\\
v^{\tilde{\phi}}(r, \tilde{\phi}) &= f_3(r)v_{\infty}\cos\tilde{\phi} - f_4(r)v_{\infty}\sin\tilde{\phi},\\
p(r, \tilde{\phi}) &= \frac{\rho c_s^2(\Gamma-1)}{\Gamma(\Gamma - 1 - c_s^2)},
\end{cases}
\end{align*}
where the functions $f_1$, $f_2$, $f_3$, and $f_4$ are given by
\begin{align*}
f_1(r) &= \frac{1}{\sqrt{g_{rr}}}, \qquad\qquad f_3(r) = -\frac{2g_{r\tilde{\phi}}}{\sqrt{g_{rr}}g_{\tilde{\phi}\tilde{\phi}}},\\
f_4(r) &= \frac{f_1g_{rr} + f_3g_{r\tilde{\phi}}}{\sqrt{(g_{rr}g_{\tilde{\phi}\tilde{\phi}} - g_{r\tilde{\phi}}^2)(f_1^2g_{rr} + f_3^2g_{\tilde{\phi}\tilde{\phi}}+2f_1f_3g_{r\tilde{\phi}})}},\\
f_2(r) &= \frac{f_2f_4g_{\tilde{\phi}\tilde{\phi}} + f_1f_3g_{r\tilde{\phi}}}{f_1g_{rr} + f_3g_{r\tilde{\phi}}}.
\end{align*}
Here, $\{g_{ij}: i = r, \tilde{\phi}; j = r, \tilde{\phi}\}$ are the components of the Kerr metric tensor as defined in equation \eqref{metric}, with $v_{\infty}$ and $\Gamma$ provided in Table \ref{tb:2D exam10}.

For boundary conditions, outflow conditions are imposed at the inner boundary, while asymptotic initial values are applied at the outer boundary. The simulations are performed on a uniform grid with $600$ radial zones and $250$ angular zones, with the simulations running up to time $t = 500$ for all cases. The results are presented in Cartesian coordinates, where $x = r \cos \phi$ and $y = r \sin \phi$, transformed from Boyer--Lindquist coordinates.

\begin{table}[!thb]
\center
\caption{Parameters in different test cases for Kerr black hole accretion.}
\begin{tabular}[c]{c|c|c|c|c|c|c|c|c|c|c}
\toprule
Cases & Case 1 & Case 2 &Case 3 &Case 4 & Case 5  & Case 6  & Case 7  & Case 8 & Case 9 & Case 10\\
\hline
$\Gamma$     &5/3            &5/3            &5/3            &$\mathbf{5/3}$ &$\mathbf{4/3}$ &$\mathbf{2.0}$ & 5/3           & 5/3            & 5/3           & 5/3\\
$M_{\infty}$ &5              &5              &5              &$\mathbf{5}$   &5              &5              &5              &5               &$\mathbf{20}$  &$\mathbf{50}$\\
$v_{\infty}$ &0.5            &0.5            &0.5            &$\mathbf{0.5}$ &0.5            &0.5            &$\mathbf{0.9}$ &$\mathbf{0.99}$ & 0.5           & 0.5\\
$r_{\min}$    &1.8            &1.8            &1.4            &1.0            &1.0            &1.0            &1.0            &1.0             &	1.0           &	1.0\\
$r_{\max}$    &50.9           &50.9           &50.9           &50.9           &50.9           &50.9           &50.9           &50.9            &	50.9          &	50.9\\
a            &$\mathbf{0}$   &$\mathbf{0.5}$ &$\mathbf{0.9}$ &$\mathbf{0.99}$&0.99           &0.99           &0.99           &0.99            &	0.99          &	0.99\\
\bottomrule
\end{tabular}\label{tb:2D exam10}
\end{table}

\begin{example}[Effects of Angular Momentum $a$ for Kerr Black Hole Accretion]\label{2D:Rie exam10a}

To illustrate the dependence of accretion patterns on the black hole's angular momentum, we fix three parameters: $\Gamma$, $M_{\infty}$, and $v_{\infty}$, as shown in the first four cases listed in Table \ref{tb:2D exam10}. The angular momentum parameter $a$ varies as $0,\, 0.5,\, 0.9,\, 0.99$ for cases 1 to 4, respectively. For $a = 0$, the black hole is non-rotating, corresponding to the Schwarzschild case where the spin effect is absent.

\begin{figure}[!thb]
\centering
\subfigure[Case 1, $a = 0.0$]{\includegraphics[width=0.255\textwidth]{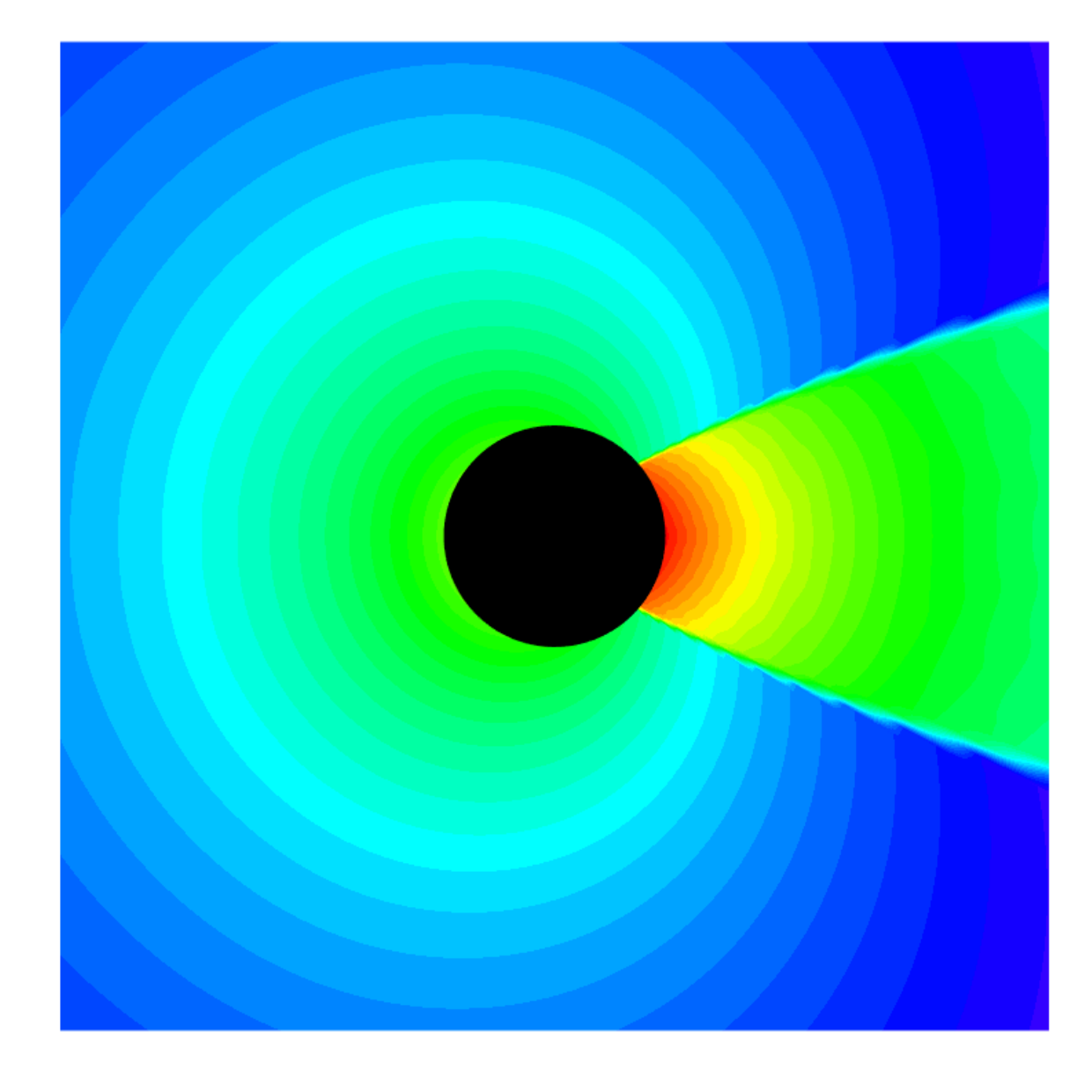}}\hspace{-2mm}
\subfigure[Case 2, $a = 0.5$]{\includegraphics[width=0.255\textwidth]{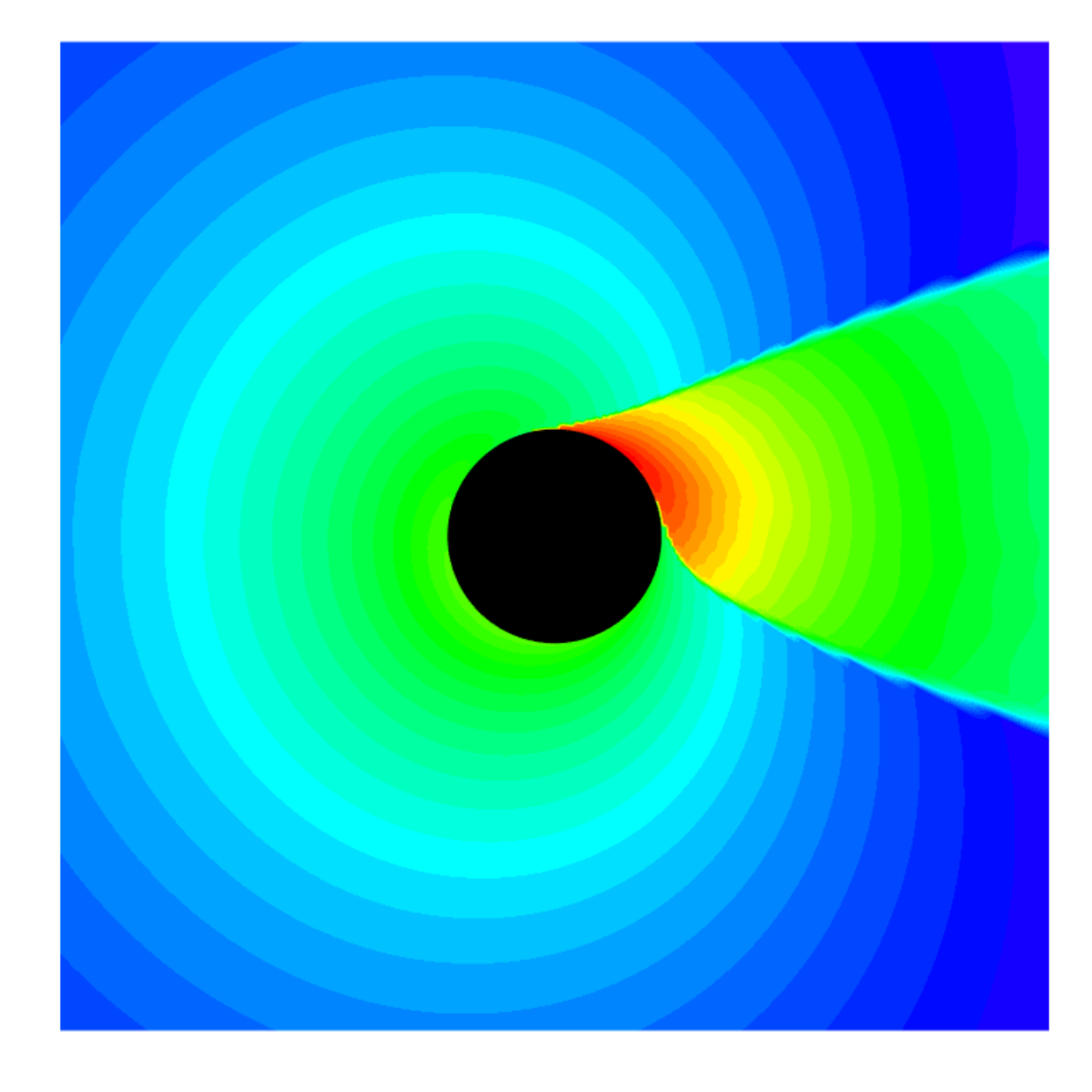}}\hspace{-2mm}
\subfigure[Case 3, $a = 0.9$]{\includegraphics[width=0.255\textwidth]{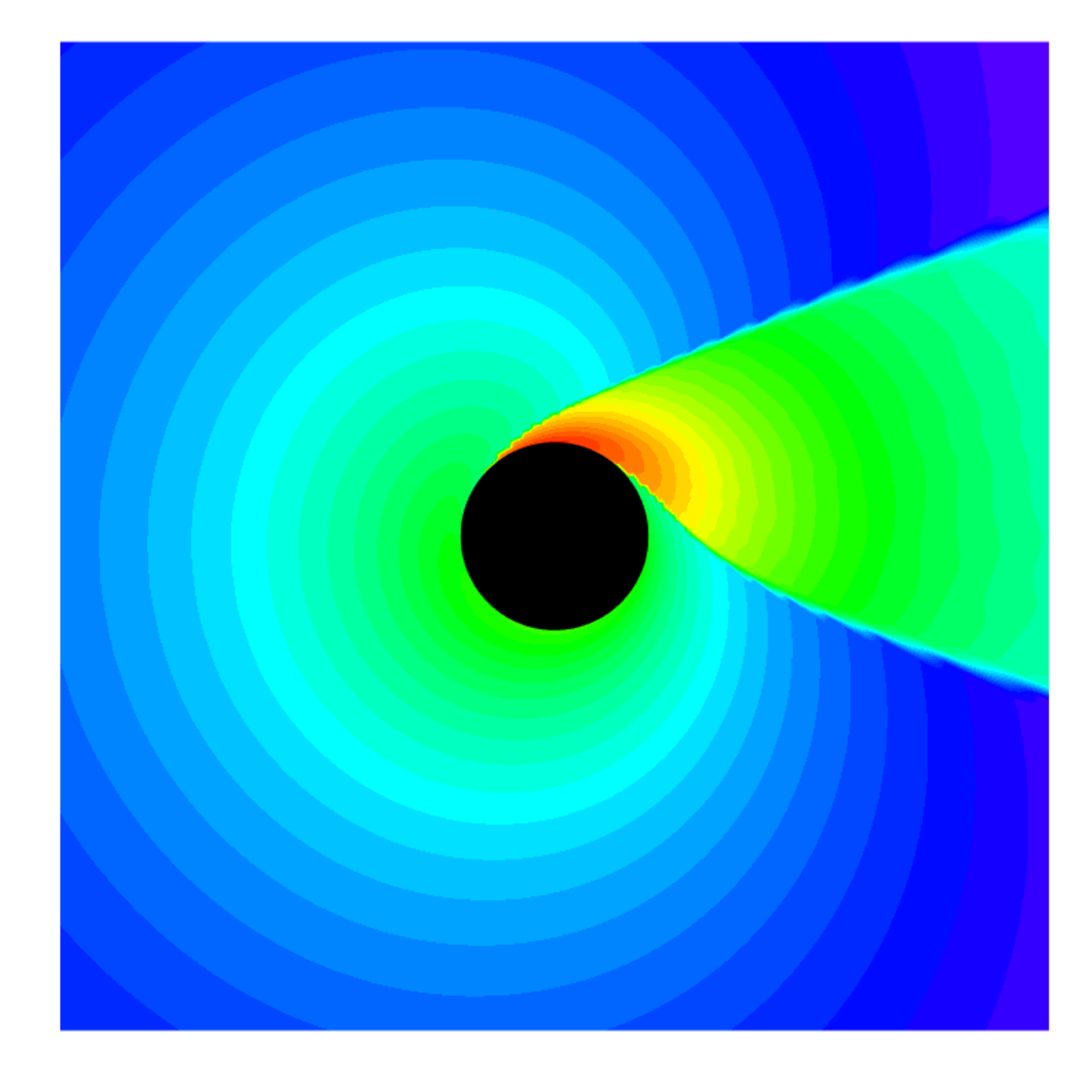}}\hspace{-2mm}
\subfigure[Case 4, $a = 0.99$]{\includegraphics[width=0.255\textwidth]{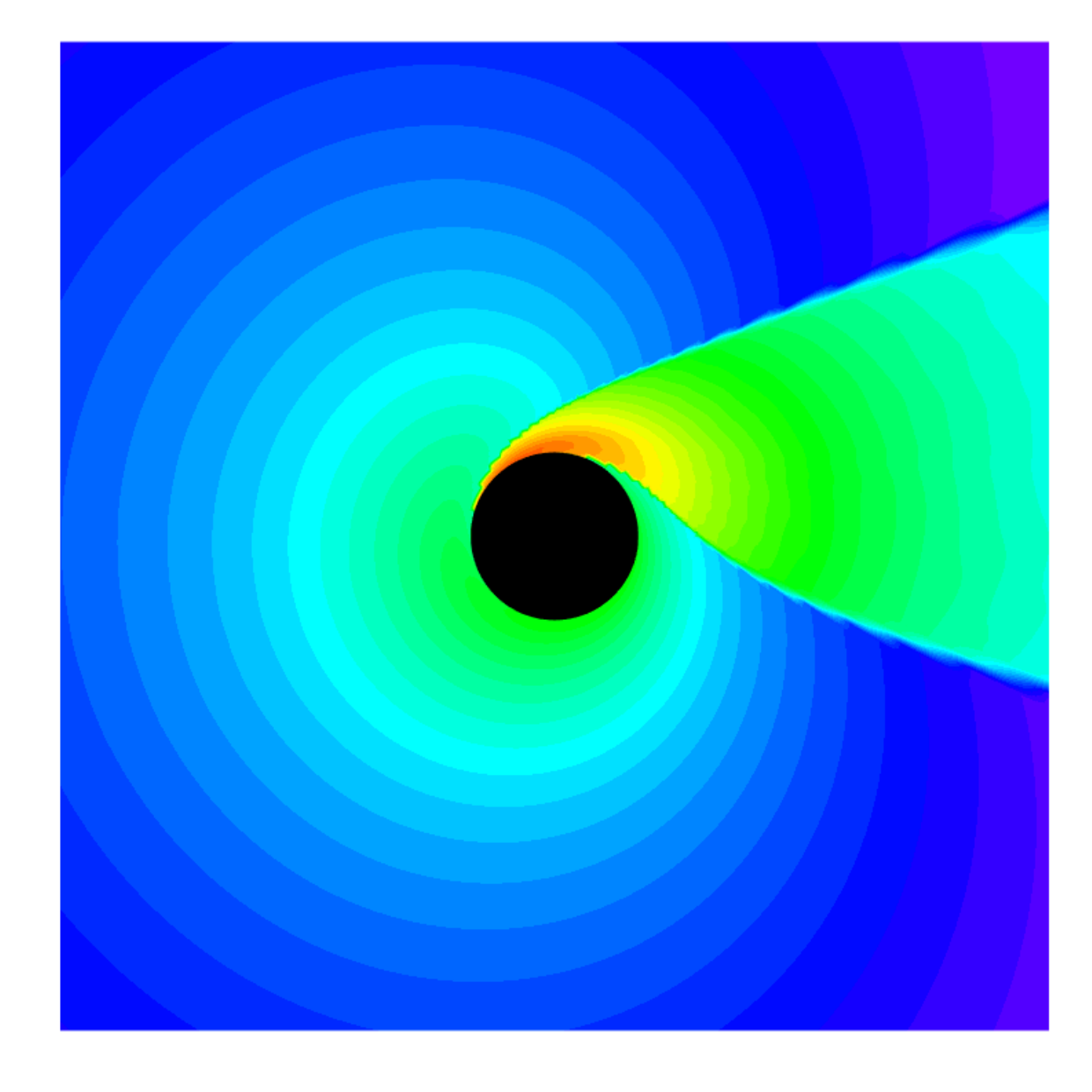}}
\caption{Example \ref{2D:Rie exam10a}: Relativistic accretion onto Kerr black hole with different angular momentums.}\label{fig:Bh p1-1 case1-4}
\end{figure}

\begin{figure}[!thb]
\centering
\subfigure[Case 1, $a = 0.0$]{\includegraphics[width=0.255\textwidth]{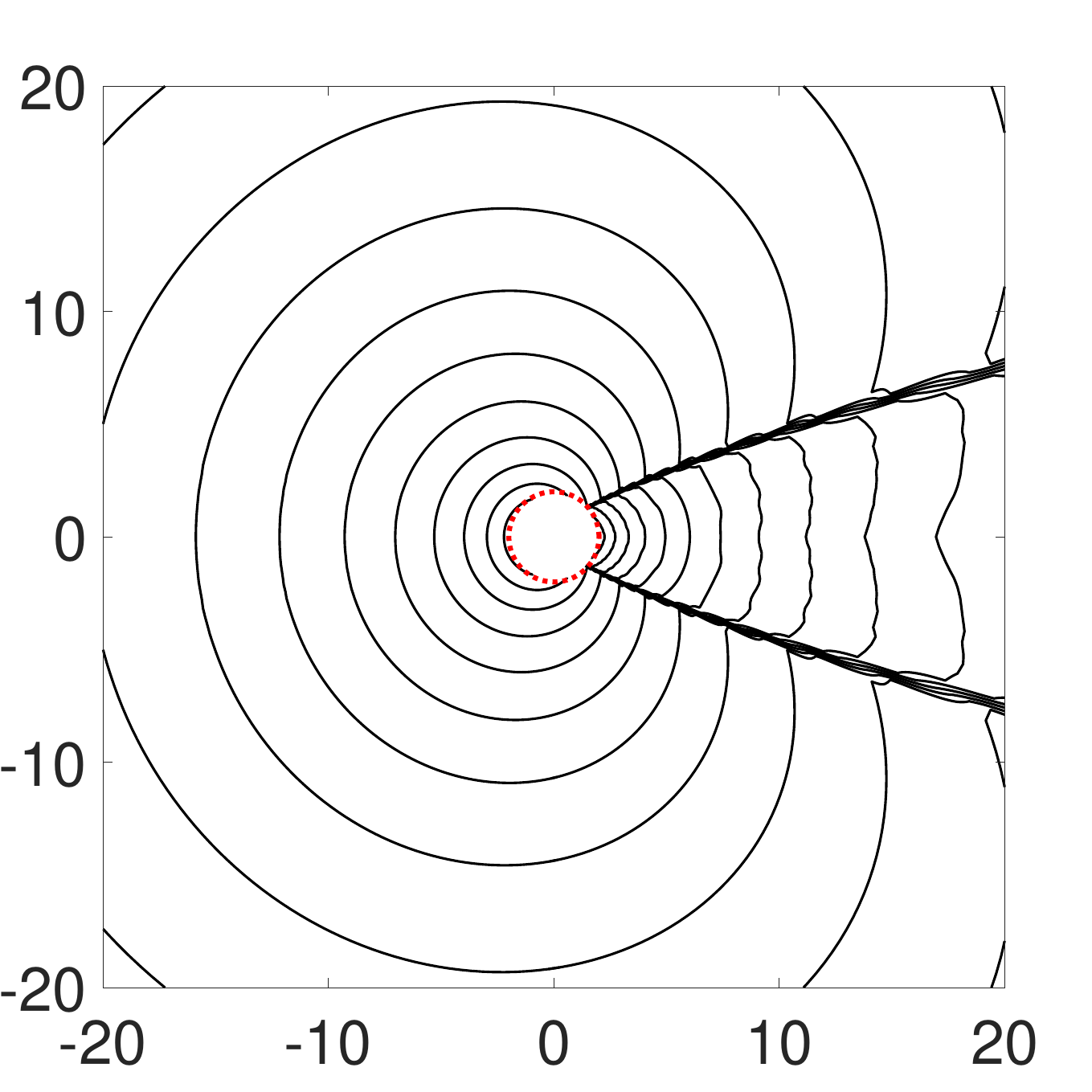}}\hspace{-2mm}
\subfigure[Case 2, $a = 0.5$]{\includegraphics[width=0.255\textwidth]{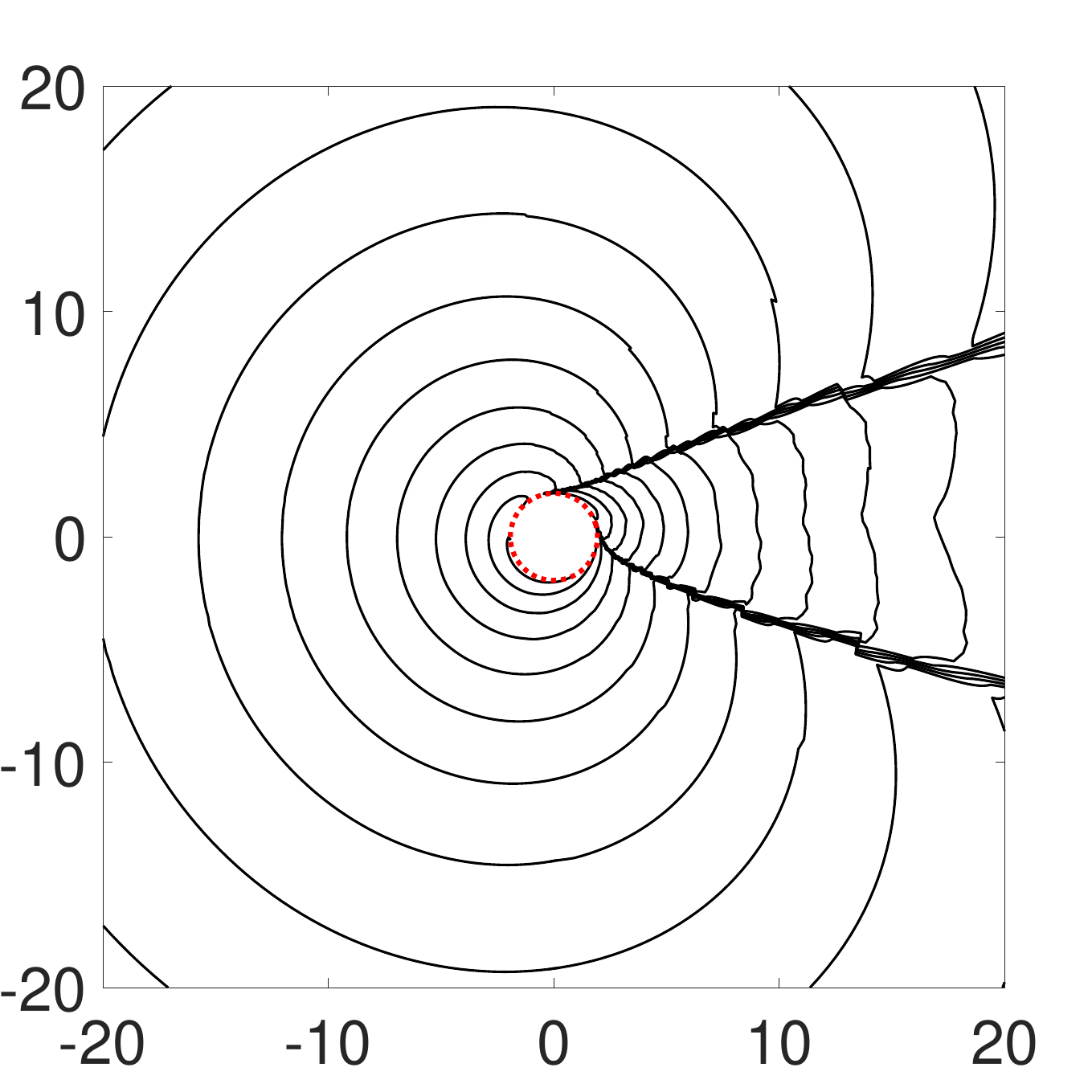}}\hspace{-2mm}
\subfigure[Case 3, $a = 0.9$]{\includegraphics[width=0.255\textwidth]{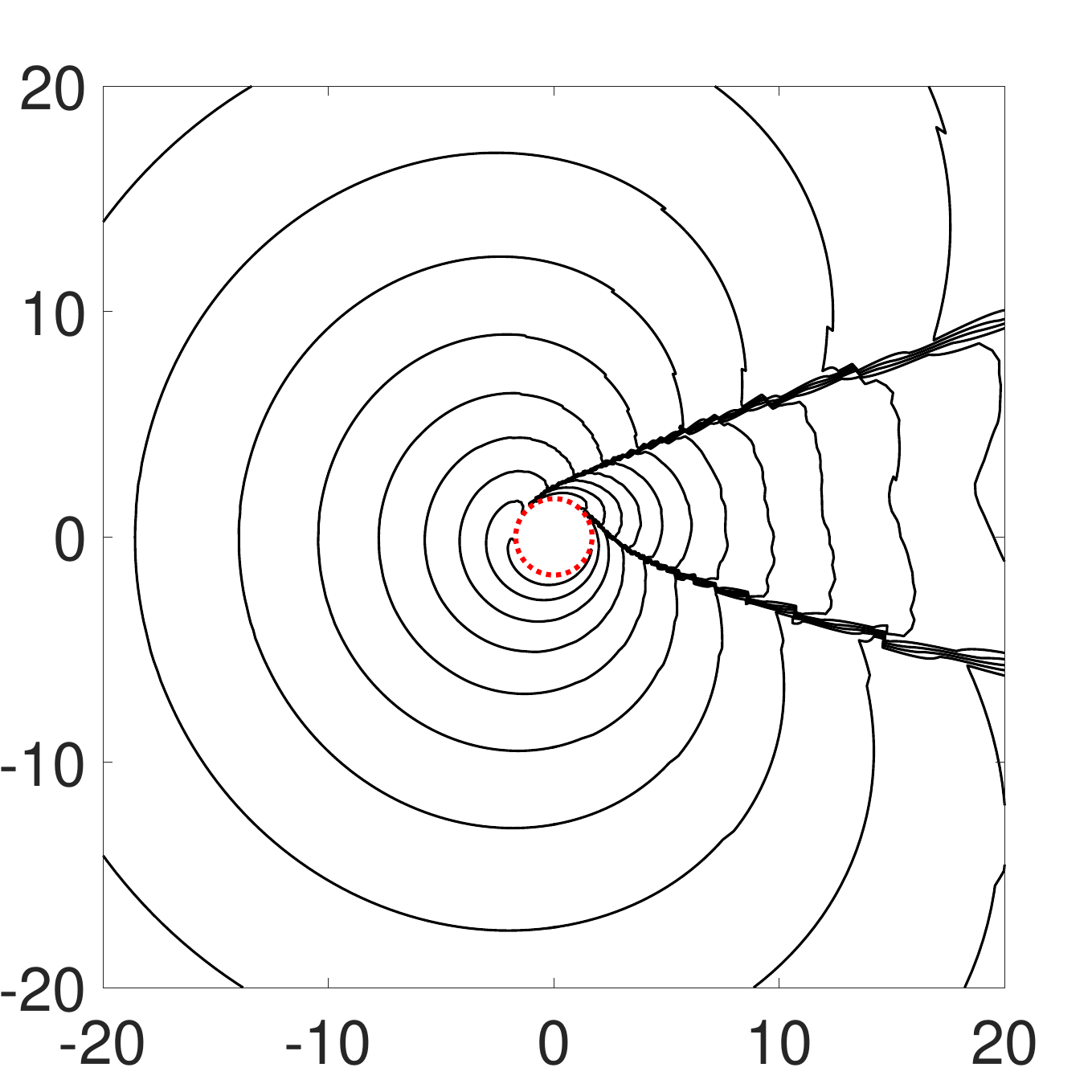}}\hspace{-2mm}
\subfigure[Case 4, $a = 0.99$]{\includegraphics[width=0.255\textwidth]{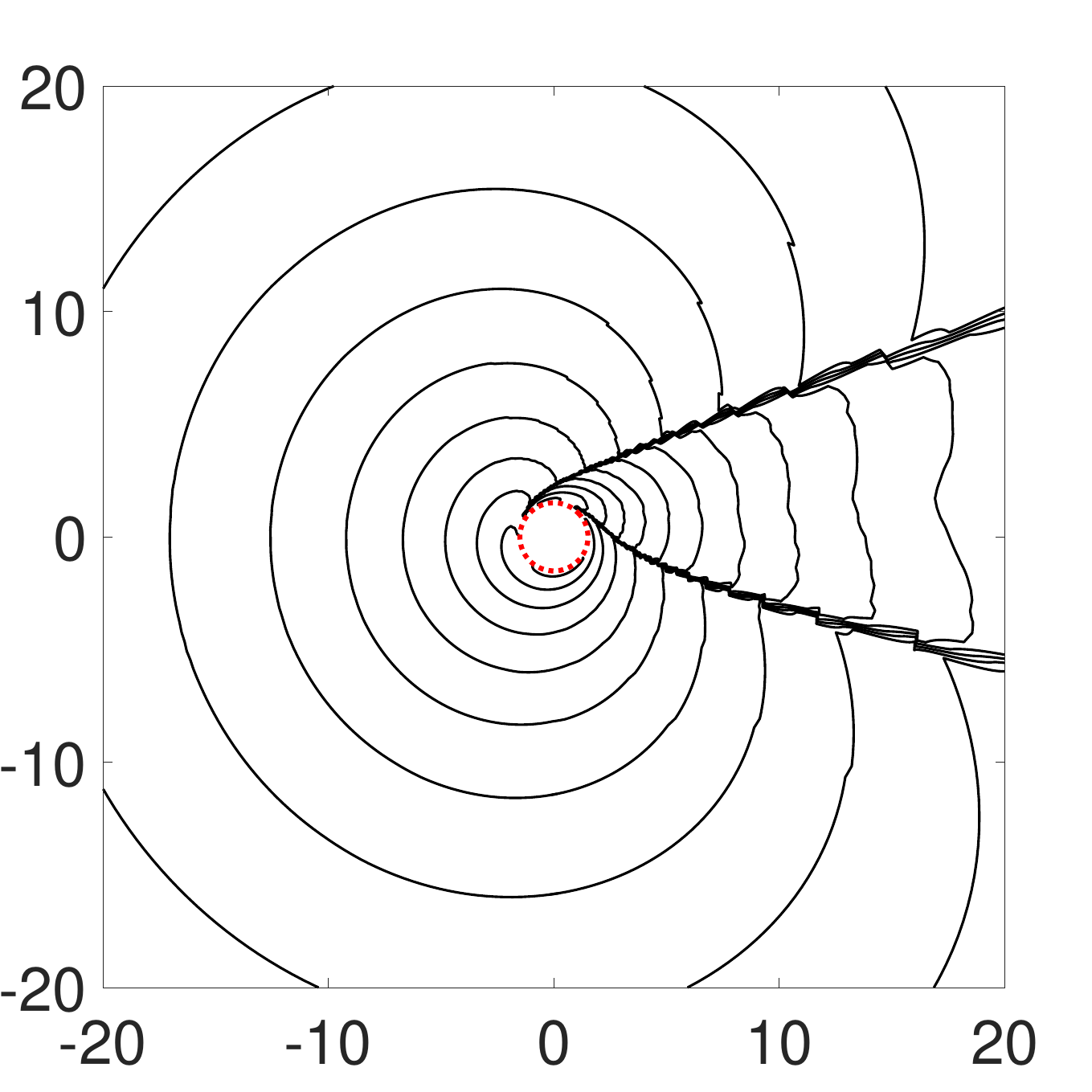}}
\caption{Example \ref{2D:Rie exam10a}: Contours of $\log \rho$ at different angular momentums for the second-order PCP-OEDG method in domain $[-20,20]^2$.}\label{fig:Bh p1-2 case1-4}
\end{figure}

\begin{figure}[!thb]
\centering
\subfigure[Case 1, $a = 0.0$]{\includegraphics[width=0.255\textwidth]{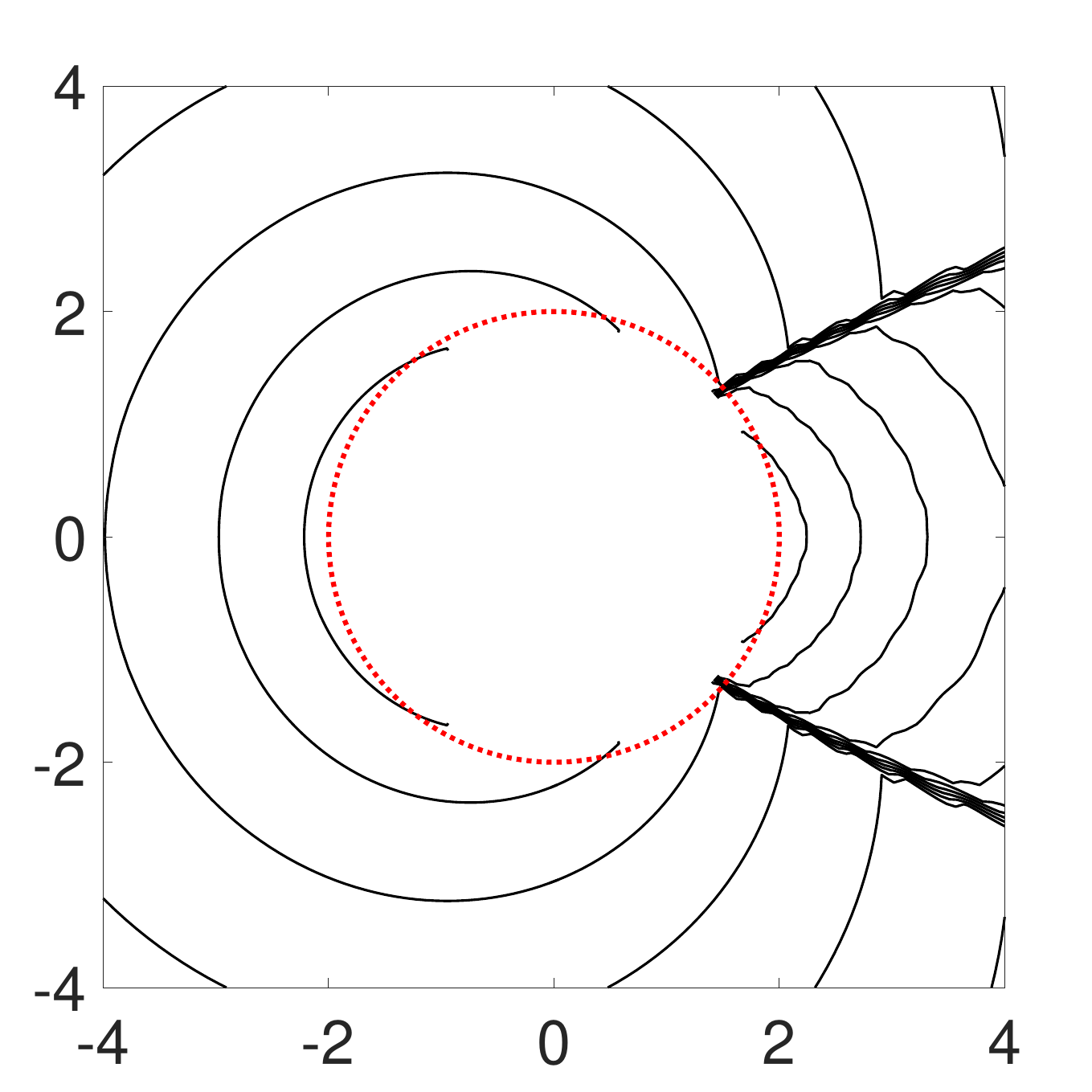}}\hspace{-2mm}
\subfigure[Case 2, $a = 0.5$]{\includegraphics[width=0.255\textwidth]{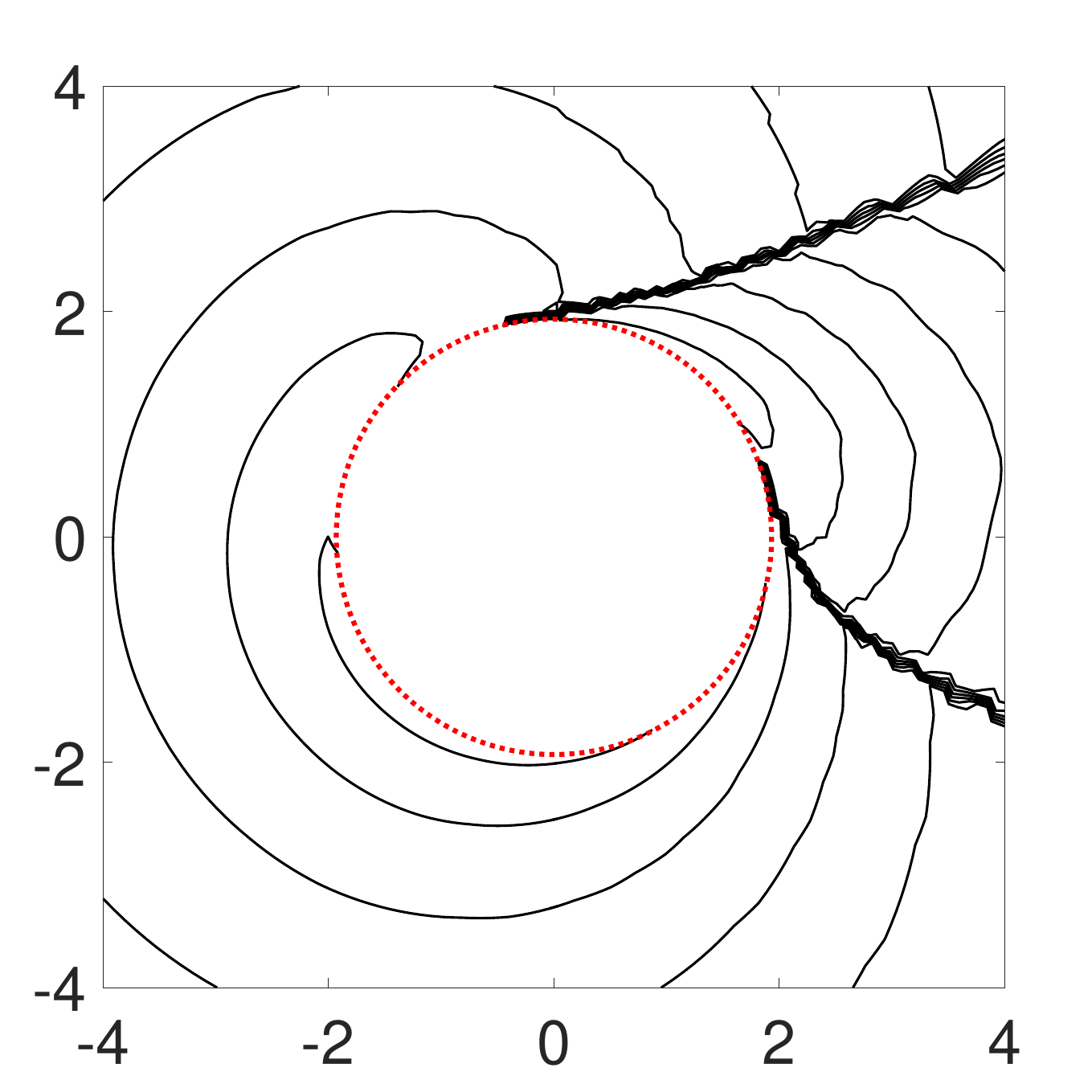}}\hspace{-2mm}
\subfigure[Case 3, $a = 0.9$]{\includegraphics[width=0.255\textwidth]{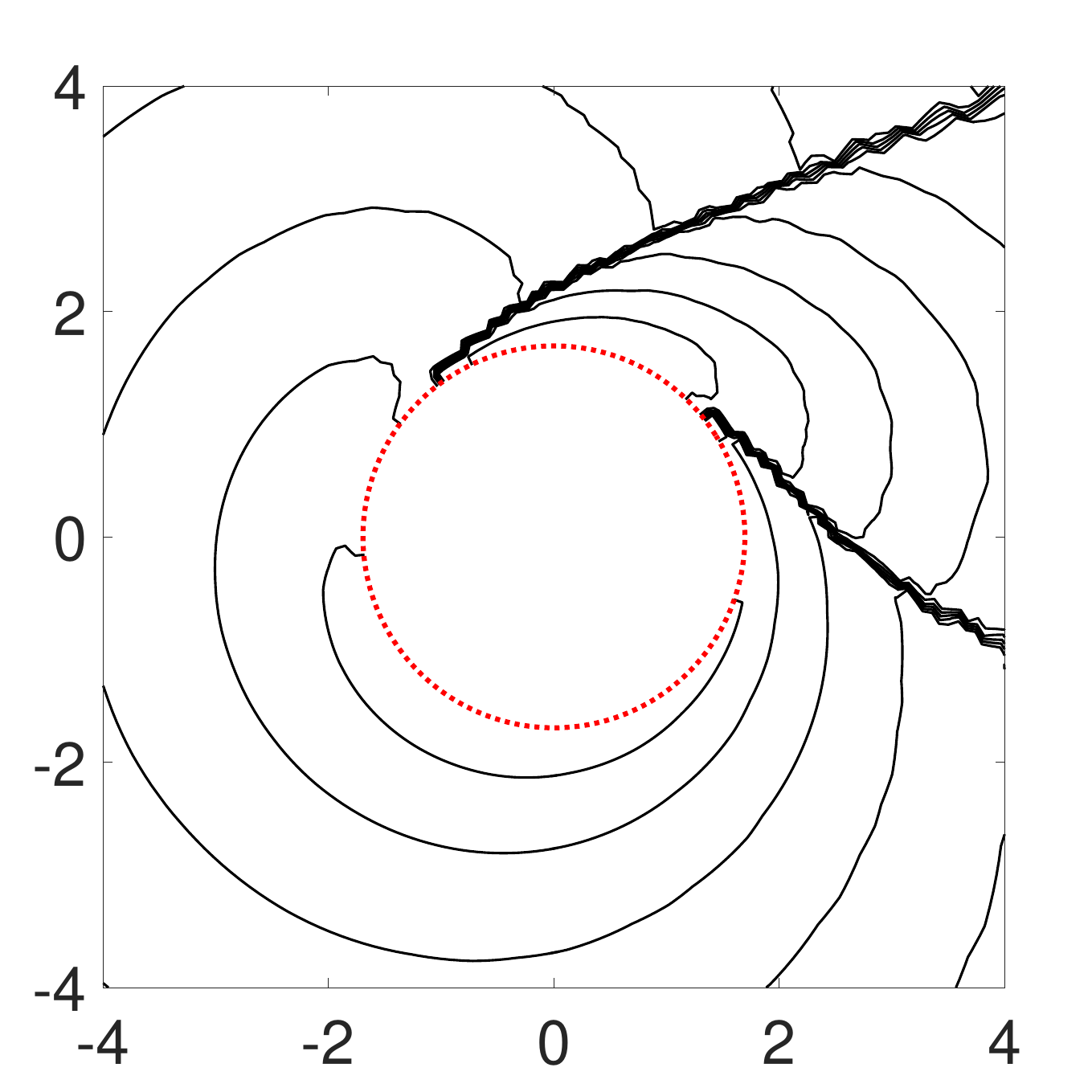}}\hspace{-2mm}
\subfigure[Case 4, $a = 0.99$]{\includegraphics[width=0.255\textwidth]{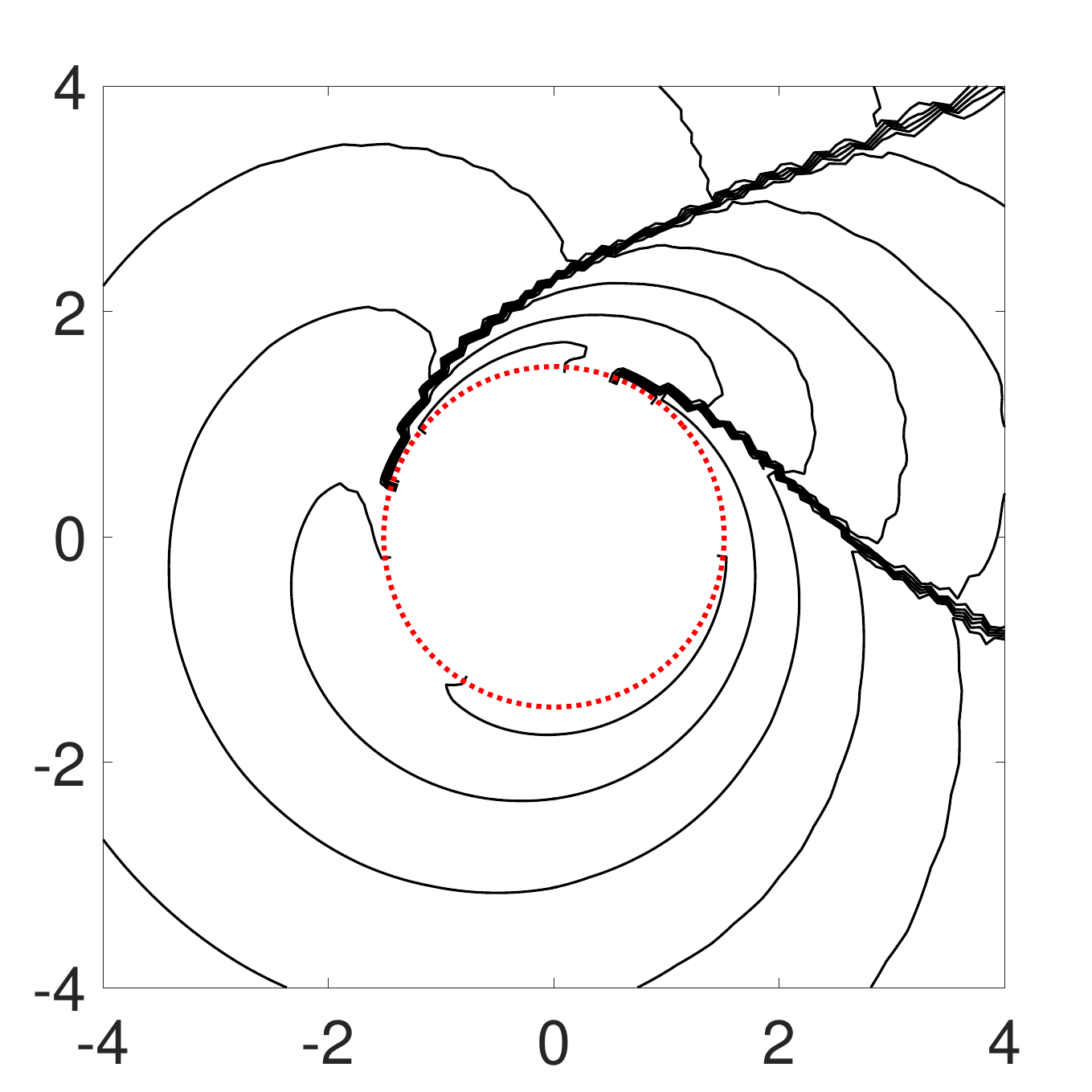}}
\centering
\subfigure[Case 1, $a = 0.0$]{\includegraphics[width=0.255\textwidth]{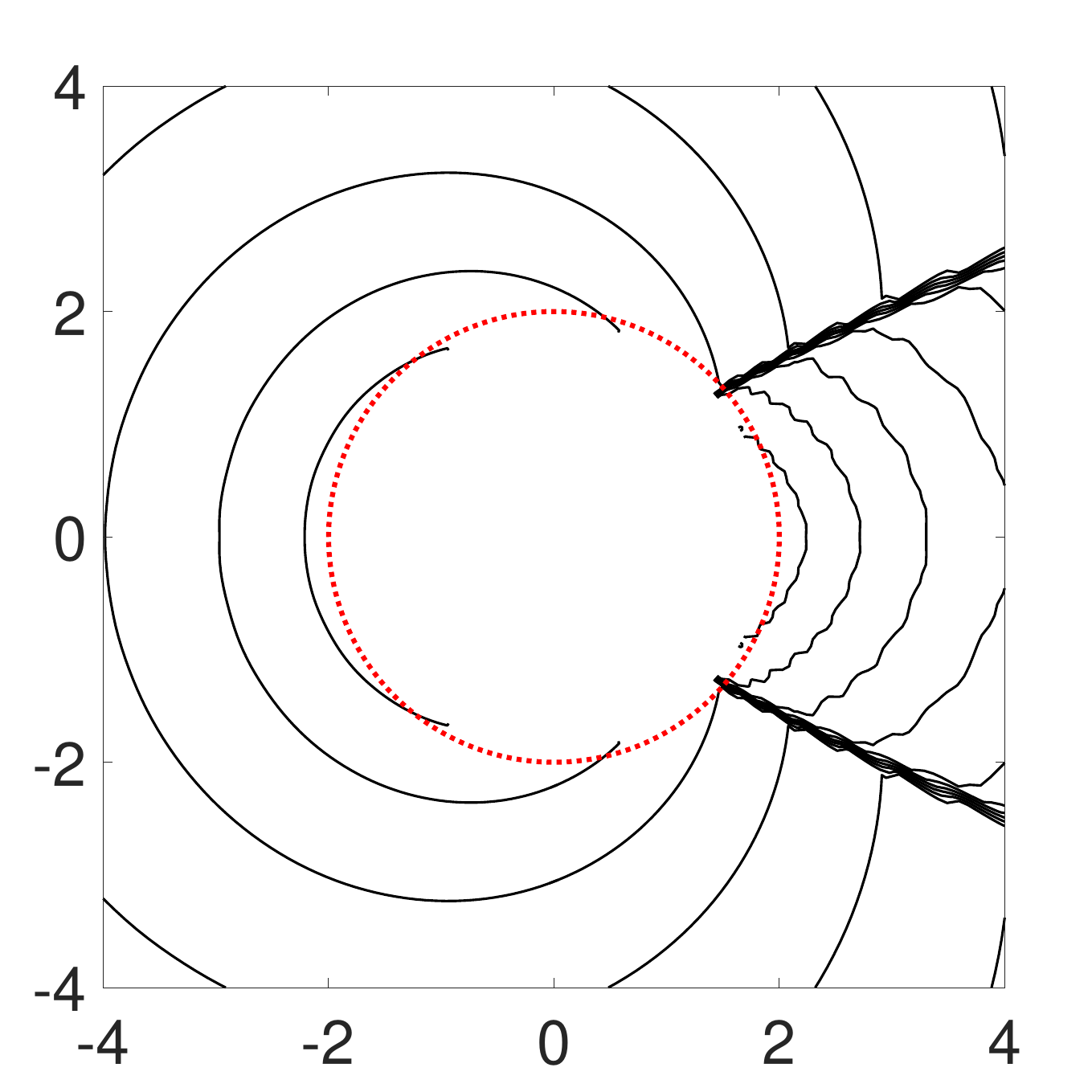}}\hspace{-2mm}
\subfigure[Case 2, $a = 0.5$]{\includegraphics[width=0.255\textwidth]{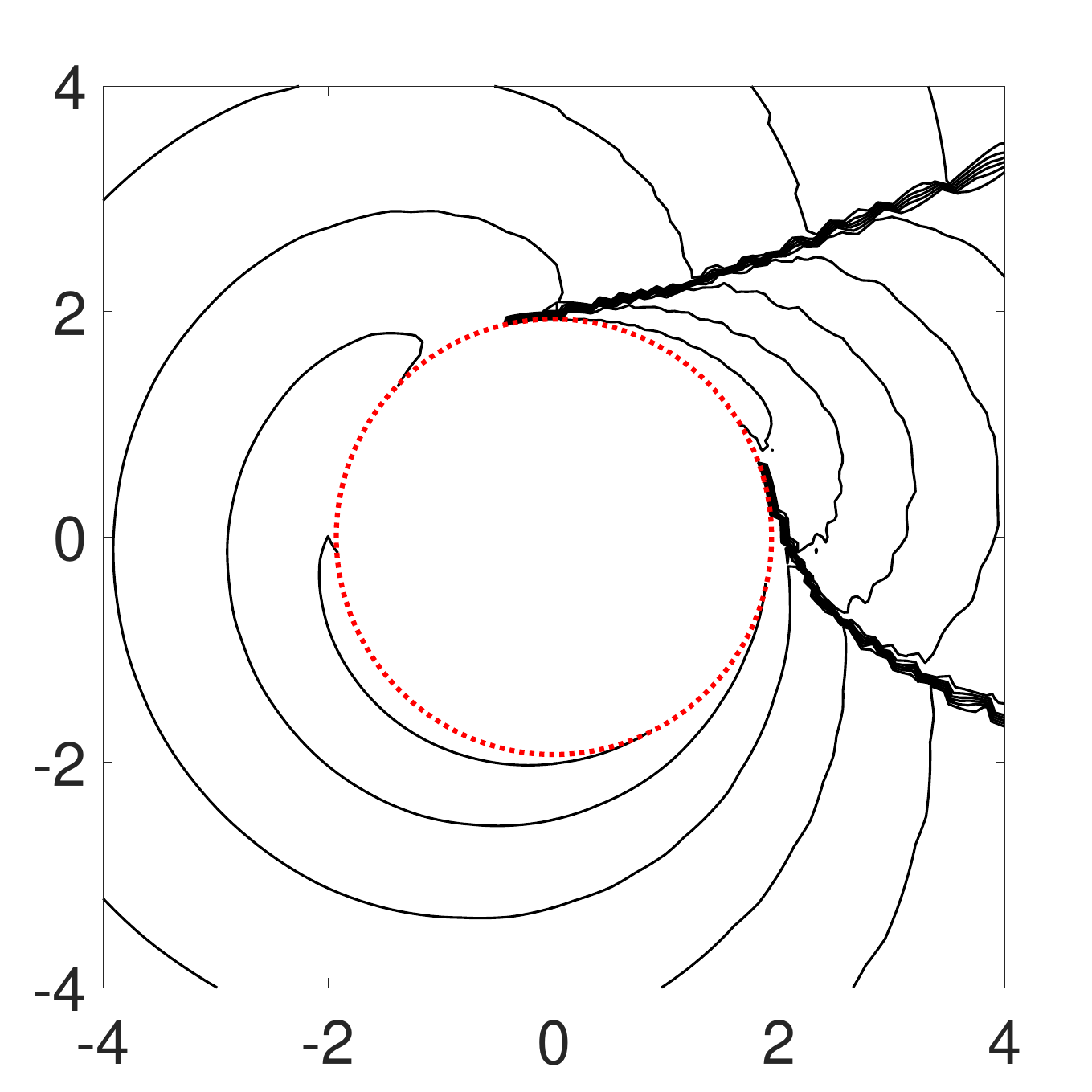}}\hspace{-2mm}
\subfigure[Case 3, $a = 0.9$]{\includegraphics[width=0.255\textwidth]{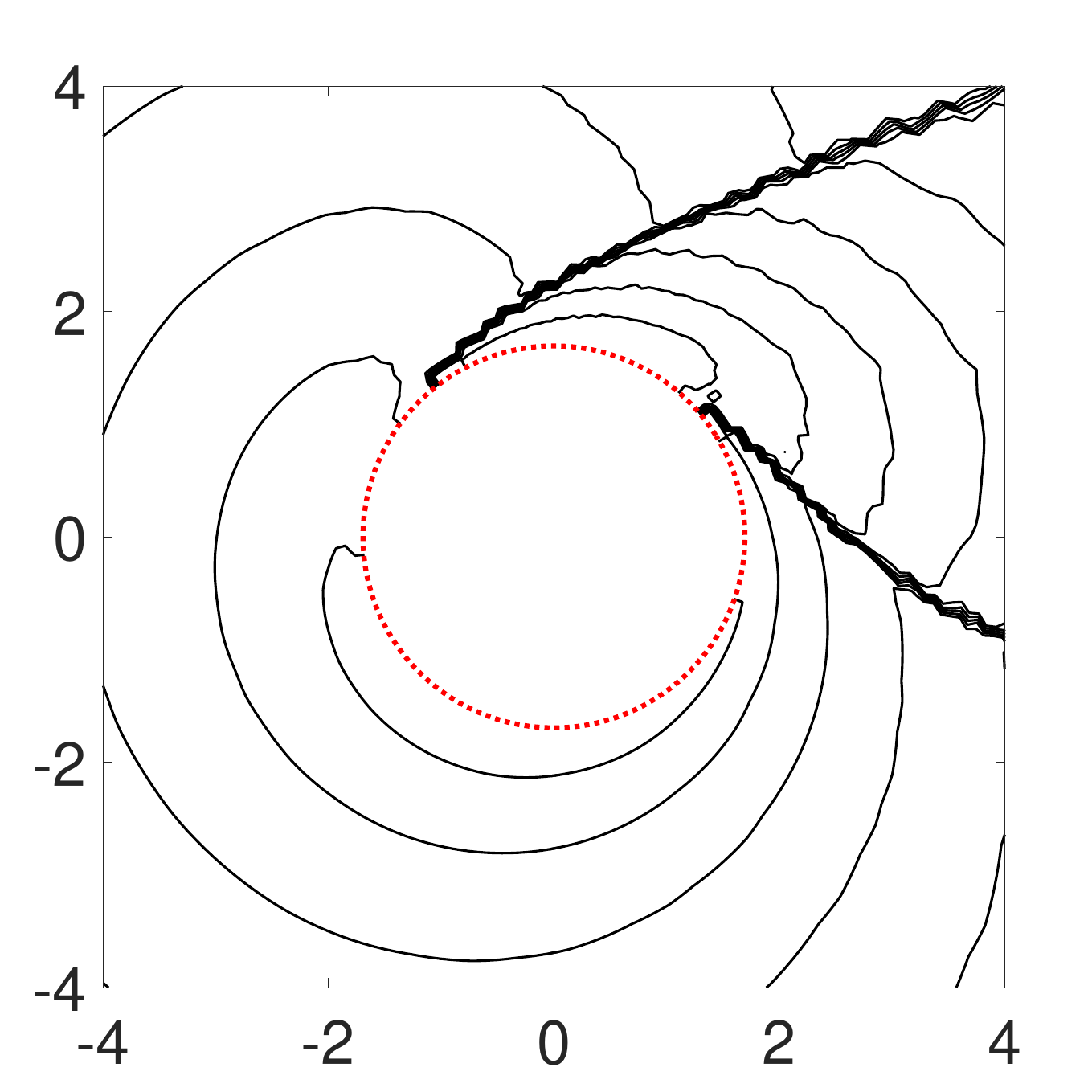}}\hspace{-2mm}
\subfigure[Case 4, $a = 0.99$]{\includegraphics[width=0.255\textwidth]{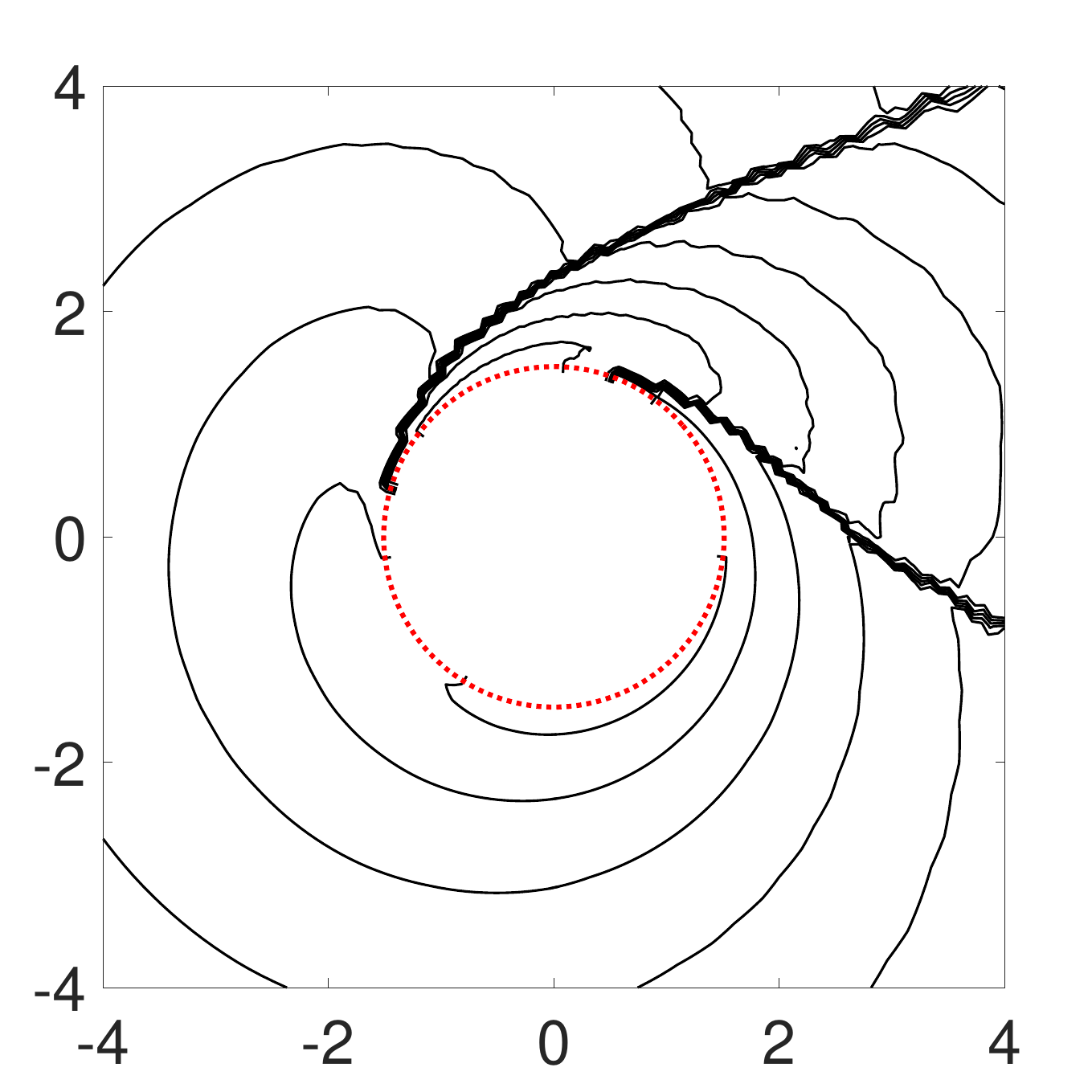}}
\centering
\subfigure[Case 1, $a = 0.0$]{\includegraphics[width=0.255\textwidth]{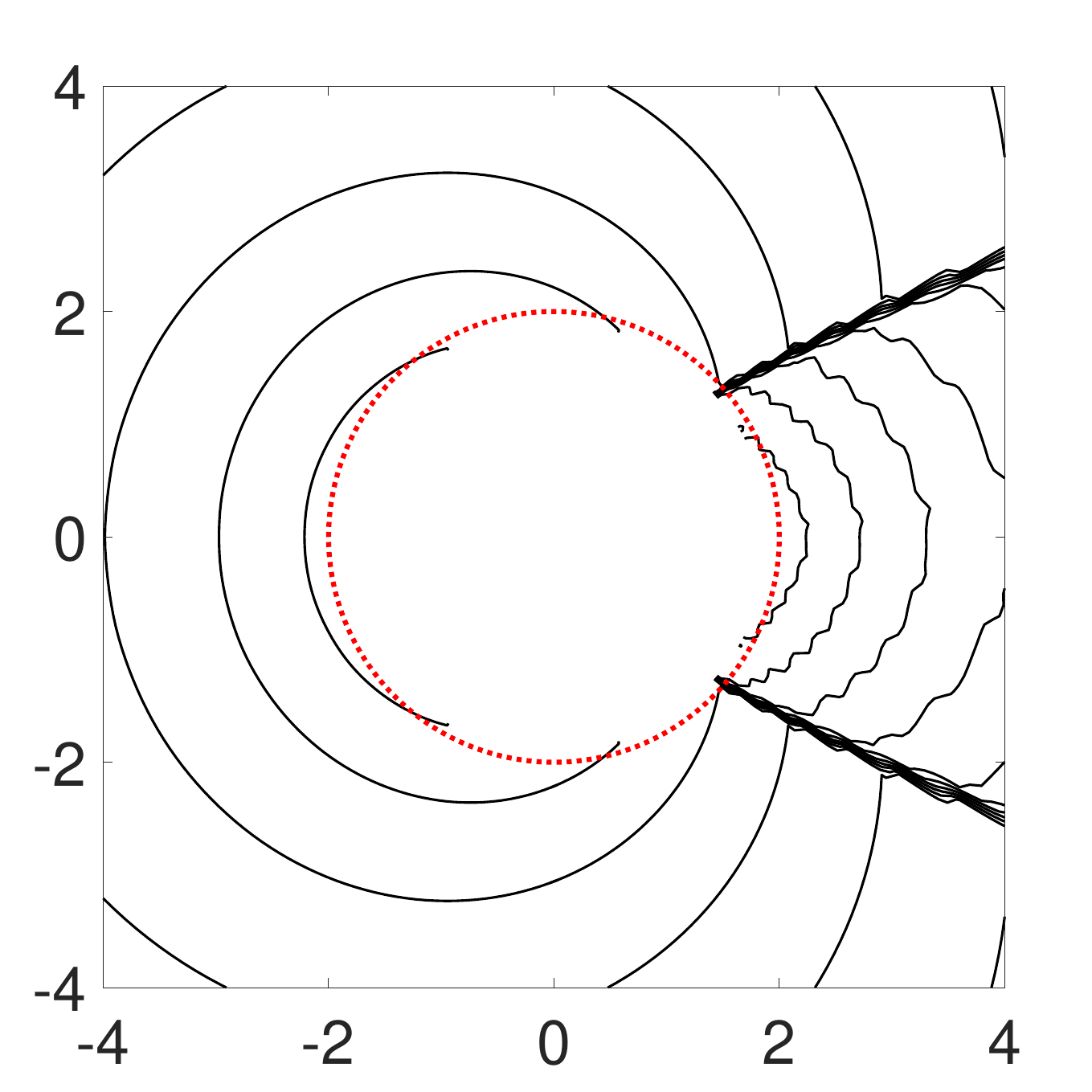}}\hspace{-2mm}
\subfigure[Case 2, $a = 0.5$]{\includegraphics[width=0.255\textwidth]{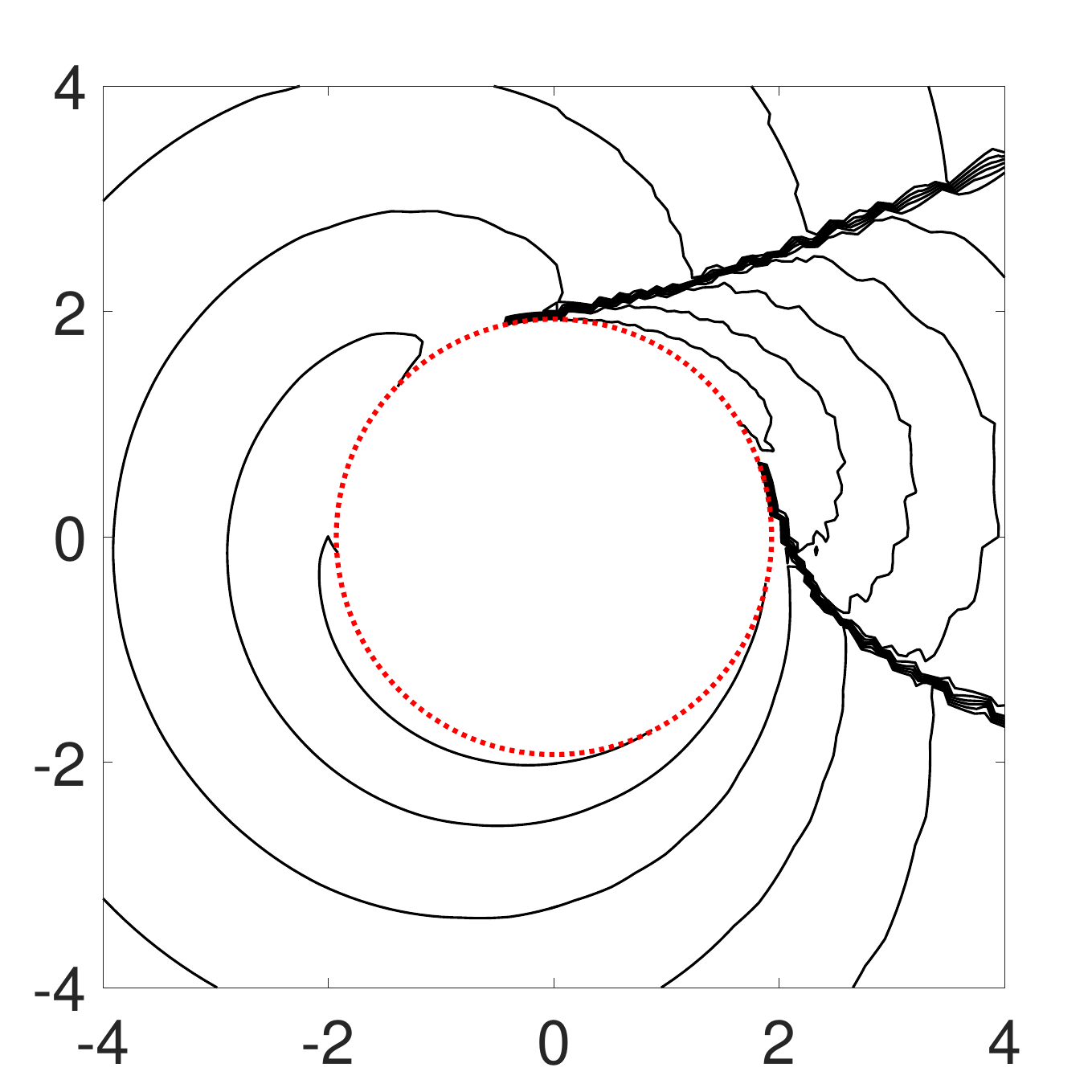}}\hspace{-2mm}
\subfigure[Case 3, $a = 0.9$]{\includegraphics[width=0.255\textwidth]{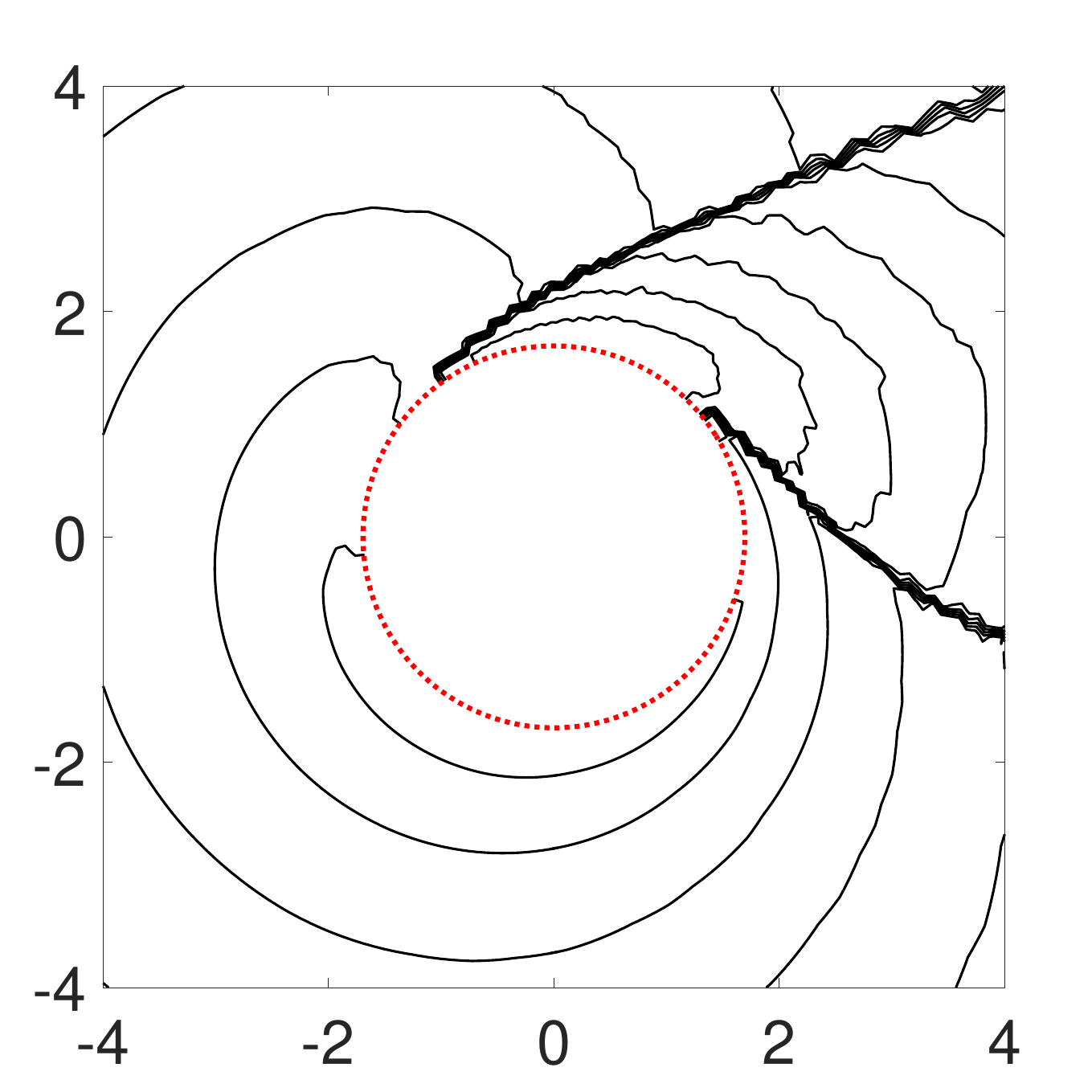}}\hspace{-2mm}
\subfigure[Case 4, $a = 0.99$]{\includegraphics[width=0.255\textwidth]{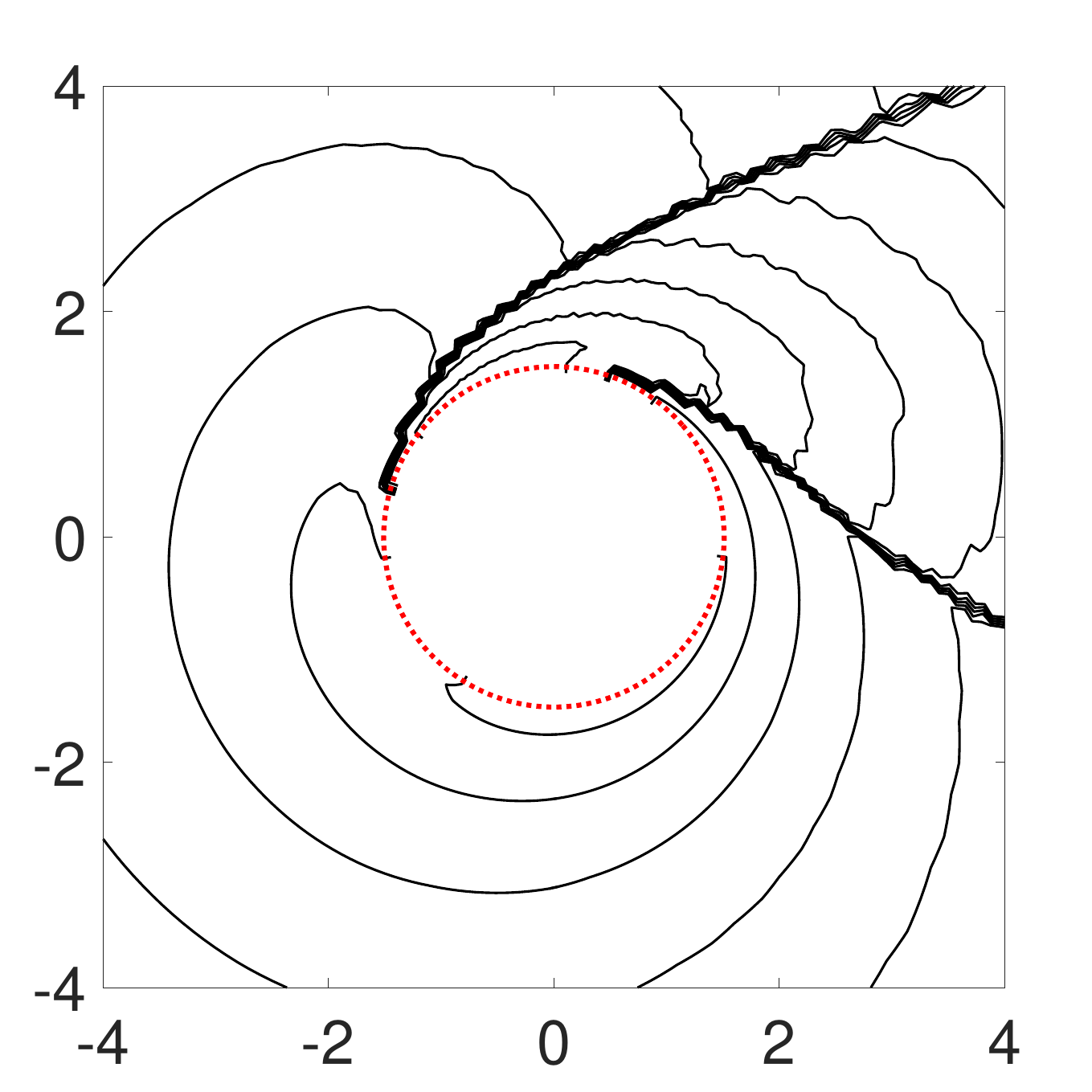}}
\caption{Example \ref{2D:Rie exam10a}: Contours of $\log \rho$ in $[-4,4]^2$. From top to bottom: $\mathbb{P}^1$-, $\mathbb{P}^2$-, and $\mathbb{P}^3$-based PCP-OEDG schemes.}\label{fig:Bh p3 case1-4}
\end{figure}

Figure \ref{fig:Bh p1-1 case1-4} shows the accretion patterns for the second-order PCP-OEDG method at the final stationary time. Red-yellow colors indicate high-density regions, while blue represents low-density zones. The black area in the center denotes the location of the event horizon $r^+$. Figure \ref{fig:Bh p1-2 case1-4} displays the contour lines of the logarithm of the rest-mass density, scaled by its asymptotic value, up to an outer domain of $r = 20$. A close-up view of the domain extending from $-4$ to $4$ is shown in Figure \ref{fig:Bh p3 case1-4}. In both figures, the dotted red line marks the location of the event horizon.

The influence of the rotating black hole on relativistic accretion is clearly evident. The matter density is highest near the event horizon and decreases with distance from the center. The flow pattern includes a stable tail shock wave. The black hole’s angular momentum drags the accreting flow, causing the shock to curve around the event horizon, with the degree of wrapping increasing as the angular momentum parameter $a$ grows. The numerical solution smoothly transitions across the event horizon, with the matter fields remaining regular at this boundary. The rotational effects are most prominent in the inner regions near the black hole, while the overall flow pattern in the outer regions remains largely unchanged for different values of $a$. The flow patterns and shocks are well captured by the PCP-OEDG schemes without any spurious oscillations, in good agreement with those reported in \cite{FIP1999}.

Similar phenomena are observed in the numerical results of the higher-order PCP-OEDG schemes, showing no nonphysical oscillations in the numerical solutions. The corresponding contour lines of the logarithm of the rest-mass density for these cases are presented in Figures \ref{fig:Bh p3 case1-4}. These findings suggest that accurately modeling the behavior of a rotating black hole necessitates a detailed representation of its innermost regions. As a result, in the subsequent test cases, we only present the close-up views of areas within a distance of $4$ units from the black hole's center.

\end{example}

\begin{figure}[!thb]
\centering
\subfigure[Case 5, $\Gamma = 4/3$]{\includegraphics[width=0.255\textwidth]{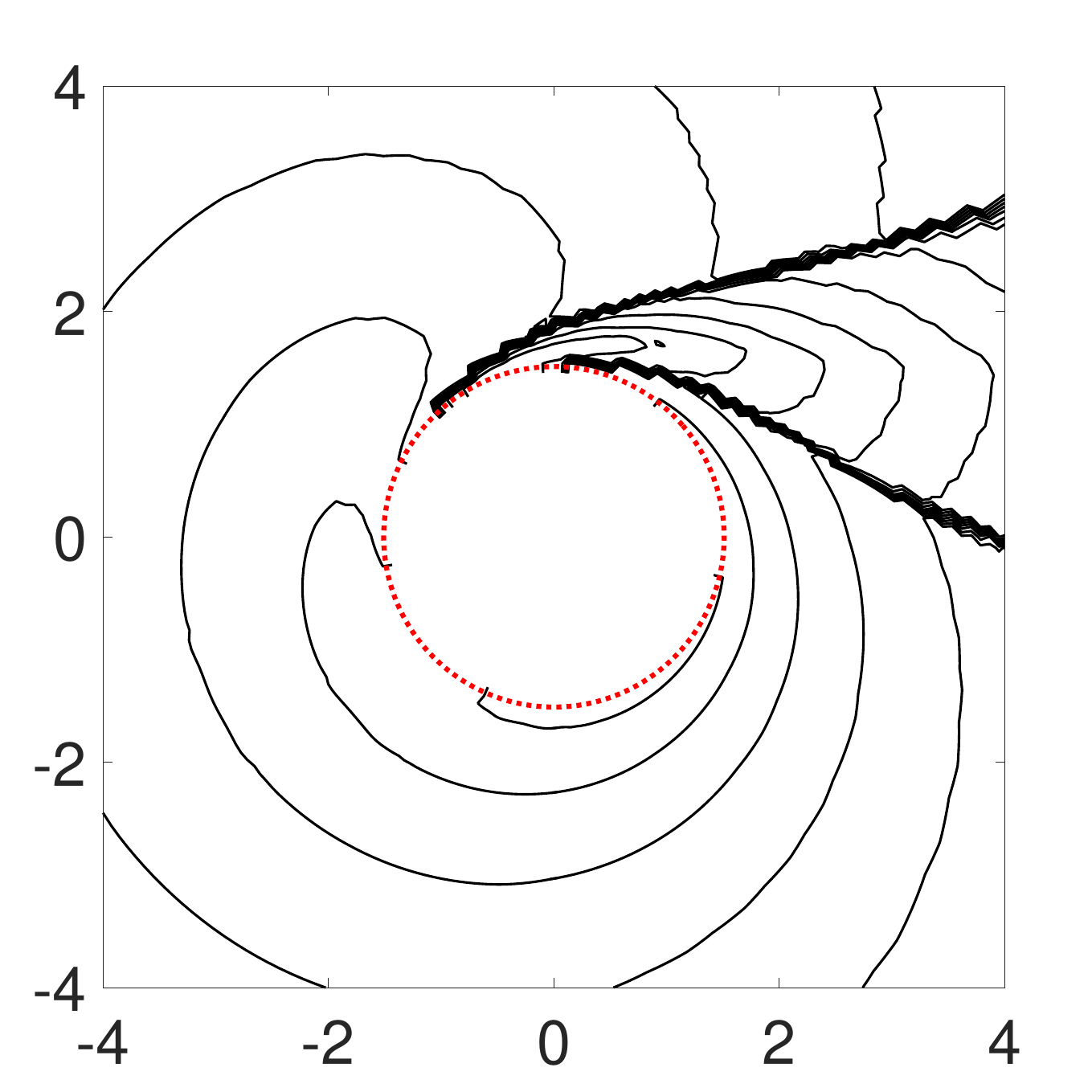}}\hspace{5mm}
\subfigure[Case 4, $\Gamma = 5/3$]{\includegraphics[width=0.255\textwidth]{figs_2D/case5_10_p1.pdf}}\hspace{5mm}
\subfigure[Case 6, $\Gamma = 2$]{\includegraphics[width=0.255\textwidth]{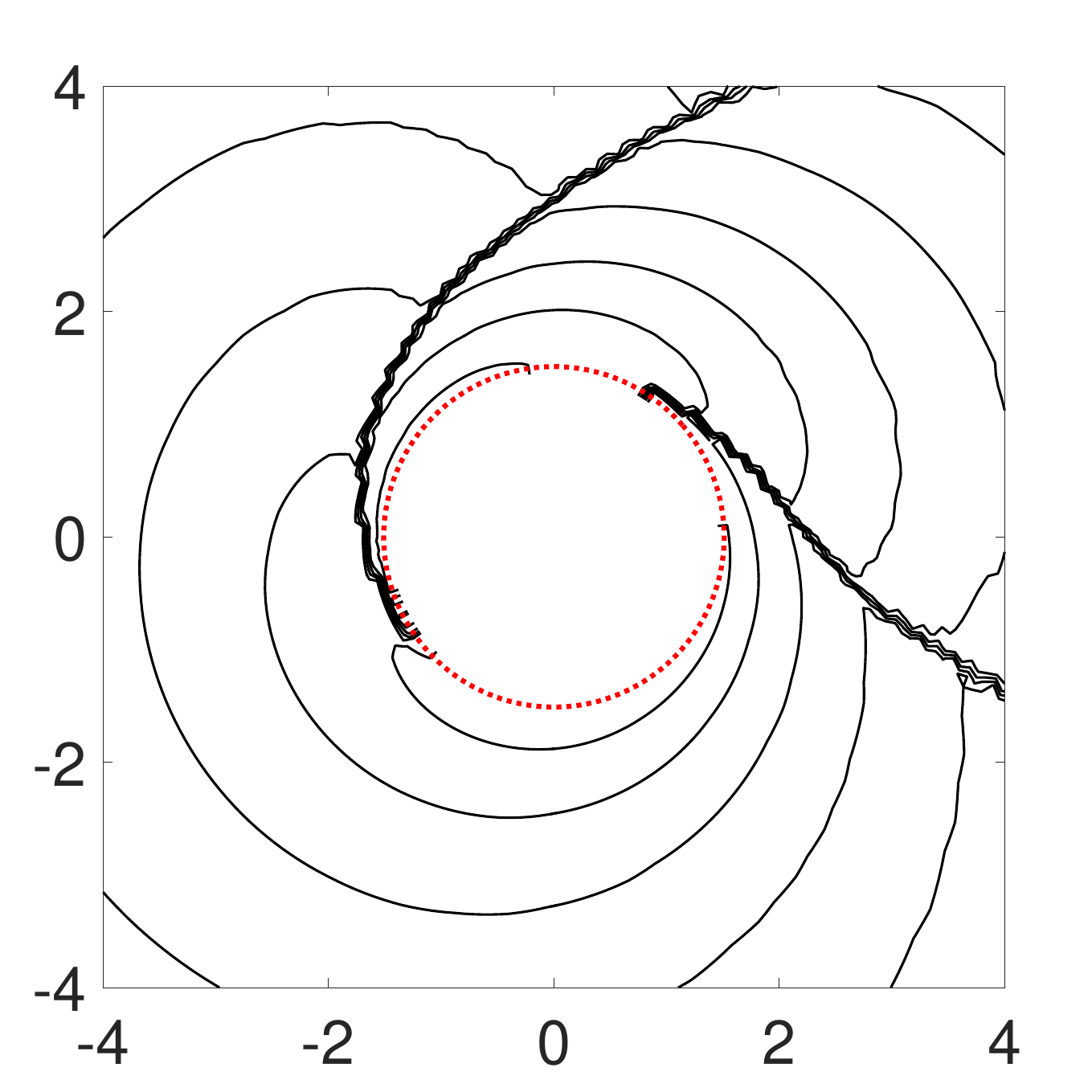}}
\centering
\subfigure[Case 5, $\Gamma = 4/3$]{\includegraphics[width=0.255\textwidth]{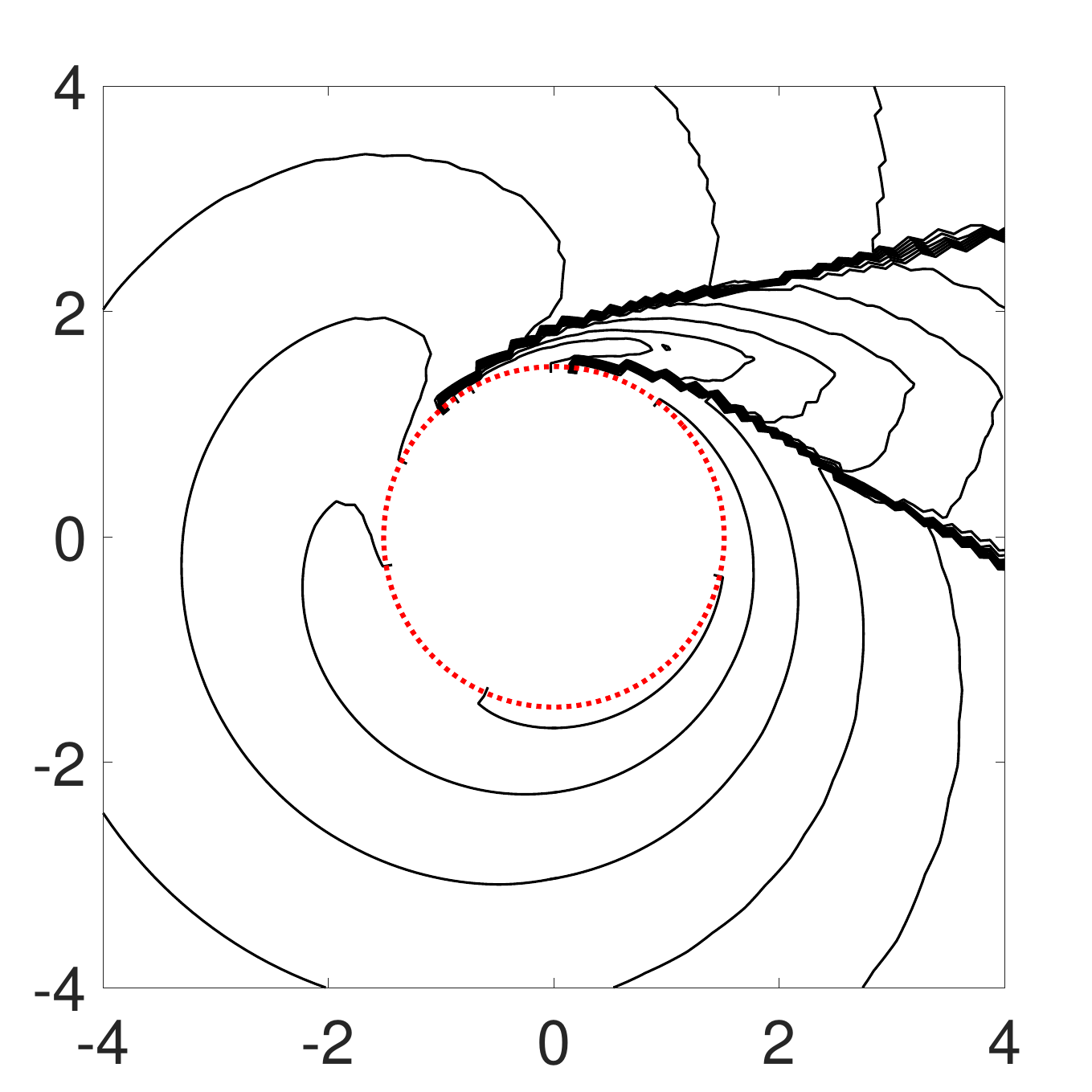}}\hspace{5mm}
\subfigure[Case 4, $\Gamma = 5/3$]{\includegraphics[width=0.255\textwidth]{figs_2D/case5_10_p2.pdf}}\hspace{5mm}
\subfigure[Case 6, $\Gamma = 2$]{\includegraphics[width=0.255\textwidth]{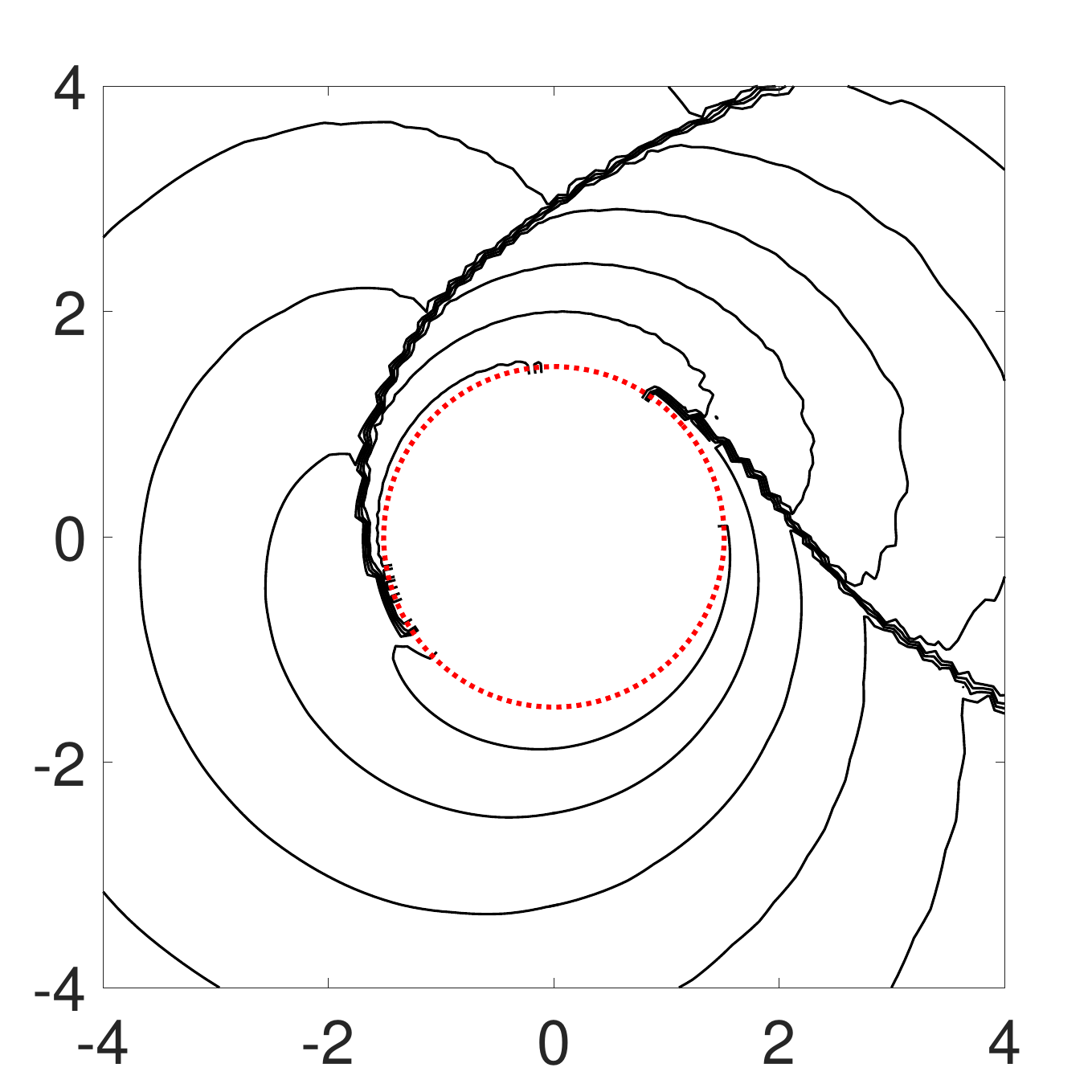}}
\centering
\subfigure[Case 5, $\Gamma = 4/3$]{\includegraphics[width=0.255\textwidth]{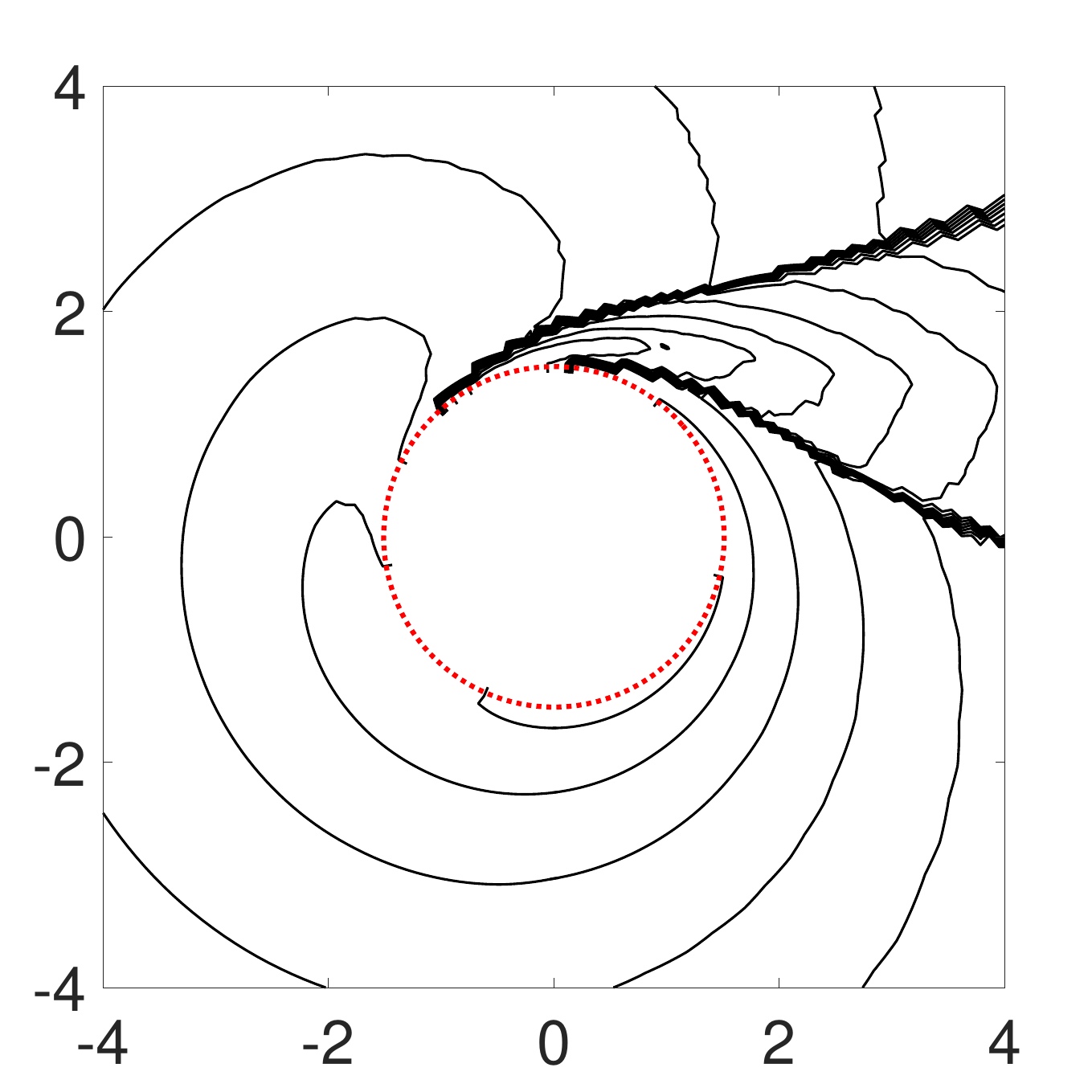}}\hspace{5mm}
\subfigure[Case 4, $\Gamma = 5/3$]{\includegraphics[width=0.255\textwidth]{figs_2D/case5_10_p3.pdf}}\hspace{5mm}
\subfigure[Case 6, $\Gamma = 2$]{\includegraphics[width=0.255\textwidth]{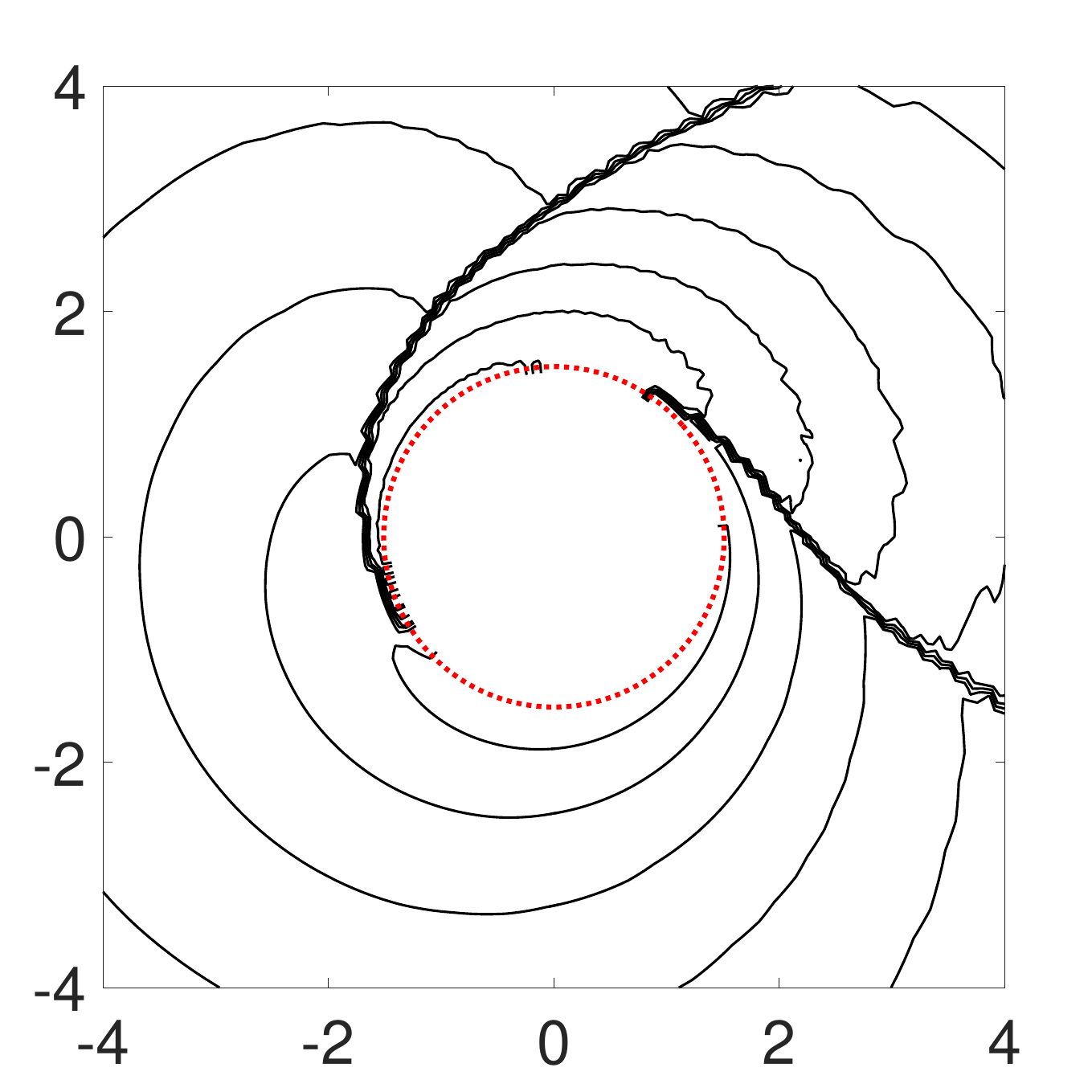}}
\caption{Example \ref{2D:Rie exam10b}: Close-up views of contours of $\log \rho$ in $[-4,4]^2$. From top to bottom: $\mathbb{P}^1$-, $\mathbb{P}^2$-, and $\mathbb{P}^3$-based PCP-OEDG schemes.}\label{fig:Bh p1 case5-8}
\end{figure}
\begin{example}[Effects of Adiabatic Index $\Gamma$ for Kerr Black Hole Accretion]\label{2D:Rie exam10b}

Cases 4 through 6 in Table \ref{tb:2D exam10} describe different thermodynamic flow configurations. The contours of the density logarithm for these cases are plotted in Figure \ref{fig:Bh p1 case5-8}, obtained by the proposed PCP-OEDG schemes. These plots clearly show the dependence of accretion patterns on the adiabatic index $\Gamma$ of the fluid around a rapidly rotating black hole.

The results indicate that as the value of $\Gamma$ increases, the shock wave opening angle becomes larger around the accretor. Additionally, the shock opening angles become larger for higher values of $\Gamma$ due to the increased pressure within the shock ``cone" as $\Gamma$ increases. This indicates that the adiabatic index significantly influences both the structure of the shock and the overall morphology of the accretion flow.

All flow structures are correctly resolved by the PCP-OEDG schemes, and the results obtained using different orders of the schemes are consistent with each other. The numerical results also match those presented in \cite{FIP1999}, demonstrating the accuracy and robustness of the proposed schemes.

\end{example}

\begin{example}[Effects of the Flow Velocity $v_{\infty}$ for Kerr Black Hole Accretion]\label{2D:Rie exam10c}

The PCP-OEDG schemes are employed to further investigate the relationship between the accretion patterns of a rotating black hole and the fluid velocity. To this end, we examine cases with increasing velocities: $v_{\infty} = 0.5,\, 0.9,\, 0.99$, corresponding to Cases 4, 7, and 8 in Table \ref{tb:2D exam10}, respectively. The numerical results for the density logarithm obtained using the $\mathbb{P}^m$-based PCP-OEDG scheme for $m=1,2,3$ at $t = 500$ over the domain $[-4,4]^2$ are presented in Figure \ref{fig:Bh p1 case7-8}.

\begin{figure}[!thb]
\centering
\subfigure[Case 4, $v_{\infty} = 0.5$]{\includegraphics[width=0.255\textwidth]{figs_2D/case5_10_p1.pdf}}\hspace{5mm}
\subfigure[Case 7, $v_{\infty} = 0.9$]{\includegraphics[width=0.255\textwidth]{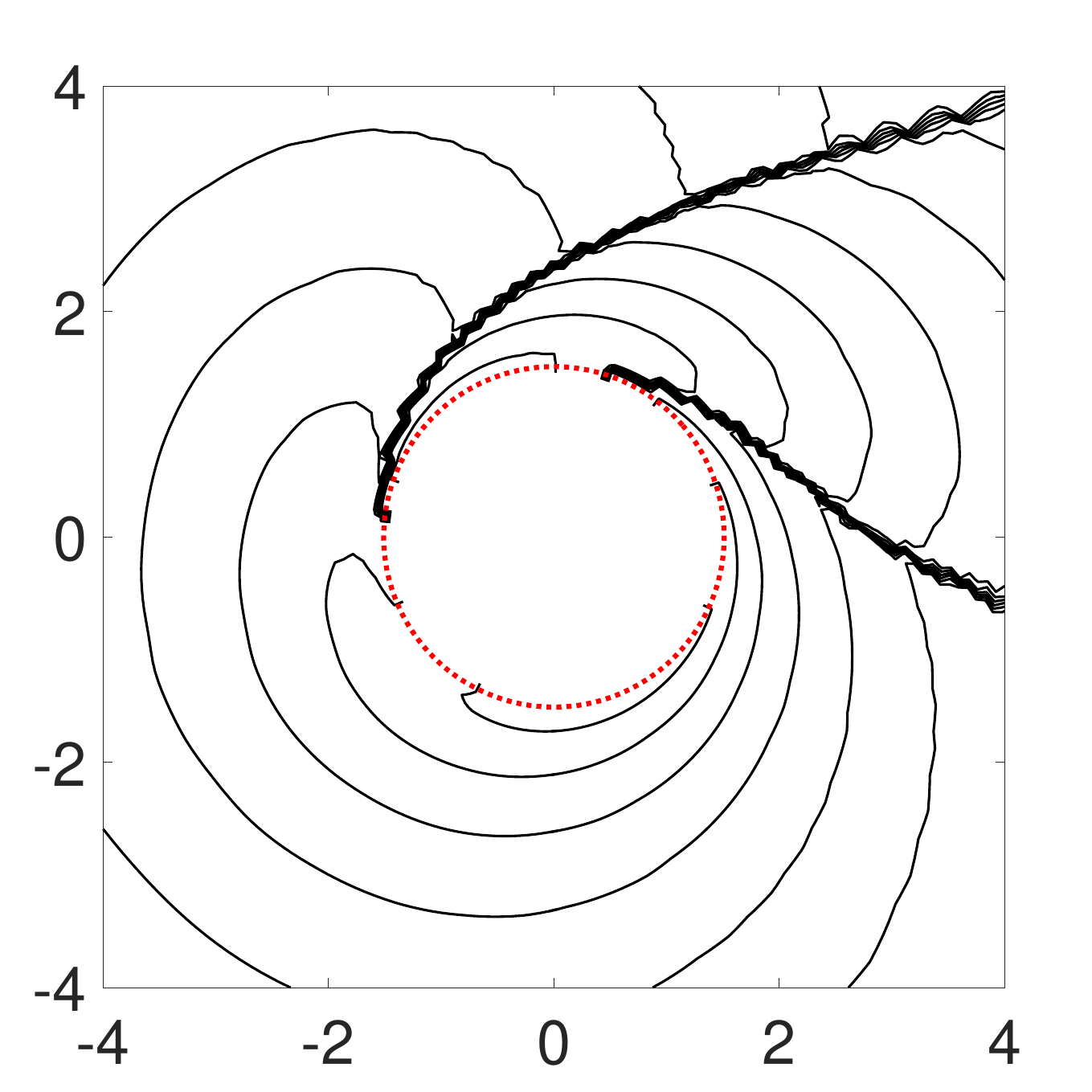}}\hspace{5mm}
\subfigure[Case 8, $v_{\infty} = 0.99$]{\includegraphics[width=0.255\textwidth]{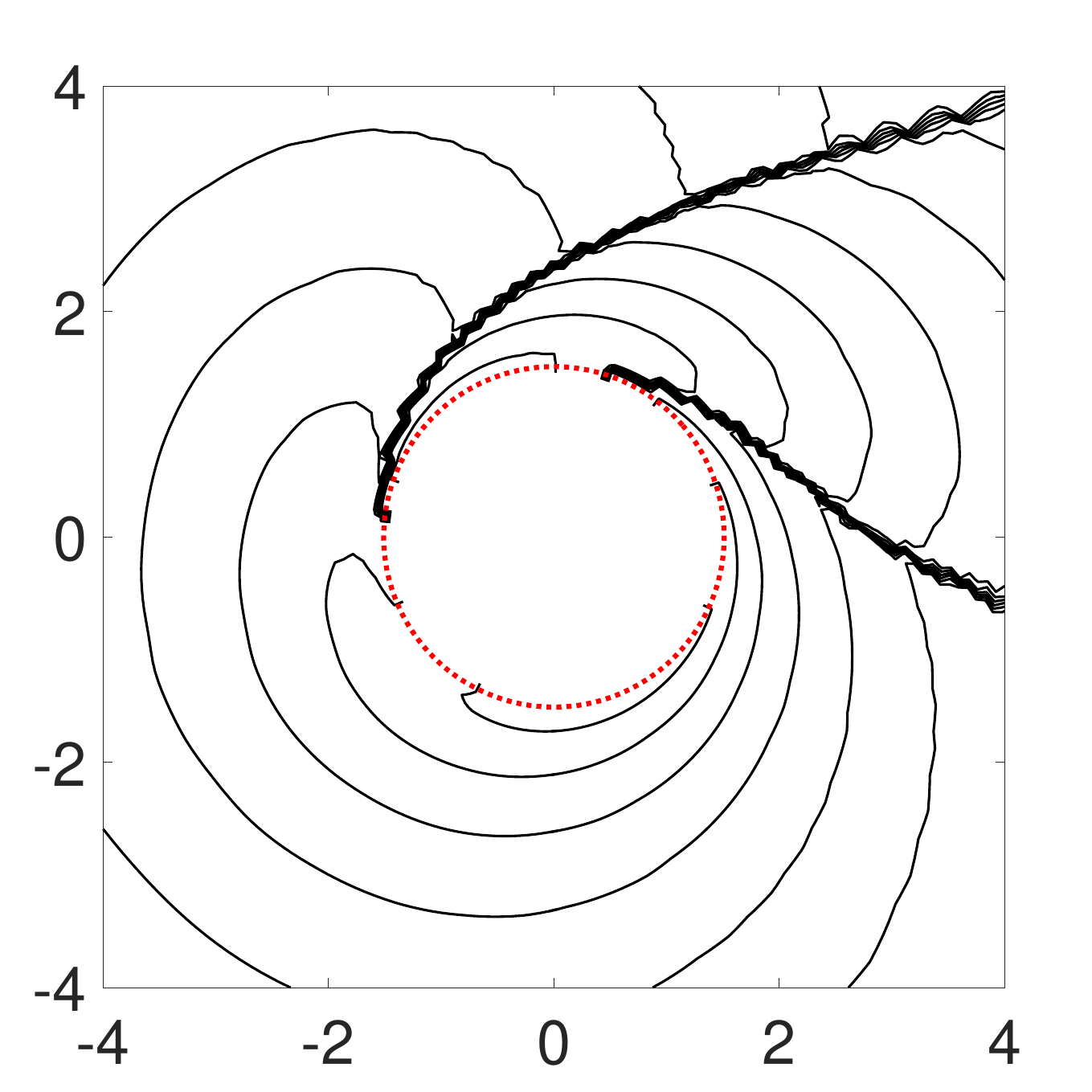}}
\centering
\subfigure[Case 4, $v_{\infty} = 0.5$]{\includegraphics[width=0.255\textwidth]{figs_2D/case5_10_p2.pdf}}\hspace{5mm}
\subfigure[Case 7, $v_{\infty} = 0.9$]{\includegraphics[width=0.255\textwidth]{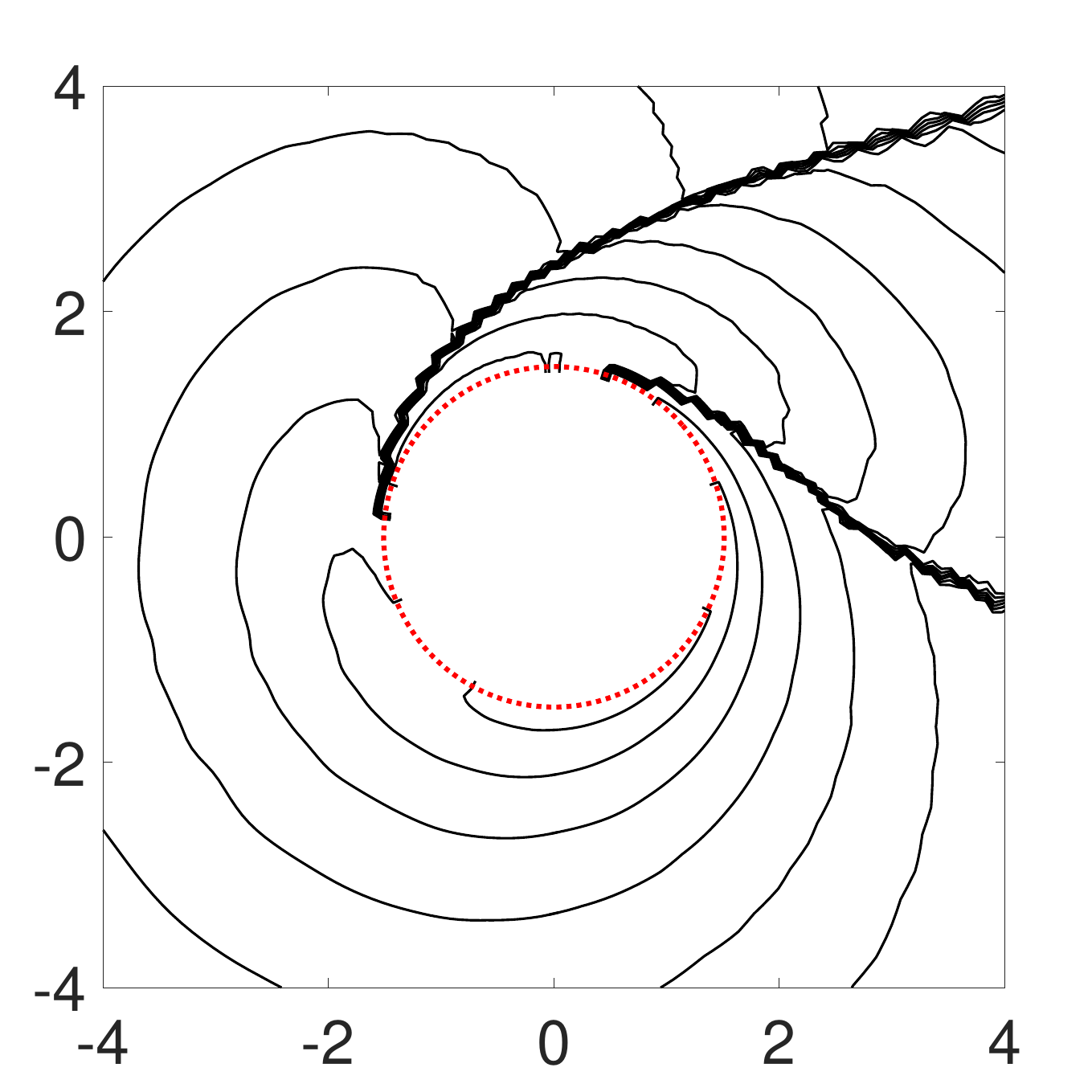}}\hspace{5mm}
\subfigure[Case 8, $v_{\infty} = 0.99$]{\includegraphics[width=0.255\textwidth]{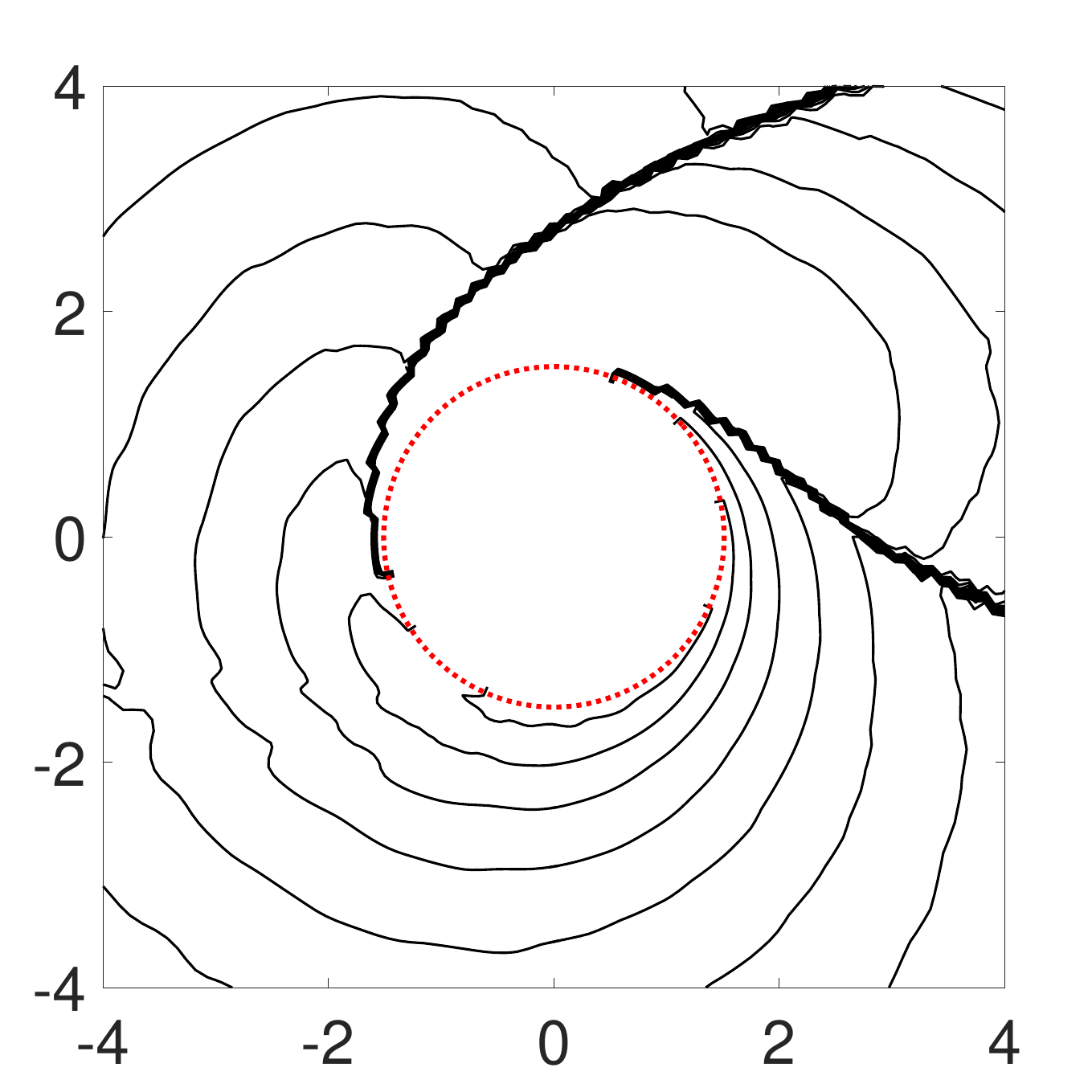}}
\centering
\subfigure[Case 4, $v_{\infty} = 0.5$]{\includegraphics[width=0.255\textwidth]{figs_2D/case5_10_p3.pdf}}\hspace{5mm}
\subfigure[Case 7, $v_{\infty} = 0.9$]{\includegraphics[width=0.255\textwidth]{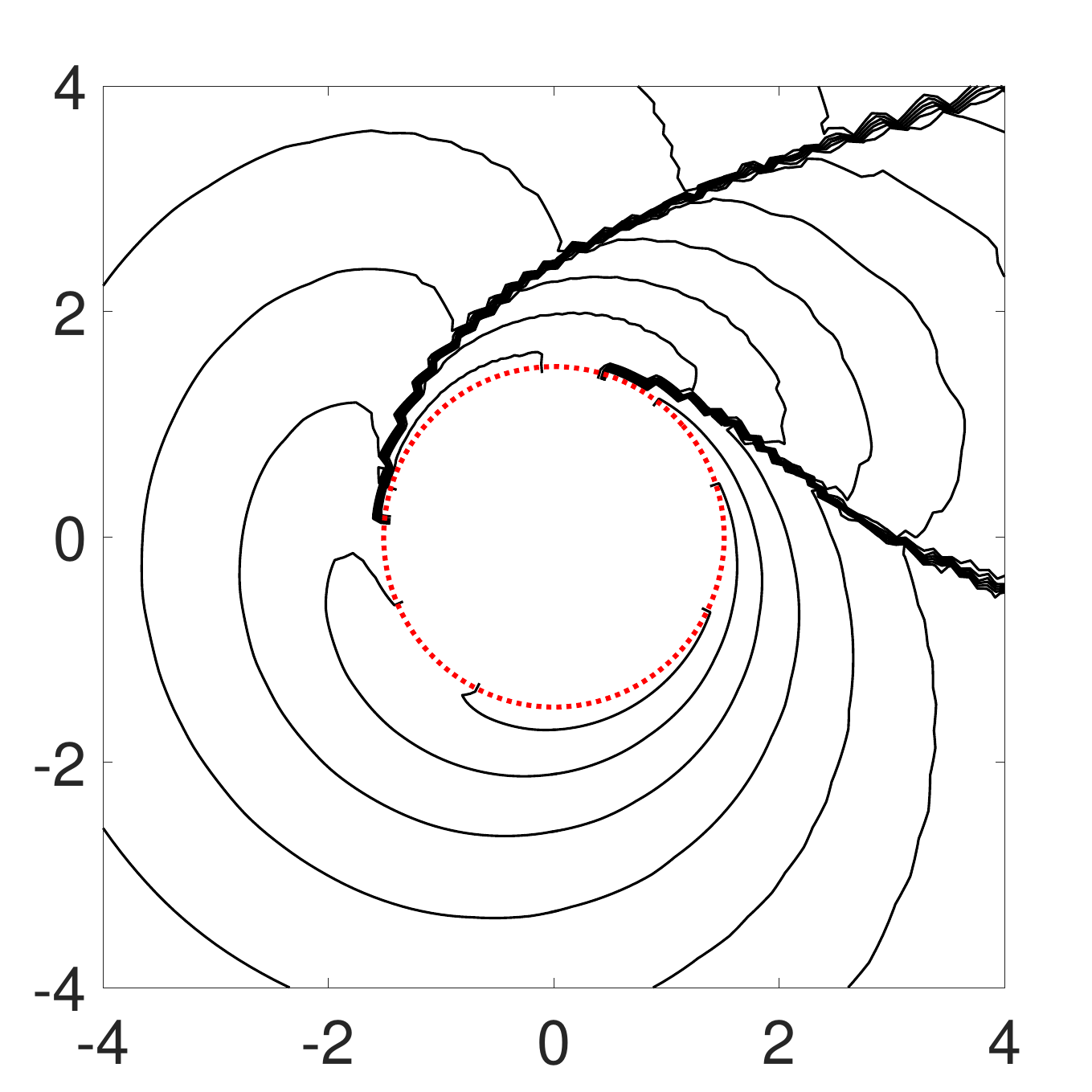}}\hspace{5mm}
\subfigure[Case 8, $v_{\infty} = 0.99$]{\includegraphics[width=0.255\textwidth]{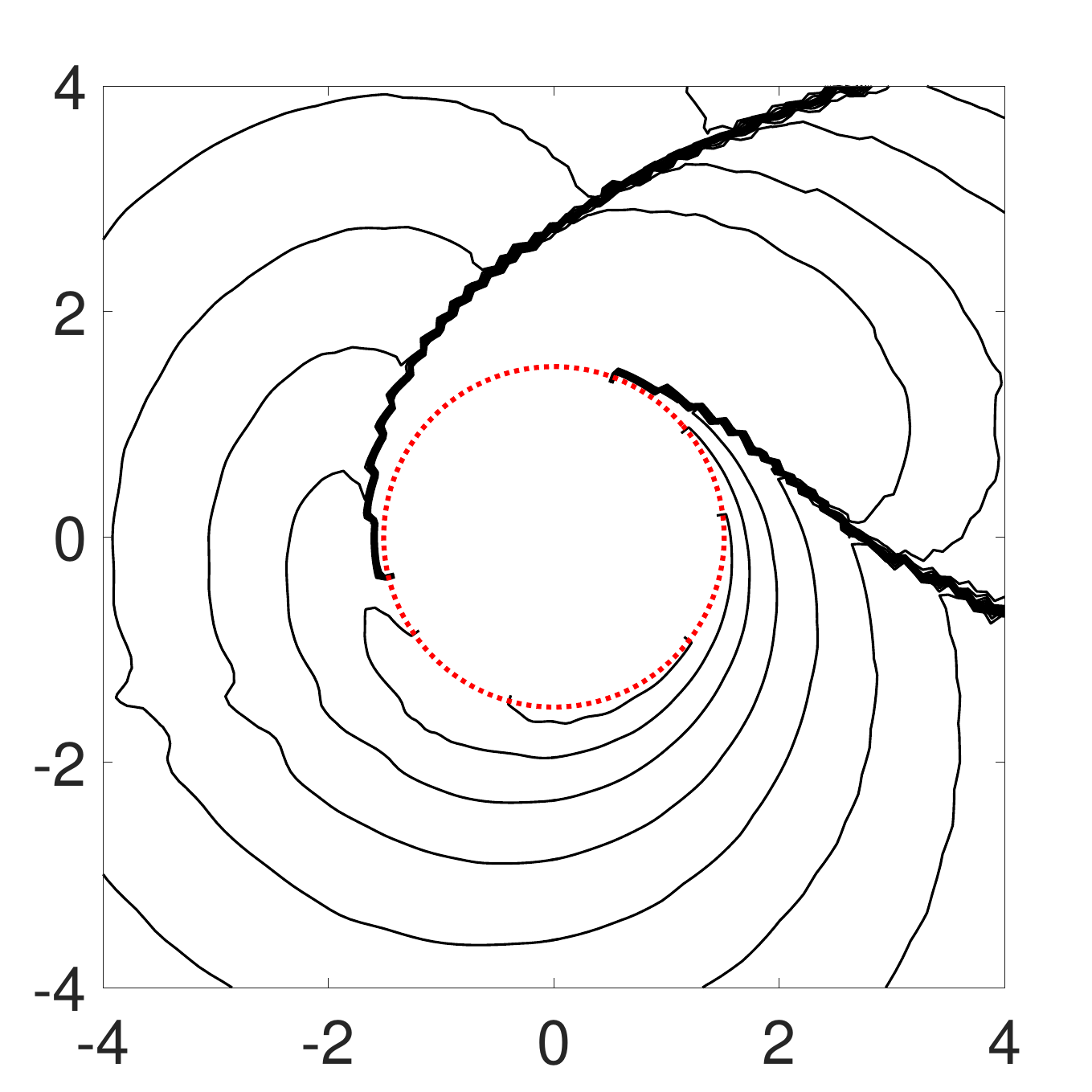}}
\caption{Example \ref{2D:Rie exam10c}: Close-up views of contours of $\log \rho$ in $[-4,4]^2$. From top to bottom: $\mathbb{P}^1$-, $\mathbb{P}^2$-, and $\mathbb{P}^3$-based PCP-OEDG schemes.}\label{fig:Bh p1 case7-8}
\end{figure}

From these results, we observe that while the shock wave wraps around the accretor, higher fluid velocities (larger Lorentz factors) have a relatively minor impact on the overall physical characteristics of black hole accretion. This suggests that the shock structure and accretion flow morphology are predominantly influenced by other parameters, such as the angular momentum and the adiabatic index, rather than the asymptotic flow velocity.

\end{example}

\begin{example}[Effects of the Mach Number $M_{\infty}$ for Kerr Black Hole Accretion]\label{2D:Rie exam10d}

In this final test, we use the PCP-OEDG method to explore the impact of different Mach numbers on the flow morphology around the black hole. Figure \ref{fig:Bh p1 case9-10} shows the contours of the rest-mass density logarithm under identical settings, with the exception of varying Mach numbers, simulated by the PCP-OEDG schemes.

\begin{figure}[!thb]
\centering
\subfigure[Case 4, $M_{\infty} = 5$]{\includegraphics[width=0.255\textwidth]{figs_2D/case5_10_p1.pdf}}\hspace{5mm}
\subfigure[Case 9, $M_{\infty} = 20$]{\includegraphics[width=0.255\textwidth]{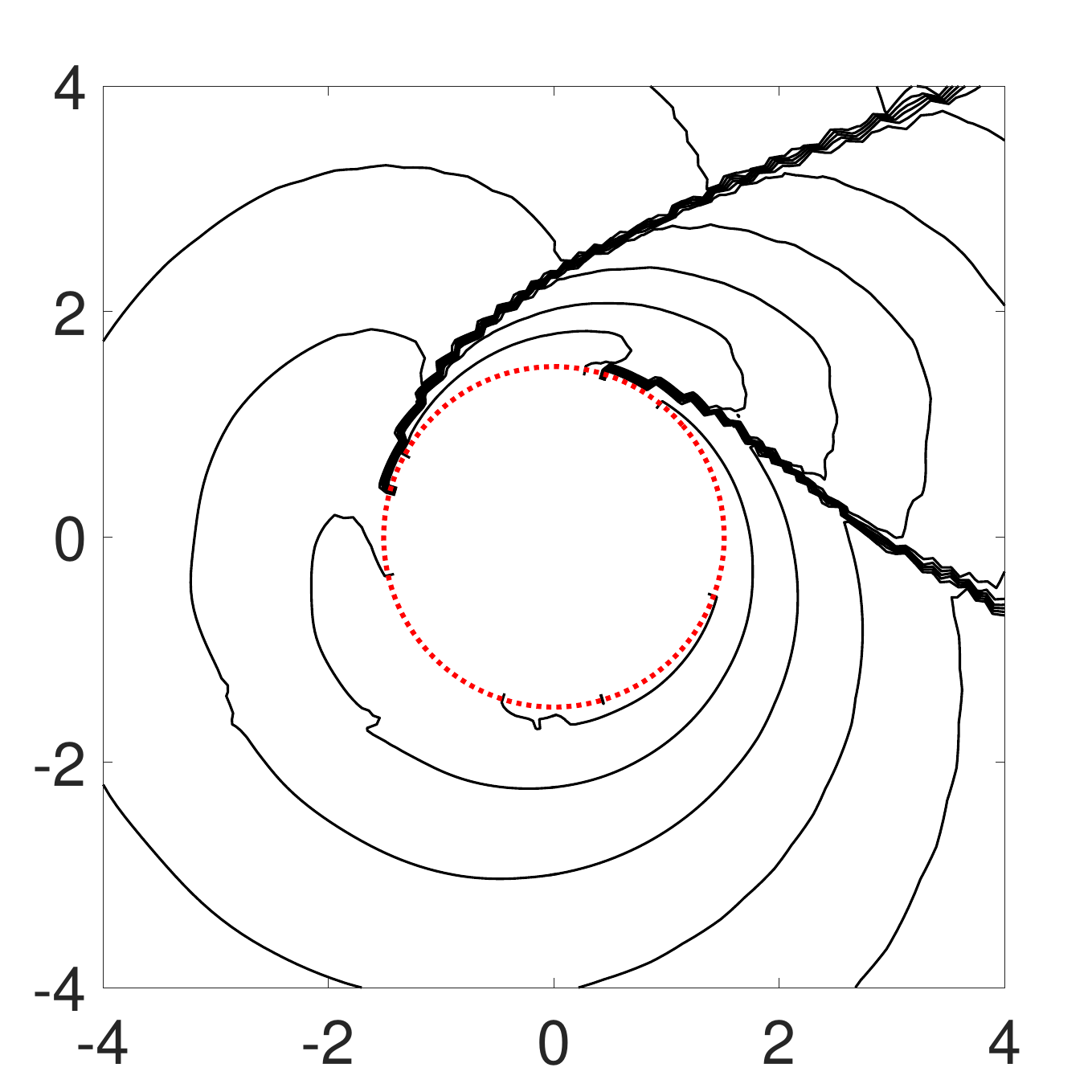}}\hspace{5mm}
\subfigure[Case 10, $M_{\infty} = 50$]{\includegraphics[width=0.255\textwidth]{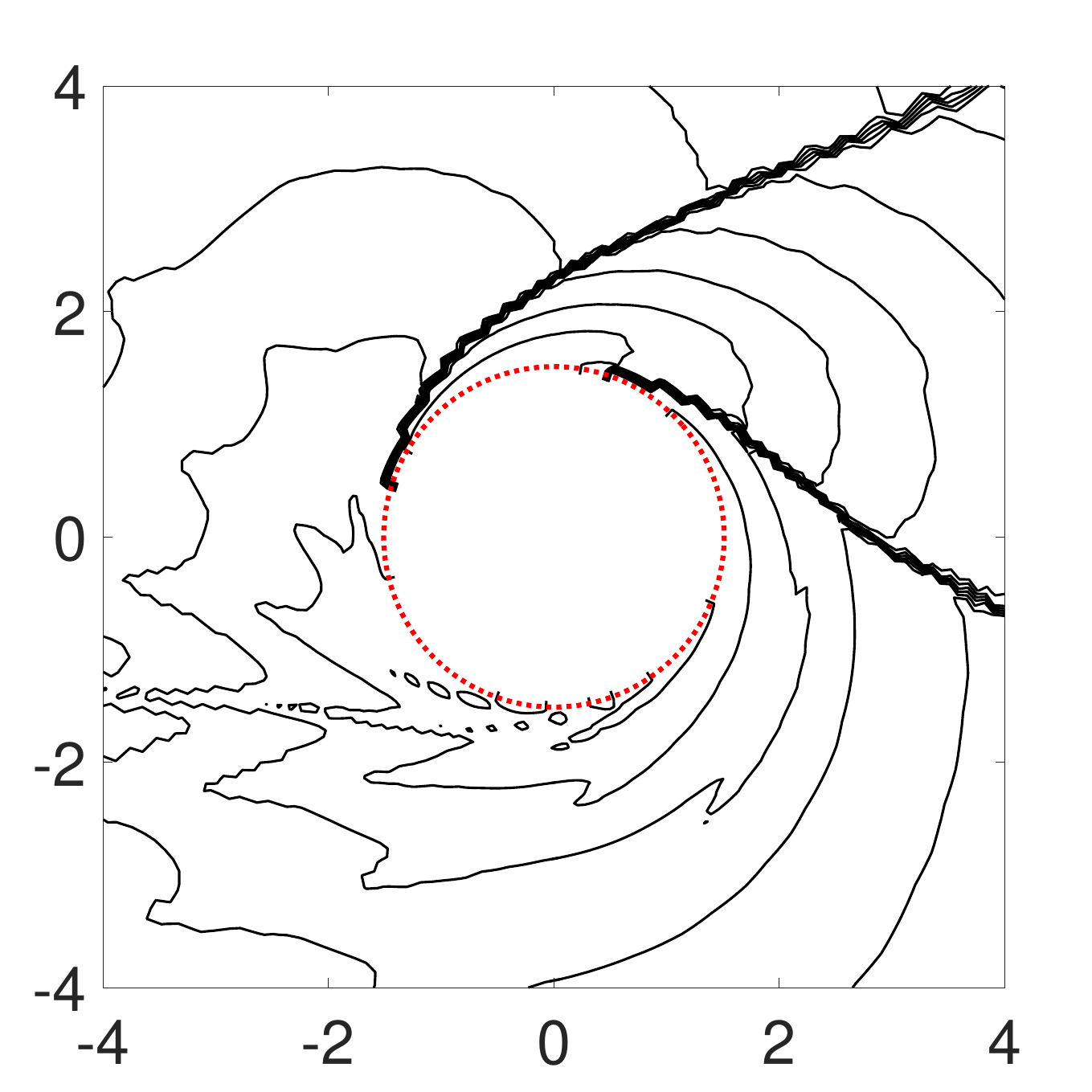}}
\centering
\subfigure[Case 4, $M_{\infty} = 5$]{\includegraphics[width=0.255\textwidth]{figs_2D/case5_10_p2.pdf}}\hspace{5mm}
\subfigure[Case 9, $M_{\infty} = 20$]{\includegraphics[width=0.255\textwidth]{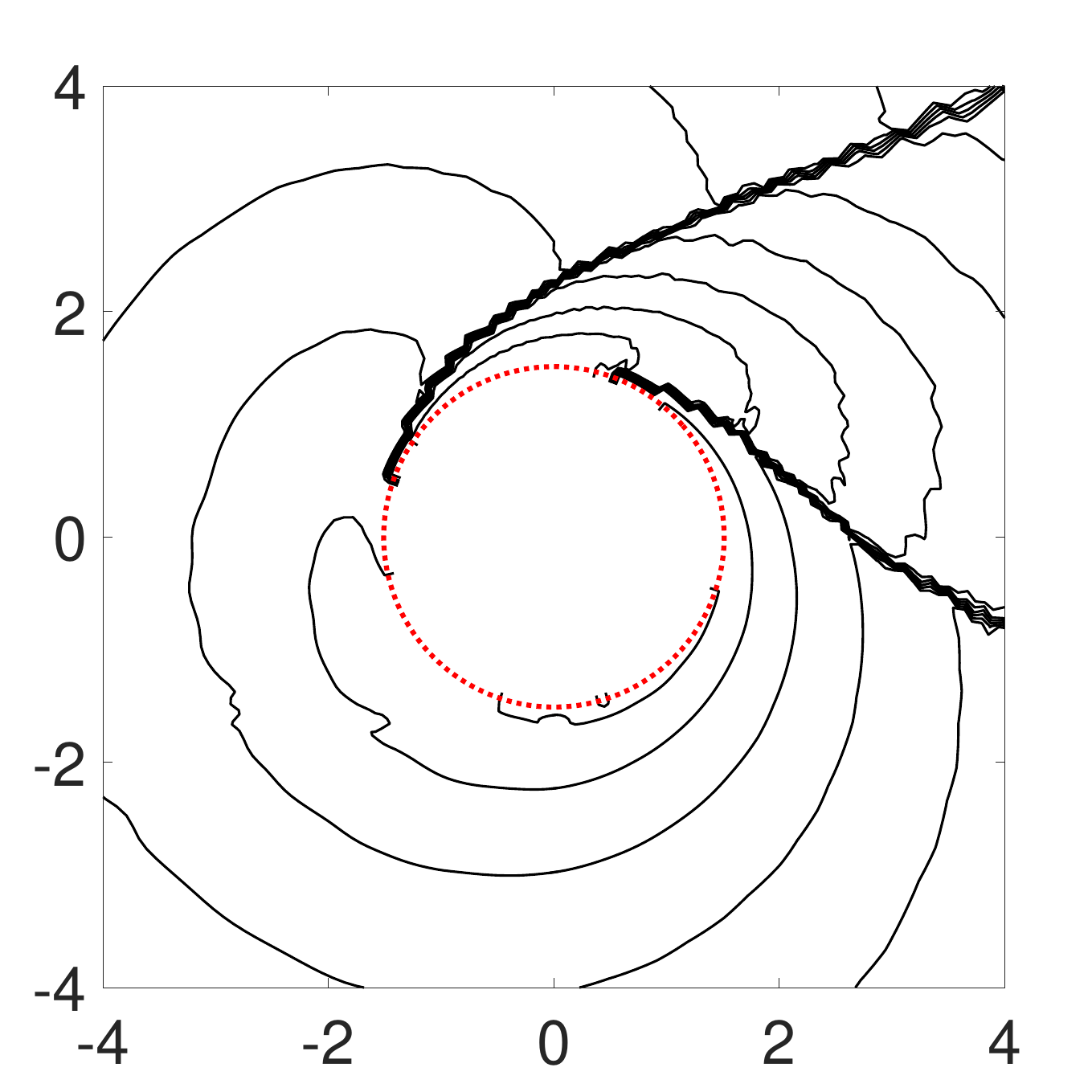}}\hspace{5mm}
\subfigure[Case 10, $M_{\infty} = 50$]{\includegraphics[width=0.255\textwidth]{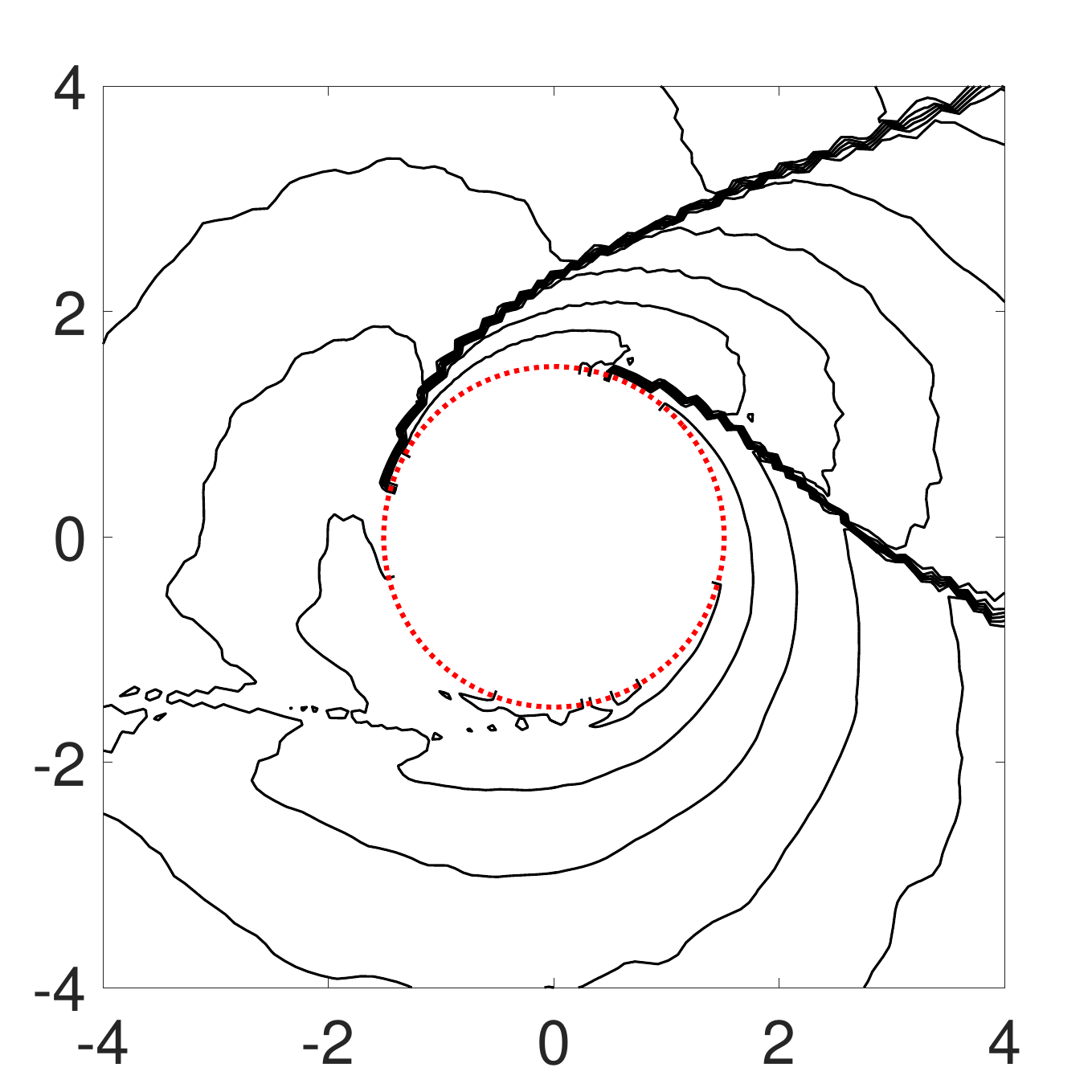}}
\centering
\subfigure[Case 4, $M_{\infty} = 5$]{\includegraphics[width=0.255\textwidth]{figs_2D/case5_10_p3.pdf}}\hspace{5mm}
\subfigure[Case 9, $M_{\infty} = 20$]{\includegraphics[width=0.255\textwidth]{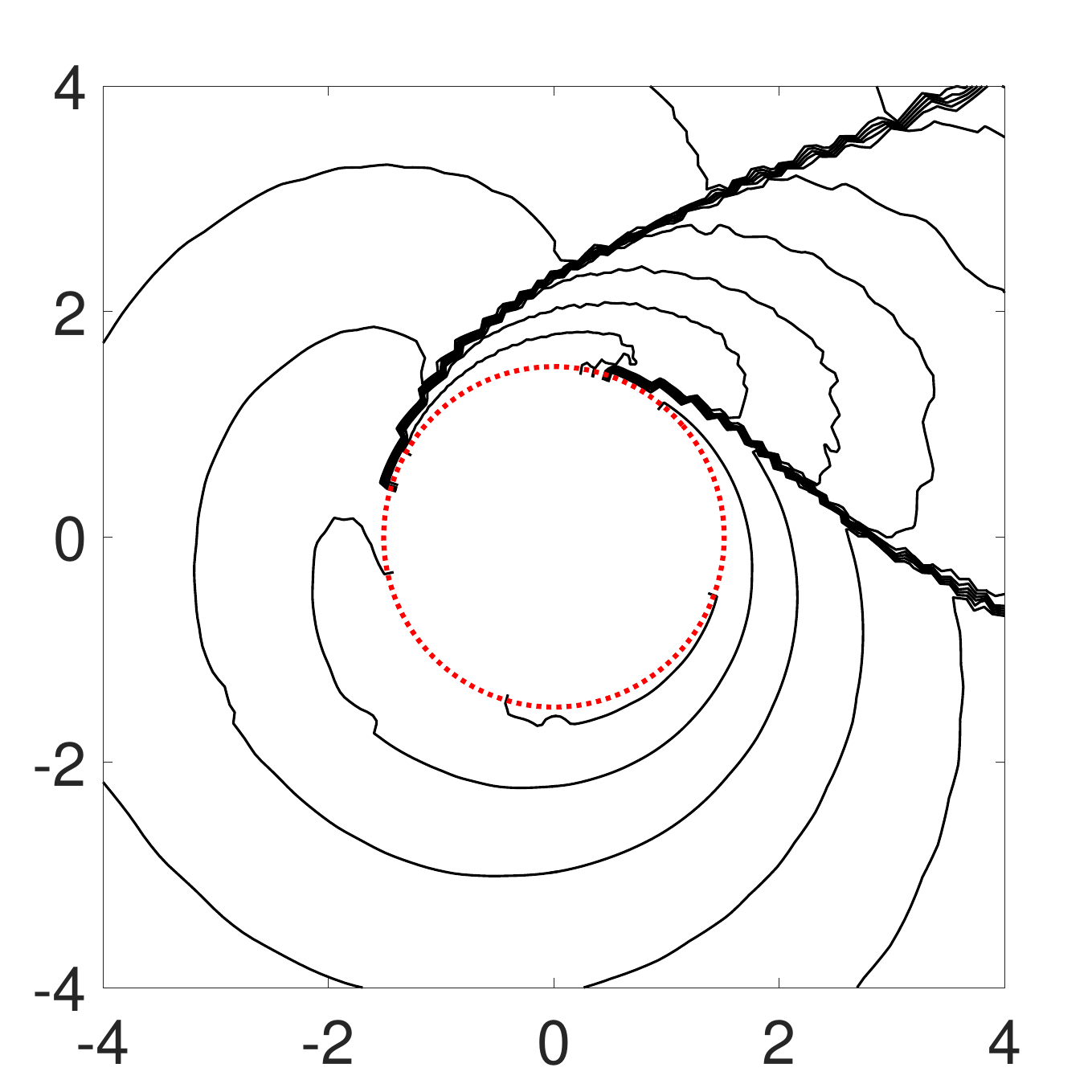}}\hspace{5mm}
\subfigure[Case 10, $M_{\infty} = 50$]{\includegraphics[width=0.255\textwidth]{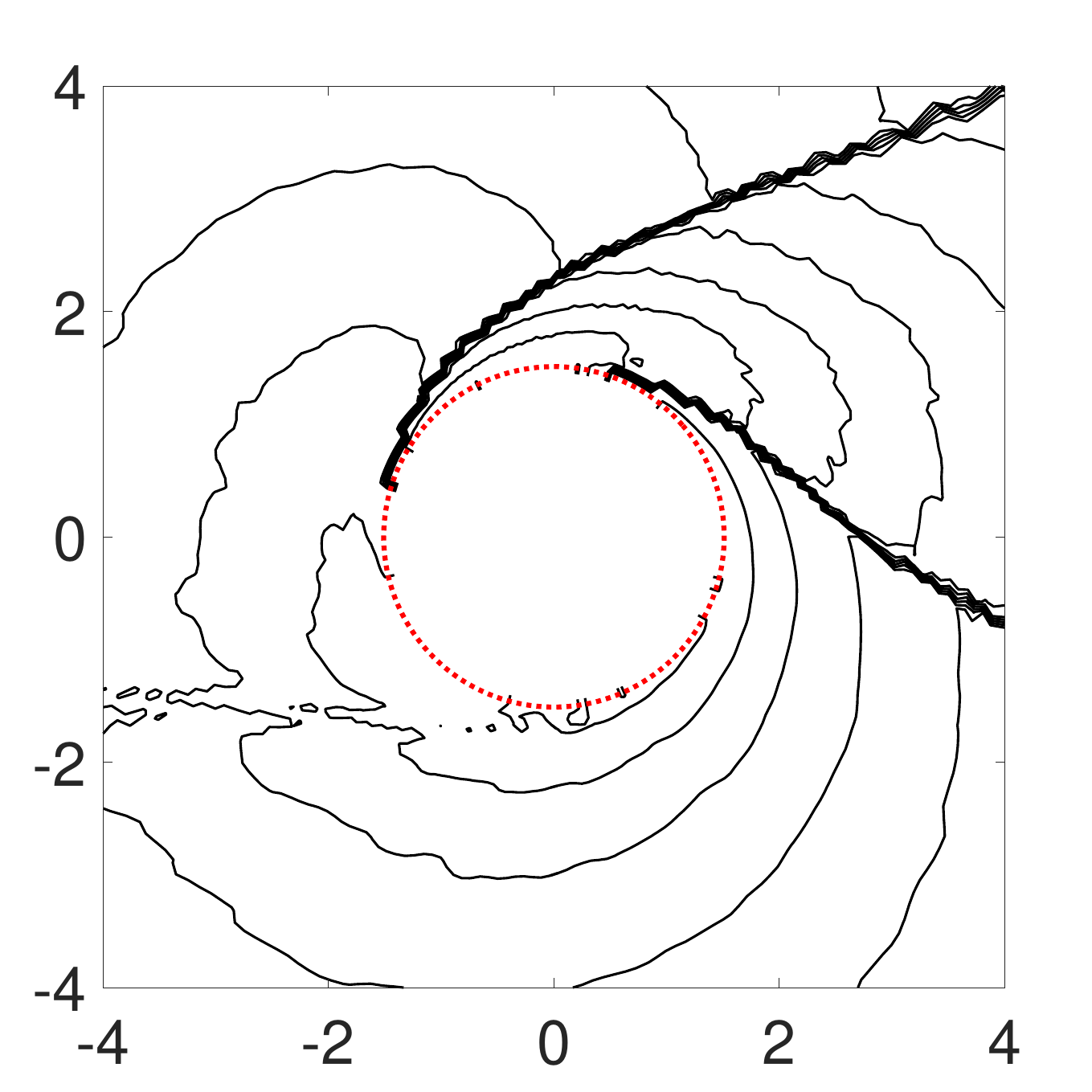}}
\caption{Example \ref{2D:Rie exam10d}: Close-up views of contours of $\log \rho$ in $[-4,4]^2$. From top to bottom: $\mathbb{P}^1$-, $\mathbb{P}^2$-, and $\mathbb{P}^3$-based PCP-OEDG schemes.}\label{fig:Bh p1 case9-10}
\end{figure}

The results clearly indicate that as the Mach number increases, the density contours become less smooth, reflecting the presence of more pronounced discontinuities and oscillations. This suggests that higher Mach numbers contribute to more turbulent and complex flow structures around the black hole, highlighting the role of compressibility in shaping the accretion dynamics.

Due to the large velocity and high Mach number, the simulation of Case 10 is very challenging. The proposed PCP-OEDG schemes perform robustly in this ultra-relativistic test, and the flow patterns are captured with high resolution.

\end{example}

\section{Conclusions}\label{sec6}

In this paper, we have presented the development of high-order accurate, physical-constraint-preserving, oscillation-eliminating discontinuous Galerkin (PCP-OEDG) schemes for the simulation of general relativistic hydrodynamics (GRHD) in arbitrary spacetimes. Our work has addressed several critical challenges inherent in the numerical simulation of GRHD within the DG framework, including handling curved spacetime, achieving high-order accuracy, suppressing spurious oscillations near discontinuities, and preserving key physical constraints such as positive density, positive pressure, and subluminal fluid velocity.

Our PCP-OEDG schemes are designed based on the W-form of the GRHD equations, which are obtained by reformulating the $(3+1)$ Arnowitt--Deser--Misner formalism, leveraging the Cholesky decomposition of the spatial metric. This reformulation has effectively addressed the challenge of non-equivalent admissible state sets at different points in curved spacetime by ensuring a spacetime-independent convex admissible state set, enabling the design of robust, high-order PCP schemes via convexity techniques.

To suppress nonphysical oscillations typically arising near shock waves and contact discontinuities, we have introduced a novel oscillation-eliminating (OE) procedure based on the exact solution of a linear damping equation. This procedure, which avoids using the complex characteristic decomposition, is computationally efficient and non-intrusive to the DG discretization. Its incorporation has proven to be an effective means of enhancing the stability of the DG method while maintaining high-order accuracy, local conservation, and compactness.

Furthermore, we have rigorously proven the PCP property for cell averages using highly technical estimates and the geometric quasi-linearization approach. This approach transforms complex nonlinear constraints into linear ones through the introduction of auxiliary variables, simplifying the analysis. Additionally, a simple but effective PCP limiter has been introduced to enforce the PCP property of DG solution polynomials at certain critical points. We have also developed provably convergent PCP iterative algorithms for the recovery of primitive variables in GRHD, ensuring that these variables satisfy physical constraints during their recovery from evolved variables.

The performance of the proposed PCP-OEDG method has been thoroughly validated through a series of extensive numerical experiments, including benchmark tests in flat spacetime, axisymmetric ultra-relativistic jet flows, and accretion processes around rotating black holes modeled by the Kerr metric. These experiments have demonstrated the method's robustness, accuracy, and efficiency, particularly in extreme GRHD conditions involving strong shocks, low pressure or density, high Lorentz factors, and intense gravitational fields.

In conclusion, the PCP-OEDG schemes developed in this work provide a reliable and efficient numerical framework for solving GRHD equations. These methods have proven to be highly effective in maintaining critical physical constraints while delivering high-order accurate and essentially oscillation-free solutions.

\bibliographystyle{abbrv}
\bibliography{mybib}

\end{document}